\documentclass{amsart}
\usepackage{appendix}
\usepackage{amssymb}

\usepackage{thmtools}
\usepackage{thm-restate}
\usepackage{hyperref}
\usepackage{cleveref}

\usepackage{mathrsfs}

\usepackage{graphicx}

\usepackage[cmtip,all]{xy}

\usepackage{amsthm}
\newtheorem{theorem}{Theorem}[section]
\newtheorem{proposition}[theorem]{Proposition}

\newtheorem{lem}[theorem]{Lemma}
\newtheorem{lemma}[theorem]{Lemma}
\newtheorem{cor}[theorem]{Corollary}
\newtheorem{prop}[theorem]{Proposition}
\newtheorem*{conjecture*}{Conjecture}

\theoremstyle{definition}
\newtheorem{defn}[theorem]{Definition} 
\newtheorem{construction}[theorem]{Construction}
\newtheorem{example}[theorem]{Example}

\newtheorem{remark}[theorem]{Remark}
\newtheorem*{question*}{Question}

\newcommand{\lf}{\mathrm{lf}}
\newcommand{\Aut}{\mathrm{Aut}}

\newcommand{\End}{\mathrm{End}}
\newcommand{\PSL}{\mathrm{PSL}}
\newcommand{\SL}{\mathrm{SL}}
\newcommand{\GL}{\mathrm{GL}}
\newcommand{\PGL}{\mathrm{PGL}}

\newcommand{\Hom}{\mathrm{Hom}}
\newcommand{\im}{\mathop{\mathrm{im}}}

\newcommand{\al}{\alpha}

\newcommand{\CQ}{\mathcal Q}
\newcommand{\CalC}{\mathcal C}

\newcommand{\SA}{\mathcal A}
\newcommand{\NN}{\mathbb N}
\newcommand{\RR}{\mathbb R}
\newcommand{\ZZ}{\mathbb Z}
\newcommand{\CC}{\mathbb C}

\newcommand{\Qbb}{{\mathbb Q}}
\newcommand{\Rbb}{{\mathbb R}}
\newcommand{\Zbb}{{\mathbb Z}}
\newcommand{\Nbb}{{\mathbb N}}
\newcommand{\Cbb}{{\mathbb C}}
\newcommand{\Fbb}{{\mathbb F}}
\newcommand{\Ebb}{{\mathbb E}}

\makeatletter
\def\l@subsection{\@tocline{2}{0pt}{2.5pc}{5pc}{}}
\makeatother

\usepackage[style=alphabetic,
]{biblatex}

\usepackage{booktabs}  
\usepackage{adjustbox}
\usepackage{tikz}
\usepackage{quiver}
\usepackage{hyperref}
\usepackage{mathtools}
\usepackage{enumitem}
\usepackage{soul}

\newcommand{\nocontentsline}[3]{}
\newcommand\stoptoc{%
   \let\origcontentsline\addcontentsline
   \let\addcontentsline\nocontentsline
}
\newcommand\resumetoc{%
   \let\addcontentsline\origcontentsline
}

\numberwithin{equation}{section}

\begin{document}

\title{$K$-Theoretic Obstructions to Linearizing QCA Representations}


\author{Mattie Ji}
\address{University of Pennsylvania}
\curraddr{}
\email{mji13@sas.upenn.edu}
\thanks{}

\author{Bowen Yang}
\address{Harvard University}
\curraddr{}
\email{bowen\_yang@g.harvard.edu}
\thanks{}

\subjclass[2020]{Primary: 19J35, 81V70. Secondary: 20C35.}

\date{}

\dedicatory{}

\maketitle

\begin{abstract}
Projective representations arise naturally in physics and representation theory, and determining whether they can be linearized has been a fundamental problem. In this work, we study the analogous problem for quantum cellular automata (QCA) representations, which incorporate locality constraints imposed by a metric space $X$. Over an arbitrary field $\mathbb{F}$, we develop an obstruction theory for the linearization of QCA representations, using the algebraic $K$-theory spectrum of QCA constructed in previous work of the authors. The resulting obstructions are governed by the homotopy type of the QCA spaces, from which we extract universal obstruction classes to linearization. In the complex algebraic and unitary case, we also fully compute the homotopy types of the QCA spaces over a point, a line, and a plane.
\end{abstract}

\renewcommand\contentsname{Table of Contents}
\tableofcontents

\section{Introduction}

\subsection{Quantum Physics and Linearization} In his note~\cite{freed2023anomaly} \textit{``What is an anomaly?"}, Freed writes:
\begin{quote}
\centering
``Quantum theory is projective. Quantization is linear.''
\end{quote}

We explain his first point in the familiar setting of single-particle quantum mechanics. Let
\(V\) be a finite dimensional complex Hilbert space. There are two parallel
ways of describing quantum states, called the Schrödinger and
Heisenberg pictures.\\

\noindent \textbf{Schrödinger Picture.} In the Schrödinger picture, a pure state is a ray in \(V\), i.e. a
one-dimensional subspace
\[
    [v]\in \mathbb P V
\]
spanned by a nonzero vector \(v\in V\). Equivalently, the pure state is
represented by the rank-one orthogonal projection \(P_{[v]}\) onto \([v]\).

 Mixed states are probabilistic mixtures of pure states, and are represented by
 density operators
 \[
     \rho=\sum_i p_i P_{[v_i]}\in \End(V),
     \qquad
    p_i\geq 0,
    \qquad
   \sum_i p_i=1.
\]
 The pure states are precisely those density operators of rank one.\\

\noindent \textbf{Heisenberg Picture.} In the Heisenberg picture, one instead starts with the algebra of observables
\[
    \mathcal A \coloneqq \End(V).
\]
The Hermitian inner product on \(V\) gives \(\mathcal A\) a \(*\)-operation.
A state, pure or mixed, is a normalized positive linear functional
\[
    \omega:\mathcal A\to \mathbb C,
    \qquad
    \omega(I)=1,
    \qquad
    \omega(A^*A)\geq 0
    \quad\text{for all }A\in\mathcal A.
\]
 The set of states is convex, and its extreme points are the pure states.\\
 
The two descriptions are equivalent for finite dimensional $V$. From either picture, the action of a group $G$ on quantum states (i.e., a $G$-symmetry) is naturally projective. 

In the Schrödinger picture, symmetries are encoded through a projective unitary representation
\[
    \varphi:G\longrightarrow PU(V)\subset PGL(V).
\]

In the Heisenberg picture, the same symmetry acts on the algebra of
observables by \(*\)-automorphisms:
\[
    \varphi:G\longrightarrow \Aut_{*}(\mathcal A)=PU(V).
\]
The induced action on a state \(\omega\) is then given by composition.

To generalize this to an arbitrary field $\mathbb F$, we forget the Hermitian structure. This leads us to a projective representation:
\[
    \varphi:G\longrightarrow \Aut(\mathcal A)=PGL(V).
\]

Freed's second point above leads naturally to the question of linearization.
\begin{question*}[Linearization]
   Does there exist a lift $\Tilde{\varphi}$ to the following diagram?
\[
\begin{tikzcd}
& GL(V) \arrow[d] \\
G \arrow[r, "\varphi"'] \arrow[ur, dashed, "\widetilde{\varphi}"] 
& PGL(V).
\end{tikzcd}
\]
\end{question*}

This question has a well-known answer in terms of an obstruction class in group cohomology.

\begin{theorem}\label{thm:classical}
Let \(\varphi:G\to PGL(V)\) be a projective representation over a field \(\Fbb\).
Choose lifts \(U_g\in GL(V)\) of \(\varphi(g)\). Then there are scalars
\(\alpha(g,h)\in \Fbb^\times\) such that
\[
    U_gU_h=\alpha(g,h)U_{gh}.
\]
The function \(\alpha\) is a \(2\)-cocycle, and its cohomology class
\[
    [\alpha]\in H^2(G,\Fbb^\times)
\]
is independent of the choice of lifts. The projective representation
\(\varphi\) is linearizable if and only if $[\alpha]=0$.
\end{theorem}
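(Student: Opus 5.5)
The plan is the standard cocycle argument, resting on the fact that the kernel of $GL(V)\to PGL(V)$ is exactly the central subgroup $\Fbb^\times\cdot I$. I take this for granted: over an arbitrary field, an invertible linear map acting trivially on $\mathbb PV$, or by conjugation trivially on $\End(V)$, is a scalar. With it, every step is a short computation, and I would carry them out in the order below.

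\emph{Existence of $\alpha$.} Since $\varphi$ is a homomorphism, $U_gU_h$ and $U_{gh}$ both map to $\varphi(gh)$ in $PGL(V)$. Hence $U_gU_hU_{gh}^{-1}$ lies in the kernel, so it equals $\alpha(g,h)I$ for a unique $\alpha(g,h)\in\Fbb^\times$.

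\emph{Cocycle identity.} I would expand $(U_gU_h)U_k$ and $U_g(U_hU_k)$ using the defining relation, keeping in mind that the scalars are central. This gives
\[
\alpha(g,h)\alpha(gh,k)U_{ghk}=\alpha(h,k)\alpha(g,hk)U_{ghk}.
\]
Cancelling the invertible $U_{ghk}$ yields the $2$-cocycle condition for the trivial $G$-module $\Fbb^\times$, written multiplicatively.

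\emph{Independence of lifts.} Any other choice of lifts has the form $U'_g=\lambda(g)U_g$ for some function $\lambda:G\to\Fbb^\times$, again because of the kernel. Then
\[
\alpha'(g,h)=\alpha(g,h)\,\lambda(g)\lambda(h)\lambda(gh)^{-1},
\]
so $\alpha'$ and $\alpha$ differ by the coboundary of the $1$-cochain $\lambda$, and the class $[\alpha]$ is well defined.

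\emph{Linearization criterion.} Suppose a lift $\widetilde\varphi$ exists. Choosing $U_g=\widetilde\varphi(g)$ gives $\alpha\equiv1$, so $[\alpha]=0$. Conversely, suppose $[\alpha]=0$, say $\alpha(g,h)=\mu(g)^{-1}\mu(h)^{-1}\mu(gh)$ for a $1$-cochain $\mu$. Set $\widetilde\varphi(g)=\mu(g)U_g$. The transformation rule above then shows the new cocycle is identically $1$, so $\widetilde\varphi$ is a homomorphism lifting $\varphi$.

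The main obstacle is essentially bookkeeping: getting the sign and inversion conventions in the coboundary to match the multiplicative convention for the trivial module. The only genuinely non-formal input is the identification of the kernel with the scalars.
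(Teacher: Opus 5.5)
Your proposal is correct: it is the standard cocycle argument (existence of $\alpha$ from the scalar kernel of $GL(V)\to PGL(V)$, associativity for the cocycle identity, rescaling of lifts by $\lambda\colon G\to\Fbb^\times$ for both independence and the converse), and your multiplicative coboundary conventions check out. The paper states this theorem as well known and gives no proof of its own. Your argument is the classical one the paper relies on, including the same lift-rescaling bookkeeping it later uses to show the obstruction is additive under tensor products.
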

So far, we have focused on a \textit{single-particle} system, in the sense that $V$ may be regarded as concentrated at a single point in space. We now pass to the many-body setting in the Heisenberg picture, where one assigns endomorphism algebras to points of a lattice $X$ and considers  tensor product of them. The resulting objects are quantum spin systems~\cite{Nachtergaele2004QuantumSpinSystems, naaijkens2017quantum}, and their natural automorphisms are quantum cellular automata (QCA)~\cite{farrelly2020review}.

In this paper, a quantum spin system is specified by a function $q: X \to \Zbb_{>0}$, where $q(x)$ is the dimension of the local $\mathbb{F}$-vector space at $x$, and its QCA group $\mathcal{Q}(X,q)$ consists of the locality-preserving automorphisms of the associated algebra of observables; see Section~\ref{sec::background}. Thus a $G$-symmetry of $(X,q)$ is modeled by a homomorphism $\varphi: G \to \mathcal{Q}(X,q)$. In this setting, linearization has two components: factorization into pointwise projective representations, followed by pointwise lifting to genuine linear representations.

\begin{question*}[Linearization for QCA]
Does there exist $\beta \in \mathcal{Q}(X, q)$ such that the following diagram can be completed?
\begin{equation}\label{eq::qca_linearization}
\begin{tikzcd}
	&& {\prod_{x \in X} GL(\mathbb{F}^{q(x)})} \\
	&& {\prod_{x \in X} PGL(\mathbb{F}^{q(x)})} \\
	G && {\mathcal{Q}(X, q)}
	\arrow[from=1-3, to=2-3]
	\arrow[from=2-3, to=3-3]
	\arrow[dashed, from=3-1, to=1-3]
	\arrow[dashed, from=3-1, to=2-3]
	\arrow["{\beta \varphi \beta^{-1}}"', from=3-1, to=3-3]
\end{tikzcd}
\end{equation}
\end{question*}

A QCA representation $\varphi: G \to \mathcal{Q}(X, q)$ fitting in (\ref{eq::qca_linearization}) is \textit{linearizable}. One can also impose a relaxed condition that $\varphi$ is \textit{stably linearizable} if there is some linearizable $\psi: G \to \mathcal{Q}(X, r)$ such that the map $\varphi \otimes \psi: G \to \mathcal{Q}(X, qr)$, induced by pointwise multiplying $q$ and $r$, is linearizable. One can also specify that $\varphi$ is \textit{weakly linearizable} if it is liftable in (\ref{eq::qca_linearization}) after taking the colimit over all quantum spin systems (see Section~\ref{sec::background} for details).\\

To continue the single-particle story in Theorem~\ref{thm:classical}, a natural question to study is to find obstruction classes to (stable/weak) linearizations of QCA representations.

\subsection{Stable Homotopy Theory and Obstruction Classes.}

In this work, we develop an obstruction theory for (stable/weak) linearizations of QCA representations over any field $\Fbb$ and general metric spaces $X$ with reasonable assumptions. We first set up the precise definitions of QCA and their representations in Section~\ref{sec::background}. We then study linearizations from the perspective of \textit{stable homotopy theory} and \textit{cohomological obstruction theory}.\\

\noindent \textbf{Stable Homotopy Theory.} In the recent work of \cite{qca_grp_space_spectrum}, the authors developed an algebraic $K$-theory spectrum of quantum cellular automata over a fixed commutative ring $R$. For a reasonable space $X$, they constructed a space $K(\mathbf{C}(X; R))$ using Segal's $K$-theory of symmetric monoidal categories \cite{Segal1974} and defined $\mathbf{Q}(X; R)$, the \textit{QCA space of $X$}, as the based loop space of $K(\mathbf{C}(X; R))$ at the identity component. They then showed the spaces $\{\mathbf{Q}(X \times \Zbb^n; R)\}_{n \geq 0}$ assemble into an $\Omega$-spectrum $\mathbb{QCA}(X; R)$. When $X = *$, we write $\mathbb{QCA}(R) = \mathbb{QCA}(*; R)$. For the unitary case, the authors also constructed the unitary QCA spectrum using a similar space $K(\mathbf{C}^*(X))$ and its based loop space $\mathbf{Q}^*(X)$.

For an $\Omega$-spectrum $X = \{X_n\}_{n \geq 0}$, the based homotopy classes of maps $X^n(-) \coloneqq [-; X_n]$ form a (reduced) generalized cohomology theory, and the correspondence goes backward by Brown's representability \cite{brown}. Our observation is that the QCA representation $\varphi: G \to \mathcal{Q}(X, q)$ induces a natural map $\varphi^{st}: BG \to K(\mathbf{C}(X; R))_1$ and hence defines a \textit{cohomology class} in
\[[\varphi^{st}] \in [BG, K(\mathbf{C}(X; R))_1] =\footnote{This step holds because the proof of the delooping result in \cite{qca_grp_space_spectrum} showed that $\Omega K(\mathbf{C}(X \times \Zbb; R))_1$ is a space whose component at identity is $K(\mathbf{C}(X; R))_1$.} [BG, \mathbf{Q}(X \times \Zbb; R)] = \mathbb{QCA}(X; R)^1(BG).\]

When $X$ is for example $\Zbb^n$, $[\varphi^{st}]$ is identified as a cohomology class $\mathbb{QCA}(R)^{n+1}(BG)$. We can then use the machinery of (stable) homotopy theory to study the class $[\varphi^{st}]$ and infer about (stable/weak) linearizations.

We explore this connection in Section~\ref{sec::homotopy}. Our first theorem gives a homotopical obstruction to linearization.
\begin{restatable}{introthm}{ratNull}\label{thm::fin_gp_null}
    Let $G$ be a finite or, more generally, rationally acyclic group. Suppose a QCA representation $\varphi: G \to \mathcal{Q}(X, q)$ over any field $\Fbb$  is stably linearizable or weakly linearizable, then the stabilization $\varphi^{st}: BG \to K(\mathbf{C}(X; \Fbb))_1$ is null-homotopic.
\end{restatable}

Write $\operatorname{PGL}(X; \Fbb)$ to denote the colimit of $\prod_{x \in X} \operatorname{PGL}_{q(x)}(\Fbb)$ over all quantum spin systems $q$. Our second theorem shows that projectivizable null-homotopies are linearizable. When $X = *$, this gives a converse to Theorem~\ref{thm::fin_gp_null} for finite groups and is a stable analog of Theorem~\ref{thm:classical}.
\begin{restatable}{introthm}{PGLlift}
\label{thm::converse_on_PGL}
    Let $G$ be a group. Suppose a QCA representation $\varphi: G \to \mathcal{Q}(X, q)$ lands in $\prod_{x \in X} \operatorname{PGL}_{q(x)}(\Fbb)$, and the induced map $\varphi': BG \to \operatorname{BPGL}(X; \Fbb) \to \operatorname{BPGL}(X; \Fbb)^+$ is null-homotopic, then $\varphi$ is weakly linearizable. If $G$ is furthermore finite, then $\varphi$ is stably linearizable.
\end{restatable}

We also develop unitary analogs of Theorem~\ref{thm::fin_gp_null} and~\ref{thm::converse_on_PGL} in Section~\ref{sec::unitary_obstruction}.\\

\noindent \textbf{Obstruction Classes.} For a map $f: BG \to K(\mathbf{C}(X; \Fbb))_1$, there are successive cohomological obstruction classes
\[u_i(f) \in H^{i}(BG; \pi_{i-1}(\mathbf{Q}(X; \Fbb))), i \geq 1\]
that are obstructions to $f$ being null-homotopic.

In Section~\ref{sec::cohomology}, we explain how to sequentially extract these cohomology classes using a universal construction called Dror's tower~\cite{Dror1972}. We specialize to the case of algebraic and unitary QCA spaces to define what we call \textit{universal obstruction classes} in Definition~\ref{def::universalObstruction} (after the Universal Coefficient Theorem used in the construction).
\begin{restatable}{introthm}{anomalies}
\label{thm::anomalies}
Suppose $G$ is finite or rationally acyclic and $\varphi$ is a (unitary) QCA representation. If $\varphi$ is stably or weakly linearizable, then each universal obstruction class $u_i(\varphi)$ vanishes. 
\end{restatable}

Using this result and computations of the algebraic $K$-theory spectrum of unitary QCA, we obtain an obstruction theory in the unitary case, as anticipated in~\cite{Tu_2026}.
\begin{restatable}{introcor}{ObstructionUnitary}
    When $G$ is finite (or more generally, rationally acyclic) and $X=\ZZ^n$, the only potentially nonzero universal obstruction classes occur in
\begin{equation}
H^{i+1}\bigl(BG;\mathcal Q^*(\ZZ^{n-i})/\mathcal C^*(\ZZ^{n-i})\bigr),
\end{equation}
for $0\leq i\leq n+1$,
and in
\begin{equation}
H^{n+2}(BG; U(1)).
\end{equation}
\end{restatable}

\noindent \textbf{Computations.} From the stable homotopy theory interpretation of linearization, we are motivated to compute the groups $\mathbb{QCA}(X; R)^1(BG)$. In Section~\ref{sec::computation}, we discuss how to compute the structure of QCA spaces for special examples, which sheds light on the structure of their generalized cohomology theories.
 
\begin{restatable}{introthm}{CSplit}\label{thm::complex_qca_split}
The following spaces are equivalent to products of Eilenberg-MacLane spaces:
\begin{enumerate}[leftmargin=*]
    \item (Complex Case): For $n = 0,1$, the space $K(\mathbf{C}(\Zbb^n; \mathbb{C}))$ and hence $\mathbf{Q}(\Zbb^n; \mathbb{C})$.
    \item (Complex Case): The universal cover of $K(\mathbf{C}(\Zbb^2; \mathbb{C}))$ and hence $\mathbf{Q}(\Zbb^2; \mathbb{C})$.
    \item (Unitary Case): For $n = 0, 1, 2$, the space $K(\mathbf{C}^*(\Zbb^n))$ and hence $\mathbf{Q}^*(\Zbb^n)$.
    \item (Finite Field Case): For $n = 0, 1$, the spaces $\mathbf{Q}(\Zbb^n; \Fbb_{p^k})$ for all primes $p$, and the spaces $K(\mathbf{C}(\Zbb^n; \Fbb_{2^k}))$.
\end{enumerate}
\end{restatable}
For all spaces in Theorem~\ref{thm::complex_qca_split}(1-3) other than $\mathbf{Q}(\Zbb^2; \Cbb)$, this is in fact an equivalence of group-like $\Ebb_{\infty}$-spaces (with respect to their connective deloopings). For Theorem~\ref{thm::complex_qca_split}(4), this is an equivalence of group-like $\Ebb_{\infty}$-spaces when $p = 2$. In the unitary case, a result of \cite{freedman2020classification} implies $K(\mathbf{C}^*(\Zbb^2))$ is simply connected. If the analogous $K(\mathbf{C}(\Zbb^2; \mathbb{C}))$ is simply connected, the equivalence for $\mathbf{Q}(\Zbb^2; \Cbb)$ is an equivalence of group-like $\Ebb_{\infty}$ spaces.\\

In Appendix~\ref{appendix::obstruction}, we compare our universal obstruction classes with several low-degree obstruction classes from the literature. The degree-$1$ obstruction agrees with ours, the degree-$2$ construction gives a concrete model for ours, and the degree-$3$ obstruction factors through ours. This comparison shows the universal obstruction classes refine the existing low-degree obstructions. We also clarify the ambiguity around hyperplane choice in degree $2$ over any field. Appendix~\ref{appendix::arithmetic} gives a detailed account of a family of QCA representations whose universal obstruction classes have arithmetic origins.\\

Although this paper is an application of the QCA space and spectrum constructed in our recent work~\cite{qca_grp_space_spectrum}, the study of QCA representations of finite groups in the unitary case predates that construction. In the physics literature, such investigations have appeared under the name of lattice anomalies. Cohomological obstruction classes were postulated~\cite{kawagoe2025anomaly, Tu_2026, czajka2025anomalies} in that context and have been formulated and computed in many examples, especially in dimensions one and two~\cite{shirley2026anomaly, seifnashri2026disentangling, bols2026classification, kapustinXu2025higher}. Our contribution is to develop an obstruction theory within the uniform framework provided by the $K$-theoretic QCA spectrum~\cite{qca_grp_space_spectrum}. This framework allows us to prove rigorous results over arbitrary fields and for a broad class of spaces, including the lattices $\ZZ^d$ in every dimension, and to identify the corresponding universal obstruction classes in all cohomological degrees. We do not address anomalies of higher symmetries, see~\cite{kapustin2025higher, feng2026higher, kapustinXuInPreparation} for treatment of this.\\

\noindent \textbf{Acknowledgements.} We thank Dan Freed for encouraging us to compute a homotopy fiber, which led to Section~\ref{sec::cohomology}. We thank Mark Behrens and Mike Hopkins for insights that greatly clarified the computation in Section~\ref{sec::computation}.

MJ would like to thank Mark Behrens and Mona Merling for many helpful conversations throughout this project. MJ also thanks the following people for helpful discussions: A. J. (Jon) Berrick on \cite{Berrick1982, BERRICK1983172} and Remark~\ref{rmk::pullback}, Nir Gadish and David Zhaoqi Zhu on obstruction theory, Pengkun Huang and Juan Moreno on integral Steenrod operations, and Daniel Krashen on group cohomology.

BY thanks Anton Kapustin introducing him to anomalous lattice symmetries in the first place. BY also thanks Yu-An Chen, Jeongwan Haah, David Long, and Wilbur Shirley for patiently answering his questions.

MJ is partially supported by the National Science Foundation Graduate Research Fellowship (DGE-2236662). BY acknowledges support from the Simons Foundation through the Simons Collaboration on Global Categorical Symmetries.

\section{QCA Representation of Discrete Groups}\label{sec::background}

\subsection{QCA and Circuits}\label{subsec::qca} To model infinitely many particles in physical space, we imagine them as being
distributed ``uniformly'' over an unbounded metric space \(X\). To impose mild
regularity assumptions on \(X\), we always assume that \(X\) contains a countable, coarsely dense subspace that is uniformly discrete, locally finite metric space of bounded geometry. Note that this does not require \(X\) itself to be discrete or locally finite. Typical examples include lattices in Lie
groups, such as
\[
    \mathbb{Z}^d \subset \mathbb{R}^d.
\]
For two such metric spaces $X$ and $X'$, the space $X \times X'$ has the $L^{\infty}$-metric.

\begin{defn}
A function \(q\colon X \to \mathbb{N}\) is called locally finite if
\[
    \Lambda_q := \{x\in X : q_x \coloneqq q(x)  > 1\}
\]
is a locally finite subset of \(X\). That is, \(\Lambda_q\) has finite
intersection with every bounded subset of \(X\).\footnote{In particular,
\(\Lambda_q\) is finite if \(X\) is bounded.} We often call \(\Lambda_q\) the
lattice associated to \(q\). We denote the collection of such functions by
\(\mathbb{N}^X_{\mathrm{lf}}\). This is a partially ordered set under the
relation
\[
    q \leq r
    \quad\Longleftrightarrow\quad
    q_x \mid r_x \text{ for all } x\in X.
\]
\end{defn}

If \(X\) is already locally finite, then every function \(q\colon X\to\mathbb{N}\)
is locally finite. 
\begin{defn}\label{def::algebra_observables}
    A \textit{$\mathbb F$-quantum spin system} consists of a metric space $(X, \rho)$, and a locally finite function $q\in \NN^X_\lf$. Its \textit{algebra of local observables} is defined as
    \begin{equation}
  \SA(X,q) = \bigotimes_{x\in X} M_{q_x}(\mathbb F)= \varinjlim_{B\subset X \text{ bounded }} \bigotimes_{x\in B} M_{q_x}(\mathbb F). 
\end{equation}
An element $a\in \SA(X,q)$ can be understood as a finite $k$-linear combination of restricted tensor products
\[
\bigotimes_{x\in X} a_x,
\]
where $a_x\in M_{q_x}(\mathbb F)$ and $a_x=I_{q_x}$ outside some bounded subset of $X$. The minimal bounded subset $B\subset X$ such that the element lies in $\bigotimes_{x\in B} M_{q_x}(\mathbb F)$ is called its support, denoted by $\mathrm{supp}(a)$.
\end{defn}

\begin{defn}\label{def:locality_preserve_iso}
A \emph{quantum cellular automaton (QCA)} of $\SA(X,q)$ is a $\mathbb F$-algebra automorphism $\al$ of finite spread. That is, there exists $\ell>0$ such that for every $a\in \SA(X,q)$
the support of $\al(a)$ is contained in
\[
D_\ell(\mathrm{supp} (a)):=\{y\in X:\rho(\mathrm{supp}(a),y)\leq \ell\}.
\] The smallest such $\ell$, when it exists, is called the spread of $\al$. The \textit{group of QCA} of $\SA(X,q)$ is denoted as $\CQ(X,q)$.
\end{defn}

For quantum spin systems $q \leq r$, there is a well-defined map
\[\CQ(X, q) \hookrightarrow \CQ(X, r) \]
induced by pointwise Kronecker product with the identity matrices (see Lemma 2.13 of \cite{qca_grp_space_spectrum}).

\begin{defn}\label{def::qca_group}
The \emph{total QCA group} over $X$ is the direct limit in the category of groups:
\[
\CQ(X)\ :=\ \operatorname{colim}_{q\in \NN^X_\mathrm{lf}}\ \CQ(X,q).
\]
Concretely, $\CQ(X)$ is the set of equivalence classes $[q,\alpha]$ with
$\alpha\in\CQ(X,q)$, modulo the condition that
\[
[q,\alpha]\sim [r,\beta] \text{ if and only if there exists } s\ \text{with}\ q\mid s,\ r\mid s\ \text{such that}\
\iota_{q\to s}(\alpha)=\iota_{r\to s}(\beta).
\]
The product is
\[
[q,\alpha]\cdot[r,\beta]\;\coloneqq\;
\big[s,\ \iota_{q\to s}(\alpha)\circ \iota_{r\to s}(\beta)\big],
\qquad s_x \coloneqq \mathrm{lcm}(q_x,r_x).
\]
\end{defn}

    When $\mathbb F=\CC$, there is a unitary variant of QCA that commutes with the adjoint operation on elements of $\mathcal A(X,q)$. In Section 5 of \cite{qca_grp_space_spectrum}, they are referred to as $*$-QCA, to distinguish them from the algebraic version. We denote the unstable $*$-QCA group as $\mathcal{Q}^*(X, q)$, and the total $*$-QCA group as $\mathcal{Q}^*(X)$. When $X$ is a point, the adjoint preserving QCA are precisely the projective unitary matrices. Thus, we refer to this as the \textit{unitary case} and use the word \textit{unitary QCA} interchangeably with $*$-QCA. All of our results and proofs admit straightforward adaptations to this setting. Throughout, we will indicate the corresponding unitary versions where appropriate.

We now discuss an important class of QCA known as \textit{circuits}.
\begin{example}[Single-layer general, special, and unitary circuit]
Let $q\in \NN^X_{\mathrm{lf}}$ be a spin system. A \emph{single-layer circuit} on $(X,q)$ consists of the following data.

\begin{enumerate}[leftmargin=*]
    \item First, choose a uniformly bounded partition
\begin{equation}
X=\coprod_j X_j,
\qquad
\sup_j \operatorname{diam}(X_j)<\infty .
\end{equation}
For each block $X_j$, set
\begin{equation}
q(X_j):=\prod_{x\in X_j} q_x .
\end{equation}

\item Second, for each block $X_j$, choose a local automorphism $\alpha_j$ of the matrix algebra associated to $X_j$. We will consider either of the following cases:
\begin{enumerate}
    \item A \textit{single layer general circuit} chooses
\[
\alpha_j\in \operatorname{PGL}_{q(X_j)}(\mathbb F),
\]
corresponding to conjugation by an invertible matrix.
\item A \textit{single layer special circuit} chooses
\[
\alpha_j\in \operatorname{PSL}_{q(X_j)}(\mathbb F),
\]
corresponding to conjugation by an invertible matrix of determinant one.
\item A \textit{single layer unitary circuit} chooses
\[
\alpha_j\in \operatorname{PU}_{q(X_j)},
\]
corresponding to conjugation by a unitary matrix.
\end{enumerate}
\end{enumerate}

Each circuit can be identified as a QCA as follows. Since the subsets $X_j$ are pairwise disjoint, the automorphisms $\alpha_j$ act on mutually commuting tensor factors of $\SA(X,q)$. Hence their tensor product
\begin{equation}
\alpha:=\bigotimes_j \alpha_j
\end{equation}
defines a locality-preserving automorphism of $\SA(X,q)$. The uniform bound on the diameters of the blocks $X_j$ implies that the propagation of $\alpha$ is uniformly bounded.
\end{example}

\begin{defn}
We define $\mathcal{C}(X, q), \mathcal{C}^{\mathrm{sp}}(X, q), \mathcal{C}^*(X, q)$ as the subgroup generated by single-layer general (resp. special, unitary) circuits in their corresponding QCA groups over $(X, q)$ respectively. We write $\mathcal{C}(X), \mathcal{C}^{sp}(X), \mathcal{C}^*(X)$ to denote the subgroups generated by all single layer circuits in $\mathcal{Q}(X)$. 
\end{defn}

\begin{prop}\label{prop::circuit_type}
The groups $\CalC(X)$ and $\CalC^\mathrm{sp}(X)$ are normal subgroups of $\CQ(X)$. Furthermore, $\CalC^\mathrm{sp}(X)$ is equal to the commutator subgroup of $\CQ(X)$. Similarly, $\CalC^*(X)$ is the commutator subgroup of $\CQ^*(X)$. When $q(x) \geq 3$ for all $x \in X$, the unstable counterpart of the statements for $(X, q)$ also holds. Furthermore, $\mathcal{C}(X) = \mathcal{C}^{sp}(X)$ when $X = Y \times \Zbb$ or $\Fbb$ has $n$-th roots for all $n$.
\end{prop}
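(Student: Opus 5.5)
Normality comes first and is the easiest part. For $\alpha\in\CQ(X,q)$ of spread $\ell$ and a single-layer circuit $\gamma=\bigotimes_j\alpha_j$ on a partition $\{X_j\}$, I will show that $\alpha\gamma\alpha^{-1}$ is again a circuit, possibly after stabilizing. Each factor $\alpha\alpha_j\alpha^{-1}$ is an inner automorphism $\mathrm{Ad}(\alpha(U_j))$. Since $\alpha(U_j)$ is supported in $D_\ell(X_j)$, this factor is an inner automorphism supported in a ball of bounded diameter, and the factors commute with one another. Color the blocks with finitely many colors, which bounded geometry permits, so that blocks of the same color have disjoint $\ell$-neighbourhoods. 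Then $\alpha\gamma\alpha^{-1}$ is a finite product of single-layer circuits, one for each color class, each subordinate to a coarser bounded partition. Determinants are preserved because $\alpha(U_j)$ is a conjugate of $U_j\otimes I$ under an inner automorphism of a finite matrix block, so special circuits go to special circuits. This gives normality of $\CalC(X)$ and $\CalC^{\mathrm{sp}}(X)$, and the same argument works unstably.

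Next I will show $[\CQ,\CQ]\subseteq\CalC^{\mathrm{sp}}$. The strategy is to prove that the commutator of any two QCA is a circuit, and then that its layers can be made special. The first part is the standard fact that $\CQ/\CalC$ is abelian. I will prove it by an Eilenberg-swindle or "blending" argument: given $\alpha,\beta$, the commutator $\alpha\beta\alpha^{-1}\beta^{-1}$ can be written as a product of locally supported pieces. Concretely, I will use the stabilization in $\CQ(X)$ to write $\alpha\otimes\alpha^{-1}$ as a circuit, which is the swap trick $\alpha\otimes\alpha^{-1}=(\alpha\otimes1)\,\mathrm{SWAP}\,(\alpha^{-1}\otimes 1)\,\mathrm{SWAP}$, with the conjugated swap being a circuit by the normality above. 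The commutator then reduces to a product of such elements. To see that the layers are special, note that after tensoring with a spin of dimension $\ge 2$ one can absorb determinants. The group $\PGL_n/\PSL_n\cong\Fbb^\times/(\Fbb^\times)^n$ is abelian, so commutators of circuits lie in special circuits.

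For the reverse inclusion $\CalC^{\mathrm{sp}}\subseteq[\CQ,\CQ]$, I will use that $\PSL_n(\Fbb)$ is perfect for $n\ge3$, apart from small exceptions that are removed by stabilizing. Each block $\alpha_j$ of a special layer is then a product of a bounded number of commutators in $\PSL_{q(X_j)}$; bounded commutator width for $\PSL_n$ (every element is a product of two commutators) gives this. Taking tensor products blockwise writes the layer as a product of two commutators of single-layer circuits. The condition $q(x)\ge 3$ guarantees perfectness unstably. The unitary case is analogous, using that $\PU_n$ has every element a commutator.

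Finally, for $\CalC=\CalC^{\mathrm{sp}}$: if $\Fbb$ has all $n$-th roots, then $\Fbb^\times=(\Fbb^\times)^n$, so $\PGL_n=\PSL_n$ blockwise. For $X=Y\times\ZZ$, I will kill the determinant class of a layer $\bigotimes_j \mathrm{Ad}(U_j)$ with a telescoping trick along $\ZZ$. Write $\det U_j$ as a class in $\Fbb^\times/(\Fbb^\times)^{q(X_j)}$. Stabilize by tensoring with the same spin at neighbouring sites, and factor the layer as a product of two layers that pair adjacent blocks along the $\ZZ$-direction, moving the determinant obstruction off to infinity in a controlled way. I expect this last swindle to be the main obstacle, because it needs careful bookkeeping so that each intermediate block stays of bounded size while its determinant becomes an $N$-th power for the enlarged block dimension $N$.
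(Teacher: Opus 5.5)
\textbf{The gap.} The problem is in the step $[\CQ(X),\CQ(X)]\subseteq\CalC^{\mathrm{sp}}(X)$, where you pass from ``the commutator is a circuit'' to ``it is a special circuit.'' The reasons you give do not establish this.

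First, the claim that ``after tensoring with a spin of dimension $\ge 2$ one can absorb determinants'' is false in general. Take $X=*$ and $\Fbb=\Qbb$. Then $\CalC(*)=\PGL_\otimes(\Qbb)$ and $\CalC^{\mathrm{sp}}(*)=\PSL_\otimes(\Qbb)$, and the quotient is $\Qbb^\times\otimes(\Qbb/\ZZ)\neq 0$. The class of $\operatorname{diag}(1,2)$ survives every stabilization, which is exactly the phenomenon used in Appendix~\ref{appendix::arithmetic}.

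Second, the element you must control is not a commutator of circuits. The swap reduction gives $[\alpha,\beta]\otimes 1=[\alpha\otimes 1,\ \beta\otimes\beta^{-1}]$. This is a commutator of an arbitrary QCA with a circuit, or equivalently a product of conjugates of elements $\gamma\otimes\gamma^{-1}$ by arbitrary QCA.

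Third, even for two circuits, the fact that each $\PGL_n/\PSL_n\cong\Fbb^\times/(\Fbb^\times)^n$ is abelian gives nothing by itself. Different layers live on different, interleaving partitions, and there is no evident determinant homomorphism defined on all of $\CalC(X)$.

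\textbf{The missing idea.} The swap layer is \emph{stably special}.
\begin{itemize}
\item At a site $x$, swapping the two copies of $M_{q_x}$ is $\mathrm{Ad}(P_x)$ for a permutation matrix $P_x$ with $\det P_x=\pm 1$.
\item After tensoring each nontrivial site with a $2$-dimensional spin, $\det(P_x\otimes I_2)=1$, so $\mathrm{SWAP}\otimes 1\in\CalC^{\mathrm{sp}}$.
\item Your swap identity is literally a commutator: $\alpha\otimes\alpha^{-1}=[\alpha\otimes 1,\mathrm{SWAP}]$. This is a conjugate of a special circuit times the inverse of a special circuit. It therefore lies in $\CalC^{\mathrm{sp}}$ by the normality you proved; your Skolem--Noether determinant remark is exactly what makes $\CalC^{\mathrm{sp}}$ normal.
\item Then $[\alpha,\beta]\otimes 1=[\alpha\otimes 1,\beta\otimes\beta^{-1}]$ again has the form $gcg^{-1}c^{-1}$ with $c\in\CalC^{\mathrm{sp}}$. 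Hence $[\alpha,\beta]\in\CalC^{\mathrm{sp}}(X)$.
\end{itemize}

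\textbf{Two further points.} This argument, like yours, doubles the local dimension. So it says nothing about the unstable inclusion $[\CQ(X,q),\CQ(X,q)]\subseteq\CalC^{\mathrm{sp}}(X,q)$ asserted for $q\ge 3$; your use of $q\ge 3$ only addresses the reverse inclusion, via perfectness of $\PSL_n$. Also, the $Y\times\ZZ$ telescoping is only sketched, not carried out.

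For comparison, the paper proves none of this in place. It simply cites Proposition~2.19, Lemma~2.31, Corollary~C and Proposition~5.6 of \cite{qca_grp_space_spectrum}.
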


\begin{proof}
    See Proposition~2.19, Lemma~2.31, Corollary C, and Proposition 5.6 in~\cite{qca_grp_space_spectrum}.
\end{proof}

Theorem A of \cite{qca_grp_space_spectrum} shows that $\mathcal{Q}(\Zbb)/\mathcal{C}(\Zbb)$ is $K_0(\operatorname{Az}(\Fbb))$, the $K_0$ group of the symmetric monoidal category of $\Fbb$-Azumaya algebras with tensor product. 

\begin{remark}
    In the unitary case, $\mathcal{Q}^*(\Zbb)/\mathcal{C}^*(\Zbb)$ is shown to be $\mathbb Q_{>0}$ through the so-called \textit{GNVW index}~\cite{Gross_2012}. In the algebraic setting, as $K_0(\mathrm{Az}(\Fbb))\cong \mathbb Q_{>0}\oplus \mathrm{Br}(\mathbb F)$, we still call the first component the GNVW index. We refer to the second component as the \textit{Brauer index} for the obvious reason. 
\end{remark}

\subsection{QCA Representations} For our discussions on linearization, we are interested in a special class of single-layer circuits known as \textit{on-site QCA}.
\begin{example}[On-site QCA]
For each $x\in X$, choose an invertible matrix $u_x\in GL_{q_x}(\mathbb F)$. Define
\[
\al\left(\bigotimes_{x\in X} a_x\right)
=
\bigotimes_{x\in X} u_x a_x u_x^{-1}.
\]
This is a QCA of spread $0$.
\end{example}

Now we will define QCA representations and their linearization questions. We first do this over a field $\Fbb$ and then discuss the unitary case later.
\begin{defn}[QCA representations]
Let $\CQ(X,q)$ denote the group of QCA of $\SA(X,q)$. A \emph{QCA representation} of a group $G$ on the quantum system parametrized by $(X,q)$ is a group homomorphism
\begin{equation}
\varphi:G\longrightarrow \CQ(X,q).
\end{equation}

Given QCA representations
\begin{equation}
\varphi:G\longrightarrow \CQ(X,q),
\qquad
\psi:G\longrightarrow \CQ(X,r),
\end{equation}
their tensor product is the QCA representation
\begin{equation}
\varphi\otimes \psi:G\longrightarrow \CQ(X,qr),
\end{equation}
defined by
\begin{equation}
(\varphi\otimes \psi)(g):=\varphi(g)\otimes \psi(g).
\end{equation}
Here $qr\in \NN^X_{\mathrm{lf}}$ denotes the pointwise product spin system, so that $(qr)_x=q_xr_x$. The same construction applies to unitary QCA representations, giving
\begin{equation}
\varphi\otimes \psi:G\longrightarrow \CQ^*(X,qr)
\end{equation}
whenever $\varphi$ and $\psi$ are unitary.
\end{defn}

The on-site example above defines a group homomorphism
\[
\prod_{x\in X} GL_{q_x}(\mathbb F)\longrightarrow \prod_{x\in X} PGL_{q_x}(\mathbb F)\hookrightarrow \CQ(X,q),
\]
where an element $(u_x)_{x\in X}$ acts by
\[
\bigotimes_{x\in X} a_x
\longmapsto
\bigotimes_{x\in X} u_x a_x u_x^{-1}.
\]

The algebra $\SA(X,q)$ should be regarded as the infinite-particle analog of $\SA=\End(V)$. Under this analogy, $PGL(V)$ is replaced by $\CQ(X,q)$, and projective representations are replaced by QCA representations. Thus the question of linearization generalizes as follows.

\begin{question*}[Linearization of QCA Representation]
Given a QCA representation $\varphi: G \to \CQ(X, q)$. Does there exist a QCA $\beta\in \CQ(X,q)$ such that the following diagram can be completed?

\begin{equation}\label{eq::unstable_lift}
\begin{tikzcd}
	& {\prod_{x\in X} \operatorname{GL}_{q_x}(\mathbb{F})} \\
	& {\prod_{x\in X} \operatorname{PGL}_{q_x}(\mathbb{F})} \\
	G & {\CQ(X,q)}
	\arrow[two heads, from=1-2, to=2-2]
	\arrow[hook, from=2-2, to=3-2]
	\arrow[shift left=2, dashed, from=3-1, to=1-2]
	\arrow[dashed, from=3-1, to=2-2]
	\arrow["{\beta \varphi \beta^{-1}}"', from=3-1, to=3-2]
\end{tikzcd}
\end{equation}
\end{question*}

The question has two parts.
\begin{enumerate}[label=(\alph*), leftmargin=*]
    \item First, after conjugating by a QCA $\beta$, does the QCA representation land in
$$
\prod_{x\in X} \operatorname{PGL}_{q_x}(\mathbb F)\subset \CQ(X,q)?$$
Equivalently, is it induced by a collection of independent projective representations? 
\item Second, do these projective representations lift to genuine linear representations, so that the conjugated action factors through
\[
\prod_{x\in X} GL_{q_x}(\mathbb F)?
\]
\end{enumerate}

\begin{defn}\label{def::factorizability}
We say that the QCA representation $\varphi$ is:
\begin{enumerate}
    \item \emph{linearizable} if $\varphi$ fits into diagram (\ref{eq::unstable_lift}).
    \item \emph{linearly onsite} if $\varphi$ fits into diagram (\ref{eq::unstable_lift}) with $\beta$ being the identity map.
    \item \emph{stably linearizable} if $\varphi \otimes \psi: G \to \CQ(X, qr)$ fits into diagram (\ref{eq::unstable_lift}) for some linearly onsite representation $\psi: G \to \CQ(X, r)$.
    \item \emph{trivially stably linearizable} if $\varphi \otimes c_{id}: G \to \CQ(X, qr)$ fits into diagram $(\ref{eq::unstable_lift}$) and $c_{id}: G \to \CQ(X, r)$ is the constant map to the identity element of $\CQ(X, r)$.
\end{enumerate}
\end{defn}
Note that from Definition~\ref{def::factorizability}, we clearly have the sequences of implications
\[(2) \implies (1) \implies (4) \implies (3).\]
The four conditions all consider lifts after propagating a finite number of stages along the colimit in $\Nbb^{X}_{\lf}$. We can also ask the question of whether or not the QCA representation $\varphi$ is linearizable after passing through the entire colimit. For convenience, we introduce the following notations:
\begin{defn}\label{def::gl_notation}
We write:
\begin{itemize}
    \item $\operatorname{GL}_{\otimes}(X; \Fbb)$ to be $\operatorname{colim}_{q \in \Nbb^{X}_{\lf}} \prod_{x \in X} \operatorname{GL}_{q_x}(\Fbb)$.
    \item $\operatorname{PGL}(X; \Fbb)$ to be $\operatorname{colim}_{q \in \Nbb^{X}_{\lf}} \prod_{x \in X} \operatorname{PGL}_{q_x}(\Fbb)$.
    \item When $X = *$, we write $\operatorname{GL}_{\otimes}(\Fbb) = \operatorname{GL}_{\otimes}(*; \Fbb)$ and $\operatorname{PGL}_\otimes(\Fbb) = \operatorname{PGL}(*; \Fbb)$.
\end{itemize}
\end{defn}

\begin{defn}\label{def:weak_lift}
Continuing from Definition~\ref{def::factorizability}, we say that the QCA representation $\varphi$ is:
\begin{enumerate}
\setcounter{enumi}{4}
    \item \textit{weakly linearizable} if the induced map $G \to \mathcal{Q}(X)$ factors through $\operatorname{GL}_{\otimes}(X; \Fbb) \to \operatorname{PGL}(X; \Fbb)$ up to conjugation.
    \item \textit{weakly linearly onsite} if $G \to \mathcal{Q}(X)$ factors through $\operatorname{GL}_{\otimes}(X; \Fbb) \to \operatorname{PGL}(X; \Fbb)$.
\end{enumerate}
\end{defn}
From Definition~\ref{def::factorizability} and Definition~\ref{def:weak_lift}, we have the following implications:
\[(4) \implies (5) \impliedby (6) \impliedby (2).\]

Each condition in the two definitions is a generalization of linearizability of a projective representation. In particular we have

\begin{prop}
When $X = *$, we have $\CQ(X,q) \cong \PGL_q(\mathbb{F})$, and a QCA representation becomes a projective representation. Each condition in Definition~\ref{def::factorizability} and~\ref{def:weak_lift} is equivalent to linearizability of the projective representation.
\end{prop}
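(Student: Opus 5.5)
The plan is to identify $\CQ(*,q)$ with $\PGL_q(\Fbb)$ by Skolem--Noether, and then reduce every condition in Definitions~\ref{def::factorizability} and~\ref{def:weak_lift} to the vanishing of the class $[\alpha]\in H^2(G,\Fbb^\times)$ of Theorem~\ref{thm:classical}. All implications among the conditions are already recorded after those definitions: $(2)\Rightarrow(1)\Rightarrow(4)\Rightarrow(3)$ and $(2)\Rightarrow(6)\Rightarrow(5)\Leftarrow(4)$. So it suffices to show two things: that linearizability of the projective representation $\varphi$ gives $(2)$, and that each of $(3)$ and $(5)$ forces $[\alpha]=0$.

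\textbf{Step 1: the identification.} When $X=*$, the algebra $\SA(*,q)$ is $M_q(\Fbb)$, and the spread condition is vacuous. Every $\Fbb$-algebra automorphism of the central simple algebra $M_q(\Fbb)$ is inner by Skolem--Noether, so $\CQ(*,q)\cong\PGL_q(\Fbb)$. The on-site map $GL_q(\Fbb)\to\PGL_q(\Fbb)\hookrightarrow\CQ(*,q)$ is then the usual projection. Hence condition $(2)$ says exactly that $\varphi$ is linearizable as a projective representation.

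\textbf{Step 2: condition $(3)$ forces $[\alpha]=0$.} Fix lifts $U_g\in GL_q(\Fbb)$ of $\varphi(g)$, with cocycle $\alpha$. Let $\psi$ be linearly on-site with a genuine linear lift $V_g$. Then $U_g\otimes V_g$ lifts $(\varphi\otimes\psi)(g)$, and its cocycle is again $\alpha$, because $V_gV_h=V_{gh}$. Conjugating by $\beta\in\PGL_{qr}(\Fbb)$ does not change the cohomology class: pick a matrix representative $B$ and use the lifts $B(U_g\otimes V_g)B^{-1}$, which have the same cocycle. By Theorem~\ref{thm:classical}, the fact that $\beta(\varphi\otimes\psi)\beta^{-1}$ lifts to $GL_{qr}$ means this class is zero. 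Hence $[\alpha]=0$.

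\textbf{Step 3: condition $(5)$ forces $[\alpha]=0$.} This is the step needing the most care, since $G$ may be infinite and a homomorphism $G\to\GL_\otimes(\Fbb)$ need not factor through any finite stage. I would argue elementwise rather than try to factor through a finite stage.
\begin{enumerate}[leftmargin=*]
\item The transition maps $U\mapsto U\otimes I$ are injective and send scalars to scalars. Each stage has kernel $\Fbb^\times$, so the kernel of $\GL_\otimes(\Fbb)\to\PGL_\otimes(\Fbb)$ is the colimit of $\Fbb^\times$ along identity maps, namely the scalars $\Fbb^\times$.
\item A conjugating element $\beta\in\PGL_\otimes(\Fbb)$ lies in some finite stage, so it has a representative $B$ there. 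Replacing the given lift $W$ by $g\mapsto B^{-1}W_gB$, we may assume $\varphi$ itself lifts to a homomorphism $W\colon G\to\GL_\otimes(\Fbb)$.
\item For each $g$, the elements $W_g$ and the image of $U_g$ have the same image in $\PGL_\otimes(\Fbb)$. By the kernel computation, $W_g=c(g)\,U_g$ in $\GL_\otimes(\Fbb)$ for a scalar $c(g)\in\Fbb^\times$.
\item Since $W$ is a homomorphism, $\alpha(g,h)=c(gh)c(g)^{-1}c(h)^{-1}$ holds in $\GL_\otimes(\Fbb)$. Both sides are scalars, and $\Fbb^\times$ embeds into $\GL_\otimes(\Fbb)$, so the identity holds in $\Fbb^\times$. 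Thus $\alpha$ is a coboundary and $[\alpha]=0$.
\end{enumerate}

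Combining Steps 1--3 with the implications recorded after the definitions shows that all six conditions are equivalent to linearizability of the projective representation $\varphi$.
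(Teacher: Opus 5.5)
Your proof is correct and follows essentially the same route as the paper: invariance of the cocycle under conjugation and under tensoring with a linear representation handles conditions (1)--(4), and the central extension $1\to\mathbb{F}^\times\to\GL_\otimes(\mathbb{F})\to\PGL_\otimes(\mathbb{F})\to 1$ handles (5)--(6). Checking only the weakest conditions (3) and (5) via the recorded implications, and spelling out the kernel computation in Step 3, simply makes explicit what the paper asserts when it says the stabilized obstruction class equals the original one.
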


\begin{proof}
Let $\varphi:G\to \PGL_q(\mathbb F)$ be a projective representation, and let $\alpha_\varphi$ denote its obstruction cocycle, as in Theorem~\ref{thm:classical}. Conjugating $\varphi$ by an element of $\PGL_q(\mathbb F)$ clearly does not change $\alpha_\varphi$.

The obstruction is also additive under tensor product. Indeed, if $\psi:G\to \PGL_r(\mathbb F)$ is another projective representation, then choosing lifts of $\varphi$ and $\psi$ gives
\begin{equation}
\alpha_{\varphi\otimes\psi}=\alpha_\varphi+\alpha_\psi,
\end{equation}
where the coefficient group $\mathbb F^\times$ is written additively. In particular, tensoring with a linearizable projective representation does not change the obstruction class.

Thus conditions $(1)$--$(4)$ are all equivalent to the vanishing of the obstruction index. By Theorem~\ref{thm:classical}, this is equivalent to linearizability.

The same obstruction may be defined for a homomorphism $G\to \PGL_\otimes(\mathbb F)$, using the central extension
\begin{equation}
    1\longrightarrow \mathbb F^\times \longrightarrow \GL_\otimes(\mathbb F)\longrightarrow \PGL_\otimes(\mathbb F)\longrightarrow 1 .
\end{equation}

Its vanishing is equivalent to liftability to $\GL_\otimes(\mathbb F)$, by the same argument as in the finite-dimensional case. Moreover, for a projective representation $\varphi:G\to \PGL(V)$, the obstruction class is the same as the obstruction class of the induced map $G\to \PGL_\otimes(\mathbb F)$. Therefore the stabilized conditions $(5)$ and $(6)$ are again equivalent to the vanishing of the same index, and hence to linearizability.

\end{proof}
\begin{remark}
For a general $X$, these conditions are distinct. To see the effect of conjugation, suppose that $X$ consists of two points. The onsite automorphisms
\begin{equation}
\PGL(\mathbb F^m)\times \PGL(\mathbb F^n)
\subset \PGL(\mathbb F^{mn})
\cong \Aut(M_m(\mathbb F)\otimes M_n(\mathbb F))
= \CQ(X,(m,n))
\end{equation}
do not form a normal subgroup. Stabilization can also have a nontrivial effect for unbounded spaces, such as $X=\ZZ$: see Appendix~F of~\cite{bols2026classification} for an example of a stably linearizable representation that is not linearizable. Finally, Conditions~(5) and~(6) differ from the rest when the group is infinite.
\end{remark}

For the unitary case, we consider an analog of the linearization of QCA representations in (\ref{eq::unstable_lift}).

\begin{defn}
Let $\CQ^*(X,q)$ denote the group of $*$-QCA over $\Cbb$. We call $\varphi: G \to \mathcal{Q}(X, q)$ a \emph{unitary QCA representation} if it takes values in $\CQ^*(X,q)$.
\end{defn}

\begin{question*}[Unitary Linearization of QCA Representation]
Given a unitary QCA representation $\varphi: G \to \CQ^*(X, q)$. Does there exist an unitary QCA $\beta\in \CQ^*(X,q)$ such that the following diagram can be completed?
\begin{equation}\label{eq::unitary_unstable_lift}
\begin{tikzcd}
	& {\prod_{x\in X} \operatorname{U}(q_x)} \\
	& {\prod_{x\in X} \operatorname{PU}(q_x)} \\
	G & {\CQ^*(X,q)}
	\arrow[two heads, from=1-2, to=2-2]
	\arrow[hook, from=2-2, to=3-2]
	\arrow[shift left=2, dashed, from=3-1, to=1-2]
	\arrow[dashed, from=3-1, to=2-2]
	\arrow["{\beta \varphi \beta^{-1}}"', from=3-1, to=3-2]
\end{tikzcd}
\end{equation}
\end{question*}

We can then define the unitary analogs of Definition~\ref{def::factorizability} directly with respect to the diagram (\ref{eq::unitary_unstable_lift}). We will also use $\operatorname{U}_{\otimes}(X)$ and $\operatorname{PU}(X)$ to denote the unitary analogs of Definition~\ref{def::gl_notation}. This lets us define the unitary analogs of Definition~\ref{def:weak_lift} as well. Note here we do not impose any topology on the unitary and projective unitary groups and only consider their underlying discrete structures.
\subsection{Examples}
We end the section with some examples. First, we show that not every QCA representation is stably or weakly linearizable. It suffices to exhibit a QCA that is not stably a circuit. Indeed, we have inclusions
\begin{equation}
\PGL(X;\mathbb F)\subset \CalC(X)\subset \CQ(X),
\end{equation}
and $\CalC(X)$ is a normal subgroup of $\CQ(X)$. Hence any stably or weakly linearizable QCA representation must take values in $\CalC(X)$. Conversely, any element of $\CQ(X)$ which does not lie in $\CalC(X)$ determines a QCA representation of the integer group that is neither stably nor weakly linearizable.

\begin{example}[Algebraic pumps]
As shown in Theorem~A of~\cite{qca_grp_space_spectrum}, when $X=\ZZ$ there is an element of $\CQ(\ZZ^1)$ which does not lie in $\CalC(\ZZ^1)$, associated to a pair $(A,B)$ of central simple $\mathbb F$-algebras. Let $A'$ and $B'$ denote their respective opposite algebras. The corresponding QCA represents the element $\frac{A}{B} \in K_0(\operatorname{Az}(\Fbb)) = \mathcal{Q}(\Zbb^1)/\mathcal{C}(\Zbb^1)$ and is illustrated in Figure~\ref{fig::surjectivity}.
\end{example}

\begin{figure}[htb]
    \centering
\[\scalebox{0.8}{\begin{tikzpicture}[x=0.75pt,y=0.75pt,yscale=-1,xscale=1]

\draw  [fill={rgb, 255:red, 0; green, 0; blue, 0 }  ,fill opacity=1 ] (247,163.75) .. controls (247,161.4) and (248.9,159.5) .. (251.25,159.5) .. controls (253.6,159.5) and (255.5,161.4) .. (255.5,163.75) .. controls (255.5,166.1) and (253.6,168) .. (251.25,168) .. controls (248.9,168) and (247,166.1) .. (247,163.75) -- cycle ;
\draw  [fill={rgb, 255:red, 0; green, 0; blue, 0 }  ,fill opacity=1 ] (311,163.75) .. controls (311,161.4) and (312.9,159.5) .. (315.25,159.5) .. controls (317.6,159.5) and (319.5,161.4) .. (319.5,163.75) .. controls (319.5,166.1) and (317.6,168) .. (315.25,168) .. controls (312.9,168) and (311,166.1) .. (311,163.75) -- cycle ;
\draw    (251.25,163.75) -- (315.25,163.75) ;
\draw  [fill={rgb, 255:red, 0; green, 0; blue, 0 }  ,fill opacity=1 ] (375,163.75) .. controls (375,161.4) and (376.9,159.5) .. (379.25,159.5) .. controls (381.6,159.5) and (383.5,161.4) .. (383.5,163.75) .. controls (383.5,166.1) and (381.6,168) .. (379.25,168) .. controls (376.9,168) and (375,166.1) .. (375,163.75) -- cycle ;
\draw    (315.25,163.75) -- (379.25,163.75) ;
\draw  [fill={rgb, 255:red, 0; green, 0; blue, 0 }  ,fill opacity=1 ] (439,163.75) .. controls (439,161.4) and (440.9,159.5) .. (443.25,159.5) .. controls (445.6,159.5) and (447.5,161.4) .. (447.5,163.75) .. controls (447.5,166.1) and (445.6,168) .. (443.25,168) .. controls (440.9,168) and (439,166.1) .. (439,163.75) -- cycle ;
\draw    (379.25,163.75) -- (443.25,163.75) ;
\draw  [fill={rgb, 255:red, 0; green, 0; blue, 0 }  ,fill opacity=1 ] (185,163.75) .. controls (185,161.4) and (186.9,159.5) .. (189.25,159.5) .. controls (191.6,159.5) and (193.5,161.4) .. (193.5,163.75) .. controls (193.5,166.1) and (191.6,168) .. (189.25,168) .. controls (186.9,168) and (185,166.1) .. (185,163.75) -- cycle ;
\draw    (189.25,163.75) -- (253.25,163.75) ;
\draw    (143.25,163.75) -- (207.25,163.75) ;
\draw    (427.25,163.75) -- (491.25,163.75) ;
\draw  [dash pattern={on 4.5pt off 4.5pt}]  (283,56.4) -- (285,248) ;
\draw  [fill={rgb, 255:red, 0; green, 0; blue, 0 }  ,fill opacity=1 ] (132,163.75) .. controls (132,161.4) and (133.9,159.5) .. (136.25,159.5) .. controls (138.6,159.5) and (140.5,161.4) .. (140.5,163.75) .. controls (140.5,166.1) and (138.6,168) .. (136.25,168) .. controls (133.9,168) and (132,166.1) .. (132,163.75) -- cycle ;
\draw    (121,163.75) -- (185,163.75) ;
\draw [color={rgb, 255:red, 208; green, 2; blue, 27 }  ,draw opacity=1 ]   (136,225.4) .. controls (150.48,232.64) and (202.2,237.64) .. (223.77,224.5) ;
\draw [shift={(226,223)}, rotate = 143.13] [fill={rgb, 255:red, 208; green, 2; blue, 27 }  ,fill opacity=1 ][line width=0.08]  [draw opacity=0] (8.93,-4.29) -- (0,0) -- (8.93,4.29) -- cycle    ;
\draw [color={rgb, 255:red, 208; green, 2; blue, 27 }  ,draw opacity=1 ]   (248,226.4) .. controls (262.48,233.64) and (327.24,238.08) .. (349.72,224.9) ;
\draw [shift={(352,223.4)}, rotate = 143.13] [fill={rgb, 255:red, 208; green, 2; blue, 27 }  ,fill opacity=1 ][line width=0.08]  [draw opacity=0] (8.93,-4.29) -- (0,0) -- (8.93,4.29) -- cycle    ;
\draw [color={rgb, 255:red, 208; green, 2; blue, 27 }  ,draw opacity=1 ]   (369,227.4) .. controls (383.48,234.64) and (448.24,239.08) .. (470.72,225.9) ;
\draw [shift={(473,224.4)}, rotate = 143.13] [fill={rgb, 255:red, 208; green, 2; blue, 27 }  ,fill opacity=1 ][line width=0.08]  [draw opacity=0] (8.93,-4.29) -- (0,0) -- (8.93,4.29) -- cycle    ;
\draw [color={rgb, 255:red, 208; green, 2; blue, 27 }  ,draw opacity=1 ]   (421,124.4) .. controls (379.84,95.98) and (343.48,95.41) .. (309.1,122.69) ;
\draw [shift={(307,124.4)}, rotate = 320.36] [fill={rgb, 255:red, 208; green, 2; blue, 27 }  ,fill opacity=1 ][line width=0.08]  [draw opacity=0] (8.93,-4.29) -- (0,0) -- (8.93,4.29) -- cycle    ;
\draw [color={rgb, 255:red, 208; green, 2; blue, 27 }  ,draw opacity=1 ]   (299,125.4) .. controls (257.84,96.98) and (221.48,96.41) .. (187.1,123.69) ;
\draw [shift={(185,125.4)}, rotate = 320.36] [fill={rgb, 255:red, 208; green, 2; blue, 27 }  ,fill opacity=1 ][line width=0.08]  [draw opacity=0] (8.93,-4.29) -- (0,0) -- (8.93,4.29) -- cycle    ;
\draw [color={rgb, 255:red, 208; green, 2; blue, 27 }  ,draw opacity=1 ]   (175,124.4) .. controls (133.84,95.98) and (97.48,95.41) .. (63.1,122.69) ;
\draw [shift={(61,124.4)}, rotate = 320.36] [fill={rgb, 255:red, 208; green, 2; blue, 27 }  ,fill opacity=1 ][line width=0.08]  [draw opacity=0] (8.93,-4.29) -- (0,0) -- (8.93,4.29) -- cycle    ;
\draw [color={rgb, 255:red, 208; green, 2; blue, 27 }  ,draw opacity=1 ]   (513,120.4) .. controls (471.84,91.98) and (465.25,93.33) .. (432.07,120.69) ;
\draw [shift={(430,122.4)}, rotate = 320.36] [fill={rgb, 255:red, 208; green, 2; blue, 27 }  ,fill opacity=1 ][line width=0.08]  [draw opacity=0] (8.93,-4.29) -- (0,0) -- (8.93,4.29) -- cycle    ;
\draw [color={rgb, 255:red, 208; green, 2; blue, 27 }  ,draw opacity=1 ]   (69,223.4) .. controls (83.4,230.6) and (100.56,239.27) .. (119.61,226.69) ;
\draw [shift={(122,225)}, rotate = 143.13] [fill={rgb, 255:red, 208; green, 2; blue, 27 }  ,fill opacity=1 ][line width=0.08]  [draw opacity=0] (8.93,-4.29) -- (0,0) -- (8.93,4.29) -- cycle    ;

\draw (238,174.4) node [anchor=north west][inner sep=0.75pt]    {$-1$};
\draw (309,175.4) node [anchor=north west][inner sep=0.75pt]    {$0$};
\draw (373,175.4) node [anchor=north west][inner sep=0.75pt]    {$1$};
\draw (437,175.4) node [anchor=north west][inner sep=0.75pt]    {$2$};
\draw (176,173.4) node [anchor=north west][inner sep=0.75pt]    {$-2$};
\draw (95,148.4) node [anchor=north west][inner sep=0.75pt]  [font=\large]  {$...$};
\draw (498,155.4) node [anchor=north west][inner sep=0.75pt]  [font=\large]  {$...$};
\draw (297,131.4) node [anchor=north west][inner sep=0.75pt]  [font=\small,color={rgb, 255:red, 208; green, 2; blue, 27 }  ,opacity=1 ]  {$\underbrace{B} \otimes B'$};
\draw (122,174.4) node [anchor=north west][inner sep=0.75pt]    {$-3$};
\draw (114,197.4) node [anchor=north west][inner sep=0.75pt]  [font=\small,color={rgb, 255:red, 208; green, 2; blue, 27 }  ,opacity=1 ]  {$\underbrace{A} \otimes A'$};
\draw (225,196.4) node [anchor=north west][inner sep=0.75pt]  [font=\small,color={rgb, 255:red, 208; green, 2; blue, 27 }  ,opacity=1 ]  {$\underbrace{A} \otimes A'$};
\draw (350,198.4) node [anchor=north west][inner sep=0.75pt]  [font=\small,color={rgb, 255:red, 208; green, 2; blue, 27 }  ,opacity=1 ]  {$\underbrace{A} \otimes A'$};
\draw (413,132.4) node [anchor=north west][inner sep=0.75pt]  [font=\small,color={rgb, 255:red, 208; green, 2; blue, 27 }  ,opacity=1 ]  {$\underbrace{B} \otimes B'$};
\draw (166,130.4) node [anchor=north west][inner sep=0.75pt]  [font=\small,color={rgb, 255:red, 208; green, 2; blue, 27 }  ,opacity=1 ]  {$\underbrace{B} \otimes B'$};
\draw (43,203.4) node [anchor=north west][inner sep=0.75pt]  [font=\large,color={rgb, 255:red, 208; green, 2; blue, 27 }  ,opacity=1 ]  {$...$};
\draw (501,115.4) node [anchor=north west][inner sep=0.75pt]  [font=\large,color={rgb, 255:red, 208; green, 2; blue, 27 }  ,opacity=1 ]  {$...$};
\draw (43,114.4) node [anchor=north west][inner sep=0.75pt]  [font=\large,color={rgb, 255:red, 208; green, 2; blue, 27 }  ,opacity=1 ]  {$...$};
\draw (476,203.4) node [anchor=north west][inner sep=0.75pt]  [font=\large,color={rgb, 255:red, 208; green, 2; blue, 27 }  ,opacity=1 ]  {$...$};

\end{tikzpicture}}
\]
    \caption{A QCA $\alpha$ for a pair $(A, B)$ of central simple algebras. Here we place an alternating sequence of $A \otimes A' \cong M_n(\mathbb F)$ and $B \otimes B' \cong M_m(\mathbb F)$ on the metric space $\mathbb{Z}$, and $\alpha$ simultaneously transports the tensor factor $B$ to the left and the tensor factor $A$ to the right.}
    \label{fig::surjectivity}
\end{figure}
For general metric spaces, finding non-circuit QCA remains an active area of research. In the unitary case over $X=\ZZ^3$, a candidate was first constructed in~\cite{haah2023nontrivial}. Subsequently, further examples have been found~\cite{fidkowski2024qca, sun2025clifford, shirley2022three}. At present, these examples and the group $\CQ^*(X)/\CalC^*(X)$ are still not well understood; understanding this quotient is part of the problem of computing the homotopy groups of the space $\mathbf{Q}^*(X)$. However, a special class of QCA, namely Clifford QCA, has become well understood~\cite{haah2021clifford, haah2025topological, Yang_2026}.

Non-circuit QCA are not the only way a QCA representation fails to be stably or weakly linearizable. Indeed, they correspond only to the degree-$1$ cohomology obstruction class in Section~\ref{sec::cohomology}. The paper~\cite{shirley2026anomaly} contains an example of a QCA representation with nontrivial degree-$2$ obstruction class. We give some examples in Appendix~\ref{appendix::arithmetic}. In general, there exist examples with vanishing higher degree obstruction classes. 

Lastly, we mention a bewildering QCA which only occurs for QCA representation of infinite order. 
\begin{example}[Annular shifts on the plane, Figure~\ref{fig:annular}]\label{exp::annular}
Let $X=\ZZ^2$ and $q$ uniformly constant. Decompose $X$ into square annuli centered at the origin. For instance, write
\begin{equation}
A_r={(i,j)\in \ZZ^2:\max(|i|,|j|)=r},
\qquad r\geq 1.
\end{equation}
Each $A_r$ is a finite cycle, with adjacent sites ordered cyclically around the boundary of the square. Choose signs $\epsilon_r\in {\pm 1}$, and define an automorphism by shifting the matrix algebras on $A_r$ by one step in the direction $\epsilon_r$, independently for each $r$. The center site may be fixed. Since each site is moved only a uniformly bounded distance, the resulting automorphism is a QCA on $\ZZ^2$. One may therefore ask whether the $\ZZ$-action generated by this annular shift is stably or weakly linearizable. We will discuss it again in Example~\ref{exp::revisit}.
\end{example}
\begin{figure}
    \centering
\[\scalebox{0.8}{\begin{tikzpicture}[scale=0.8]

  \foreach \x in {-3,...,3} {
    \foreach \y in {-3,...,3} {
      \fill (\x,\y) circle (1.5pt);
    }
  }

\fill (0,0) circle (3pt);
  \draw[step=1, gray!35, very thin] (-3,-3) grid (3,3);

  \draw[->] (-3.4,0) -- (3.4,0) node[right] {$x$};
  \draw[->] (0,-3.4) -- (0,3.4) node[above] {$y$};

  \draw[red, thick, ->] (1,0) -- (1,1);
  \draw[red, thick, ->] (1,1) -- (0,1);
  \draw[red, thick, ->] (0,1) -- (-1,1);
  \draw[red, thick, ->] (-1,1) -- (-1,0);
  \draw[red, thick, ->] (-1,0) -- (-1,-1);
  \draw[red, thick, ->] (-1,-1) -- (0,-1);
  \draw[red, thick, ->] (0,-1) -- (1,-1);
  \draw[red, thick, ->] (1,-1) -- (1,0);

  \draw[blue, thick, ->] (2,-1) -- (2,0);
  \draw[blue, thick, ->] (2,0) -- (2,1);
  \draw[blue, thick, ->] (2,1) -- (2,2);
  \draw[blue, thick, ->] (2,2) -- (1,2);
  \draw[blue, thick, ->] (1,2) -- (0,2);
  \draw[blue, thick, ->] (0,2) -- (-1,2);
  \draw[blue, thick, ->] (-1,2) -- (-2,2);
  \draw[blue, thick, ->] (-2,2) -- (-2,1);
  \draw[blue, thick, ->] (-2,1) -- (-2,0);
  \draw[blue, thick, ->] (-2,0) -- (-2,-1);
  \draw[blue, thick, ->] (-2,-1) -- (-2,-2);
  \draw[blue, thick, ->] (-2,-2) -- (-1,-2);
  \draw[blue, thick, ->] (-1,-2) -- (0,-2);
  \draw[blue, thick, ->] (0,-2) -- (1,-2);
  \draw[blue, thick, ->] (1,-2) -- (2,-2);
  \draw[blue, thick, ->] (2,-2) -- (2,-1);

  \draw[orange, thick, ->] (3,-2) -- (3,-1);
  \draw[orange, thick, ->] (3,-1) -- (3,0);
  \draw[orange, thick, ->] (3,0) -- (3,1);
  \draw[orange, thick, ->] (3,1) -- (3,2);
  \draw[orange, thick, ->] (3,2) -- (3,3);
  \draw[orange, thick, ->] (3,3) -- (2,3);
  \draw[orange, thick, ->] (2,3) -- (1,3);
  \draw[orange, thick, ->] (1,3) -- (0,3);
  \draw[orange, thick, ->] (0,3) -- (-1,3);
  \draw[orange, thick, ->] (-1,3) -- (-2,3);
  \draw[orange, thick, ->] (-2,3) -- (-3,3);
  \draw[orange, thick, ->] (-3,3) -- (-3,2);
  \draw[orange, thick, ->] (-3,2) -- (-3,1);
  \draw[orange, thick, ->] (-3,1) -- (-3,0);
  \draw[orange, thick, ->] (-3,0) -- (-3,-1);
  \draw[orange, thick, ->] (-3,-1) -- (-3,-2);
  \draw[orange, thick, ->] (-3,-2) -- (-3,-3);
  \draw[orange, thick, ->] (-3,-3) -- (-2,-3);
  \draw[orange, thick, ->] (-2,-3) -- (-1,-3);
  \draw[orange, thick, ->] (-1,-3) -- (0,-3);
  \draw[orange, thick, ->] (0,-3) -- (1,-3);
  \draw[orange, thick, ->] (1,-3) -- (2,-3);
  \draw[orange, thick, ->] (2,-3) -- (3,-3);
  \draw[orange, thick, ->] (3,-3) -- (3,-2);
\end{tikzpicture}}
\]
    \caption{Annular shifts on the plane. This has spread $1$.}
    \label{fig:annular}
\end{figure}

\section{Homotopical Interpretation of Linearization}\label{sec::homotopy}

We first review the QCA spaces constructed in \cite{qca_grp_space_spectrum}. For each space $X$ and field $\Fbb$, the authors of \cite{qca_grp_space_spectrum} defined a certain symmetric monoidal category $\mathbf{C}(X; \Fbb)$ of quantum spin systems over $(X, \Fbb)$. 

\begin{defn}
The \textit{space of QCA} $\mathbf{Q}(X; \Fbb)$ over $(X, \Fbb)$ is defined to be
\[\mathbf{Q}(X; \Fbb) = \Omega K(\mathbf{C}(X; \Fbb))_1\]
where $K(\mathbf{C}(X; \Fbb))$ is the Segal's K-theory of a symmetric monoidal category (see IV.4 of \cite{weibel2013k} or appendix of \cite{Thomason01011982}) and the subscript $1$ denotes the component at identity. We often suppress the notation $\Fbb$.  
\end{defn}

Theorem E of~\cite{qca_grp_space_spectrum} and the remarks following it showed that $\mathbf{Q}(X)$ assembles into an $\Omega$-spectrum of the form $\{\mathbf{Q}(X \times \Zbb^n)\}_{n \geq 0}$. Theorem B of \cite{qca_grp_space_spectrum} constructs a natural map
\[B\mathcal{Q}(X) \to K(\mathbf{C}(X))_{1}\]
that induces a factorization
\[B\mathcal{Q}(X) \to B\mathcal{Q}(X)^+ \xrightarrow{\cong} K(\mathbf{C}(X))_1.\]
Here, $B\mathcal{Q}(X)^+$ refers to the plus-construction of the space $B\mathcal{Q}(X)$, which is defined axiomatically as follows.
\begin{defn}
Let $X$ be a connected CW complex, its plus-construction is a space $X^+$ and a map $i: X \to X^+$ such that (a) $\ker(i_*: \pi_1(X) \to \pi_1(X^+))$ is the maximal perfect normal subgroup $\mathcal{P}\pi_1(X)$ of $\pi_1(X)$ and (b) $i: X \to X^+$ is an isomorphism in homology on any local coefficients. The map $i$ is initial among any maps $f: X \to Y$ that sends $\mathcal{P}\pi_1(X)$ to $0$.
\end{defn}

Unless mentioned otherwise, the maximal perfect normal subgroups considered for plus-constructions in this paper would happen to be the commutator subgroup. This is certainly the case for $B\mathcal{Q}(X)$, due to Theorem B of \cite{qca_grp_space_spectrum}. Given this plus-construction space, we can stabilize a QCA representation by looking at its induced map into the plus-construction.

\begin{defn}
For any QCA representation $\varphi: G \to \mathcal{Q}(X, q)$, we define the \textit{stabilization} of $\varphi$ as the composition
\[\varphi^{st}: BG \xrightarrow{B\varphi} B\mathcal{Q}(X, q) \to B\mathcal{Q}(X) \to B\mathcal{Q}(X)^+ \to K(\mathbf{C}(X))_{1}.\]
\end{defn}

In what follows, we say a group $G$ is \textit{rationally acyclic} if $\Tilde{H}^k(BG; \Qbb) = 0$ for $k \geq 0$. Examples of rationally acyclic groups include finite groups and infinite torsion groups such as $\frac{\Qbb}{\Zbb}$.\\

The purpose of this section is to prove the following two theorems.

\ratNull*

\begin{remark}
We do not anticipate the converse to be true. A piece of evidence comes from the case $G = \Zbb$ (which is not rationally acyclic). A QCA representation $\varphi: \Zbb \to \mathcal{Q}(X, q)$ is equivalent to specifying a generator $\alpha \in \mathcal{Q}(X, q)$. In this case, the stabilization $\varphi^{st}: B\Zbb \cong S^1 \to K(\mathbf{C}(X))_{1}$ is null-homotopic if and only if $\varphi^{st}$ represents $0$ in $\pi_1(K(\mathbf{C}(X))_{1}) = \mathcal{Q}(X)^{ab}$. In other words, $\varphi^{st}$ is null-homotopic if and only if $\alpha$ is stably a commutator. When $\Fbb = \mathbb{C}$ or $X = Y \times \Zbb$, Corollary C of \cite{qca_grp_space_spectrum} implies $\mathcal{Q}(X)^{ab} = \mathcal{Q}(X)/\mathcal{C}(X)$, so $\varphi^{st}$ is null-homotopic if and only if $\alpha$ is stably a circuit. Under this assumption, we see that $\varphi$ being stably linearizable would imply it is stably a circuit and hence $\varphi^{st}$ is null-homotopic. However, not every circuit is conjugate to an onsite QCA with stabilization.
\end{remark}

We note that a partial converse for Theorem~\ref{thm::fin_gp_null} is true over a point, which will follow from the following more general statement about linearizing projectivizable null-homotopies.
\PGLlift*

Since $\operatorname{PGL}(*; \Fbb) = \mathcal{Q}(*)$, we have the following corollary.

\begin{cor}\label{cor::converse_on_pt}
    Let $G$ be a finite group and $X = *$ over any field $\Fbb$, then a QCA representation $\varphi: G \to \mathcal{Q}(*, q)$ is stably linearizable if and only if the stabilization $\varphi^{st}: BG \to K(\mathbf{C}(X))_1$ is null-homotopic.
\end{cor}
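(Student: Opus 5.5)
The corollary follows by combining Theorem~\ref{thm::fin_gp_null} and Theorem~\ref{thm::converse_on_PGL} in the special case $X=*$. We treat the two directions separately.

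\textbf{Forward direction.} Suppose $\varphi: G\to\mathcal{Q}(*,q)$ is stably linearizable. A finite group is rationally acyclic, so Theorem~\ref{thm::fin_gp_null} applies directly. It shows that $\varphi^{st}: BG\to K(\mathbf{C}(*))_1$ is null-homotopic.

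\textbf{Reverse direction.} Over a point every QCA representation lands in $\operatorname{PGL}_q(\Fbb)$, because $\mathcal{Q}(*,q)=\PGL_q(\Fbb)$. Also $\operatorname{PGL}(*;\Fbb)=\PGL_\otimes(\Fbb)=\mathcal{Q}(*)$. The key step is to identify the map $\varphi'$ of Theorem~\ref{thm::converse_on_PGL} with $\varphi^{st}$. We have
\[\varphi': BG\to B\PGL_\otimes(\Fbb)\to B\PGL_\otimes(\Fbb)^+ .\]
By definition, $\varphi^{st}$ is the composite
\[BG\to B\mathcal{Q}(*)\to B\mathcal{Q}(*)^+\xrightarrow{\simeq}K(\mathbf{C}(*))_1,\]
where the last map is the equivalence of Theorem B of \cite{qca_grp_space_spectrum}. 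Since $B\mathcal{Q}(*)=B\PGL_\otimes(\Fbb)$, the map $\varphi^{st}$ is $\varphi'$ followed by a homotopy equivalence. Hence $\varphi^{st}$ is null-homotopic if and only if $\varphi'$ is.

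One point needs care. The plus construction in Theorem~\ref{thm::converse_on_PGL} must be taken with respect to the same perfect subgroup as the one appearing in Theorem B. This holds because over a point both sides use the commutator subgroup of $\mathcal{Q}(*)$, which is perfect by Proposition~\ref{prop::circuit_type}.

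Now assume $\varphi^{st}$ is null-homotopic. Then $\varphi'$ is null-homotopic, and $G$ is finite. The second part of Theorem~\ref{thm::converse_on_PGL} therefore gives that $\varphi$ is stably linearizable. This completes both directions.

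The only real content beyond citing the two theorems is the identification of $\varphi'$ with $\varphi^{st}$ and the compatibility of the plus constructions noted above. We expect this to be routine.
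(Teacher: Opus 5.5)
Your proposal is correct and follows the paper's argument, which likewise combines Theorem~\ref{thm::fin_gp_null} (a finite group being rationally acyclic) with Theorem~\ref{thm::converse_on_PGL} through the identification $\operatorname{PGL}(*;\Fbb)=\mathcal{Q}(*)$. The plus-construction compatibility you flag needs no argument, because the two groups are literally equal; also, perfectness of the commutator subgroup comes from Lemma~\ref{lem::commutator_perfect}, whereas Proposition~\ref{prop::circuit_type} only identifies that subgroup with the special circuits.
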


\begin{remark}
    We observe that that $\CQ(*, q)\cong \PGL_q(\mathbb F)$, so $\varphi$ is a projective representation of $G$. The problem of stable linearization is therefore equivalent to that of unstable linearization. In this sense, Corollary~\ref{cor::converse_on_pt} may be viewed as a stabilized homotopical analog of Theorem~\ref{thm:classical}.
\end{remark}

\subsection{Null-Homotopy of Stable Rationally Acyclic QCA Representations}

In this section, we will prove Theorem~\ref{thm::fin_gp_null} over a fixed field $\Fbb$. The proof relies on the following lemmas whose proof we defer to the end of this section.

\begin{lem}\label{lem::commutator_perfect}
The commutator subgroups of $\operatorname{BGL}_{\otimes}(X; \Fbb)$ and $\operatorname{BPGL}(X; \Fbb)$ are both perfect and hence their maximal perfect normal subgroups. 
\end{lem}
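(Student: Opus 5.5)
Throughout we read the statement as being about the groups $\operatorname{GL}_{\otimes}(X;\Fbb)$ and $\operatorname{PGL}(X;\Fbb)$, i.e.\ the fundamental groups of the classifying spaces named in the lemma. The plan is to show that every commutator in either group is a product of commutators of elements that again lie in the commutator subgroup; then $[H,H]=[[H,H],[H,H]]$ for $H$ either group. Once the commutator subgroup is perfect, it is the maximal perfect normal subgroup. Indeed, it is normal, and any perfect subgroup $P$ satisfies $P=[P,P]\subseteq[H,H]$.

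First I reduce to finite stages. Since $H$ is a filtered colimit over $q\in\Nbb^X_{\lf}$, any commutator $[a,b]$ is already realized in $\prod_{x}\operatorname{GL}_{q_x}(\Fbb)$ (resp.\ $\prod_x\operatorname{PGL}_{q_x}(\Fbb)$) for some $q$. Both the colimit transition maps and products are compatible with commutators, so it suffices to work pointwise in $x$. The difficulty is that infinitely many coordinates are involved, so I need a uniform bound on the number of commutators, independent of $x$.

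The key step is pointwise. Pass from $q$ to $r=q\cdot 2$, or more generally to $r\ge q$ with $r_x\ge 3q_x$. Under the Kronecker embedding $g\mapsto g\otimes I_m$, the commutator $[a,b]$ becomes $[a,b]\otimes I_m$. This element lies in $\operatorname{SL}_{q_xm}(\Fbb)$, and it is an elementary matrix product because it has determinant $1$. For $n\ge 3$, $\operatorname{SL}_n(\Fbb)=E_n(\Fbb)$ is perfect. Moreover, each elementary matrix $e_{ij}(\lambda)$ equals $[e_{ik}(\lambda),e_{kj}(1)]$, and elementary matrices are themselves commutators of elements of $\operatorname{SL}_n$. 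This gives a uniform expression with a bounded number of commutators of elements of $\operatorname{SL}$, provided a bounded-word statement is available.

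Concretely, I would use the classical fact (Thompson) that for $n\ge3$ every element of $\operatorname{SL}_n(\Fbb)$ is a single commutator of elements of $\operatorname{SL}_n(\Fbb)$, or at least a product of a bounded number of them. This lets me write the $x$-coordinate of $[a,b]\otimes I_m$ as $[c_x,d_x]$ with $c_x,d_x\in\operatorname{SL}_{r_x}(\Fbb)$. The tuples $(c_x)$ and $(d_x)$ are then elements of $\prod_x\operatorname{GL}_{r_x}$ lying in $\prod_x\operatorname{SL}_{r_x}$. Each of them is in the commutator subgroup by applying the same single-commutator fact again, so both are commutators of tuples. Hence $[a,b]=[(c_x),(d_x)]$ with $(c_x),(d_x)\in[H,H]$, which proves perfectness for $\operatorname{GL}_\otimes$. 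The PGL case follows because the image of a perfect group is perfect. The commutator subgroup of $\operatorname{PGL}(X;\Fbb)$ is the image of that of $\operatorname{GL}_\otimes(X;\Fbb)$, since $\operatorname{GL}_\otimes\to\operatorname{PGL}$ is surjective stage by stage: it is surjective at each finite stage and colimits preserve surjectivity.

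The main obstacle is this uniformity across the infinitely many sites. Perfectness at each site is not enough; we need commutator width bounded independently of $x$, so that pointwise choices assemble into finitely many tuples. I would cite Thompson's theorem that $\operatorname{SL}_n(\Fbb)$ has commutator width $1$ for $n\ge3$; small fields such as $\Fbb_2$ in low rank are handled by the stabilization $r_x\ge 3$. If only a bound $N$ is available, the same argument goes through with $N$ commutators.
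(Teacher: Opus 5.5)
Your argument is correct, but it takes a different route from the paper's.

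\textbf{What the paper does.} It cites Proposition~1.5 of Weibel's \emph{$KV$-theory of categories}, applied to the permutative groupoid $(\mathcal D,\otimes)$ of Definition~\ref{def::key_category} and to its $\prod_x\operatorname{PGL}$ variant. That result is formal and rests only on the symmetry of the monoidal structure. The mechanism is the standard swap trick. After the stabilization $q\mapsto q^3$, one has coordinatewise
\[
[a,b]\otimes 1\otimes 1=[a\otimes a^{-1}\otimes 1,\ b\otimes 1\otimes b^{-1}],\qquad a\otimes a^{-1}\otimes 1=[a\otimes 1\otimes 1,\ \sigma],
\]
where $\sigma$ is the pointwise swap of the first two tensor factors. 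The factor $b\otimes 1\otimes b^{-1}$ is handled the same way with the swap of the first and third factors.

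\textbf{What this approach buys.} The uniformity over infinitely many sites, which you flag as the main obstacle, is automatic: the formulas are the same at every $x$, and $q^3$ has the same support $\Lambda_q$. The argument works verbatim over any commutative ring, as the paper's later remark on general $R$ uses. It also comes packaged with the identification $\operatorname{BGL}_\otimes(X;\Fbb)^+\simeq K(\mathcal D)_1$ of Corollary~\ref{cor::zd_h_space_identification}.

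\textbf{What your approach buys.} You import Thompson's theorem that $\operatorname{SL}_n(\Fbb)$ has commutator width one, away from $n=2$, $|\Fbb|\le 3$. This is a much deeper, field-specific input. It does not extend to rings where $\operatorname{SL}_n\neq E_n$ or where commutator width is unbounded. In exchange, you get the slightly sharper statement that every commutator is a single commutator of elements of $[H,H]$. Your deduction of the $\operatorname{PGL}$ case from the surjectivity of $\operatorname{GL}_\otimes\to\operatorname{PGL}$ is cleaner than re-running the categorical result.

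\textbf{Two small repairs.} First, stabilize only on $\Lambda_q$, e.g.\ $r_x=3q_x$ for $q_x>1$ and $r_x=1$ otherwise. A constant multiple of $q$ need not be locally finite when $X$ is not. The sites with $q_x=1$ contribute nothing, because $\operatorname{GL}_1$ is abelian. Second, your choice ``$r=q\cdot 2$'' does not meet your own condition $r_x\ge 3q_x$.
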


\begin{lem}\label{lem::BGL_rational}
The homotopy groups of $\operatorname{BGL}_{\otimes}(X; \Fbb)^+$ are rational. 
\end{lem}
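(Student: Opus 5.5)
The plan is to reduce rationality of homotopy to rationality of integral homology, and then show that every tensor-stabilization map acts on homology in a way that forces unique divisibility.

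\textbf{Step 1: $\operatorname{BGL}_{\otimes}(X;\Fbb)^+$ is a connected H-space.} I would first show that $\operatorname{BGL}_{\otimes}(X;\Fbb)^+$ is a simple space, by exhibiting it as a connected H-space. The pointwise tensor product
\[(\prod_x \operatorname{GL}_{q_x}(\Fbb))\times(\prod_x \operatorname{GL}_{r_x}(\Fbb))\to \prod_x \operatorname{GL}_{q_xr_x}(\Fbb)\]
is a group homomorphism. It is compatible with the colimit over $\Nbb^X_{\lf}$ up to conjugation by permutation matrices, and these permutation matrices lie in the commutator subgroup after one further stabilization. By Lemma~\ref{lem::commutator_perfect}, the commutator subgroup of $\operatorname{GL}_{\otimes}(X;\Fbb)$ is the maximal perfect normal subgroup, so these conjugations are killed by the plus construction. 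The map therefore descends to a homotopy-associative product on $\operatorname{BGL}_{\otimes}(X;\Fbb)^+$. The base point, coming from the identity matrices, is a homotopy unit, because tensoring with $I_r$ is exactly a transition map of the colimit.

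\textbf{Step 2: reduction to homology.} Given Step 1, the mod-$\mathcal C$ Hurewicz theorem for simple spaces, with $\mathcal C$ the Serre class of abelian groups that are uniquely divisible, gives the following. The homotopy groups are $\Qbb$-vector spaces if and only if $\tilde H_*(\operatorname{BGL}_{\otimes}(X;\Fbb)^+;\Zbb)$ is. Since the plus construction is a homology isomorphism, it suffices to prove that $\tilde H_*(\operatorname{BGL}_{\otimes}(X;\Fbb);\Zbb)$ is uniquely divisible. Homology commutes with filtered colimits, so this equals
\[\operatorname{colim}_q \tilde H_*\Bigl(B\prod_x \operatorname{GL}_{q_x}(\Fbb);\Zbb\Bigr).\]
In degree $1$ this is immediate. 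Modulo commutators one gets the determinant, and the transition map for $q\le qn$ is $\det \mapsto \det^{n}$, pointwise in $x$. The colimit of $\Fbb^\times\xrightarrow{(-)^n}\Fbb^\times$ over all $n$ is uniquely divisible. The same holds for products over the sites $x\in X$, since each site can be stabilized independently.

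\textbf{Step 3: higher degrees, the main obstacle.} In higher degrees I need to show that each transition map $A\mapsto A\otimes I_n$ becomes ``multiplication by $n$'' in the colimit. The strategy is as follows:
\begin{enumerate}[leftmargin=*]
\item After conjugating by a permutation matrix, which acts trivially on homology, $A\otimes I_n$ becomes the block sum $A\oplus\cdots\oplus A$.
\item Compare with the block-sum H-structure: map into $\prod_x \operatorname{BGL}(\Fbb)^+$, where $\oplus$ is the H-space structure and the $n$-fold diagonal block sum induces multiplication by $n$ on primitives.
\item Use an inductive filtration by the Pontryagin product to propagate unique $n$-divisibility to the whole colimit.
\end{enumerate}
The delicate points are twofold. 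First, the colimit is taken along $\otimes I_n$ and not along $\oplus$. Second, there are infinitely many sites, so I need the relevant homology classes to be supported on bounded sets, or else a Künneth/colimit argument over finite subsets of sites.

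If the direct homological route stalls, I would instead compute the homotopy groups directly. I would write $\pi_k \operatorname{BGL}_{\otimes}(X;\Fbb)^+$ as a colimit of $\pi_k$ of the plus constructions of the finite stages. Each transition map factors through multiplication by $n$ in $K_k(\Fbb)$, so every class becomes divisible by all $n$ and all $n$-torsion dies in the colimit. This yields rational groups.
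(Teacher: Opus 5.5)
Your reduction in Steps 1--2 is reasonable, but Step 3, which is the real content of the lemma, is not proved, and neither of your routes closes it. The integral plan (multiplication by $n$ on primitives, then a Pontryagin filtration) has no footing. Over $\Zbb$ the K\"unneth isomorphism fails, so there is no coproduct on integral homology, and hence no primitives or coradical filtration to induct on. Your remedy for infinitely many sites is also false: the homology of an infinite product is not the colimit of the homology of its finite subproducts. Your own degree-one answer $\prod_x \Fbb^\times$ already contains classes of unbounded support.

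The fallback fails as well. The finite stages $B\prod_x \operatorname{GL}_{q_x}(\Fbb)$ involve unstable groups and, for infinite $X$, infinite products. Identifying $\pi_k$ of their plus constructions with $K_k(\Fbb)$, let alone $\prod_x K_k(\Fbb)$, would need homological stability for $\operatorname{GL}_q(\Fbb)$, which you never invoke even for $X=*$. For infinite $X$ it would also need the nontrivial fact that $K$-theory commutes with infinite products. Moreover, there is no map from $K_k(\Fbb)$ back to the next finite stage, so ``each transition map factors through multiplication by $n$ in $K_k(\Fbb)$'' is not a factorization that exists.

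The missing idea is to work mod $p$ and use the $\Ebb_\infty$-ring-space structure of $B\mathcal{C}$. Here $\mathcal{C}$ has objects $x\mapsto \Fbb^{f(x)}$, automorphisms $\prod_x \operatorname{GL}_{f(x)}(\Fbb)$, and pointwise $\oplus$ and $\otimes$, written $*$ and $\bullet$ on homology. Unique divisibility of the reduced integral homology of the colimit is equivalent to vanishing of its $\Fbb_p$-homology for every prime $p$. The transition map $A\mapsto A\otimes I_p$ is $x\mapsto x\bullet[p]$, where $[p]$ is the class of the constant function $p$.

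Your instinct $A\otimes I_p\sim A^{\oplus p}$ is exactly the Cohen--Lada--May distributivity law $x\bullet[p]=x\bullet[1]^{*p}=\sum x_{(1)}*\cdots*x_{(p)}$. This holds for \emph{every} class $x$, not only primitives, and it uses K\"unneth only for the finite product $Y^p$ over a field, so infinitely many sites cause no trouble. By cocommutativity, each multi-index that is not constant lies in an $S_p$-orbit whose size is a multinomial coefficient divisible by $p$. So mod $p$ only diagonal terms survive, and $x\bullet[p]=y^{*p}$ with $\deg y=\deg x/p$. In particular $x\bullet[p]=0$ if $p\nmid\deg x$. Since $(a*b)\bullet[p]=(a\bullet[p])*(b\bullet[p])$, iterating gives $x\bullet[p^t]=0$ once $p^t\nmid \deg x$, so every positive-degree mod-$p$ class dies in the colimit.

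A smaller point: uniquely divisible groups do not form a Serre class, since they are not closed under subgroups ($\Zbb\subset\Qbb$). The step from homology to homotopy is therefore the Bousfield--Kan localization theorem for nilpotent spaces (Lemma~\ref{lem::h_space_rational}), not mod-Serre-class Hurewicz. The conclusion you draw from it is still correct.
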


Let us first see how the theorem follows from Lemma~\ref{lem::commutator_perfect} and Lemma~\ref{lem::BGL_rational} in the following steps.

\begin{lem}\label{lem::factorized_null_homotopuic}
Let $G$ be a rationally acyclic group. Suppose a QCA representation $\varphi: G \to \mathcal{Q}(X, q)$ is linearly onsite, then the stabilization $\varphi^{st}$ is null-homotopic.  
\end{lem}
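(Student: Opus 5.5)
If $\varphi$ is linearly onsite, then by definition $\varphi$ factors as
\[
G \xrightarrow{\widetilde{\varphi}} \prod_{x\in X}\operatorname{GL}_{q_x}(\Fbb) \to \prod_{x\in X}\operatorname{PGL}_{q_x}(\Fbb) \hookrightarrow \mathcal{Q}(X,q).
\]
Passing to the colimit over $\Nbb^X_{\lf}$, the composite $G \to \mathcal{Q}(X)$ factors through $\operatorname{GL}_\otimes(X;\Fbb) \to \operatorname{PGL}(X;\Fbb) \to \mathcal{Q}(X)$. Applying $B$ shows that the first part of $\varphi^{st}$ factors as
\[
BG \to \operatorname{BGL}_{\otimes}(X;\Fbb) \to B\mathcal{Q}(X).
\]

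The plan is to move this factorization into the plus-constructions. By Lemma~\ref{lem::commutator_perfect}, the maximal perfect normal subgroup of $\pi_1 \operatorname{BGL}_\otimes(X;\Fbb)$ is its commutator subgroup. A group homomorphism carries commutators to commutators, and the commutator subgroup of $\mathcal{Q}(X)$ is killed in $B\mathcal{Q}(X)^+$ by Theorem B of \cite{qca_grp_space_spectrum}. So the universal property of the plus-construction gives a map $\operatorname{BGL}_\otimes(X;\Fbb)^+ \to B\mathcal{Q}(X)^+$, and the square with the plus maps commutes up to homotopy. Hence $\varphi^{st}$ is homotopic to the composite
\[
BG \to \operatorname{BGL}_\otimes(X;\Fbb) \to \operatorname{BGL}_\otimes(X;\Fbb)^+ \to B\mathcal{Q}(X)^+ \xrightarrow{\simeq} K(\mathbf{C}(X))_1 .
\]
It therefore suffices to show that every map $f\colon BG \to Y := \operatorname{BGL}_\otimes(X;\Fbb)^+$ is null-homotopic.

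This is the main step, and I would argue it with obstruction theory. By Lemma~\ref{lem::BGL_rational}, every $\pi_n(Y)$ is a $\Qbb$-vector space, so $Y$ is a rational space. The delicate point is whether $Y$ is simple, since obstruction theory with untwisted coefficients needs this. I would show that $\pi_1(Y)$ is abelian: it is the abelianization of $\operatorname{GL}_\otimes(X;\Fbb)$, because the plus-construction kills exactly the commutator subgroup by Lemma~\ref{lem::commutator_perfect}. I would also show that $\pi_1(Y)$ acts trivially on the higher homotopy groups. For this I would use that $Y$ is an H-space, coming from the block-sum or tensor structure that makes $\operatorname{BGL}_\otimes^+$ an infinite loop space (as for $K(\mathbf{C}(X))_1$); if that structure is unavailable, I would fall back on the standard fact that plus-constructions of such colimit groups are H-spaces.

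With $Y$ simple, the obstructions to null-homotoping $f$ lie in $H^{n}(BG;\pi_n(Y))$ for $n\ge 1$ (over a CW model of $BG$). Each coefficient group $\pi_n(Y)$ is a rational vector space, so the universal coefficient theorem gives $H^n(BG;\pi_n Y)\cong \Hom(H_n(BG;\Qbb),\pi_n Y)$, with no Ext term because $\pi_n Y$ is divisible. These groups vanish because $G$ is rationally acyclic: rational acyclicity gives $\tilde H^*(BG;\Qbb)=0$, hence $\tilde H_*(BG;\Qbb)=0$ by duality over a field. In degree one this is consistent with the direct observation that $H_1(G;\Zbb)=G^{ab}$ is torsion, so $\Hom(G^{ab}, \pi_1 Y)=0$ and $f_*$ is trivial on $\pi_1$. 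All obstructions therefore vanish, $f$ is null-homotopic, and so $\varphi^{st}$ is null-homotopic.

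I expect the simplicity (H-space) input to be the main obstacle. An alternative that avoids it is to use localization: for a nilpotent rational $Y$, maps out of $BG$ factor through the rationalization $BG_{\Qbb}$, and $BG_{\Qbb}$ is contractible because $\tilde H_*(BG;\Qbb)=0$. This route still needs nilpotency of $Y$, which again follows from the H-space structure.
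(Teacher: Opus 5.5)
Your proposal is correct and follows essentially the paper's argument: factor $\varphi^{st}$ through $\operatorname{BGL}_{\otimes}(X;\Fbb)^+$ via the universal property of the plus-construction, then kill all obstructions using that its homotopy groups are rational (Lemma~\ref{lem::BGL_rational}) and that $G$ is rationally acyclic. The only cosmetic difference is in how $\pi_1$ is handled. The paper first lifts to the universal cover, using $\Hom(G,V)=0$ for rational $V$. You instead treat $\pi_1$ directly by arguing the target is simple, which is justified by the group-like H-space structure recorded in Corollary~\ref{cor::zd_h_space_identification}.
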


\begin{proof}
Since $\varphi$ is linearly onsite, it fits in (\ref{eq::unstable_lift}) with $\beta = \operatorname{id}$. We can extend the diagram to consider the maps:
\[\begin{tikzcd}
	& {\prod_{x\in X} \operatorname{GL}_{q_x}(\mathbb{F})} & {\operatorname{GL}_{\otimes}(X; \mathbb{F})} \\
	& {\prod_{x\in X} \operatorname{PGL}_{q_x}(\mathbb{F})} & {\operatorname{PGL}(X; \mathbb{F})} \\
	G & {\CQ(X,q)} & {\CQ(X; \mathbb{F})}
	\arrow[from=1-2, to=1-3]
	\arrow[two heads, from=1-2, to=2-2]
	\arrow[from=1-3, to=2-3]
	\arrow[from=2-2, to=2-3]
	\arrow[hook, from=2-2, to=3-2]
	\arrow[from=2-3, to=3-3]
	\arrow[shift left=2, dashed, from=3-1, to=1-2]
	\arrow[dashed, from=3-1, to=2-2]
	\arrow["\varphi"', from=3-1, to=3-2]
	\arrow[from=3-2, to=3-3]
\end{tikzcd}.\]
The natural map $\operatorname{BGL}_{\otimes}(X; \Fbb) \to \operatorname{BPGL}(X; \Fbb) \to \operatorname{BPGL}(X;\Fbb)^{+}$ sends the commutator subgroup to $0$ by Lemma~\ref{lem::commutator_perfect}. The map $\operatorname{BPGL}(X; \Fbb) \to B\mathcal{Q}(X; \Fbb) \to  B\mathcal{Q}(X; \Fbb)^+$ also sends the commutator subgroup of $\operatorname{PGL}(X; \Fbb)$ to $0$. Thus, the universal property of plus-construction gives a commutative diagram:
\[\begin{tikzcd}
	BG && \\
	{\operatorname{BGL}_{\otimes}(X; \mathbb{F})} & {\operatorname{BPGL}(X; \mathbb{F})} & {B\mathcal{Q}(X; \mathbb{F})} \\
	{\operatorname{BGL}_{\otimes}(X; \mathbb{F})^+} & {\operatorname{BPGL}(X; \mathbb{F})^+} & {B\mathcal{Q}(X; \mathbb{F})^+}
	\arrow[from=1-1, to=2-1]
	\arrow[from=1-1, to=2-2]
	\arrow[from=1-1, to=2-3]
	\arrow[from=2-1, to=2-2]
	\arrow[from=2-1, to=3-1]
	\arrow[from=2-2, to=2-3]
	\arrow[from=2-2, to=3-2]
	\arrow[from=2-3, to=3-3]
	\arrow[from=3-1, to=3-2]
	\arrow[from=3-2, to=3-3]
\end{tikzcd}\]
Thus, we see that the stabilization $BG \to B\mathcal{Q}(X; \Fbb)^+$ factors through $\operatorname{BGL}_{\otimes}(X; \Fbb)^+$. We claim any map $f: BG \to \operatorname{BGL}_{\otimes}(X; \Fbb)^+$ is null-homotopic. Indeed, since $G$ is rationally acyclic, we observe that $\operatorname{Hom}_{Grp}(G, V) = 0$ for any $V$ that is rational, as $V$ is torsion-free. Thus, any map $BG \to \operatorname{BGL}_{\otimes}(X; \Fbb)^+$ lifts to the universal cover $\widetilde{\operatorname{BGL}_{\otimes}(X; \Fbb)^+}$. We now show that any map $BG \to \tau_{>1} \operatorname{BGL}_{\otimes}(X; \Fbb)^+$ is null-homotopic. A null-homotopy is equivalent to being able to lift the map to along the path-space fibration. By standard obstruction theory (e.g., Theorem 7.37 of \cite{DavisKirk2001}, which we can apply as the universal cover is simply-connected and the loop space is simple), the obstructions to doing so lie in $H^{n+1}(BG; \pi_n(\Omega \widetilde{\operatorname{BGL}_{\otimes}(X; \Fbb)^+})) = H^{n+1}(BG; \pi_{n+1}(\operatorname{BGL}_{\otimes}(X; \Fbb)^+))$. Since $G$ is rationally acyclic, the cohomology groups of $G$ with rational coefficients are zero in degree above zero. Lemma~\ref{lem::BGL_rational} then concludes the proof.
\end{proof}

As a corollary of Lemma~\ref{lem::factorized_null_homotopuic}, we also obtain the following.
\begin{cor}\label{cor::lin_factorizable_null}
Let $G$ be a rationally acyclic group. Suppose a QCA representation $\varphi: G \to \mathcal{Q}(X, q)$ is linearizable or trivially stably linearizable, then the stabilization $\varphi^{st}$ is null-homotopic.
\end{cor}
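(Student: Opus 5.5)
The plan is to reduce both cases of Corollary~\ref{cor::lin_factorizable_null} to Lemma~\ref{lem::factorized_null_homotopuic}. The only extra input needed is that two operations do not change the homotopy class of the stabilization: conjugating by a QCA $\beta$, and tensoring with the constant representation $c_{id}$.

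\textbf{Linearizable case.} Suppose $\varphi$ is linearizable via some $\beta \in \mathcal{Q}(X,q)$. Then $\beta\varphi\beta^{-1}$ is linearly onsite, so Lemma~\ref{lem::factorized_null_homotopuic} shows that $(\beta\varphi\beta^{-1})^{st}$ is null-homotopic. It then suffices to show
\[
(\beta\varphi\beta^{-1})^{st} \simeq \varphi^{st}.
\]
Write $c_\beta$ for conjugation by $\beta$, viewed as an inner automorphism of $\mathcal{Q}(X,q)$ and also of $\mathcal{Q}(X)$. The standard fact is that for any group $H$ and $h \in H$, the map $Bc_h \colon BH \to BH$ is freely homotopic to the identity, with the homotopy given by the loop $h$. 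Hence
\[
B(c_\beta\circ\varphi) = Bc_\beta \circ B\varphi \simeq B\varphi
\]
as free maps into $B\mathcal{Q}(X)$. Composing with the plus-construction map into $K(\mathbf{C}(X))_1$ gives a free homotopy between the two stabilizations. Null-homotopy is invariant under free homotopy, so $\varphi^{st}$ is null. If the base-point issue matters, I would note that $K(\mathbf{C}(X))_1$ is an H-space (indeed a component of an infinite loop space), so based and free homotopy classes out of $BG$ agree anyway.

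\textbf{Trivially stably linearizable case.} The representation $\varphi\otimes c_{id} \colon G \to \mathcal{Q}(X,qr)$ is linearizable, so by the previous case $(\varphi\otimes c_{id})^{st}$ is null. The claim is that this equals $\varphi^{st}$ on the nose. The map $\alpha \mapsto \alpha\otimes \mathrm{id}$ from $\mathcal{Q}(X,q)$ to $\mathcal{Q}(X,qr)$ is exactly the structure map $\iota_{q\to qr}$ of the colimit defining $\mathcal{Q}(X)$, namely pointwise Kronecker product with identity matrices. It therefore commutes with the maps into $\mathcal{Q}(X)$. Consequently $\varphi$ and $\varphi\otimes c_{id}$ induce the same homomorphism $G\to\mathcal{Q}(X)$, and hence the same stabilization.

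\textbf{Expected obstacle.} The only subtle point is this last identification. The tensor factor ordering $\alpha\otimes\mathrm{id}$ versus the convention used in Lemma~2.13 of \cite{qca_grp_space_spectrum} might differ by an onsite permutation isomorphism $M_{q_x}\otimes M_{r_x}\cong M_{q_xr_x}$. If so, I would absorb the discrepancy as a further conjugation by an onsite (spread $0$) QCA in $\mathcal{Q}(X,qr)$. By the conjugation argument in the linearizable case, such a conjugation does not affect the homotopy class of the stabilization.
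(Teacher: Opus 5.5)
Your proposal is correct and follows the paper's proof essentially step for step. Conjugation gives a free homotopy $B\varphi \simeq B(\beta\varphi\beta^{-1})$, Lemma~\ref{lem::factorized_null_homotopuic} kills the conjugated map, and the H-space structure on $K(\mathbf{C}(X))_1$ upgrades free null-homotopy to based null-homotopy; the trivially stably linearizable case reduces to the linearizable one because $\varphi$ and $\varphi\otimes c_{id}$ induce the same map into $\mathcal{Q}(X)$. Your extra remark about absorbing a possible tensor-factor reordering as an onsite conjugation is a harmless safeguard that the paper does not spell out.
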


\begin{proof}
The case for $\varphi$ being trivially stably linearizable reduces to the case of being linearizable, as we could just replace $\varphi$ by $\varphi \otimes c_{id}$ without changing the induced map $BG \to B\mathcal{Q}(X)$.

Now suppose $\varphi: G \to \mathcal{Q}(X, q)$ is linearizable. Recall that two group homomorphisms $f, g: G_1 \to G_2$ induce freely homotopic maps $Bf, Bg: BG_1 \to BG_2$ if and only if they differ by conjugations. Thus, the map $B\varphi: BG \to B\mathcal{Q}(X, q)$ is freely homotopic to a map $B\varphi': BG \to B\mathcal{Q}(X, q)$ such that $\varphi': G \to \mathcal{Q}(X, q)$ is linearly onsite. Lemma~\ref{lem::factorized_null_homotopuic} then shows that
\[(\varphi')^{st}: BG \xrightarrow{B\varphi'} B\mathcal{Q}(X, q) \to B\mathcal{Q}(X) \to B\mathcal{Q}(X)^+ \xrightarrow{\sim} K(\mathbf{C}(X))_1\]
is null-homotopic. This shows that the map
\[(\varphi)^{st}: BG \xrightarrow{B\varphi} B\mathcal{Q}(X, q) \to B\mathcal{Q}(X) \to B\mathcal{Q}(X)^+ \xrightarrow{\sim} K(\mathbf{C}(X))_1\]
is freely null-homotopic. For a space $Y$ and a connected $H$-space $Z$, based-homotopy class of maps $Y \to Z$ are the same as free homotopy classes $Y \to Z$ (see Proposition 1.4.3 of \cite{may2011more}, for example). Thus, $(\varphi')^{st}$ is also based null-homotopic. This concludes the proof.
\end{proof}

We are now able to prove Theorem~\ref{thm::fin_gp_null}.
\begin{proof}[Proof of Theorem~\ref{thm::fin_gp_null}]
Suppose $\varphi$ is stably linearizable, then there is a linearizable $\psi: G \to \mathcal{Q}(X, r)$ such that $(\varphi \otimes \psi)$ is linearizable. Consider the stabilization map
\[(\varphi \otimes \psi)^{st}: BG \xrightarrow{B(\varphi \otimes \psi)} B\mathcal{Q}(X, qr) \to B\mathcal{Q}(X) \to B\mathcal{Q}(X)^+ \xrightarrow{\sim} K(\mathbf{C}(X))_1.\]
This is homotopic to the map
\[BG \xrightarrow{(\varphi^{st}, \psi^{st})} B\mathcal{Q}(X)^+ \times B\mathcal{Q}(X)^+ \xrightarrow{\mu} B\mathcal{Q}(X)^+, \]
where $\mu$ is the $H$-space structure on $B\mathcal{Q}(X)^+$. In particular, in the group $[BG, B\mathcal{Q}(X)^+]_*$, $(\varphi \otimes \psi)^{st}$ represents the class $[\varphi^{st}] + [\psi^{st}]$. From Corollary~\ref{cor::lin_factorizable_null}, we know $[\psi^{st}] = 0$ in this group. Thus we have that $0 = [\varphi^{st}] + 0$, so $\varphi^{st}$ is also based null-homotopic.\\

Suppose $\varphi$ is weakly linearizable, evidently the argument for null-homotopy only relied on the stable diagram being factorizable, and we analogously can do this modulo conjugations. Thus, $\varphi^{st}$ is null-homotopic. 
\end{proof}

The rest of this subsection proves Lemma~\ref{lem::commutator_perfect} and Lemma~\ref{lem::BGL_rational}. To do this, we introduce the following local notations.
\begin{defn} \label{def::key_category}
Let $(\mathcal{C}, \oplus, \otimes)$ be a category defined as follows: \begin{enumerate} \item The objects of $\mathcal{C}$ are assignments to each $x \in X$ of a vector space $\mathbb{F}^{f(x)}$, where $f(x) \in \mathbb{Z}_{\geq 0}$. \item The morphisms are all automorphisms given on each $f$ by \[ \prod_{x \in X} \operatorname{GL}_{f(x)}(\mathbb{F}). \] \item The operations $\oplus$ and $\otimes$ are given pointwise by direct sum and tensor product, respectively. \end{enumerate} Let $\mathcal{D}$ be the subcategory of $\mathcal{C}$ consisting of those objects for which $f(x) \geq 1$ for all $x \in \mathbb{Z}^d$. Observe that $(\mathcal{D}, \otimes)$ is a skeletal permutative category. \end{defn}

In particular, $B\mathcal{C}$ is an example of an $\mathbb{E}_{\infty}$-ring space (see the introduction of Chapter II of \cite{CohenLadaMay1976}). We also observe $\mathcal{D}$ fits the description for Proposition 1.5 of \cite{951cf383-5a4d-3775-b8b2-0bafd2f162fe}, which directly yields the following corollary.

\begin{cor}\label{cor::zd_h_space_identification}
   The commutator subgroup of $\operatorname{GL}_{\otimes}(X; \Fbb)$ is perfect. Furthermore, the plus-construction $\operatorname{BGL}_{\otimes}(X; \Fbb)^+$ at its commutator is the component at identity of $K(\mathcal{D})$. In particular, $\operatorname{BGL}_{\otimes}(X; \Fbb_p)^+$ is a group-like H-space.
\end{cor}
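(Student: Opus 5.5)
The plan is to verify that $(\mathcal{D},\otimes)$ satisfies the hypotheses of Proposition 1.5 of \cite{951cf383-5a4d-3775-b8b2-0bafd2f162fe}, which is a group-completion statement of McDuff--Segal/Quillen type for permutative categories. That proposition says roughly this. Suppose a permutative category has a \emph{cofinal} sequence of objects $d_1 \to d_2 \to \cdots$, obtained by tensoring with a fixed family, such that $\operatorname{Aut}(d)$ acts appropriately and the translation maps are injective on automorphism groups. Let $\operatorname{Aut}_\infty = \operatorname{colim}_n \operatorname{Aut}(d_n)$. Then the commutator subgroup of $\operatorname{Aut}_\infty$ is perfect, and $B\operatorname{Aut}_\infty^+ \simeq K(\mathcal{D})_0$ is the identity component of the group completion.

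\textbf{Step 1 (setup).} The objects of $\mathcal{D}$ are functions $f: X\to \Zbb_{\geq 1}$ with $\otimes$ given by pointwise multiplication. Our objects have to be locally finite, i.e.\ elements of $\Nbb^X_{\lf}$; I would read Definition~\ref{def::key_category} with this restriction built in. The automorphism groups are $\prod_x \operatorname{GL}_{f(x)}(\Fbb)$. The poset $\Nbb^X_{\lf}$ under divisibility is directed, since the lcm of two locally finite functions is again locally finite. Tensoring with the unit object $1_{g}$ embeds $\operatorname{Aut}(f)$ into $\operatorname{Aut}(fg)$ by pointwise Kronecker product with identity matrices. The colimit of these groups is by definition $\operatorname{GL}_{\otimes}(X;\Fbb)$.

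\textbf{Step 2 (verify the hypotheses).} I would check the following conditions.
\begin{enumerate}
\item \emph{Cofinality}: every object divides some object along a chosen cofinal chain. Over a countable uniformly discrete coarsely dense subset, the poset $\Nbb^X_{\lf}$ admits a cofinal sequence, for instance $q_n(x)=n!$ on a growing exhausting family of bounded sets, and then extended so that it remains locally finite. If no countable cofinal chain exists, I would instead run the filtered-colimit version of the argument.
\item \emph{Injectivity}: Kronecker product with the identity is injective on each factor $\operatorname{GL}_{f(x)}$.
\item \emph{Symmetry condition}: the symmetry isomorphism of $f\otimes f$ acts as a permutation matrix pointwise. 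The key hypothesis in such propositions is that the symmetry isomorphism becomes a commutator, or equivalently becomes conjugate to the identity, after stabilization. This is the standard Whitehead-lemma-style condition.
\end{enumerate}

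\textbf{Step 3 (conclusion).} Proposition 1.5 then gives that $[\operatorname{GL}_\otimes,\operatorname{GL}_\otimes]$ is perfect and that $\operatorname{BGL}_\otimes(X;\Fbb)^+\simeq K(\mathcal{D})_1$. The last claim follows because the identity component of the group completion of a permutative category is an infinite loop space, and in particular a group-like H-space.

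The main obstacle I expect is the symmetry/commutator hypothesis in Step 2 when carried out \emph{uniformly over all $x\in X$ at once}, because the groups involved are infinite products. My approach would be to do the pointwise Whitehead-type argument, in which the swap permutation on $\Fbb^{f}\otimes\Fbb^{f}$ becomes a commutator after tensoring with a further factor. I would then observe that the commutator expressions have bounded length independent of $x$. This bound lets the pointwise commutators assemble into a commutator in the product group. The same bounded-length observation should give the perfectness of the commutator subgroup.
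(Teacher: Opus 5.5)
Your route is the paper's: the paper's entire proof is the assertion that $(\mathcal{D},\otimes)$ fits Weibel's Proposition 1.5, with the three conclusions read off, as in your Steps 1 and 3. Restricting objects to $\Nbb^X_{\lf}$ is the right reading. Two parts of your Step 2 need correcting, however.

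\textbf{The countable cofinal chain does not exist.} Once $X$ is infinite (e.g.\ $X=\ZZ^d$, $d\ge 1$), no countable family $q_1,q_2,\dots$ is cofinal. To see this, take distinct points $x_1,x_2,\dots$ in a locally finite subset. Let $q(x_n)$ be a prime larger than $q_n(x_n)$, and set $q=1$ elsewhere. Then $q$ is locally finite and $q\nmid q_n$ for every $n$. So your fallback is mandatory, not optional. You need the group-completion/plus-construction identification indexed by the directed poset $\Nbb^X_{\lf}$ itself. That means the isomorphism $H_*(K(\mathcal{D})_1)\cong\operatorname{colim}_{q}H_*(B\operatorname{Aut}(q))$, together with a map $\operatorname{hocolim}_q B\operatorname{Aut}(q)\simeq B\operatorname{GL}_{\otimes}(X;\Fbb)\to K(\mathcal{D})_1$ inducing it. The paper also relies on this directed form implicitly: in the proof of Lemma~\ref{lem::BGL_rational} it computes the homology as a colimit over $\operatorname{QS}(X)$. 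If the result you cite is stated only for sequences, you must check that it holds in this form.

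\textbf{The ``main obstacle'' is not one.} Perfectness of $[G,G]$ for $G=\operatorname{GL}_{\otimes}(X;\Fbb)$ needs no hypothesis on the symmetry isomorphism. It also needs no bounded commutator-length argument. For unbounded $q$, such an argument would require uniform commutator width of $\operatorname{SL}_n(\Fbb)$ in $n$, which you do not supply. Instead, for $g,h\in\operatorname{Aut}(f)$ one has, in $\operatorname{Aut}(f^3)$,
\[
[g,h]\otimes I\otimes I=[\,g\otimes g^{-1}\otimes I,\ h\otimes I\otimes h^{-1}\,],\qquad g\otimes g^{-1}\otimes I=[\,g\otimes I\otimes I,\ \sigma_{12}\,],
\]
where $\sigma_{12}$ is the pointwise swap of the first two tensor factors. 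Similarly $h\otimes I\otimes h^{-1}=[\,h\otimes I\otimes I,\ \sigma_{13}\,]$. In the colimit, $g$ and $g\otimes I_{f^2}$ are literally the same element, and any finitely many elements lie in a common stage. Hence $[G,G]\subseteq[[G,G],[G,G]]$. Everything here is defined pointwise in $x$, so uniformity is automatic.
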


By changing the morphisms in $\mathcal{D}$ from $\prod_{x \in X} \operatorname{GL}_{f(x)}(\Fbb)$, $f(x) \geq 1$ to $\prod_{x \in X} \operatorname{PGL}_{f(x)}(\Fbb)$, Proposition 1.5 of \cite{951cf383-5a4d-3775-b8b2-0bafd2f162fe} also applies to show $\operatorname{PGL}_{\otimes}(X; \Fbb)$ has perfect commutator subgroup. This fact combined with Corollary~\ref{cor::zd_h_space_identification} proves Lemma~\ref{lem::commutator_perfect}.

To prove Lemma~\ref{lem::BGL_rational}, we record the following fact from rational homotopy theory.
\begin{lem}[Proposition V.3.3 of \cite{Bousfield1972}, See also Section 8 of \cite{Ivanov2022}]\label{lem::h_space_rational}
Let $Y$ be a nilpotent (e.g., simple) space and $\Tilde{H}_*(Y; \Zbb)$ is rational, then $\pi_*(Y)$ is rational.
\end{lem}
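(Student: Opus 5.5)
The plan is to realize $Y$ as its own rationalization. For a nilpotent space $Y$, the Bousfield--Kan (equivalently Sullivan) localization theory gives a rationalization map $\ell\colon Y\to Y_{\Qbb}$. It has three properties. First, $Y_{\Qbb}$ is nilpotent. Second, $\pi_*(Y_{\Qbb})$ is rational: $\pi_1$ is a uniquely divisible nilpotent group and the higher groups are $\Qbb$-vector spaces. Third, $\ell$ induces an isomorphism $\Tilde{H}_*(Y;\Zbb)\otimes\Qbb\cong\Tilde{H}_*(Y_{\Qbb};\Zbb)$, with the target already rational. If $\ell$ is a weak equivalence, then $\pi_*(Y)\cong\pi_*(Y_{\Qbb})$ is rational and we are done.

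The first step is to upgrade $\ell$ to an integral homology isomorphism using the hypothesis. Since $\Tilde{H}_*(Y;\Zbb)$ is rational, the natural map $\Tilde{H}_*(Y;\Zbb)\to\Tilde{H}_*(Y;\Zbb)\otimes\Qbb$ is an isomorphism. Composing with the localization isomorphism, and checking via naturality that the composite is $\ell_*$, shows that $\ell_*\colon\Tilde{H}_*(Y;\Zbb)\to\Tilde{H}_*(Y_{\Qbb};\Zbb)$ is an isomorphism.

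The second step applies the Whitehead theorem for nilpotent spaces (Dror's generalized Whitehead theorem). A map between connected nilpotent spaces that is an isomorphism on integral homology is a weak equivalence. This conclusion is exactly what is needed.

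The main obstacle is this second step when $\pi_1(Y)\neq 1$. The classical Whitehead theorem requires simple connectivity, and a homology isomorphism need not detect $\pi_1$ in general; nilpotency is precisely what rescues it. If I wanted to avoid citing Dror, I would argue through a refined Postnikov tower of $Y$ whose stages are principal fibrations with fibre $K(A,n)$. I would then use a Serre-class argument for the class of abelian groups that are uniquely $p$-divisible for all primes $p$, inducting up the tower and comparing with the parallel tower of $Y_{\Qbb}$. An equivalent formulation of this induction is that $H_*(-;\Zbb/p)$ vanishes on the relevant fibres and total spaces. For the case actually used in the paper, $Y$ is a simple space such as a group-like H-space like $\operatorname{BGL}_{\otimes}(X;\Fbb)^+$. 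There $\pi_1$ is abelian and acts trivially on the higher homotopy groups, so the Serre-class mod-$\mathcal{C}$ Hurewicz argument applies directly, and this step is routine.
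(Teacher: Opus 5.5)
Your argument is correct. Rationalizing $Y$, using the hypothesis to see that $\ell_*$ is an integral homology isomorphism, and then applying the Whitehead theorem for nilpotent spaces is the standard proof of the result the paper cites (Bousfield--Kan, Proposition V.3.3); the paper gives no argument of its own beyond that citation. (Your optional Serre-class fallback is not needed, and it is slightly loose as stated: uniquely divisible groups are not closed under subgroups, so they do not form a Serre class in the strict sense, and that route would have to be run with $\mathbb{F}_p$-coefficients, as you indicate.)
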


Finally, we prove Lemma~\ref{lem::BGL_rational}. Here we adapt a strategy analogous to that of Theorem V.5.3 of \cite{May1977EInfinityRingSpaces}, see also Theorem 10.2 of \cite{may2009preciselyeinftyringspaces}.

\begin{proof}[Proof of Lemma~\ref{lem::BGL_rational}]
We first note that $\operatorname{BGL}(X; \Fbb)^+$ is connected, so there is no issue at $\pi_0$ or $H_0$. Write $\operatorname{QS}(X)$ as a poset whose objects are functions $q: X \to \Zbb_{>0}$ and $q \leq r$ if $q(x)$ divides $r(x)$ for all $x \in X$. In this case, we write $r = q \cdot q'$. For a natural number $n \in \Zbb_{>0}$, we also write $n$ to denote the constant function $n: X \to \{n\}$.\\

We note that each $q \in \operatorname{QS}(X)$ corresponds to the component $B\mathcal{C}_{q}$ corresponding to $q \in \pi_0(B\mathcal{C})$. We write $[q]$ to denote the corresponding element in $H_0(K\mathcal{C})$. Furthermore, for $q \leq r$ with $r = q \cdot q'$, there is a natural map $B\mathcal{C}_q \to B\mathcal{C}_r$ given by multiplication by $q'$.\\

The set-up above enables us to consider the following strict filtered colimit
\[L \coloneqq \operatorname{colim}_{q \in \operatorname{QS}(X)} B\mathcal{C}_{q}.\]
By Corollary~\ref{cor::zd_h_space_identification}, $\operatorname{BGL}(X; \Fbb)^+$ is the identity component of $K(\mathcal{D})$. Since $K(\mathcal{D})$ is the group completion of $B\mathcal{D}$, the axiomatic definition of localization for group completion implies that for any commutative ring $R$,
\[H_*(K(\mathcal{D}); R) \cong \pi_0(B\mathcal{D})^{-1} H_*(B\mathcal{D}; R).\]
In particular, the homology at the identity component corresponds to applying the localization (which is equivalently a colimit) at the homology of the identity component of $B\mathcal{D}$. In particular, this shows
\[H_*(\operatorname{BGL}(X; \Fbb)^+; R) \cong H_*(K(\mathcal{D})_1; R) \cong H_*(L; R) \cong \operatorname{colim}_{q \in \operatorname{QS}(X)} H_*(B\mathcal{C}_{q}; R).\]
For a prime $p$, we wish to show that $\Tilde{H}_*(L; \Fbb_p) = 0$. The universal coefficient theorem for homology will then imply the multiplication by $p$, in the sense of adding an element with itself $p-1$ times, is an isomorphism on $\Tilde{H}_*(L; \Zbb)$. If this holds for all primes $p$, then $\Tilde{H}_*(L; \Zbb)$ is rational. It then follows from Lemma~\ref{lem::h_space_rational} that $\operatorname{BGL}(X; \Fbb)^+$ has rational homotopy groups.\\

Since $B(\mathcal{C})$ is an $\Ebb_{\infty}$-ring space, the homology group $H_*(B\mathcal{C}; \Fbb_p)$ has two ring structures under the H-space structure given by $\oplus$ and the H-space structure given by $\otimes$. We denote products with respect to $\oplus$ as $*$ and products with respect to $\otimes$ as $\bullet$. We now make a subclaim that for any $x \in H_n(B\mathcal{C}_q; \Fbb_p)$ for $n > 0$ and $q \in \operatorname{QS}(X)$, for $t$ large enough, $x \bullet [p^t] \in H_n(B\mathcal{C}_{q\cdot p^k}; \Fbb_p)$ is $0$. This subclaim will imply that $\Tilde{H}_*(L; \Fbb_p) = 0$ for the following reason. Since $\Tilde{H}_*(L; \Fbb_p) = \operatorname{colim}_{q \in \operatorname{QS}(X)} \Tilde{H}_*(B\mathcal{C}_q; \Fbb_p)$ is a filtered colimit of abelian groups, we just need to show that any element $x \in \Tilde{H}_*(B\mathcal{C}_q; \Fbb_p)$ becomes $0$ at some point in the colimit stage. By the subclaim, it suffices to find a path in the poset $\operatorname{QS}(X)$ that goes through infinitely many $\bullet$ by $[p]$, and this path always exists.\\

It remains for us to show the subclaim. We first discuss a construction that will be helpful. In general for a space $Y$, the homology $H_*(Y; \Fbb_p)$ has a coproduct map $H_*(Y; \Fbb_p) \to H_*(Y \times Y; \Fbb_p) \cong H_*(Y; \Fbb_p) \otimes H_*(Y; \Fbb_p)$ induced first by the diagonal map $\Delta: Y \to Y \times Y$ and then the Kunneth isomorphism. We use $\psi: H_*(Y; \Fbb_p) \to H_*(Y; \Fbb_p)^{\otimes p}$ to denote the $p$-fold iterated coproduct map. The map $\psi$ satisfies a co-commutativity condition in the sense that $\psi$ is invariant under any permutation $\sigma$ on the tensors of $H_*(Y; \Fbb_p)^{\otimes p}$ (i.e., $\sigma \circ \psi = \psi$). Write $\{x_i\}_{i \in I}$ as a basis of $H_*(Y; \Fbb_p)$ where each $x_i$ has pure degree. Write $I = (i_1, ..., i_p)$ as a multi-index such that $\deg(x_{i_1}) + ... \deg(x_{i_p}) = k$. An element $x \in H_k(Y; \Fbb_p)$ will be sent to $\psi(x) = \sum a_I x_{i_1} \otimes ... \otimes x_{i_p}$. Let $\sigma$ be a permutation of $p$-elements, we note that the co-commutativity of $\psi$ implies that $a_{I} = a_{\sigma I}$.\\

Proposition II.1.5 \cite{CohenLadaMay1976} computed how $*$ and $\bullet$ interact with each other in the homology of any $\Ebb_{\infty}$-ring space. In particular, for $x \in H_n(B\mathcal{C}_q; \Fbb_p) \subseteq H_n(B\mathcal{C}; \Fbb_p)$ it implies that
\[x \bullet [p] = x\bullet (\underbrace{[1] * ... * [1]}_{\text{$p$ times}}) = \sum a_I x_{i_1} * ... * x_{i_p}.\]
Since $*$ is commutative, we can group together any summand that differs from each other by a permutation $\sigma$ of $p$-elements. There is then $\frac{p!}{n_1! ... n_k!}$ many copies of the same summand where $n_1, ..., n_k$ are the number of the same elements in $I$. In particular, $\frac{p!}{n_1! ... n_k!}$ is divisible by $p$ unless $x_{i_1} = ... = x_{i_p} = z_j$ for some basis element $z_j$. Thus, we have that
\[x \bullet [p] = \sum z_j^{[p]},\]
where $z_j^{[p]}$ denotes $p$-th power of $z_j$ under $*$. Now $H_*(B(\mathcal{C}); \Fbb_p)$ is an $\Fbb_p$-algebra under $*$, so we have that
\[x \bullet [p] = (\sum z_j)^{[p]}.\]
Write $y = \sum z_j$, we now have that $x \bullet [p] = y^{[p]}$. Now observe that if $p$ does not divide $\deg(x) = n$, then $y = 0$ (this is because $z_j^{[p]}$ has degree $p \deg(z)$, which is equal to $q > 0$). Otherwise, we multiply $x$ by $[p^k] = [p] \bullet ... \bullet [p]$ until $\frac{\deg(x)}{p^k} = \frac{n}{p^k}$ is no longer an integer. This shows that $x \bullet [p^k] = 0$ for $k$ sufficiently large.
\end{proof}

\begin{remark}
    The same proof for Lemma~\ref{lem::BGL_rational} also shows that $\operatorname{BGL}(X; R)^+$ has rational homotopy groups for any commutative ring $R$ with unity. The proof for Lemma~\ref{lem::commutator_perfect} also works for any $R$. Consequently, Theorem~\ref{thm::fin_gp_null} holds for any $R$ with the property that all automorphisms of $M_n(R)$ are inner (e.g., fields, local rings). For rings where the automorphisms of $M_n(R)$ may not be inner, a lift to GL does not really make sense unless one interprets being linearly onsite to mean restricting to pointwise inner automorphisms. Under this interpretation, Theorem~\ref{thm::fin_gp_null} would hold for any $R$.
\end{remark}

\subsection{Projectivizable Null-Homotopy is Linearizable}

In this section, we will prove Theorem~\ref{thm::converse_on_PGL}, which would give a converse to Theorem~\ref{thm::fin_gp_null} when $X = *$.

\begin{proof}[Proof of Theorem~\ref{thm::converse_on_PGL}]
Suppose $\varphi: G \to \mathcal{Q}(X, q)$ restricts to $\prod_{x \in X} \operatorname{PGL}_{q(x)}(\Fbb)$ and induces a null-homotopic map $BG \to \operatorname{BPGL}(X; \Fbb)^+$. The surjection $\operatorname{GL}_{\otimes}(X; \Fbb) \to \operatorname{PGL}(X; \Fbb)$ has central kernel and satisfies the main theorem of \cite{BERRICK1983172}, so the induced map $\operatorname{BGL}_{\otimes}(X; \Fbb)^+ \to \operatorname{BPGL}(X; \Fbb)^+$ is a fibration. The homotopy lifting property of fibrations allows us to lift the null-homotopy $BG \to \operatorname{BPGL}(X; \Fbb)^+$ to a map $BG \to \operatorname{BGL}_{\otimes}(X; \Fbb)^+$. We now have a square
\[\begin{tikzcd}
	BG && \\
	& {\operatorname{BGL}_{\otimes}(X; \mathbb{F})} & {\operatorname{BPGL}(X; \mathbb{F})} \\
	& {\operatorname{BGL}_{\otimes}(X; \mathbb{F})^+} & {\operatorname{BPGL}(X; \mathbb{F})^+}
	\arrow["{B\varphi}", from=1-1, to=2-3]
	\arrow["{\text{lift}}"', dashed, from=1-1, to=3-2]
	\arrow[from=2-2, to=2-3]
	\arrow[from=2-2, to=3-2]
	\arrow[from=2-3, to=3-3]
	\arrow["{\text{fibration}}"', from=3-2, to=3-3]
\end{tikzcd}\]
By Lemma~\ref{lem::homotopy_pullback_zero} below, the bottom square is in fact a pullback square, so we have a map $BG \to \operatorname{BGL}_{\otimes}(X; \Fbb)$ given by the pullback. This shows $\varphi$ is weakly linearizable. When $G$ is finite, the induced map $G \to \operatorname{GL}_{\otimes}(X; \Fbb)$ must land in some finite stage. This shows $\varphi: G \to \mathcal{Q}(*, q)$ is stably linearizable. 
\end{proof}

\begin{remark}
    When $X = *$, the map $\operatorname{BGL}_{\otimes}(\Fbb)^+ \to \operatorname{BPGL}(\Fbb)^+$ being a fibration is Theorem 7 of \cite{c80f3555-900f-35b5-ab05-6d397e6a44e6} and Proposition 6.2 of \cite{qca_grp_space_spectrum}.
\end{remark}

\begin{lemma}\label{lem::homotopy_pullback_zero}
Let $H$ be central in $G$ and suppose $BH \to BG \to BG/H$  induces a fibration $BH^+ \cong BH \to BG^+ \to BG/H^+$, then the square
\[\begin{tikzcd}
	BG & {BG/H} \\
	{BG^+} & {BG/H^+}
	\arrow[from=1-1, to=1-2]
	\arrow[from=1-1, to=2-1]
	\arrow[from=1-2, to=2-2]
	\arrow[from=2-1, to=2-2]
\end{tikzcd}\]
is a homotopy pullback square. Since the horizontal arrows are fibrations, this is a pullback square.
\end{lemma}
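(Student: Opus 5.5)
The plan is to compare the two horizontal maps as fibrations over their bases and check that the induced map on homotopy fibers is an equivalence. Since $H$ is central in $G$, the short exact sequence $1 \to H \to G \to G/H \to 1$ gives a fibration sequence $BH \to BG \to B(G/H)$. By hypothesis, applying the plus-construction gives a fibration sequence $BH \to BG^+ \to B(G/H)^+$, where $BH^+ \simeq BH$ because $H$ is abelian, so its maximal perfect subgroup is trivial. The vertical maps $BG \to BG^+$ and $B(G/H) \to B(G/H)^+$ are the plus-construction maps. I will use that the plus-construction is functorial up to homotopy, so the square commutes up to a homotopy that I fix once and for all. I will also use that the map of fibration sequences restricts on fibers to a map $BH \to BH$.

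\textbf{Step 1.} I will identify the induced map on fibers as (homotopic to) the identity of $BH$. Both fiber inclusions $BH \to BG$ and $BH \to BG^+$ come from the inclusion $H \hookrightarrow G$, the second by naturality of the plus-construction. So the induced map $\mathrm{hofib}(BG \to B(G/H)) \to \mathrm{hofib}(BG^+ \to B(G/H)^+)$ is identified with $BH \to BH^+ \simeq BH$. I can check this on $\pi_1$, where both fibers are $K(H,1)$ and the map is the identity of $H$. A map between $K(H,1)$'s inducing an isomorphism on $\pi_1$ is a weak equivalence.

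\textbf{Step 2.} I will deduce that the square is a homotopy pullback. The standard criterion is that a commutative square is a homotopy pullback iff the induced map on homotopy fibers is a weak equivalence over every basepoint of $B(G/H)$. Since $B(G/H)$ is connected, one basepoint suffices. An alternative, more hands-on route is to compare long exact sequences: the map from $BG$ to the homotopy pullback $P = BG^+ \times^h_{B(G/H)^+} B(G/H)$ is a map of fibrations over $B(G/H)$ with fiber $BH$ in both cases. The five lemma on the long exact homotopy sequences, including the $\pi_0$ and $\pi_1$ terms, which are handled with the usual care for group actions, gives a weak equivalence $BG \to P$.

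\textbf{Step 3.} The last sentence says the square is a pullback because the horizontal arrows are fibrations. For this I will use the fact that when one leg of the cospan, here $BG^+ \to B(G/H)^+$, is a fibration, the strict pullback computes the homotopy pullback. So the strict pullback $BG^+ \times_{B(G/H)^+} B(G/H)$ is weakly equivalent to $BG$. This is the only form in which it is used in the proof of Theorem~\ref{thm::converse_on_PGL}: a map $BG \to BG^+$ together with a compatible map to $B(G/H)$ yields a map into the pullback, hence to $BG$ up to homotopy.

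\textbf{Main obstacle.} I expect the main obstacle to be Step 1, specifically making precise that the fiber comparison map really is the identity on $H$. This requires that the homotopy making the square commute is compatible with the chosen fiber inclusions. I would handle this by using a functorial model of the plus-construction, for instance Quillen's, applied to the whole fibration sequence, so the square commutes strictly. Then I would compute $\pi_1$ of the fiber map directly from the central extension.
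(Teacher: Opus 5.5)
Your argument is correct, but it follows a different route from the paper. The paper never compares fibers directly. For fibrations satisfying its condition $(\dagger)$ it attaches a class in $H^2(B;\pi_1F)$, namely the $d_2$-transgression of $\mathrm{id}_{\pi_1F}$ in the cohomological Serre spectral sequence, and identifies it, for $BH\to BG\to B(G/H)$, with the extension class $\beta$. It then forms the strict pullback $P$ and notes that $P$ is aspherical, with $\pi_1P$ a central extension of $G/H$ by $H$ of class $i^*\alpha$, where $\alpha$ is the class of the plus-constructed fibration. Finally it uses the isomorphism of Serre spectral sequences induced by the plus-construction map to get $i^*\alpha=\beta$, so $\pi_1P\cong G$ as extensions and $P\simeq BG$.

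You replace all of this with the fiberwise criterion for homotopy cartesian squares, or equivalently the five lemma for $BG\to P$ over $B(G/H)$. This is more elementary, and it proves the sharper statement that the canonical comparison map $BG\to P$ is an equivalence. Equality of extension classes alone only gives an abstract isomorphism of extensions. Moreover, the paper's step asserting that the two spectral sequences are isomorphic tacitly uses the same fiber identification you isolate in Step 1. What the paper's version buys is an explicit description of the invariant as the extension class in $H^2(B(G/H);H)$ pulled back from $B(G/H)^+$, which matches the obstruction-theoretic viewpoint of the rest of the paper.

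Two small remarks on your Step 1. First, the hypothesis should be read in Berrick's sense: the natural map $BH\simeq BH^+\to\mathrm{hofib}(BG^+\to B(G/H)^+)$ is an equivalence. With that reading Step 1 is immediate and needs no $\pi_1$ computation. Computing that $\pi_1$-map by hand would not be routine, since it involves $\pi_2B(G/H)^+$ and is essentially the content of Berrick's criterion. Second, Quillen's cell-attaching plus construction is only functorial up to homotopy. For a strictly commuting square you should use a functorial model, e.g.\ nullification with respect to a universal acyclic space, or simply carry the chosen homotopies along.
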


\begin{remark}\label{rmk::pullback}
   This lemma follows from the bottom diagram on Page 54 of \cite{Berrick1982}. Furthermore, (5.11) of \cite{Berrick1982} shows the square is homotopy bicartesian.
\end{remark}

For completeness, we include a proof below.
\begin{proof}
    For a fibration $F \to E \to B$ where:
    \begin{itemize}
        \item $B$ is path-connected.
        \item $\pi_1(B)$ acts trivially on $\pi_n(F)$. 
        \item $F$ is connected simple with $\pi_1(F)$.
    \end{itemize}
    (call these conditions $(\dagger)$), there is a canonical class in $H^2(B; \pi_1(F))$ given as follows. Consider the cohomological Serre spectral sequence of the fibration $F \to E \to B$, there is a $d_2$ differential arising
    \[d_2: E^{0,1}_2 = H^0(B; H^1(F; \pi_1F)) \cong H^1(F; \pi_1 F) \cong \operatorname{Hom}(\pi_1 F; \pi_1 F)\]\[ \to E^{2,0}_2 = H^2(B; H^0(F; \pi_1 F)) = H^2(B; \pi_1 F). \]
    The image of the identity map $\pi_1 F \to \pi_1 F$ under $d_2$ defines a canonical class in $H^2(B; \pi_1 F)$. Note that when we consider the fibration $BH \to BG \to BG/H$, which satisfies $(\dagger)$, this class is the cohomology class $\beta$ classifying the central extension $H \to G \to G/H$.\\

    The induced fibration $BH \to BG^+ \to BG/H^+$ also satisfies the condition $(\dagger)$ above, and hence has a cohomology class $\alpha$. Pullbacks respect this cohomology class. Note that a pull-back 
\[\begin{tikzcd}
	P & {BG/H} \\
	{BG^+} & {BG/H^+}
	\arrow[from=1-1, to=1-2]
	\arrow[from=1-1, to=2-1]
	\arrow[from=1-2, to=2-2]
	\arrow[from=2-1, to=2-2]
\end{tikzcd}\]
implies that there is a fibration $BH \to P \to BG/H$, which must imply $P \cong B\pi_1 P$ from a long exact sequence computation. It suffices to show the extension class $\gamma$ classifying $H \to \pi_1(P) \to G/H$ is the same as the class $\beta$ for $H \to G \to G/H$.\\

Let $i: BG/H \to BG/H^+$ denote the plus-construction map. By naturality we have that $i^*(\alpha) = \gamma$. On the other hand, clearly the Serre spectral sequence for the fibration $BH \to BG^+ \to BG/H^+$ is isomorphic to the Serre spectral sequence for the fibration $BH \to BG \to BG/H$ via the plus-construction map. This isomorphism would send $i^*(\alpha) = \beta$ in the $E_2$-page, in particular. Thus, we have that $\beta = \gamma$, and hence $P = BG$ and the initial square is a pullback square.
\end{proof}

\subsection{Linearization Obstructions in the Unitary Case}\label{sec::unitary_obstruction}

For the unitary case over $\mathbb{C}$, the authors of \cite{qca_grp_space_spectrum} constructed a certain symmetric monoidal category $\mathbf{C}^*(X)$ of unitary quantum spin systems over $X$. The \textit{space of unitary QCA} $\mathbf{Q}^*(X)$ is defined to be
\[\mathbf{Q}^*(X) = \Omega K(\mathbf{C}^*(X))_1\]
such that $\pi_0(\mathbf{Q}^*(X))$ is the classical QCA classification group of $X$ in the physics literature (see Theorem 5.10 of \cite{qca_grp_space_spectrum}), denoted $\mathcal{Q}^*(X)/\mathcal{C}^*(X)$. Furthermore, in the same theorem, $K(\mathbf{C}^*(X))_1$ may be identified as $B\mathcal{Q}^*(X)^+$. Theorem 5.11 of \cite{qca_grp_space_spectrum} also gives an analogous $\Omega$-spectrum of the form $\{\mathbf{Q}^*(X \times \Zbb^n)\}_{n \geq 0}$.\\

We can obtain the unitary analogs of Theorem~\ref{thm::fin_gp_null} and Theorem~\ref{thm::converse_on_PGL} as follows.

\begin{theorem}
Let $G$ be a rationally acyclic group. Suppose an unitary QCA representation $\varphi: G \to \mathcal{Q}^*(X, q)$ is stably linearizable or weakly linearizable with respect to (\ref{eq::unitary_unstable_lift}), then the stabilization $\varphi^{st}: BG \to K(\mathbf{C}^*(X))_1$ is null-homotopic.
\end{theorem}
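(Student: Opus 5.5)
The plan is to run the proof of Theorem~\ref{thm::fin_gp_null} verbatim, substituting the unitary objects: $\operatorname{U}_{\otimes}(X)$ for $\operatorname{GL}_{\otimes}(X;\Fbb)$, $\operatorname{PU}(X)$ for $\operatorname{PGL}(X;\Fbb)$, $\mathcal{Q}^*(X)$ for $\mathcal{Q}(X)$, and $K(\mathbf{C}^*(X))_1\simeq B\mathcal{Q}^*(X)^+$ for $K(\mathbf{C}(X))_1$. All groups are treated as discrete, so $B\operatorname{U}_{\otimes}(X)$ and the other classifying spaces are classifying spaces of discrete groups. This means the homological arguments go through without any topological input.

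First I establish the unitary analogs of Lemma~\ref{lem::commutator_perfect} and Lemma~\ref{lem::BGL_rational}.

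For the commutator statement, I introduce the unitary version of Definition~\ref{def::key_category}: the category $\mathcal{C}^*$ whose objects are functions $f\colon X\to\Zbb_{\ge 0}$, whose morphisms are $\prod_x \operatorname{U}(f(x))$, and which carries pointwise $\oplus$ and $\otimes$. Let $\mathcal{D}^*$ be the full subcategory with $f\ge 1$. Then $(\mathcal{D}^*,\otimes)$ is again a skeletal permutative category to which Proposition 1.5 of \cite{951cf383-5a4d-3775-b8b2-0bafd2f162fe} applies. Hence the commutator subgroup of $\operatorname{U}_{\otimes}(X)$ is perfect, and $B\operatorname{U}_{\otimes}(X)^+\simeq K(\mathcal{D}^*)_1$ is a group-like $H$-space. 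Replacing the morphisms by $\prod_x\operatorname{PU}(f(x))$ gives the same conclusion for $\operatorname{PU}(X)$.

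For rationality, $B\mathcal{C}^*$ is an $\Ebb_\infty$-ring space, so the proof of Lemma~\ref{lem::BGL_rational} applies word for word. The key identity is $x\bullet[p]=y^{[p]}$ from Proposition II.1.5 of \cite{CohenLadaMay1976}, combined with the degree argument. This shows that $\widetilde H_*(K(\mathcal{D}^*)_1;\Fbb_p)=0$ for every prime $p$, so its integral homology is rational. Lemma~\ref{lem::h_space_rational} then gives rational homotopy groups.

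Next I prove the unitary Lemma~\ref{lem::factorized_null_homotopuic}. If $\varphi$ is linearly onsite, the plus-construction universal property factors $\varphi^{st}$ through $B\operatorname{U}_{\otimes}(X)^+$. This uses that $\operatorname{PU}(X)\to\mathcal{Q}^*(X)$ sends commutators into $\mathcal{C}^*(X)$, the commutator subgroup of $\mathcal{Q}^*(X)$ by Proposition~\ref{prop::circuit_type}, which is what $B\mathcal{Q}^*(X)^+$ kills. Because $G$ is rationally acyclic, $\operatorname{Hom}(G,\pi_1)=0$, so the map lifts to the universal cover. The obstruction groups $H^{n+1}(BG;\pi_{n+1})$ then have rational coefficients and vanish.

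Corollary~\ref{cor::lin_factorizable_null} transfers directly. Conjugation by $\beta\in\mathcal{Q}^*(X,q)$ changes $B\varphi$ only by a free homotopy, and for a connected $H$-space target free and based null-homotopy coincide.

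Finally, for stable linearizability I use the $H$-space structure on $B\mathcal{Q}^*(X)^+$ induced by tensor product. This gives $[(\varphi\otimes\psi)^{st}]=[\varphi^{st}]+[\psi^{st}]$ in $[BG,B\mathcal{Q}^*(X)^+]_*$, and the second term vanishes. Weak linearizability follows by the same factorization through $B\operatorname{U}_\otimes(X)^+$ up to conjugation.

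The step I expect to need the most care is checking that Proposition 1.5 of \cite{951cf383-5a4d-3775-b8b2-0bafd2f162fe} genuinely applies to the unitary categories. In particular, the discrete groups $\operatorname{U}(n)$ must satisfy its hypotheses: the stabilization maps $\operatorname{U}(n)\to\operatorname{U}(nm)$, $u\mapsto u\otimes I_m$, must have the required conjugation and commutator properties. I would first verify the hypotheses directly by noting that permutation matrices are unitary, so the same block-swapping arguments used for $\operatorname{GL}$ apply inside $\operatorname{U}$. The rationality argument itself is purely formal in the $\Ebb_\infty$-ring structure and should pose no new difficulty.
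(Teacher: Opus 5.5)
Your proposal is correct and follows essentially the same route as the paper: build the unitary analog of the bipermutative category of Definition~\ref{def::key_category}, use Weibel's Proposition~1.5 to identify $B\operatorname{U}_{\otimes}(X)^+$ with the identity component of its $K$-theory, rerun the $\Ebb_\infty$-ring-space rationality argument of Lemma~\ref{lem::BGL_rational}, and then repeat the proof of Theorem~\ref{thm::fin_gp_null} (the paper only sketches these steps in three lines). Your version is more careful than the paper's on two points: you work with $K(\mathcal{D}^*)_1$ under $\otimes$ rather than loosely with $K(\mathcal{C}^*)$, and you explicitly flag that the hypotheses of Weibel's proposition must be checked for the discrete unitary groups.
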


\begin{proof}
We modify the construction to Definition~\ref{def::key_category} to consider a groupoid $\mathcal{C}^*$ whose objects are assignments to each $x \in X$ a vector space $\Cbb^{f(x)}$ and the  automorphisms are given by $\prod_{x \in X} U_{f(x)}$, with operations given by direct sum and tensor product. Proposition 1.5 of \cite{951cf383-5a4d-3775-b8b2-0bafd2f162fe} still applies to identify $K(\mathcal{C}^*)$ as $\operatorname{BU}_{\otimes}(X)^+$. A nearly identical argument as in the proof of Lemma~\ref{lem::BGL_rational} shows that $K(\mathcal{C}^*)$ is rational. Thus, the rest of the argument in Theorem~\ref{thm::fin_gp_null} proceeds similarly.
\end{proof}

\begin{theorem}
Let $G$ be a group. Suppose an unitary QCA representation $\varphi: G \to \mathcal{Q}^*(X, q)$ lands in $\prod_{x \in X} \operatorname{PU}(q_x)$, and the induced map $\varphi': BG \to \operatorname{BPU}(X) \to \operatorname{BPU}(X)^+$ is null-homotopic, then $\varphi$ is weakly linearizable with respect to (\ref{eq::unitary_unstable_lift}). If $G$ is furthermore finite, then $\varphi$ is stably linearizable with respect to (\ref{eq::unitary_unstable_lift}).
\end{theorem}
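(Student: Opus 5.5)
The plan is to transport the proof of Theorem~\ref{thm::converse_on_PGL} to the unitary setting, replacing $\operatorname{GL}_{\otimes}(X;\Fbb)\to\operatorname{PGL}(X;\Fbb)$ by the central extension
\[
1\longrightarrow U(1)\longrightarrow \operatorname{U}_{\otimes}(X)\longrightarrow \operatorname{PU}(X)\longrightarrow 1,
\]
where all groups are taken as discrete groups. The kernel is central because an element of $\prod_x U(q_x)$ that acts trivially by conjugation is a tuple of scalars. After stabilization in the colimit, only a single global phase survives. The reason is that the transition maps $U(q_x)\to U(q_xr_x)$, given by $u\mapsto u\otimes I$, identify scalar tuples compatibly, and products of local phases act as a single phase on the tensor product.

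\textbf{Step 1: perfect commutator subgroups.} First check that the commutator subgroups of $\operatorname{U}_{\otimes}(X)$ and $\operatorname{PU}(X)$ are perfect, so that the plus-constructions are taken with respect to them. The argument is the same as for Lemma~\ref{lem::commutator_perfect}. Apply Proposition~1.5 of \cite{951cf383-5a4d-3775-b8b2-0bafd2f162fe} to the permutative category $\mathcal{D}^*$, which is $\mathcal{C}^*$ restricted to $f(x)\ge 1$, with morphisms $\prod_x U_{f(x)}$, respectively $\prod_x PU_{f(x)}$, and monoidal structure $\otimes$. This was already used in the preceding unitary theorem.

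\textbf{Step 2: the fibration.} Next show that $B U(1)\to \operatorname{BU}_{\otimes}(X)^+\to \operatorname{BPU}(X)^+$ is a fibration. As in the proof of Theorem~\ref{thm::converse_on_PGL}, I would invoke the main theorem of \cite{BERRICK1983172} for central extensions. Its hypothesis, as in the algebraic case, is that the commutator subgroup of the total group maps onto the perfect radical of the quotient compatibly. That holds by Step~1 together with surjectivity of $\operatorname{U}_{\otimes}(X)\to\operatorname{PU}(X)$.

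I expect this to be the main obstacle. One must confirm that Berrick's hypotheses are met in the discrete unitary setting. Concretely, the preimage of the commutator subgroup $[\operatorname{PU}(X),\operatorname{PU}(X)]$ must behave well, and one must see that $BU(1)$ is unaffected by the plus-construction, since $U(1)$ is abelian. If Berrick's theorem does not apply directly, I would instead verify that the fiber is $BU(1)$. To do this, compare Serre spectral sequences using the homology isomorphisms of the plus-construction, in the same spirit as the proof of Lemma~\ref{lem::homotopy_pullback_zero}.

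\textbf{Step 3: lift and pull back.} With the fibration in hand, the null-homotopy of $\varphi'\colon BG\to\operatorname{BPU}(X)^+$ lifts, by homotopy lifting, to a map $BG\to \operatorname{BU}_{\otimes}(X)^+$. This lift is compatible with $B\varphi\colon BG\to\operatorname{BPU}(X)$. Lemma~\ref{lem::homotopy_pullback_zero}, applied with $H=U(1)$, shows that the square formed by $\operatorname{BU}_{\otimes}(X)$, $\operatorname{BPU}(X)$ and their plus-constructions is a homotopy pullback. Hence the pair of maps yields a map $BG\to\operatorname{BU}_{\otimes}(X)$ lying over $B\varphi$. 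Taking $\pi_1$ gives a homomorphism $G\to\operatorname{U}_{\otimes}(X)$ lifting $\varphi$ up to conjugation, which is weak linearizability with respect to (\ref{eq::unitary_unstable_lift}).

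\textbf{Step 4: finite $G$.} If $G$ is finite, its image in the filtered colimit $\operatorname{U}_{\otimes}(X)$ lies in some finite stage $\prod_x U(q_xr_x)$. This gives a factorization of $\varphi\otimes c_{id}$ through $\prod_x U(q_xr_x)$, possibly after conjugation. That is trivial stable linearizability, and hence stable linearizability.
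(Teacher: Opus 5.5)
Your proposal is essentially the paper's proof: it uses the central extension $\operatorname{U}_{\otimes}(X)\to\operatorname{PU}(X)$, Berrick's criterion \cite{BERRICK1983172} for a fibration of plus-constructions, lifting of the null-homotopy, the pullback square of Lemma~\ref{lem::homotopy_pullback_zero}, and finiteness of $G$ to reach a finite stage. One correction: the kernel is $\prod_{x\in X}U(1)$, not a single $U(1)$, since the transition maps $\lambda I_{q_x}\mapsto\lambda I_{q_xr_x}$ are the identity on scalar tuples, and distinct tuples of phases are distinct elements of $\prod_x U(q_x)$ even though they induce the same trivial automorphism; because Berrick's criterion and Lemma~\ref{lem::homotopy_pullback_zero} only use that the kernel is central and abelian, your argument goes through verbatim with $H=\prod_{x\in X}U(1)$.
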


\begin{proof}
The exact sequence $U(1) \to U_{\otimes} \to PU$ induces a surjective map $U_{\otimes}(X) \to \operatorname{PU}(X)$ with central kernel. This satisfies the criterion in \cite{BERRICK1983172} and hence gives a fibration $BU_{\otimes}(X)^+ \to B\operatorname{PU}(X)^+$. The proof then proceeds similarly as in Theorem~\ref{thm::converse_on_PGL} with the relevant square still being a pullback, due to Lemma~\ref{lem::homotopy_pullback_zero}.
\end{proof}

\section{Cohomological Interpretation of Linearization}\label{sec::cohomology}
Adapting a construction of Dror, we define a complete set of universal obstruction classes for the stable linearization of a QCA representation. The construction works over any field and also in the unitary case. These classes take values in group cohomology, with coefficients in the classification groups of QCA, together with $\mu(\Fbb)$ (the roots of unity in $\Fbb$) or $U(1)$ in the top degree. This agrees with predictions from~\cite{Tu_2026}.

A noteworthy feature of this construction is that it produces a universal class in each degree. Consequently, any otherwise constructed obstruction class with values in the same cohomology group is weaker than or equal to ours: if the corresponding universal obstruction class vanishes, then any cohomology class obtained by other means must vanish as well. 

\subsection{Dror's Thesis}\label{sec::dror}
Consider a group $Q$ whose commutator subgroup $C=[Q,Q]$ is perfect. Examples include $Q=GL(R)$, relevant to algebraic $K$-theory, $Q=\CQ(X)$, the total QCA group over a field $\mathbb F$, as well as, $Q=\CQ^*(X)$, the unitary QCA group. In these examples, the commutator subgroups are respectively the elementary subgroup $E(R)$, the group of special circuits $\CalC^{sp}(X)$ and the group of unitary circuits $\CalC^*(X)$. Dror’s thesis \cite{Dror1972} gives an explicit construction, in terms of group cohomology, of the homotopy fiber of the natural map
\begin{equation}
    BQ\rightarrow BQ^+.
\end{equation}
We record this construction below.
\begin{construction}[Killing homology]
\label{constr:homological-killing}
Let $n\geq 2$, and let $X_n$ be a path-connected, homologically $n$-connected space. Set
\begin{equation}
A_n:=H_{n+1}(X_n;\mathbb Z).
\end{equation}
Since $H_n(X_n;\mathbb Z)=0$, the universal coefficient theorem gives a natural identification
\begin{equation}
H^{n+1}(X_n;A_n)\cong \operatorname{Hom}(H_{n+1}(X_n;\mathbb Z),A_n).
\end{equation}
Let $\iota_n\in H^{n+1}(X_n;A_n)$ be the universal class corresponding to the identity map $\operatorname{id}_{A_n}:A_n\to A_n$. We define $X_{n+1}$ to be the total space of the principal $K(A_n,n)$-fibration
\begin{equation}
K(A_n,n)\longrightarrow X_{n+1}\longrightarrow X_n
\end{equation}
classified by $\iota_n$. In other words, $X_{n+1}$ is the homotopy fiber of $X_n \to K(A_n, n+1)$.
\end{construction}

\begin{lemma} \label{lem:homological-killing}
The space $X_{n+1}$ is homologically $(n+1)$-connected.
\end{lemma}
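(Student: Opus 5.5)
The plan is to run the homological Serre spectral sequence of the principal fibration
\[
K(A_n,n)\longrightarrow X_{n+1}\longrightarrow X_n
\]
and show that in total degrees $\le n+1$ the only nonzero entries are two copies of $A_n$, which cancel against each other by a transgression.

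First I will deal with the local coefficients. The fibration is pulled back, along the classifying map $f\colon X_n\to K(A_n,n+1)$ representing $\iota_n$, from the path-space fibration $PK(A_n,n+1)\to K(A_n,n+1)$. That base is simply connected, so the local system $H_q(K(A_n,n);\mathbb Z)$ on $X_n$ is the pullback of a trivial system and is therefore trivial. Hence
\[
E^2_{p,q}=H_p\bigl(X_n;H_q(K(A_n,n);\mathbb Z)\bigr).
\]

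Next I will list the $E^2$ terms with $p+q\le n+1$. For the fiber (using $n\ge 2$ and Hurewicz), $\tilde H_q(K(A_n,n))$ vanishes for $q<n$, equals $A_n$ for $q=n$, and vanishes for $q=n+1$. For the base, $\tilde H_p(X_n;\mathbb Z)=0$ for $p\le n$ by hypothesis, and $H_{n+1}(X_n;\mathbb Z)=A_n$. By the universal coefficient theorem, $H_p(X_n;M)=0$ for $1\le p\le n$ and any abelian group $M$. The row $q=0$ therefore contributes only $\mathbb Z$ at $p=0$ and $A_n$ at $(n+1,0)$ in this range. The row $q=n$ contributes $A_n$ at $(0,n)$; the entry at $(1,n)$ vanishes. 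All rows with $0<q<n$ are zero, and the entry at $(0,n+1)$ is zero. The only possible differential among these entries is the transgression
\[
d_{n+1}\colon E^{n+1}_{n+1,0}=H_{n+1}(X_n)\cong A_n\longrightarrow E^{n+1}_{0,n}=H_n(K(A_n,n))\cong A_n .
\]
If $d_{n+1}$ is an isomorphism, both entries die, and $\tilde H_i(X_{n+1};\mathbb Z)=0$ for all $i\le n+1$, which is the claim. Path-connectedness of $X_{n+1}$ also follows, since the fiber and the base are connected.

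The main step is showing that $d_{n+1}$ is an isomorphism. I will use naturality of the Serre spectral sequence with respect to the map of fibrations from ours to the path fibration over $K(A_n,n+1)$, which is the identity on fibers. In the path fibration the total space is contractible, so the transgression $H_{n+1}(K(A_n,n+1))\to H_n(K(A_n,n))$ is an isomorphism, namely the identity of $A_n$ under the Hurewicz identifications. Naturality then gives
\[
d_{n+1}^{X}=d_{n+1}^{\mathrm{path}}\circ f_*\colon H_{n+1}(X_n)\to H_n(K(A_n,n)).
\]
It remains to see that $f_*$ is an isomorphism on $H_{n+1}$. Since $H_{n+1}(K(A_n,n+1))\cong A_n$ and $H_n(X_n)=0$, the class $f^*(\iota)=\iota_n$ corresponds under the universal coefficient theorem exactly to the homomorphism $f_*\colon H_{n+1}(X_n)\to A_n$. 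By construction this homomorphism is $\mathrm{id}_{A_n}$. The care needed here is in matching these universal-coefficient and Hurewicz identifications consistently; once that is done, $d_{n+1}$ is an isomorphism and the lemma follows.
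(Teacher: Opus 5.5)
Your proposal is correct and follows the same route as the paper: the Serre spectral sequence of $K(A_n,n)\to X_{n+1}\to X_n$, the vanishing of every entry of total degree $\le n+1$ other than $E_{0,0}$, $E_{n+1,0}\cong A_n$ and $E_{0,n}\cong A_n$, and then showing the transgression $d^{n+1}$ is an isomorphism because it is identified with $\iota_n$. Your remark that the local system is trivial and your explicit comparison with the path fibration over $K(A_n,n+1)$ spell out what the paper compresses into ``by naturality of the Serre spectral sequence.''
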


Before proving the lemma, let us see how it gives a construction of the homotopy fiber. Let $X_1=BC$. Since $H_1(X_1;\mathbb Z)=C/[C,C]=0$, the universal coefficient theorem gives a canonical isomorphism
\begin{equation}
H^2(X_1;H_2(X_1))\cong
\operatorname{Hom}(H_2(X_1),H_2(X_1)).
\end{equation}
Let $\iota_1\in H^2(X_1;H_2(X_1))$ be the class corresponding to the identity map in the right-hand side. This determines the universal central extension of the group $C$
\begin{equation}
    1\longrightarrow A_1\longrightarrow \widetilde C\longrightarrow C\longrightarrow 1,
\end{equation} where $A_1=H_2(X_1)$.\\

The name is justified by the following universal property. Let
$1\to A\to E\to C\to 1$ be any central extension of $C$. Its extension class
$\alpha\in H^2(X_1;A)$ corresponds, under the universal coefficient isomorphism
$H^2(X_1;A)\cong \operatorname{Hom}(H_2(X_1),A)$, to a homomorphism
$\phi_\alpha:A_1\to A$. Then there is a unique homomorphism
$\widetilde C\to E$ over $C$, and its restriction to $A_1$ is $\phi_\alpha$:
\begin{equation}
\begin{tikzcd}
1 \arrow[r] & A_1 \arrow[r] \arrow[d, "\phi_\alpha"'] &
\widetilde C \arrow[r] \arrow[d] &
C \arrow[r] \arrow[d, equal] &
1 \\
1 \arrow[r] & A \arrow[r] &
E \arrow[r] &
C \arrow[r] &
1 .
\end{tikzcd}
\end{equation}
\begin{prop}\label{prop:superperfect}
The group $\widetilde C$ is superperfect; that is,
\begin{equation}
H_1(\widetilde C;\mathbb Z)=H_2(\widetilde C;\mathbb Z)=0.
\end{equation}
\end{prop}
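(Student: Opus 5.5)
We need to show that the universal central extension $\widetilde C$ of the perfect group $C$ is superperfect. The plan is to first show $\widetilde C$ is perfect, and then get $H_2(\widetilde C)=0$ from the five-term exact sequence together with the universal property just recorded.

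\emph{Perfectness.} Let $P=[\widetilde C,\widetilde C]$. Since $C$ is perfect, $P$ surjects onto $C$, so $\widetilde C = P\cdot A_1$ with $A_1$ central. This gives $[\widetilde C,\widetilde C]=[P,P]$, so $P$ is perfect. It remains to see $P=\widetilde C$. The restriction $1\to A_1\cap P\to P\to C\to 1$ is a central extension of $C$. I would compare the two maps $\widetilde C\to \widetilde C$ over $C$: the identity, and the composite of the universal map $\widetilde C\to P$ with the inclusion $P\hookrightarrow\widetilde C$. Uniqueness in the universal property forces them to agree, so the inclusion $P\hookrightarrow \widetilde C$ is surjective and $\widetilde C$ is perfect. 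The uniqueness itself is standard: two lifts differ by a homomorphism $\widetilde C\to A$, which factors through the abelianization. In the paper's cohomological formulation, one alternatively checks that $\operatorname{Hom}(H_1(\widetilde C),A)=0$. I would state this uniqueness explicitly as a consequence of the extension class $\iota_1$ being universal, which the paper has just asserted.

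\emph{Vanishing of $H_2$.} Apply the Hochschild--Serre five-term exact sequence (equivalently, the Serre spectral sequence of $BA_1\to B\widetilde C\to BC$) to the central extension:
\[
H_2(\widetilde C)\to H_2(C)\xrightarrow{\ \delta\ } A_1\to H_1(\widetilde C)\to H_1(C)\to 0 .
\]
The transgression $\delta$ is the homomorphism corresponding to the extension class under universal coefficients, which is $\mathrm{id}_{A_1}$ by construction. Hence $\delta$ is an isomorphism, and therefore $H_2(\widetilde C)\to H_2(C)$ is zero. This recovers $H_1(\widetilde C)=0$ again, consistent with the first step. To show $H_2(\widetilde C)=0$, take the universal central extension $1\to H_2(\widetilde C)\to \widehat C\to \widetilde C\to 1$ of the perfect group $\widetilde C$, which exists since $\widetilde C$ is perfect. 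The composite $\widehat C\to\widetilde C\to C$ is then a central extension of $C$. Centrality needs an argument: the kernel $K$ is a central-by-central extension, and for perfect $\widehat C$ the three-subgroup lemma gives $[K,\widehat C]=1$, since $[[\widehat C,\widehat C],K]\subseteq [[\widehat C,K],\widehat C]=1$. The universal property of $\widetilde C$ then gives a map $\widetilde C\to\widehat C$ over $C$. Composing with $\widehat C\to\widetilde C$ yields an endomorphism of $\widetilde C$ over $C$, which equals the identity by the uniqueness established above. So $\widehat C\to \widetilde C$ splits. Since $\widehat C$ is perfect, a central extension of it that splits is trivial, and its kernel $H_2(\widetilde C)$ must vanish: the image of the section together with the central kernel generates $\widehat C$, so perfectness forces $\widehat C$ to equal the image of the section.

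The main obstacle is the centrality step in the last paragraph, namely the three-subgroup lemma argument that a central extension of a central extension of a perfect group is central. Everything else is formal, given the uniqueness in the universal property and the identification of the transgression with $\iota_1$.
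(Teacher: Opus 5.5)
Your argument is essentially the paper's. Perfectness comes from the five-term exact sequence, whose map $H_2(C)\to A_1$ is $\mathrm{id}_{A_1}$. Then $H_2(\widetilde C)=0$ follows by regarding a central extension of $\widetilde C$ as a central extension of $C$, splitting it via the universal property, and concluding that the extension classified by the identity in $H^2(\widetilde C;H_2(\widetilde C))\cong\Hom(H_2(\widetilde C),H_2(\widetilde C))$ is trivial. Your ending, ``split extension with perfect total space has trivial kernel'', is equivalent to this.

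One correction: your first-paragraph proof of perfectness is circular as you justify it. Uniqueness of lifts over $C$ holds exactly because every homomorphism $\widetilde C\to A$ vanishes, i.e.\ because $\widetilde C$ is already perfect. So the five-term sequence must be what establishes $H_1(\widetilde C)=0$, as in the paper. Only after that is the uniqueness you invoke in the $H_2$ step legitimate.

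Also, the centrality step you flag as the main obstacle is easier than the three-subgroup lemma. For $k$ in the preimage of $A_1$, the map $g\mapsto[g,k]$ is a homomorphism $\widetilde C\to A$. It is trivial because $\widetilde C$ is perfect. This works for an arbitrary central extension of $\widetilde C$, so you need neither perfectness of $\widehat C$ nor to restrict to the universal one.
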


\begin{proof}
We first show that $\widetilde C$ is perfect. The  five-term exact sequence for the central extension
$1\to A_1\to \widetilde C\to C\to 1$ is
\begin{equation}
H_2(\widetilde C;\mathbb Z)\longrightarrow H_2(C;\mathbb Z)\longrightarrow A_1\longrightarrow
H_1(\widetilde C;\mathbb Z)\longrightarrow H_1(C;\mathbb Z)\longrightarrow 0.
\end{equation}
Since $C$ is perfect, $H_1(C;\mathbb Z)=0$. Moreover, by construction of the universal central extension, the map
$H_2(C;\mathbb Z)\to A_1$ is the identity map on
$A_1=H_2(C;\mathbb Z)$. Exactness therefore gives
$H_1(\widetilde C;\mathbb Z)=0$.

It remains to prove that $H_2(\widetilde C;\mathbb Z)=0$. Let
$1\to A\to E\to \widetilde C\to 1$ be any central extension. Since
$\widetilde C$ is perfect, the composite $E\to \widetilde C\to C$ is again
a central extension of $C$. Indeed, the
preimage of $A_1\subset \widetilde C$ is central in $E$; commutators with
such lifts define homomorphisms $\widetilde C\to A$, and these vanish
because $\widetilde C$ is perfect.

By the universal property above, there is a unique map
$\widetilde C\to E$ over $C$. Composing with $E\to \widetilde C$ gives a
map $\widetilde C\to \widetilde C$ over $C$, hence the identity by the same
universal property. Thus, the central extension $E$ of $\widetilde C$ splits.

Since $\widetilde C$ is perfect, central extensions of $\widetilde C$ by an
abelian group $A$ are classified by
$H^2(B\widetilde C;A)\cong \operatorname{Hom}(H_2(\widetilde C;\mathbb Z),A)$.
Taking $A=H_2(\widetilde C;\mathbb Z)$, the identity map must correspond to
a split extension, hence to the zero class. Therefore
$H_2(\widetilde C;\mathbb Z)=0$.
\end{proof}

Let $X_2=B\widetilde C$. Proposition~\ref{prop:superperfect} implies that $X_2$ is homologically $2$-connected. We now apply Construction~\ref{constr:homological-killing} repeatedly. Thus, for each $n\geq 2$, we set $A_n:=H_{n+1}(X_n;\mathbb Z)$ and define $X_{n+1}$ by a principal fibration
\begin{equation}
K(A_n,n)\longrightarrow X_{n+1}\longrightarrow X_n .
\end{equation}
This gives a tower
\begin{equation}\label{eq:Dror}
\cdots \longrightarrow X_{n+1}\longrightarrow X_n\longrightarrow \cdots \longrightarrow X_3\longrightarrow X_2\longrightarrow X_1\longrightarrow X_0\coloneqq BQ.
\end{equation}
By Lemma~\ref{lem:homological-killing}, the space $X_n$ is homologically $n$-connected for every $n\geq 2$.

\begin{theorem}[Dror \cite{Dror1972}]\label{thm::dror}
With
\begin{equation}
X_\infty:=\operatorname*{holim}_{n\geq 0} X_n,
\end{equation}
there is a fibration sequence
\begin{equation}
X_\infty\longrightarrow BQ\longrightarrow BQ^+.
\end{equation}
\end{theorem}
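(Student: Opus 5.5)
The plan is to identify $X_\infty$ with the homotopy fiber $F$ of the plus-construction map $i\colon BQ\to BQ^+$ by a homology-comparison argument. I will build a compatible map from the tower to $F$, show the limit is acyclic with fundamental group $C$, and then apply a uniqueness statement for acyclic spaces over $BC$.

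First I describe the tower as a tower over $BQ$. The map $X_1=BC\to BQ$ is the covering space corresponding to $C\subseteq Q$. Each $X_{n+1}\to X_n$ is a principal fibration with fiber $K(A_n,n)$, and $X_2\to X_1$ is $B\widetilde C\to BC$ with fiber $K(A_1,1)$. Consequently every $X_n$ for $n\ge 1$ maps to $BQ$, and $\pi_1(X_n)=\widetilde C$ for $n\ge 2$. For $n\ge 2$ the fiber $K(A_n,n)$ is simply connected, so passing to $X_{n+1}$ does not change $\pi_1$.

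Second I show that $X_\infty$ is acyclic. The map $X_{n+1}\to X_n$ is $(n-1)$-connected, so the Milnor $\lim^1$ sequence, or a direct connectivity count, gives $H_k(X_\infty)\cong H_k(X_m)$ for $m\gg k$. Lemma~\ref{lem:homological-killing} makes the latter zero for $1\le k\le m$. Hence $\widetilde H_*(X_\infty;\mathbb Z)=0$. I expect the proof of Lemma~\ref{lem:homological-killing} itself to be a Serre spectral sequence computation. In the range up to degree $n+1$, the only nonzero entries are the bottom row and the $E^{0,n}$ column carrying $A_n$. The transgression $d_{n+1}\colon H_{n+1}(X_n)\to H_n(K(A_n,n))=A_n$ is the identity because $\iota_n$ was chosen that way. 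Therefore both $H_n(X_{n+1})$ and $H_{n+1}(X_{n+1})$ vanish. The base $X_n$ need not be simply connected, so I will use local coefficients. The fibration is principal, so $\pi_1$ acts trivially on the fiber's homology and the argument goes through.

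Third I compare $X_\infty$ with $F$. Because $C$ is perfect, $i$ kills exactly $C$, so $\pi_1(F)\to\pi_1(BQ)$ has image $C$. The standard fact is that $F$ is acyclic and its fundamental group is the universal central extension of $C$. In particular $F\to BQ$ factors through $BC$, and indeed through $B\widetilde C$. I plan to use the characterization that an acyclic space $Y$ with a map $Y\to BQ$ whose $\pi_1$-image is $C$ is homotopy equivalent over $BQ$ to $F$.

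The obstacle is producing the comparison map $X_\infty\to F$ and proving it is a weak equivalence. I will first obtain the map from the fact that $BQ\to BQ^+$ restricted to $X_\infty$ is null-homotopic. This holds because $X_\infty$ is acyclic and $BQ^+$ is simple: obstruction theory in $H^{k+1}(X_\infty;\pi_k(BQ^+))$ applies, with the $\pi_1$-obstruction handled because $\pi_1(X_\infty)\to Q$ lands in $C$, and $C$ dies in $\pi_1(BQ^+)$. Once the map exists, I will use homology of the universal covers. The induced map of fibrations over $BQ$ makes $X_\infty\to F$ a homology isomorphism with every local coefficient system pulled back from $BQ$. On $\pi_1$ it is the identity of $\widetilde C$, by the universal property recalled above, since both groups are universal central extensions of $C$. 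Finally, Whitehead's theorem on universal covers, applied with the acyclicity and nilpotence of the fiberwise comparison, upgrades the map to a weak equivalence. This gives the fibration sequence $X_\infty\to BQ\to BQ^+$.
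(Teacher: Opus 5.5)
Your first two steps are fine. The gap is in the third step, which you rightly single out as the whole difficulty: it is not carried out, and the tools you name cannot carry it out.

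\textbf{The characterization is false.} An acyclic space mapping to $BQ$ with $\pi_1$-image $C$ is not determined up to equivalence over $BQ$. For example, take $F\vee A'$ with $A'$ any non-contractible acyclic space mapped constantly; its $\pi_1$-kernel contains the nontrivial perfect group $\pi_1(A')$, whereas that of $F$ is the abelian group $A_1$.

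\textbf{Fixing $\pi_1$ does not help.} Suppose $A_1=H_2(C)\neq 0$, pick a nontrivial $t\in A_1\subset\widetilde C$, and set $A=(F\vee S^2)\cup_\gamma e^3$ with $\gamma=(2-t)\cdot[S^2]\in\pi_2(F\vee S^2)$.
\begin{itemize}
\item The retraction $A\to F$ lies over $BQ$ and is an isomorphism on $\pi_1$.
\item It is a homology isomorphism for every local system pulled back from $BQ$: the central element $t$ acts trivially on such systems, so $2-t$ acts as $1$. In particular $A$ is acyclic.
\item Yet the kernel of $\pi_2(A)\to\pi_2(F)$ is $\mathbb{Z}[\widetilde C]/(2-t)\neq 0$.
\end{itemize}
So even the properties you aim to verify for $X_\infty\to F$ would not yield a weak equivalence. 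Whitehead's theorem needs $H_*(-;\mathbb{Z}[\widetilde C])$, and this local system is not pulled back from $BQ$ once $A_1\neq 0$.

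\textbf{The claimed homology isomorphism is itself unproved.} With coefficients $\mathbb{Z}[Q]$ it says that $\operatorname{hofib}(X_\infty\to BQ)$ has the homology of $\operatorname{hofib}(F\to BQ)\simeq\Omega BQ^+$. That is essentially the theorem, and no ``induced map of fibrations'' supplies it. The appeal to ``nilpotence of the fiberwise comparison'' has no content as written.

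\textbf{The missing idea is to compare the towers stage by stage, not only their limits.} Let $P_n$ be the Postnikov sections of the simply connected space $BC^+$, and put $Y_n=\operatorname{hofib}(BC\to P_n)$.
\begin{itemize}
\item Then $Y_1=BC$ and $\operatorname{holim}_n Y_n\simeq\operatorname{hofib}(BC\to BC^+)$.
\item The fiber sequence $Y_{n+1}\to Y_n\to\operatorname{hofib}(P_{n+1}\to P_n)=K(\pi_{n+1}BC^+,n+1)$ makes $Y_{n+1}\to Y_n$ a principal fibration.
\item Compare the Serre spectral sequences of $Y_n\to BC\to P_n$ and $BC^+\langle n\rangle\to BC^+\to P_n$ over the simply connected base $P_n$. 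Since $BC\to BC^+$ is a homology isomorphism, so is $Y_n\to BC^+\langle n\rangle$.
\item Hence $Y_n$ is homologically $n$-connected and $H_{n+1}(Y_n)\cong\pi_{n+1}(BC^+)$. The class classifying $Y_{n+1}\to Y_n$ factors through $BC^+\langle n\rangle\to K(\pi_{n+1},n+1)$, so it corresponds to this isomorphism.
\item By induction $Y_n\simeq X_n$ compatibly, with $A_n\cong\pi_{n+1}(BC^+)$. Therefore $X_\infty\simeq\operatorname{hofib}(BC\to BC^+)$, and the covering square transfers this to $BQ\to BQ^+$.
\end{itemize}

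\textbf{Comparison with the paper.} The paper's short proof takes a different route. Acyclicity and $\pi_1$-surjectivity identify the \emph{cofiber} of $X_\infty\to BC$ with $BC^+$, and the paper then passes to the fiber with ``in other words.'' The space $A$ above has the same cofiber but is not equivalent to $F$. So that passage needs the same stage-wise input.
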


\begin{proof}
The homotopy limit $X_\infty$ is acyclic, i.e., its reduced integral homology vanishes in all degrees. Moreover, the map $X_\infty\to BC$ is surjective on fundamental groups. Therefore the homotopy cofiber of $X_\infty\to BC$ is simply connected, and the induced map from $BQ$ to this homotopy cofiber is a homology equivalence. By the characterization of the plus construction, this homotopy cofiber is equivalent to $BC^+$. In other words, $X_\infty$ is the homotopy fiber of $BC\rightarrow BC^+.$ Since the square
\begin{equation}
\begin{tikzcd}
BQ \arrow[r] \arrow[d]
& BQ^+ \arrow[d] \\
BC \arrow[r]
& BC^+
\end{tikzcd}
\end{equation}
is cartesian and the vertical maps are covering maps, the homotopy fibers of the two horizontal maps are naturally equivalent, after choosing compatible basepoints. Since $X_\infty$ is the homotopy fiber of $BC\to BC^+$, it follows that $X_\infty$ is also the homotopy fiber of $BQ\to BQ^+$.

\end{proof}

\begin{proof}[Proof of Lemma~\ref{lem:homological-killing}]
Consider the Serre spectral sequence of the fibration
\begin{equation}
K(A_n,n)\longrightarrow X_{n+1}\longrightarrow X_n .
\end{equation} with
\begin{equation}
E^2_{p,q}\cong H_p(X_n;H_q(K(A_n,n);\mathbb Z)).
\end{equation}

The fiber $K(A_n,n)$ is $(n-1)$-connected. In low degrees, its homology is 
\begin{equation}
H_q(K(A_n,n);\mathbb Z)\cong
\begin{cases}
\mathbb Z, & q=0,\\
0, & 0<q<n,\\
A_n, & q=n,\\
0, & q=n+1.
\end{cases}
\end{equation} On the other hand, $H_p(X_n;\mathbb Z)=0$ for $0<p\leq n$.

It follows that, in total degree at most $n+1$, the only possibly nonzero terms are $E^2_{0,0}\cong \mathbb Z$, $E^2_{n+1,0}\cong H_{n+1}(X_n;\mathbb Z)$, and $E^2_{0,n}=H_0(X_n, H_n(K(A_n,n);\mathbb Z))$.

The differential has bidegree
\begin{equation}
d^r:E^r_{p,q}\longrightarrow E^r_{p-r,q+r-1}.
\end{equation}
The term $E^r_{n+1,0}$ receives no differentials, since an incoming differential would have source $E^r_{n+1+r,1-r}$, which vanishes for $r\geq 2$. 

For $2\leq r\leq n$, the outgoing differential
\begin{equation}
d^r:E^r_{n+1,0}\longrightarrow E^r_{n+1-r,r-1}
\end{equation}
has target in a row $0<r-1<n$, and this row vanishes because $H_{r-1}(K(A_n,n);\mathbb Z)$ vanishes. Therefore the first possible nonzero differential out of $E_{n+1,0}$ is $d^{n+1}.$
Under the identifications above, this is a homomorphism 
\begin{equation}
d^{n+1}:H_{n+1}(X_n;\mathbb Z)\to H_0(X_n, H_n(K(A_n,n);\mathbb Z))\cong A_n.
\end{equation}

We now identify this differential. By naturality of the Serre spectral sequence the transgression $d^{n+1}$ is evaluation against the classifying class $\iota_n\in H^{n+1}(X_n;A_n)$. Therefore the differential is an isomorphism.

Consequently, both terms $E^{n+1}_{n+1,0}$ and $E^{n+1}_{0,n}$ are killed by this differential. It follows that $\widetilde H_i(X_{n+1};\mathbb Z)=0$ for $i\leq n+1$. Therefore $X_{n+1}$ is homologically $(n+1)$-connected.
\end{proof}
\subsection{Universal Obstruction Classes}
The construction above produces a sequence of classes whose pullbacks give the successive obstruction classes for lifting along the Dror tower. Let
\begin{equation}
\widetilde{\varphi}_0:=B\varphi:BG\longrightarrow BQ=X_0
\end{equation}
be the map induced by a group homomorphism $\varphi:G\to Q$. The first question is whether $\widetilde{\varphi}_0$ lifts to $X_1$:
\begin{equation}
\begin{tikzcd}
& X_1 \arrow[d] \\
BG \arrow[r, "\widetilde{\varphi}_0"] \arrow[ru, dashed] & X_0 .
\end{tikzcd}
\end{equation}
Such a lift exists precisely when $\varphi$ factors through $C\subset Q$; equivalently, when the composite
\begin{equation}
G\xrightarrow{\varphi}Q\longrightarrow Q/C=: A_0
\end{equation}
is trivial. This composite represents the first obstruction class in $H^1(BG;A_0)\cong \Hom(G, A_0)$.

More generally, suppose a lift $\widetilde{\varphi}_n:BG\to X_n$ has been chosen. The obstruction to lifting it further,
\begin{equation}
\begin{tikzcd}
& X_{n+1} \arrow[d] \\
BG \arrow[r, "\widetilde{\varphi}_n"] \arrow[ru, dashed] & X_n ,
\end{tikzcd}
\end{equation}
is given by
\begin{equation}
\widetilde{\varphi}_n^*\iota_n\in H^{n+1}(BG;A_n),
\end{equation}
where $\iota_n\in H^{n+1}(X_n;A_n)$ is the universal class and $A_n=H_{n+1}(X_n;\mathbb Z)$.

\begin{prop}\label{prop::obstruction}
Let $\widetilde{\varphi}_0=B\varphi:BG\to X_0=BQ$. Suppose that the first $n$ obstruction classes
\begin{equation}
\widetilde{\varphi}_i^*\iota_i\in H^{i+1}(BG;A_i),
\qquad 0\leq i\leq n-1,
\end{equation}
vanish. Then $\widetilde{\varphi}_0$ admits a lift
\begin{equation}
\widetilde{\varphi}_n:BG\longrightarrow X_n.
\end{equation}
In particular, if all obstruction classes vanish, then $\widetilde{\varphi}_0$ admits a compatible lift
\begin{equation}
\widetilde{\varphi}_\infty:BG\longrightarrow X_\infty.
\end{equation}
Equivalently, the induced map
\begin{equation}
BG\longrightarrow BQ^+
\end{equation}
is null-homotopic.
\end{prop}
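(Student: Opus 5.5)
The plan is induction on $n$, using that each stage $X_{i+1}\to X_i$ of the Dror tower \eqref{eq:Dror} is a principal fibration classified by a map to an Eilenberg--MacLane space. The passage to the homotopy limit is then handled through Theorem~\ref{thm::dror}.

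\emph{The first stage.} The case $n=0$ is trivial. For the step from $X_0$ to $X_1$, I will use the subgroup $C\subset Q$. The class $\widetilde{\varphi}_0^*\iota_0\in H^1(BG;A_0)=\Hom(G,Q/C)$ is the composite $G\to Q\to Q/C$. If it vanishes, then $\varphi$ factors through $C$, and $B$ of this factorization is a lift $\widetilde{\varphi}_1: BG\to BC=X_1$. Here I read $X_1\to X_0$ as the covering $BC\to BQ$, which is the homotopy fiber of $BQ\to B(Q/C)=K(A_0,1)$ because $Q/C$ is abelian. The step from $X_1$ to $X_2=B\widetilde C$ is the only other place where the tower is not literally given by Construction~\ref{constr:homological-killing}. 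There $X_2$ is the homotopy fiber of $BC\to K(A_1,2)$ classified by $\iota_1$, since $B$ of a central extension is the principal $K(A_1,1)$-bundle with that extension class. So I will treat it uniformly with the steps $n\ge 2$.

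\emph{The inductive step.} For each $i\ge 1$, $X_{i+1}$ is the homotopy fiber of a map $k_i:X_i\to K(A_i,i+1)$ representing $\iota_i$. Given a lift $\widetilde{\varphi}_i$, the composite $k_i\circ\widetilde{\varphi}_i$ represents $\widetilde{\varphi}_i^*\iota_i$. If this class vanishes, the composite is null-homotopic, and a choice of null-homotopy yields a map $BG\to \mathrm{hofib}(k_i)=X_{i+1}$ over $\widetilde{\varphi}_i$. This is the universal property of homotopy fibers, with no obstruction theory needed. Replacing the projections by fibrations, I can make the lifts strictly compatible. Induction then gives $\widetilde{\varphi}_n$.

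\emph{Passing to the limit, and the main obstacle.} The delicate point is that the obstruction classes at later stages depend on the earlier choices of lift. So "all obstruction classes vanish" must be read as saying there is a sequence of choices along which each successive class vanishes. With that reading, I obtain a compatible family $(\widetilde{\varphi}_n)$ of strict lifts along a tower of fibrations, and hence a map into the inverse limit, which models $X_\infty=\operatorname{holim}X_n$. This gives $\widetilde{\varphi}_\infty$. For the last assertion, Theorem~\ref{thm::dror} says $X_\infty\to BQ\to BQ^+$ is a fibration sequence, so the composite $X_\infty\to BQ^+$ is canonically null-homotopic. Hence $B\varphi$ followed by the plus map factors through a null map.

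For the converse, which the word "Equivalently" requires, I would argue as follows. A null-homotopy of $BG\to BQ^+$ gives a lift to the homotopy fiber $X_\infty$. Composing with the projections $X_\infty\to X_n$ gives lifts $\widetilde{\varphi}_n$ for every $n$, and along these lifts every obstruction class vanishes.
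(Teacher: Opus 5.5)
Your proposal is correct and follows the paper's own argument. Like the paper, you induct up the Dror tower using that each $X_{i+1}\to X_i$ is a principal fibration classified by $\iota_i$, so a lift exists exactly when the pulled-back class vanishes; you then pass to the homotopy limit and conclude via Theorem~\ref{thm::dror}. Your explicit treatment of the first two stages, of how later obstructions depend on earlier choices of lift, and of the converse direction are sound refinements of points the paper leaves implicit.
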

\begin{proof}
Let
\begin{equation}
p_i:X_{i+1}\longrightarrow X_i
\end{equation}
denote the $i$th stage of the tower. By the construction of the tower, the class
\begin{equation}
\iota_i\in H^{i+1}(X_i;A_i)
\end{equation}
is the universal obstruction to lifting a map into $X_i$ through $p_i$. More precisely, if $f:Y\to X_i$ is a map, then the primary obstruction to the existence of a lift $\widetilde f:Y\to X_{i+1}$ with $p_i\circ \widetilde f\cong f$ is the class
\begin{equation}
f^*\iota_i\in H^{i+1}(Y;A_i).
\end{equation}
The vanishing of this class is equivalent to the existence of such a lift.

We apply this with $Y=BG$ and argue by induction. If all obstruction classes vanish, the inductive argument produces a compatible system of lifts ${\widetilde{\varphi}_i:BG\to X_i}$. These maps determine a map into the homotopy inverse limit
\begin{equation}
\widetilde{\varphi}_\infty:BG\longrightarrow X_\infty.
\end{equation}

Finally, by construction, $X_\infty$ is the homotopy fiber of the map
\begin{equation}
BQ\longrightarrow BQ^+.
\end{equation}
Therefore a map $BG\to BQ$ admits a lift to $X_\infty$ if and only if its composite
\begin{equation}
BG\longrightarrow BQ\longrightarrow BQ^+
\end{equation}
is null-homotopic. This proves the final assertion.
\end{proof}

\subsection{Anomalies}\label{sec::anomaly}
We now apply the universal obstruction classes to QCA representations. In the unitary case, these classes recover all conjectural obstruction classes appearing in the literature, often referred to as anomalies or anomaly indices~\cite{Tu_2026, kawagoe2025anomaly, kapustinXu2025higher}. Moreover, the present framework provides a concrete group-theoretic procedure for constructing them.

Let $Q=\mathcal Q(X)$. A QCA representation determines a homomorphism
\begin{equation}
  \varphi \colon G \longrightarrow \mathcal Q(X,q) \longrightarrow \mathcal Q(X).  
\end{equation}
For concreteness, we take $X=\mathbb Z^n$. The discussion below extends without essential change to the case $X=Y\times \mathbb Z^n$, where $Y$ is any metric space of bounded geometry. That $\{\mathbf Q(\ZZ^n; \mathbb F)\}_n$ and $\{\mathbf Q^*(\ZZ^n)\}_n$ assemble into an $\Omega$-spectrum greatly simplifies the situation. 
\begin{lemma}\label{lem::indexF}
Let $Q=\CQ(\ZZ^n)$ be the total QCA group over $\mathbb F$, and let $C=\CalC(\ZZ^n)$ be the subgroup of quantum circuits. Then, for each $i$, there are natural isomorphisms
\begin{equation}
A_i = \pi_i \mathbf Q(\ZZ^n; \mathbb F).
\end{equation}
In particular, for $0\leq i\leq n$, we have by Proposition~\ref{prop::circuit_type}
\begin{equation}
A_i \cong \mathcal Q(\ZZ^{n-i})/\mathcal C(\ZZ^{n-i}) \text{ and } A_n \cong \mathcal{Q}(\Zbb^0)/\mathcal{C}^{sp}(\Zbb^0).
\end{equation}
\end{lemma}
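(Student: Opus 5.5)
We will identify the Dror tower for $Q=\CQ(\ZZ^n)$ with the Postnikov tower of the homotopy fiber of $BQ\to BQ^+$, and then read off homotopy groups of $BQ^+\simeq K(\mathbf C(\ZZ^n))_1$.

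\textbf{Step 1: $\pi_1$ of the stages.} By Theorem~\ref{thm::dror} there is a fibration $X_\infty\to BQ\to BQ^+$. The index $A_0=Q/C$ equals $\pi_1(BQ^+)=Q^{ab}$. For $n\ge 1$ we have $\ZZ^n=\ZZ^{n-1}\times\ZZ$, so Proposition~\ref{prop::circuit_type} gives $\CalC(\ZZ^n)=\CalC^{sp}(\ZZ^n)=[Q,Q]$. Hence $A_0=\pi_1 BQ^+=\pi_0\mathbf Q(\ZZ^n)$ by the delooping $\Omega K(\mathbf C(\ZZ^{n}\times\ZZ))_1\simeq K(\mathbf C(\ZZ^n))_1$.

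Here the indexing needs care: the statement's $A_i=\pi_i\mathbf Q(\ZZ^n)$ is read as $\pi_{i+1}(BQ^+)$. This matches $A_0=\pi_1BQ^+$ and $A_1=H_2(BC)=\pi_2(BC^+)=\pi_2(BQ^+)$, the latter being the standard identification of $H_2$ of a perfect group with $\pi_2$ of its plus construction.

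\textbf{Step 2: Higher stages.} For $i\ge 1$ the map $X_i\to BQ$ factors through $BC$. We claim by induction that there is a fibration $X_\infty\to X_i\to \tau_{\ge i+1}BC^+$, where $\tau_{\ge i+1}$ denotes the $i$-connected cover. Granting this, $X_i\to\tau_{\ge i+1}BC^+$ is acyclic because $X_\infty$ is acyclic, so it is a homology isomorphism. Since $X_i$ is homologically $i$-connected, Hurewicz applied to the $i$-connected space $\tau_{\ge i+1}BC^+$ gives
\[A_i=H_{i+1}(X_i)\cong H_{i+1}(\tau_{\ge i+1}BC^+)\cong\pi_{i+1}(BC^+)=\pi_{i+1}(BQ^+).\]
For the inductive step, we check that $\iota_i$ pulls back from the fundamental class of $\tau_{\ge i+1}BC^+$ under this homology isomorphism, so $X_{i+1}$ is the pullback along $\tau_{\ge i+2}BC^+\to\tau_{\ge i+1}BC^+$.

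This inductive identification is the main obstacle. We would prove it by comparing the two principal $K(A_i,i)$-fibrations through the universal coefficient identification $H^{i+1}(-;A_i)\cong\Hom(H_{i+1},A_i)$, which is natural under homology isomorphisms. Naturality then gives the isomorphism $A_i\cong \pi_{i+1}BQ^+=\pi_i\mathbf Q(\ZZ^n)$.

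\textbf{Step 3: Using the $\Omega$-spectrum.} We have $\pi_i\mathbf Q(\ZZ^n)=\pi_0\mathbf Q(\ZZ^{n-i})$, since $\{\mathbf Q(\ZZ^m)\}$ is an $\Omega$-spectrum (Theorem E of \cite{qca_grp_space_spectrum}). In turn $\pi_0\mathbf Q(\ZZ^{m})=\pi_1 K(\mathbf C(\ZZ^m))_1=\CQ(\ZZ^m)^{ab}$.

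For $m=n-i\ge1$, Proposition~\ref{prop::circuit_type} gives $\CQ(\ZZ^m)^{ab}=\CQ(\ZZ^m)/\CalC(\ZZ^m)$. For $m=0$, the commutator subgroup is $\CalC^{sp}(\ZZ^0)$, so $A_n\cong\CQ(\ZZ^0)/\CalC^{sp}(\ZZ^0)$.
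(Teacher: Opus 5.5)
Your proof is correct, but for $i\ge 2$ it takes a different route from the paper's.

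The paper argues in two moves. First it reads off $A_i\cong\pi_i(X_\infty)$ directly from the tower. This is implicit: since $X_2=B\widetilde C$ is aspherical, each stage $K(A_i,i)\to X_{i+1}\to X_i$ contributes exactly $A_i$ in degree $i$, and the homotopy groups stabilize. Second, it uses the long exact sequence of Dror's fibration $X_\infty\to BQ\to BQ^+$, with $BQ$ aspherical, to get $\pi_{i+1}(BQ^+)\cong\pi_i(X_\infty)$. The case $A_1=H_2(\CalC(\ZZ^n);\ZZ)$ is quoted separately from Corollary~C of the earlier paper.

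You instead prove inductively that the Dror tower is the pullback of the tower of connected covers $\tau_{\ge i+1}BC^+$ along $BC\to BC^+$. Then every $X_i\to\tau_{\ge i+1}BC^+$ has the same acyclic fiber and is a homology isomorphism, and Hurewicz gives $A_i\cong\pi_{i+1}(BC^+)\cong\pi_{i+1}(BQ^+)$ uniformly for all $i\ge 1$.

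What each approach buys:
\begin{itemize}
\item Your version uses only the acyclicity of the homotopy fiber of $BC\to BC^+$, not the homotopy-limit statement of Theorem~\ref{thm::dror}.
\item It also yields a stronger fact: each $\iota_i$ is pulled back from the fundamental class of a connected cover of $BQ^+$. This links Proposition~\ref{prop::obstruction} directly to ordinary Postnikov obstruction theory for $BG\to BQ^+$.
\item The price is the inductive comparison of principal fibrations through the universal coefficient isomorphism, which you describe correctly.
\item The paper's argument is shorter once Dror's theorem is granted, but it leaves the identification $A_i=\pi_iX_\infty$ unargued.
\end{itemize}

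Two small corrections:
\begin{itemize}
\item $\pi_0\mathbf Q(\ZZ^n)=\pi_1K(\mathbf C(\ZZ^n))_1$ holds by definition. So you do not need the equivalence $\Omega K(\mathbf C(\ZZ^n\times\ZZ))_1\simeq K(\mathbf C(\ZZ^n))_1$, which is only true after restricting to the identity component.
\item When $n=0$, the group playing the role of $C$ in Dror's construction must be $\CalC^{sp}(\ZZ^0)$ rather than $\CalC(\ZZ^0)=\CQ(\ZZ^0)$. Your Step~3 accounts for this, but Step~1 omits it.
\end{itemize}
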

\begin{proof}
    First, $A_0=\CQ(\ZZ^n)/ \CalC(\ZZ^n)=\pi_0 \mathbf Q(\ZZ^n; \mathbb F)$. Second, $A_1=H_2(\CalC(\ZZ^n); \ZZ)=\pi_1 \mathbf Q(\ZZ^n; \mathbb F)$ according to Corollary~C of~\cite{qca_grp_space_spectrum}. For $n\geq 2$, $A_n=\pi_{n}X_\infty$ by Dror's tower. Since $BQ$ is a classifying space, it is a $K(Q,1)$. Thus
\begin{equation}
\pi_i(BQ)=0
\end{equation}
for $i\geq 2$, while $\pi_1(BQ)\cong Q$. By Theorem~\ref{thm::dror}, there is a long exact sequence for $n\geq 3$,
\begin{equation}
0=\pi_n(BQ)\longrightarrow \pi_n(BQ^+)
\longrightarrow \pi_{n-1}(X_\infty)
\longrightarrow \pi_{n-1}(BQ)=0.
\end{equation}
Hence the connecting homomorphism is an isomorphism
\begin{equation}
\pi_n(BQ^+)\cong \pi_{n-1}(X_\infty).
\end{equation}
We obtain
\begin{equation}
\pi_{n+1}(BQ^+)\cong A_{n},
\qquad n\geq 2.
\end{equation}
The result is then implied by Theorem~D and E of~\cite{qca_grp_space_spectrum}. 
\end{proof}
\begin{lemma}\label{lem::indexUnitary}
Let $Q=\CQ^*(\ZZ^n)$ be the total unitary QCA group, and let $C=\CalC^*(\ZZ^n)$ be the subgroup of unitary quantum circuits. Then, for each $i$, there are natural isomorphisms
\begin{equation}
A_i = \pi_i \mathbf Q^*(\ZZ^n).
\end{equation}
In particular, for $0\leq i\leq n$, we have
\begin{equation}
A_i \cong \mathcal Q^*(\ZZ^{n-i})/\mathcal C^*(\ZZ^{n-i}).
\end{equation}
\end{lemma}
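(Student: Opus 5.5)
The plan is to mirror the proof of Lemma~\ref{lem::indexF} step by step, replacing every algebraic input by its unitary counterpart from Section~5 of \cite{qca_grp_space_spectrum}. The setting is $Q=\CQ^*(\ZZ^n)$ with $C=\CalC^*(\ZZ^n)$. By Proposition~\ref{prop::circuit_type}, $C$ is the commutator subgroup of $Q$. It is also perfect, since it equals the maximal perfect normal subgroup used in the identification $K(\mathbf C^*(\ZZ^n))_1\simeq B\CQ^*(\ZZ^n)^+$ (Theorem~5.10 of \cite{qca_grp_space_spectrum}). So Dror's tower (\ref{eq:Dror}) and Theorem~\ref{thm::dror} apply and give a fibration
\[
X_\infty\to BQ\to BQ^+\simeq K(\mathbf C^*(\ZZ^n))_1 .
\]

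Next I identify the groups $A_i$ one degree at a time.

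\textbf{Degree $0$.} $A_0=Q/C=\CQ^*(\ZZ^n)/\CalC^*(\ZZ^n)$. This is $\pi_0\mathbf Q^*(\ZZ^n)$ by Theorem~5.10 of \cite{qca_grp_space_spectrum}.

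\textbf{Degree $1$.} $A_1=H_2(C;\ZZ)$. Since $BC^+$ is the universal cover of $BQ^+$ and is simply connected, Hurewicz gives
\[
H_2(C;\ZZ)\cong H_2(BC^+)\cong\pi_2(BQ^+)=\pi_1\mathbf Q^*(\ZZ^n).
\]

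\textbf{Degrees $i\ge2$.} Here $A_i=H_{i+1}(X_i)$. The fibers of the tower show that $X_i\to X_\infty$ is an equivalence through degree $i$, so Hurewicz applied to the homologically $i$-connected stage gives $A_i\cong\pi_i X_\infty$. The long exact sequence of $X_\infty\to BQ\to BQ^+$, together with $\pi_j(BQ)=0$ for $j\ge2$, then gives $\pi_i X_\infty\cong\pi_{i+1}(BQ^+)=\pi_i\mathbf Q^*(\ZZ^n)$.

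Finally, the unitary $\Omega$-spectrum $\{\mathbf Q^*(\ZZ^{n})\}_n$ of Theorem~5.11 of \cite{qca_grp_space_spectrum} gives
\[
\pi_i\mathbf Q^*(\ZZ^n)\cong\pi_0\mathbf Q^*(\ZZ^{n-i})\cong\CQ^*(\ZZ^{n-i})/\CalC^*(\ZZ^{n-i})
\]
for $0\le i\le n$. Unlike the algebraic case there is no separate special-circuit subtlety at $i=n$: over a point $\CQ^*(*)=\mathrm{PU}$ is perfect, so the quotient there is trivial and agrees with $\CalC^*$.

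The main obstacle is the naturality claim in degrees $i\ge2$: showing that the Hurewicz identification $H_{i+1}(X_i)\cong\pi_{i+1}(\text{stage})$ coincides with the connecting isomorphism, so that the isomorphisms are natural and not merely abstract. I would handle this by noting that $X_{i}\to X_\infty$ is $(i+1)$-connected after checking the connectivity of the fibers $K(A_j,j)$ for $j\ge i$. It is also worth confirming the low cases $i=1,2$ directly, where the five-term exact sequence and Proposition~\ref{prop:superperfect} substitute for Hurewicz.
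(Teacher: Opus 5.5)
Your outline matches the paper's proof. The paper proves this lemma by running the argument of Lemma~\ref{lem::indexF} with the unitary inputs: the identification $K(\mathbf C^*(\ZZ^n))_1\simeq B\CQ^*(\ZZ^n)^+$ and the unitary $\Omega$-spectrum. Concretely, $A_0=Q/C=\pi_0$, $A_1=H_2(C;\ZZ)=\pi_1$, and $A_i\cong\pi_iX_\infty\cong\pi_{i+1}(BQ^+)$ for $i\ge 2$ from Dror's tower and the long exact sequence of $X_\infty\to BQ\to BQ^+$, followed by delooping. Your Hurewicz argument for $A_1$ through the simply connected cover $BC^+$ is a fine self-contained substitute for the citation. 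Your observation that $A_n=0$ in the unitary case is also correct.

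The one step that fails as written is your justification of $A_i\cong\pi_iX_\infty$ for $i\ge 2$.

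\textbf{Hurewicz does not apply to $X_i$.} The space $X_i$ is not simply connected, since $\pi_1X_i=\widetilde C$. In fact $\pi_k(X_i)=0$ for all $k\ge i$. This is because $X_2=B\widetilde C$ is aspherical and each stage $K(A_j,j)\to X_{j+1}\to X_j$ only adds $\pi_j(X_{j+1})\cong A_j$. So $\pi_{i+1}(X_i)=0$ even though $H_{i+1}(X_i)=A_i$ can be nonzero, and there is no Hurewicz isomorphism to use.

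\textbf{The connectivity claim is off by one.} The map runs $X_\infty\to X_i$, not the other way, and it is only $i$-connected. Its fiber is assembled from the $K(A_j,j)$ with $j\ge i$, and on $\pi_i$ it sends $\pi_iX_\infty\cong A_i$ to $\pi_i(X_i)=0$. So it is neither an isomorphism through degree $i$ nor $(i+1)$-connected.

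\textbf{The fix is to shift the index by one and use the fibration long exact sequence instead of Hurewicz.} The long exact sequence of $K(A_i,i)\to X_{i+1}\to X_i$, together with $\pi_i(X_i)=\pi_{i+1}(X_i)=0$, gives $\pi_i(X_{i+1})\cong A_i$. All later fibers $K(A_j,j)$ with $j\ge i+1$ are $i$-connected, so $X_\infty\to X_{i+1}$ is an isomorphism on $\pi_k$ for $k\le i$. The towers of homotopy groups are eventually constant, so there is no $\lim^1$ issue. With this replacement, the rest of your argument goes through exactly as in the paper: the long exact sequence of $X_\infty\to BQ\to BQ^+$ and the unitary delooping.
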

\begin{proof}
    The proof is identical, using the unitary analogs of the same results from~\cite{qca_grp_space_spectrum}.
\end{proof}
\begin{table}[th]
\centering
\caption{The groups $A_i$ over a field $\mathbb F$, specifically over the complex numbers $\mathbb{C}$, and in the unitary case.}
\label{table_q_z}
\begin{adjustbox}{width=0.8\linewidth}
\begin{tabular}{lccc}
\toprule
The groups $A_i$ & Over a field $\mathbb F$ & Over $\mathbb{C}$ & Unitary\\
\midrule
$i=0$ & $\CQ(\ZZ^{n})/\mathcal C(\ZZ^{n})$ & $\CQ(\ZZ^{n})/\mathcal C(\ZZ^{n})$ & $\CQ(\ZZ^{n})/\mathcal C(\ZZ^{n})$\\
$i=1$ & $\CQ(\ZZ^{n-1})/\mathcal C(\ZZ^{n-1})$ & $\CQ(\ZZ^{n+1-i})/\mathcal C(\ZZ^{n+1-i})$ & $\CQ^*(\ZZ^{n+1-i})/\mathcal C^*(\ZZ^{n+1-i})$\\
$\cdots$ & $\cdots$ &$\cdots$ &$\cdots$ \\
$i=n-2$ & $\CQ(\ZZ^{2})/\mathcal C(\ZZ^{2})$ & $\CQ(\ZZ^{2})/\mathcal C(\ZZ^{2})$ & 0\\
$i = n-1$ & $K_0(\operatorname{Az}(\mathbb F)) = \operatorname{Br}(\mathbb{F}) \oplus \Qbb_{>0}$ & $\Qbb_{>0}$ & $\Qbb_{>0}$\\
$i = n$ & $\frac{\Qbb}{\Zbb} \otimes \mathbb F^{\times}$ & 0 & 0\\
$i = n+1$ & $\mu(\mathbb F) \oplus (K_2(\mathbb F) \otimes \Qbb)$ & $\frac{\Qbb}{\Zbb} \oplus (K_2(\Cbb) \otimes \Qbb)$ & $U(1) \oplus \text{rational vector space}$\\
$i>n+1$ & rational vector spaces & rational vector spaces & rational vector spaces\\
\bottomrule
\end{tabular}
\end{adjustbox}
\end{table}

\begin{defn}\label{def::universalObstruction}
Suppose $G$ is a group. Let 
\begin{equation} \varphi^\mathrm{st} \colon G \xrightarrow{\varphi} \CQ(\ZZ^n,q; \Fbb) \longrightarrow \CQ(\ZZ^n; \Fbb) \end{equation} be a QCA representation on the $n$-dimensional lattice $\ZZ^n$. We define the degree-1 universal obstruction class to be
\[u_1(\varphi) \coloneqq [\varphi^{st}] \in \operatorname{Hom}(G, \mathcal{Q}(\Zbb^n; \Fbb)^{ab}) = H^1(G; \pi_0 \mathbf{Q}(\Zbb^n; \Fbb)).\]
Suppose the degree-$k$ obstructions up to $k = i$ vanish. We define the degree-$(i+1)$ \textit{universal obstruction class} to be
\begin{equation} 
u_{i+1}(\varphi)\coloneqq(\widetilde{\varphi}_i^\mathrm{st})^*\iota_i \in H^{i+1}\bigl(BG;\pi_i \mathbf{Q}(\mathbb{Z}^n; \Fbb)\bigr). \end{equation} 
Similarly, given a unitary QCA representation
\begin{equation} 
\varphi^\mathrm{st} \colon G \longrightarrow \CQ^*(\ZZ^n,q) \longrightarrow \CQ^*(\ZZ^n), 
\end{equation} we define the \textit{universal obstruction classes} iteratively to be 
\begin{equation} u_{i+1}(\varphi)\coloneqq (\widetilde{\varphi}_i^\mathrm{st})^*\iota_i \in H^{i+1}\bigl(BG;\pi_i \mathbf{Q}^*(\mathbb{Z}^n)\bigr). \end{equation} 
\end{defn}

Note that Definition~\ref{def::universalObstruction} applies equally as well to give universal obstruction classes group homomorphisms $G \to \mathcal{Q}(\Zbb^n; \Fbb)$ and $G \to \mathcal{Q}^*(\Zbb^n)$.

\anomalies*
\begin{proof}
    It follows from Theorem~\ref{thm::fin_gp_null}, Proposition~\ref{prop::obstruction}, Lemmas~\ref{lem::indexF} and~\ref{lem::indexUnitary}.
\end{proof}
\begin{cor}
Suppose that $\mathbb F=\CC$ and that $G$ is finite (or more generally, rationally acyclic). Then the only potentially nonzero universal obstruction classes occur, for $0\leq i\leq n+1$, in
\begin{equation}
H^{i+1}\bigl(BG;\mathcal Q(\ZZ^{n-i})/\mathcal C(\ZZ^{n-i})\bigr),
\end{equation}
and, in addition, in
\begin{equation}
H^{n+2}(BG;\Qbb/\ZZ).
\end{equation}
\end{cor}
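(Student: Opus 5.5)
The corollary should follow by reading off the coefficient groups $A_i=\pi_i\mathbf Q(\ZZ^n;\Cbb)$ from Lemma~\ref{lem::indexF} and Table~\ref{table_q_z}, and then discarding every degree in which the cohomology of $BG$ vanishes. By Definition~\ref{def::universalObstruction}, the degree-$(i+1)$ universal obstruction class lies in $H^{i+1}(BG;A_i)$. So the whole task is to determine, for each $i$, whether $H^{i+1}(BG;A_i)$ can be nonzero when $G$ is rationally acyclic.

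\textbf{Low degrees, $0\le i\le n-1$.} Lemma~\ref{lem::indexF}, together with the identification $\mathcal C(X)=\mathcal C^{sp}(X)$ over $\Cbb$ from Proposition~\ref{prop::circuit_type} (since $\Cbb$ has all $n$-th roots), gives
\[
A_i\cong \mathcal Q(\ZZ^{n-i})/\mathcal C(\ZZ^{n-i}).
\]
These coefficient groups produce exactly the listed groups $H^{i+1}\bigl(BG;\mathcal Q(\ZZ^{n-i})/\mathcal C(\ZZ^{n-i})\bigr)$.

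\textbf{The boundary cases $i=n$ and $i=n+1$.} Here I expect:
\begin{itemize}[leftmargin=*]
\item At $i=n$, the group $\mathcal Q(\ZZ^0)/\mathcal C(\ZZ^0)$ is trivial, because every element of $\PGL_\otimes(\Cbb)$ is a circuit. This matches the table entry $\Qbb/\ZZ\otimes\Cbb^\times=0$, since $\Cbb^\times$ is divisible. So this case is consistent with the displayed range and contributes nothing.
\item At $i=n+1$, the table gives $A_{n+1}\cong \Qbb/\ZZ\oplus(K_2(\Cbb)\otimes\Qbb)$, interpreting $\mathcal Q(\ZZ^{-1})/\mathcal C(\ZZ^{-1})$ as $0$ formally.
\item For $i>n+1$, $A_i$ is a rational vector space.
\end{itemize}

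\textbf{Main step: vanishing with rational coefficients.} The key point is that $H^k(BG;V)=0$ for $k\ge1$ whenever $V$ is a $\Qbb$-vector space and $G$ is rationally acyclic. I would prove this via the universal coefficient theorem. Rational acyclicity gives $\widetilde H^*(BG;\Qbb)=0$, hence $\widetilde H_*(BG;\Qbb)=0$. The universal coefficient theorem over the field $\Qbb$ then gives
\[
H^k(BG;V)=\Hom_\Qbb\bigl(H_k(BG;\Qbb),V\bigr)=0 .
\]
This is also the step I expect to be the main obstacle. For infinite $G$ one must be careful that $V$ may be infinite-dimensional and that $\Hom$ over $\Qbb$ still applies. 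Both are fine because $\Qbb$ is a field, so the $\operatorname{Ext}$ term vanishes and infinite dimension causes no problem.

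\textbf{Conclusion.} The summand $K_2(\Cbb)\otimes\Qbb$ and all higher $A_i$ contribute nothing. The only surviving top-degree group is $H^{n+2}(BG;\Qbb/\ZZ)$, which is not killed by this argument. Combined with Theorem~\ref{thm::anomalies}, which guarantees these universal obstruction classes are defined and vanish under linearizability, this proves the statement.
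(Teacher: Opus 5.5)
Your proposal is correct and follows the paper's argument: you read off $A_i=\pi_i\mathbf Q(\ZZ^n;\Cbb)$ from Lemma~\ref{lem::indexF} and Table~\ref{table_q_z}, then discard every rational summand because a rationally acyclic group has vanishing rational cohomology in positive degrees. Your universal-coefficient argument over $\Qbb$, which handles arbitrary rational coefficient groups, spells out a step the paper leaves implicit, and you correctly place the $\Qbb/\ZZ$ summand in $A_{n+1}$ (hence in $H^{n+2}$), whereas the paper's proof text writes $A_{n+2}$.
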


\begin{proof}
The obstruction class associated to $A_i$ lies in $H^{i+1}(BG;A_i)$. By the second columns of Table~\ref{table_q_z} (which are derived from Table 1 of \cite{qca_grp_space_spectrum}), for $0\leq i\leq n$ the groups $A_i$ are identified with
\begin{equation}
\mathcal Q(\ZZ^{n-i})/\mathcal C(\ZZ^{n-i}).
\end{equation}
The next non-trivial contribution is the $\Qbb/\ZZ$ summand in $A_{n+2}$. All remaining terms, including $K_2(\CC) \otimes \mathbb{Q}$, are rational vector spaces. Since $G$ is finite (or rationally acyclic), its rational cohomology vanishes in positive degree, and hence these rational summands do not give nonzero obstruction classes.
\end{proof}

Similarly, in the unitary case we have
\ObstructionUnitary*

\begin{proof}
The proof is identical to the algebraic counterpart over $\CC$ using the unitary column of Table~\ref{table_q_z} instead. The unitary column is derived from a later computation in Proposition~\ref{prop::unitary_pt_groups}.
\end{proof}

\begin{remark}
    One could take $Q=\mathcal Q(X,q)$ and thereby obtain a sequence of \textit{unstable} obstruction classes. These classes may be related to (unstable) linearizability, but the precise relationship is unclear.
\end{remark}

Lastly, we discuss examples of obstruction classes on QCA representations.
\begin{example}[Revisiting annular shifts]\label{exp::revisit}
The obstruction classes seem less effective when $G$ is not rationally acyclic. Recall the annular shift QCA in Example~\ref{exp::annular}. Viewed as a QCA representation of the group $\ZZ$, it has no nontrivial obstruction classes in the above sense.
Indeed, the degree-one obstruction lies in
\begin{equation}
    H^1\bigl(B\ZZ;\pi_0\mathbf Q^*(\ZZ^n)\bigr).
\end{equation}
In the two-dimensional unitary case, $\pi_0\mathbf Q^*(\ZZ^2)=0$, since every two-dimensional unitary QCA is stably a circuit~\cite{freedman2020classification}; see also the simplified proof in the appendix of~\cite{haah2021clifford}. Hence the degree-one obstruction vanishes. Moreover, since $B\ZZ\cong S^1$, one has
\begin{equation}
    H^k(B\ZZ;M)=0
\end{equation}
for every $\ZZ$-module $M$ and every $k>1$. Therefore the annular shift admits no nontrivial obstruction class in our obstruction theory. Nevertheless, we suspect that it is not stably linearizable.
\end{example}

\begin{example}[Arithmetic obstruction]
Let $\Fbb = \Qbb$ be the rationals and $X = \Zbb^1$. From Table~\ref{table_q_z}, the degree-2 obstruction class has coefficients in $\pi_1(\mathcal{Q}(\Zbb^1;\Qbb)) = \frac{\Qbb}{\Zbb} \otimes \Qbb^{\times}$. Observe that
\begin{equation}
    \Qbb^{\times} \cong C_2 \times (\bigoplus_{p\ \text{prime}} \Zbb) \implies \pi_1(\mathcal{Q}(\Zbb^1;\Qbb)) \cong \bigoplus_{p\ \text{prime}} \Qbb/\Zbb.
\end{equation}
There could be many QCA representations over a line and the field $\Qbb^1$ whose degree-1 obstruction vanish but degree-2 obstruction does not vanish. The inclusion map $G = \mathcal{C}(\Zbb^1; \Qbb) \to \mathcal{Q}(\Zbb^1; \Qbb)$ would be an example of such QCA representations. In Appendix~\ref{appendix::arithmetic}, we give detailed examples of finitely generated QCA representations whose degree-1 obstruction vanish but degree-2 does not.

This can be generalized to when $\Fbb = K$ is a number field. From Page 22 of \cite{Neukirch1999}, there is an exact sequence
\begin{equation}
    0 \to \mathcal{O}_K^{\times} \to K^{\times} \to J_K \to \operatorname{Cl}(K) \to 0,
\end{equation} where $\mathcal{O}_K$ is the associated ring of integers, $J_K$ is the free abelian group on non-zero prime ideals of $\mathcal{O}_K$, and $\mathrm{Cl}(K)$ is the ideal class group. Since $\operatorname{Cl}(K)$ is finite, this gives a surjection
\begin{equation}
    \pi_1(\mathcal{Q}(\Zbb^1; K)) = K^{\times} \otimes \Qbb/\Zbb \to \bigoplus_{\mathfrak{p} \neq 0 \in \operatorname{Spec}(\mathcal{O}_k)} \Qbb/\Zbb \to 0.
\end{equation}
Thus, $\pi_1(\mathcal{Q}(\Zbb^1; K))$ is non-zero and quite large, and we can again consider the inclusion of the circuit group. This is very different from when $\Fbb = \Rbb, \Cbb$ or in the unitary case, where the corresponding group is $0$, so their QCA representations over a line have no degree-2 obstructions. 
\end{example}

\section{QCA Space Computations}\label{sec::computation}

From the stable homotopy theory interpretation of linearization, we are motivated to study the based homotopy classes of maps from $BG$ into the QCA space over $X$. Although the structure of the QCA space is in general complicated, we show that they are surprisingly simple in special cases. In this section, we will prove the following theorem.
\CSplit*

The proof for Theorem~\ref{thm::complex_qca_split}(1-3) is obtained by computing the stable $k$-invariants of their (connective) deloopings and showing that they vanish, except for $\mathbf{Q}(\Zbb^2; \mathbb{C})$ which relied on the vanishing of an unstable $k$-invariant. For Theorem~\ref{thm::complex_qca_split}(4), the stable $k$-invariants vanish when $p = 2$.

As a consequence, by computing along the Atiyah–Hirzebruch spectral sequence and noting that negative homotopy groups do not factor in, we have that
\begin{cor}\label{cor::compute}
    For (a) $T = K(\mathbf{C}^*(\Zbb^n))$ where $n = 0, 1, 2$, (b) $T = K(\mathbf{C}(\Zbb^k; \Fbb))$ for $k = 0, 1$, $\Fbb = \Cbb$ or $\Fbb_{2^k}$, or (c) $T$ is the based loop space of spaces in (a) and (b), the homotopy classes of maps $BG \to T$ are equivalent to $\prod_{i \geq 0} H^i(G; \pi_i(T))$.
\end{cor}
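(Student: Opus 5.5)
The plan is to deduce Corollary~\ref{cor::compute} directly from Theorem~\ref{thm::complex_qca_split}, in three steps: reduce to a product of Eilenberg--MacLane spaces, compute maps into each factor, and then reassemble the product.

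\textbf{Step 1: reduce to a product.} For each space $T$ in (a), (b), (c), Theorem~\ref{thm::complex_qca_split} gives an equivalence
\[T\simeq \prod_{i\geq 0} K(\pi_i(T),i).\]
In case (c), the based loop space of a product of Eilenberg--MacLane spaces is again such a product, with the degrees shifted down by one. Hence (c) follows formally from (a) and (b), and also from the explicit statements in Theorem~\ref{thm::complex_qca_split} about $\mathbf{Q}(\Zbb^n;\Cbb)$, $\mathbf{Q}^*(\Zbb^n)$ and $\mathbf{Q}(\Zbb^n;\Fbb_{2^k})$.

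The only low-degree subtlety is $\pi_0$ and $\pi_1$. The spaces $K(\mathbf{C}(\cdot))$ are group-like $\Ebb_\infty$-spaces, so all components are equivalent. Their $\pi_1$ acts trivially on the higher homotopy groups, which makes the coefficient systems in $H^i(G;\pi_i(T))$ untwisted. The $\pi_0$ factor contributes $H^0(G;\pi_0 T)=\pi_0 T$, namely the choice of component.

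\textbf{Step 2: maps into each factor.} Homotopy classes of maps from a product-decomposed target split as
\[[BG,T]=\prod_i [BG,K(\pi_i T,i)].\]
By representability of cohomology, each factor is $[BG,K(\pi_i T,i)]=H^i(BG;\pi_i T)=H^i(G;\pi_i T)$.

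There are two technical points to handle here.
\begin{itemize}
\item \emph{Based versus free classes.} By Proposition 1.4.3 of \cite{may2011more}, already invoked in the proof of Corollary~\ref{cor::lin_factorizable_null}, these agree because $T$ is an $H$-space.
\item \emph{Possibly infinite products.} The product may be infinite, since the table shows rational homotopy groups in all high degrees. Maps into an infinite product are still the product of maps into the factors, because a product of spaces is a categorical product in the homotopy category.
\end{itemize}

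\textbf{Step 3: check against the spectrum-level description.} As a consistency check, the splitting is an equivalence of infinite loop spaces in most cases, so the associated connective spectrum is a wedge (equivalently, a product) of shifted Eilenberg--MacLane spectra. The Atiyah--Hirzebruch spectral sequence for $[BG,-]$ then degenerates at $E_2$ with trivial extensions. Negative homotopy groups of the spectrum $\mathbb{QCA}$ do not enter, because we are mapping into a connective delooping. This matches the remark preceding the corollary.

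\textbf{Main obstacle.} I expect the main obstacle to be $\mathbf{Q}(\Zbb^2;\Cbb)$, which appears in (c) as the loop space of $K(\mathbf{C}(\Zbb^2;\Cbb))$ when the indexing is interpreted generously. Here Theorem~\ref{thm::complex_qca_split} gives only an equivalence of spaces, not of $\Ebb_\infty$-spaces, and it holds only after passing to the universal cover. I would avoid this case by noting that the corollary lists $K(\mathbf{C}(\Zbb^k;\Fbb))$ only for $k=0,1$. For the loop spaces in (c) it suffices to use a space-level product decomposition, since Step 2 only requires an equivalence of spaces. The group-like $H$-structure is needed only for the based/free comparison and the trivial $\pi_1$-action, and both still hold for loop spaces.
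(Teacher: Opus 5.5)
\textbf{Gap: your argument proves a bijection of sets, but the corollary asserts an isomorphism of groups.}

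Your Step 2 is correct, but it only yields a bijection of \emph{sets} $[BG,T]\cong\prod_{i\ge 0}H^i(G;\pi_iT)$. You also say explicitly that a space-level product decomposition suffices, and that the $H$-structure enters only through the based/free comparison and the trivial $\pi_1$-action. That misses the content of the corollary. Both sides are abelian groups, the left one via the infinite loop structure of $T$. The corollary is restricted to exactly those spaces for which Theorem~\ref{thm::complex_qca_split} provides a splitting of group-like $\Ebb_\infty$-spaces. It omits $\mathbf{Q}(\Zbb^2;\Cbb)$ and $\mathbf{Q}(\Zbb^1;\Fbb_{p^k})$ for odd $p$, although these do split as spaces, and the remark after the corollary claims only an isomorphism of sets for them.

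Your argument applies verbatim to $\mathbf{Q}(\Zbb^2;\Cbb)$, so it cannot explain this restriction. Indeed, the case you single out as the main obstacle is no obstacle at all for a set-level statement. The missing point is that a space-level equivalence $T\simeq\prod_iK(\pi_iT,i)$ need not be an $H$-map, and then the induced bijection is not additive. For example, take $K(\Zbb/2,1)\times K(\Zbb/2,2)$ with product $(a,b)\cdot(a',b')=(a+a',\,b+b'+a\smile a')$. This is a group-like, homotopy-commutative $H$-space that splits as a space. Yet for $G=\Zbb/2$ and $x\in H^1(G;\Zbb/2)$ the generator, $2(x,0)=(0,x^2)\neq 0$. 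So $[BG,T]\cong\Zbb/4$, not $H^1(G;\Zbb/2)\times H^2(G;\Zbb/2)\cong(\Zbb/2)^2$. The same phenomenon occurs for infinite loop spaces whose stable $k$-invariant is nonzero while its unstable counterpart vanishes.

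\textbf{How to repair it.}

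Make your Step 3 the argument rather than a consistency check; this is the paper's route. For the listed $T$, the proof of Theorem~\ref{thm::complex_qca_split} shows that the \emph{stable} $k$-invariants of the connective spectrum $E$ with $\Omega^\infty E\simeq T$ vanish. Hence $E\simeq\prod_i\Sigma^iH\pi_iT$, where wedge and product agree because the connectivity of the factors increases. It follows that $[BG,T]=E^0(BG_+)\cong\prod_iH^i(BG;\pi_iT)$ as groups; equivalently, the Atiyah--Hirzebruch spectral sequence collapses with no extension problems. Part (c) follows because looping an infinite loop splitting gives another one.

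A smaller point concerns based versus free classes. The comparison you quote from May--Ponto is for \emph{connected} $H$-spaces. Several listed $T$ are not connected; for instance $K(\mathbf{C}^*(\Zbb^0))$ and $\mathbf{Q}(\Zbb^1;\Cbb)$ both have $\pi_0=\Qbb_{>0}$. For these, based classes land in the identity component and miss the factor $H^0(G;\pi_0T)$. So the formula with $i\ge 0$ is the one for free classes, which equal $\pi_0T\times[BG,T_1]_*$.
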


\begin{remark}
If $T$ is a space in Theorem~\ref{thm::complex_qca_split} that did not fit in Corollary~\ref{cor::compute}, the isomorphism would still be an isomorphism of sets. This is helpful when $G$ is finite and $T = K(\mathbf{C}(\Zbb^1; \Fbb_{p^k})$, as our calculations in Section~\ref{sec::finite} would show $\prod_{i \geq 0} H^i(G; \pi_i(T)) = H^3(G; \Zbb/(p^k-1)\Zbb)$, which is a finite group. For example, if $G = C_2$ and $p^k$ is odd, then $H^3(C_2; \Zbb/(p^k-1)\Zbb) = \Zbb/2$, so $[BG; T] \cong \Zbb/2$ as groups.\\

If $\mathcal{Q}(\Zbb^2; \Cbb)/\mathcal{C}(\Zbb^2; \Cbb) = 0$, then the stable $k$-invariants of $K(\mathbf{C}(\Zbb^2; \mathbb{C}))$ and $\mathbf{Q}(\Zbb^2; \mathbb{C})$ would also vanish and Corollary~\ref{cor::compute} would apply.  
\end{remark}

In the rest of this section, we freely adopt the language of spectra. For an abelian group $A$, $HA$ denotes its Eilenberg-MacLane spectrum. For spectra $E$ and $X$, we write the $k$-th $E$-cohomology of $X$ to be $E^k(X) = [X, \Sigma^k E]$. If $E = HA$, we also may write $H^k(X; A)$ interchangeably to mean $(HA)^k(X)$. 

\subsection{Complex Case}

The purpose of this section is to prove Theorem~\ref{thm::complex_qca_split}(1) and~\ref{thm::complex_qca_split}(2). We will first prove this for the case of $\Zbb^1$.

\begin{theorem}\label{thm::qca_line_complex}
    The space $K(\mathbf{C}(\Zbb^1; \Cbb))$, and hence $\mathbf{Q}(\Zbb^1; \Cbb)$ is equivalent to a product of Eilenberg-MacLane spaces.
\end{theorem}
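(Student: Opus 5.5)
The plan is to work with the connective delooping. By Theorem~E of \cite{qca_grp_space_spectrum}, $K(\mathbf{C}(\Zbb^1;\Cbb))$ is the zeroth space of the connective spectrum $E \coloneqq \tau_{\geq 0}\Sigma\,\mathbb{QCA}(\Cbb)$. Concretely, its homotopy groups are
\begin{itemize}
\item $\pi_0 = K_0(\operatorname{Az}(\Cbb)) = \Qbb_{>0}$;
\item $\pi_1 = \mathcal Q(\Zbb)^{ab} = \mathcal{Q}(\Zbb)/\mathcal C(\Zbb)$;
\item the higher groups $\pi_{k}$, $k\ge 2$, shifted from $\pi_*\mathbf Q(*;\Cbb)$.
\end{itemize}
By Table~\ref{table_q_z} (the case $n=1$), over $\Cbb$ these are $\pi_1\cong \Qbb_{>0}$ (GNVW, since $\mathrm{Br}(\Cbb)=0$), $\pi_2\cong \Qbb/\Zbb\otimes\Cbb^\times=0$, $\pi_3\cong \Qbb/\Zbb\oplus (K_2(\Cbb)\otimes\Qbb)$, and rational vector spaces above. (The $\pi_0$ of the full space is handled the same way, or one restricts to the identity component and notes that all components are equivalent since the space is group-like.)

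It suffices to show that $E$ splits as a product of Eilenberg--MacLane spectra. A group-like $\Ebb_\infty$-space whose associated connective spectrum is a generalized Eilenberg--MacLane spectrum is a product of Eilenberg--MacLane spaces, and looping then gives the claim for $\mathbf Q(\Zbb^1;\Cbb)$.

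To split $E$, I will peel off the rational part first. Note that $\Qbb_{>0}\cong\bigoplus_p\Zbb$ is free abelian, not rational. Let $F$ be the fiber of rationalization $E\to E_\Qbb$. Since $E_\Qbb$ is a rational spectrum, it is a product of rational Eilenberg--MacLane spectra. The homotopy groups of $E$ are: $\pi_0,\pi_1$ free abelian, $\pi_2=0$, $\pi_3=\Qbb/\Zbb\oplus V$ with $V$ rational, and rational above. I will exhibit the splitting directly via the Postnikov tower, checking that each stable $k$-invariant vanishes. The $k$-invariants live in groups of the form $[H\pi_i,\Sigma^{j-i+1}H\pi_j]$ for $j>i$.
\begin{itemize}
\item \textbf{Free source.} When $\pi_i$ is free, $H\pi_i$ is a wedge of copies of $H\Zbb$. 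Then $[H\Zbb,\Sigma^m H\Zbb]$ is torsion for $m>0$, so it vanishes when the target is rational, and $[H\Zbb,\Sigma^m H\Qbb/\Zbb]$ is the mod-torsion part of the integral Steenrod algebra.
\item \textbf{Rational source or target.} Maps out of $HV$, or into a rational $HW$ in positive degree, vanish.
\end{itemize}
So the only possibly nonzero $k$-invariants are those from $H\pi_0$ or $H\pi_1$ (free) into $\Sigma^{4-i}H\Qbb/\Zbb$, coming from the torsion summand of $\pi_3$. For $\pi_1\to\pi_3$ this is a class in $H^3(H\Zbb;\Qbb/\Zbb)\cong H^4(H\Zbb;\Zbb)$ (up to products over primes), which is $0$ since $H\Zbb^4(H\Zbb)$ has no classes in degree $4$ apart from torsion in degrees $\equiv 0 \bmod (2p-2)$. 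For $p=2,3$ there are torsion elements (e.g.\ $\beta Sq^2$ in degree $3$ for $p=2$, and $\beta P^1$ in degree $5$ for $p=3$), and this is exactly what must be checked carefully.

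The hardest step is the $k$-invariant $H\pi_0\to\Sigma^4H\Qbb/\Zbb$, i.e.\ a class in $H^4(H\Zbb;\Qbb/\Zbb)\cong H^5(H\Zbb;\Zbb)$. This group contains $\beta Sq^4$-type and $\beta P^1$-type classes at $p=2,3$, and could be nonzero. To kill it I will use the multiplicative structure. The $\Qbb/\Zbb\subset\pi_3$ comes from $\mu(\Cbb)\subset\pi_*\mathbf Q(*)$, which in turn comes from the torsion in $K(\Cbb)$, i.e.\ from $ku$ via Suslin's theorem. The map $\pi_0$ (GNVW index, given by $\log$ of dimensions) is realized by a map of spectra from a wedge of $H\Zbb$'s. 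Equivalently, I will show the Hurewicz-type map $E\to H\pi_3$ restricted to the torsion summand splits, by constructing an explicit map $E\to \Sigma^3H\Qbb/\Zbb$ from the determinant/Bott element (the comparison $\mathbf Q(*)\to BPGL_\otimes(\Cbb)^+$ and the fibration with $BGL_\otimes^+$ from Theorem~\ref{thm::converse_on_PGL}, whose $\pi_3$ is $\Qbb/\Zbb$ detected by $B\mu\to B\Cbb^\times$). A retraction onto the $\Qbb/\Zbb$ summand forces the obstructing $k$-invariants to vanish, since they are precisely the obstructions to such a retraction. Hence $E$ splits and the theorem follows.
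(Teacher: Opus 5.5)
Your overall strategy is the paper's: vanishing of the stable $k$-invariants, with rational sources and targets killing everything except the torsion $k$-invariant out of $\Qbb_{>0}\cong\bigoplus_\Nbb\Zbb$. But your homotopy-group bookkeeping is off by one suspension, and that error is what creates your ``hardest step''.

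\begin{itemize}
\item $\Omega^\infty\Sigma\,\mathbb{QCA}(\Cbb)$ is $\mathbf{Q}(\Zbb^1;\Cbb)$, not $K(\mathbf{C}(\Zbb^1;\Cbb))$. The latter is connected by Theorem D of \cite{qca_grp_space_spectrum}, and is the identity component of $\Omega^\infty\Sigma^2\mathbb{QCA}(\Cbb)$.
\item $K_0(\operatorname{Az}(\Cbb))$ and $\mathcal{Q}(\Zbb)/\mathcal{C}(\Zbb)$ are the same group (Theorem A), and it sits in the single degree $\pi_1$.
\item The correct list is $\pi_0=0$, $\pi_1=\Qbb_{>0}$, $\pi_2=0$, $\pi_3=\Qbb/\Zbb\oplus(K_2(\Cbb)\otimes\Qbb)$, and rational above.
\end{itemize}

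By listing both $\pi_0=\Qbb_{>0}$ and $\pi_1=\Qbb_{>0}$ alongside $\pi_3$, you manufacture a phantom $k$-invariant in $\prod_\Nbb H^5(H\Zbb;\Zbb)\cong\prod_\Nbb \Zbb/6$. That group is genuinely nonzero, so the class cannot be dismissed by degree reasons.

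Your proposed way of killing it is not an argument. No map from the delooping to $\Sigma^3H(\Qbb/\Zbb)$ is actually constructed. The fibration $\operatorname{BGL}_\otimes(\Cbb)^+\to\operatorname{BPGL}_\otimes(\Cbb)^+$ lives over the point, and data about it does not produce the spectrum-level retraction you would need. Even in your own setup the step is unnecessary for a space-level statement: a group-like space is $\pi_0$ times its identity component, as your parenthetical already says. That paragraph should simply be deleted.

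Once the grading is fixed, the whole proof rests on one fact. The $\pi_1\to\pi_3$ $k$-invariant lies in
\[\prod_\Nbb H^3(H\Zbb;\Qbb/\Zbb)\cong\prod_\Nbb H\Zbb^4(H\Zbb),\]
and this group must vanish. Your justification is garbled (``no classes in degree $4$ apart from torsion in degrees $\equiv 0 \bmod (2p-2)$''), and you explicitly leave $p=2,3$ ``to be checked carefully''. That check is the actual content of the proof.

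One way to do it is the paper's Lemma~\ref{lem::technical_steenrod}:
\begin{itemize}
\item By Kochman, $H\Zbb^n(H\Zbb)$ is finite for $n>0$, with $p$-primary parts of exponent $p$.
\item The coefficient sequence $\Zbb\xrightarrow{p}\Zbb\to\Fbb_p$ then gives $\dim_{\Fbb_p}H^n(H\Zbb;\Fbb_p)=\dim H\Zbb^n(H\Zbb)_{(p)}+\dim H\Zbb^{n+1}(H\Zbb)_{(p)}$ for $n\ge 1$.
\item For odd $p$, $H^3(H\Zbb;\Fbb_p)=0$, so the $p$-primary part of $H\Zbb^4(H\Zbb)$ vanishes.
\item For $p=2$, you have $H^1(H\Zbb;\Fbb_2)=0$ and $H^2(H\Zbb;\Fbb_2)\cong H^3(H\Zbb;\Fbb_2)\cong\Fbb_2$. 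These force the $2$-primary parts in degrees $2,3,4$ to be $0$, $\Zbb/2$, $0$.
\end{itemize}

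With this, together with $H^k(H\Zbb;\Qbb)=H^k(H\Qbb;\Qbb)=H^k(H(\Qbb/\Zbb);\Qbb)=0$ for $k>0$ to dispose of the rational targets, you recover exactly the paper's proof.
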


\begin{proof}[Proof of Theorem~\ref{thm::qca_line_complex}]
Since $\mathbf{Q}(\Zbb^1; \Cbb)$ is the (based) loop space of $K(\mathbf{C}(\Zbb^1; \Cbb))$ and $\Omega$ commutes with products, it suffices to show that $K(\mathbf{C}(\Zbb^1; \Cbb))$ is equivalent to a product of Eilenberg-MacLane spaces. Since $K(\mathbf{C}(\Zbb^1; \Cbb))$ is Segal's K-theory of a symmetric monoidal category, it is in particular the zeroth space of a connective spectrum $F$ (see Appendix of \cite{Thomason01011982}).

Theorem D of \cite{qca_grp_space_spectrum} shows us that $K(\mathbf{C}(\Zbb^1; \Cbb)) \cong K(\mathbf{C}(\Zbb^1; \Cbb))_1$. We are reduced to showing $K(\mathbf{C}(\Zbb^1; \Cbb))_1$ is a product of Eilenberg-MacLane spaces, which we will do by showing $F$ has vanishing stable $k$-invariants and is hence a product of Eilenberg-MacLane spaces.

Indeed, the homotopy groups of $F$ are the homotopy groups of $K(\mathbf{C}(\Zbb^1; \Cbb))_1$, which are given in Table~\ref{table_q_z} of this paper (and also Section 6 of \cite{qca_grp_space_spectrum}):
\[
\begin{array}{c|ccccc}
i & 0 & 1 & 2 & 3 & \geq 3  \\
\hline
\pi_i(F) & 0 & \Qbb_{> 0} & 0 & \frac{\Qbb}{\Zbb} \oplus (K_2(\mathbb{C}) \otimes \Qbb) & \text{rational vector space} 
\end{array}
\]
The first possibly non-trivial $k$-invariant occurs between $\pi_1$ and $\pi_{3}$ as a map $\tau_{\leq 3-1} F \cong \Sigma^1 H\Qbb_{>0} \to \Sigma^4 H(\frac{\Qbb}{\Zbb} \oplus K_2(\mathbb{C} \otimes \Qbb))$ which represents an element in
\[[\Sigma^1 H\Qbb_{>0}, \Sigma^4 H(\frac{\Qbb}{\Zbb} \oplus K_2(\mathbb{C} \otimes \Qbb))] \cong [H\Qbb_{>0}, \Sigma^3 H(\frac{\Qbb}{\Zbb} \oplus K_2(\mathbb{C} \otimes \Qbb))],\]
which is $H^3(H\Qbb_{>0}; \frac{\Qbb}{\Zbb} \oplus K_2(\mathbb{C} \otimes \Qbb)) \cong H^3(H\Qbb_{>0}; \frac{\Qbb}{\Zbb}) \oplus H^3(H\Qbb_{>0}; K_2(\mathbb{C}) \otimes \Qbb)$. Observe that $\Qbb_{>0}$ is isomorphic to a countably infinite direct sum of integers, indexed over the prime numbers, so we can without loss write $\Qbb_{>0} \cong \bigoplus_{\Nbb} \Zbb$. We can then rewrite
\[H^3(H\Qbb_{>0}; \frac{\Qbb}{\Zbb}) \cong H^3(H\bigoplus_{\Nbb} \Zbb; \frac{\Qbb}{\Zbb}) \cong \prod_{\Nbb} H^3(H\Zbb; \frac{\Qbb}{\Zbb}),\]
\[H^3(H\Qbb_{>0}; K_2(\mathbb{C}) \otimes \Qbb) \cong H^3(H\bigoplus_{\Nbb} \Zbb; K_2(\mathbb{C}) \otimes \Qbb) \cong \prod_{\Nbb} H^3(H\Zbb; K_2(\mathbb{C}) \otimes \Qbb).\]
Now we claim that both $H^3(H\Zbb; \frac{\Qbb}{\Zbb})$ and $H^3(H\Zbb; K_2(\mathbb{C}) \otimes \Qbb)$ are $0$, which would show this $k$-invariant is trivial. We defer the proof of the first case to the technical Lemma~\ref{lem::technical_steenrod}. For the second case, since $K_2(\mathbb{C}) \otimes \Qbb$ is a rational vector space, it suffices to check that $H^3(H\Zbb; \Qbb) = 0$. In fact, we will show that $H^k(H\Zbb; \Qbb) = 0$ for all $k > 0$. The cohomology $H^k(H\Zbb; \Qbb)$ can be computed as the colimit of unstable cohomology groups $\operatorname{colim}_n H^{n+k}(K(\Zbb, n); \Qbb)$. For $n > 1$, $K(\Zbb, n)$ is simply connected and hence the rational equivalence $K(\Zbb, n) \cong_{\mathbb Q} K(\Qbb, n)$ shows that $H^{n+k}(K(\Zbb, n); \Qbb) \cong H^{n+k}(K(\Qbb, n); \Qbb)$. For $n$ odd, $S^n$ is rationally equivalent to $K(\Qbb, n)$, so $H^{n+k}(K(\Qbb, n); \Qbb) = 0$. For $n$ even, we have that $\widetilde{H}_i(K(\Qbb, n); \Zbb) = \Qbb$ for every $i$ a positive multiple of $n$ and is equal to $0$ otherwise. The universal coefficient theorem then shows that $H^{n+k}(K(\Qbb, n); \Qbb) = 0$ for $n$ sufficiently large. It follows that $H^k(H\Zbb; \Qbb) = 0$ for $k > 0$. Note this proof also shows that $H^k(H\Qbb; \Qbb) = 0$ for $k > 0$.

We have now shown that $\tau_{\leq 3} F$ is a wedge of suspensions of Eilenberg-MacLane spectra. To show $F$ is a wedge of suspensions of Eilenberg-MacLane spectra, we need to check the higher $k$-invariants. However, we see that the higher $k$-invariants all take 1 of the following 3 forms: (a) $\Qbb_{>0} \to$ \text{rational}, (b) \text{rational $\to$ rational}, and (c) $\frac{\Qbb}{\Zbb} \to $ rational. Case (a) and (b) will have trivial $k$-invariants since $H^k(H\Zbb; \Qbb) = H^k(H\Qbb; \Qbb) = 0$ for $k > 0$. For Case (c), the fiber sequence $H\Zbb \to H\Qbb \to H\frac{\Qbb}{\Zbb}$ gives a long exact sequence in rational cohomology. Since $H^k(H\Qbb; \Qbb) = 0$ for $k > 0$, it follows that
\[H^k(H\frac{\Qbb}{\Zbb}; \Qbb) = H^{k+1}(H\Zbb; \Qbb) = 0.\]
Thus, Case (c) also gives trivial $k$-invariants. We conclude that $F$ is a wedge of suspensions of Eilenberg-MacLane spectra, so its zeroth space $K(\mathbf{C}(\Zbb^1; \mathbb{C}))_1$ is a product of Eilenberg-MacLane spaces.
\end{proof}

Following the same arguments along that of Theorem~\ref{thm::qca_line_complex}, we similarly have that:
\begin{cor}\label{cor::qca_plane_complex}
Both $\mathbf{Q}(\Zbb^2; \Cbb)$ and the universal cover of $K(\mathbf{C}(\Zbb^2; \Cbb))_1$ are a product of Eilenberg-MacLane spaces. Furthermore, $K(\mathbf{C}(\Zbb^0; \Cbb))$ and $\mathbf{Q}(\Zbb^0; \Cbb)$ are a product of Eilenberg-MacLane spaces.
\end{cor}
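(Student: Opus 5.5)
We will run the argument of Theorem~\ref{thm::qca_line_complex} again, replacing the homotopy groups of $K(\mathbf{C}(\Zbb^1;\Cbb))$ by the relevant column of Table~\ref{table_q_z}. We treat the cases $n=0$ and $n=2$ separately.

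\textbf{The case $\Zbb^0$.} Here $K(\mathbf{C}(\Zbb^0;\Cbb))$ is the zeroth space of a connective spectrum $F_0$, as in the proof of Theorem~\ref{thm::qca_line_complex}. Its homotopy groups are read off from Table~\ref{table_q_z}, with the table's $n$ set to $0$ and shifted by one since $\mathbf{Q}=\Omega K$:
\begin{itemize}
\item $\pi_0$ records the GNVW-type dimension data;
\item $\pi_1$ is $\mathcal{Q}(\Zbb^0)/\mathcal{C}^{sp}(\Zbb^0)$, which vanishes over $\Cbb$;
\item $\pi_2$ is $\Qbb/\Zbb\oplus(K_2(\Cbb)\otimes\Qbb)$;
\item all higher groups are rational vector spaces.
\end{itemize}
Every $k$-invariant of $F_0$ then has one of the forms (a) free abelian or $\Qbb_{>0}$ to rational, (b) rational to rational, (c) $\Qbb/\Zbb$ to rational, or (d) $\Qbb_{>0}$ to $\Qbb/\Zbb$ in cohomological degree $3$. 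Cases (a)--(c) vanish by the computations $H^k(H\Zbb;\Qbb)=H^k(H\Qbb;\Qbb)=H^k(H\Qbb/\Zbb;\Qbb)=0$ for $k>0$ from the proof of Theorem~\ref{thm::qca_line_complex}. Case (d) vanishes by Lemma~\ref{lem::technical_steenrod}, after splitting $\Qbb_{>0}$ as $\bigoplus_\Nbb\Zbb$. Each truncation therefore splits, and so $F_0$ is a wedge of Eilenberg--MacLane spectra. Taking the zeroth space, and then $\Omega$, gives the claim for both $K(\mathbf{C}(\Zbb^0;\Cbb))$ and $\mathbf{Q}(\Zbb^0;\Cbb)$. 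If $\pi_0$ is not group-like in the relevant sense, we instead apply the argument componentwise to the identity component.

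\textbf{The case $\Zbb^2$.} Let $F_2$ be the connective spectrum with $\Omega^\infty F_2=K(\mathbf{C}(\Zbb^2;\Cbb))$. We pass to the $1$-connective cover $\tau_{\geq 2}F_2$, whose zeroth space is the universal cover of the identity component. Its homotopy groups, again from Table~\ref{table_q_z}, are:
\begin{itemize}
\item $\pi_2=\Qbb_{>0}$;
\item $\pi_3=0$;
\item $\pi_4=\Qbb/\Zbb\oplus(K_2(\Cbb)\otimes\Qbb)$;
\item all higher groups rational.
\end{itemize}
This is exactly the pattern of $F$ in Theorem~\ref{thm::qca_line_complex}, shifted up by one. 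Since stable $k$-invariants are invariant under suspension, the identical argument shows that $\tau_{\geq2}F_2$ splits. Its zeroth space, the universal cover, is therefore a product of Eilenberg--MacLane spaces.

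For $\mathbf{Q}(\Zbb^2;\Cbb)=\Omega K(\mathbf{C}(\Zbb^2;\Cbb))_1$ we note that looping kills $\pi_1$. Hence $\mathbf{Q}(\Zbb^2;\Cbb)$ is the loop space of the universal cover up to the components $\pi_0\mathbf{Q}(\Zbb^2;\Cbb)=\mathcal{Q}(\Zbb^2)/\mathcal{C}(\Zbb^2)$. More precisely, its identity component is $\Omega$ of the universal cover, and $\Omega$ preserves products. Because $\mathbf{Q}$ is a loop space, all of its components are homotopy equivalent, so $\mathbf{Q}(\Zbb^2;\Cbb)\simeq \pi_0\times \Omega(\text{universal cover})$, which is a product of Eilenberg--MacLane spaces.

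\textbf{Main obstacle.} The delicate step is the lowest $k$-invariant, from $\Qbb_{>0}$ to $\Qbb/\Zbb$, for which we rely on Lemma~\ref{lem::technical_steenrod}. A second point requiring care is reading the correct indexing off Table~\ref{table_q_z} for $n=0$ and $n=2$: in particular, checking that $\pi_3F_2=0$, equivalently $A_n=0$ over $\Cbb$, so that no new $k$-invariant involving torsion appears.
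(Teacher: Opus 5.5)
Your proposal is correct and follows the paper's proof. For $\Zbb^2$ you pass to the universal cover, which you realize as $\Omega^\infty\tau_{\geq 2}F_2$. Its homotopy groups are those of the line case shifted up by one, so the same vanishing cohomology groups kill every $k$-invariant, and the group-like space $\mathbf{Q}(\Zbb^2;\Cbb)$ is then $\pi_0$ times the loop space of that cover. For $\Zbb^0$ the paper only says that a similar argument works without passing to the universal cover; your list of homotopy groups ($\Qbb_{>0}$, $0$, $\Qbb/\Zbb\oplus(K_2(\Cbb)\otimes\Qbb)$, then rational) and your case analysis of $k$-invariants spell that argument out.
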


\begin{proof}
Let $\widetilde{K(\mathbf{C}(\Zbb^2; \Cbb)))_1}$ denote the universal cover. The main theorem of \cite{qca_grp_space_spectrum} tells us that the homotopy groups of $K(\mathbf{C}(\Zbb^2; \Cbb)))_1$ above degree $1$ can be obtained from the homotopy groups of $K(\mathbf{C}(\Zbb^1; \Cbb)))_1$. Since the universal cover is simply connected, we then have the following table:
\[
\begin{array}{c|cccccc}
i & 0 & 1 & 2 & 3 & 4 & \geq 5  \\
\hline
\pi_i \text{ of the universal cover} & 0 & 0 & \Qbb_{> 0} & 0 & \frac{\Qbb}{\Zbb} \oplus (K_2(\mathbb{C}) \otimes \Qbb) & \text{rational vector space} 
\end{array}
\]
We see that the $k$-invariant questions we encounter are in the cohomology groups as those occurring in the proof of Theorem~\ref{thm::qca_line_complex}, which we previously checked are zero. Thus, we have that $\widetilde{K(\mathbf{C}(\Zbb^2; \Cbb)))_1}$ is a product of Eilenberg-MacLane spaces. Since $\mathbf{Q}(\Zbb^2; \Cbb)$ is group-like, we know it splits off as $\pi_0 \mathbf{Q}(\Zbb^2; \mathbb{C}) \times (\mathbf{Q}(\Zbb^2; \mathbb{C}))_{1}$. The component at identity $\mathbf{Q}(\Zbb^2; \mathbb{C})$ is the loop space of $\widetilde{K(\mathbf{C}(\Zbb^2; \Cbb)))_1}$. Thus, we have that $\mathbf{Q}(\Zbb^2; \Cbb)$ is a product of Eilenberg-MacLane spaces.

A similar discussion, without needing to go to the universal cover, shows the case for $\Zbb^0$ due to the structure of its homotopy groups from the delooping.
\end{proof}

Now we prove the technical lemma used in the proof of Theorem~\ref{thm::qca_line_complex}.
\begin{lemma}\label{lem::technical_steenrod}
   The cohomology group $(H\frac{\Qbb}{\Zbb})^3(H\Zbb) = (H\Zbb)^4(H\Zbb) $ is $0$.
\end{lemma}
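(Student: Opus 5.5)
The plan has three steps: first identify the two groups in the statement, then show both are torsion, then compute them rationally and one prime at a time.

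\textbf{Identifying the two groups.} The fiber sequence $H\Zbb \to H\Qbb \to H\frac{\Qbb}{\Zbb} \to \Sigma H\Zbb$ gives a long exact sequence
\[(H\Qbb)^3(H\Zbb) \to (H\tfrac{\Qbb}{\Zbb})^3(H\Zbb) \to (H\Zbb)^4(H\Zbb) \to (H\Qbb)^4(H\Zbb).\]
In the proof of Theorem~\ref{thm::qca_line_complex} we showed $H^k(H\Zbb;\Qbb)=0$ for $k>0$, so both outer terms vanish. Hence the two groups in the statement are isomorphic, and it suffices to show $(H\Zbb)^4(H\Zbb)=[H\Zbb,\Sigma^4 H\Zbb]=0$.

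\textbf{Reducing to a prime-by-prime statement.} Since $H\Zbb\wedge H\Qbb\simeq H\Qbb$ (equivalently, the rational cohomology vanishes in positive degrees, as just used), the group $(H\Zbb)^4(H\Zbb)$ is torsion. Its $p$-torsion is detected through the Bockstein sequence for $H\Zbb\xrightarrow{p}H\Zbb\to H\Fbb_p$. Concretely, it suffices to show that multiplication by $p$ is injective on $(H\Zbb)^4(H\Zbb)$ for every prime $p$. From the sequence
\[(H\Fbb_p)^3(H\Zbb)\to (H\Zbb)^4(H\Zbb)\xrightarrow{p}(H\Zbb)^4(H\Zbb),\]
injectivity follows once we know the image of $(H\Fbb_p)^3(H\Zbb)$ under the Bockstein is zero.

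\textbf{Computing with the mod $p$ Steenrod algebra.} I would use the classical identification $H^*(H\Zbb;\Fbb_p)\cong \mathcal{A}_p/\mathcal{A}_p\beta$. For $p=2$ this is $\mathcal{A}/\mathcal{A}Sq^1$, whose degree-$3$ part is spanned by $Sq^3$; the relation $Sq^3=Sq^1Sq^2$ means $Sq^3$ is not zero modulo $\mathcal{A}Sq^1$ (the admissible basis of $\mathcal{A}/\mathcal{A}Sq^1$ consists of $Sq^I$ with $I$ not ending in $1$). For odd $p$, the first positive-degree classes are in degree $2(p-1)\ge 4$ and $2(p-1)+1$, so degree $3$ vanishes and there is nothing to check.

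\textbf{The main obstacle: the case $p=2$.} Here we must show that $Sq^3$ is not the mod $2$ reduction of an element of order $2$ in $(H\Zbb)^4(H\Zbb)$. Equivalently, its Bockstein image must vanish, meaning $Sq^1Sq^3$ should lie in $\mathcal{A}Sq^1$ modulo reductions of integral classes. The cleaner route is to compute the integral lift directly: $\beta Sq^3$ reduces mod $2$ to $Sq^1Sq^3=0$, so the integral Bockstein $\tilde\beta Sq^3$ lies in $2\cdot(H\Zbb)^4(H\Zbb)$. Iterating, any $2$-torsion element would be infinitely $2$-divisible torsion, which is impossible in this finitely generated-in-each-degree situation. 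Alternatively, I would fall back on the known computation $H^{4}(H\Zbb;\Zbb)$ from the Cartan computation of $H_*(K(\Zbb,n);\Zbb)$ in the stable range, where $H_3(H\Zbb;\Zbb)=0$ and $H_4$ is torsion. Together with universal coefficients, $(H\Zbb)^4(H\Zbb)\cong \operatorname{Ext}(H_3,\Zbb)\oplus\operatorname{Hom}(H_4,\Zbb)=0$. This last route is the one I would actually write up, citing the stable homology $H_*(H\Zbb;\Zbb)$ in low degrees ($\Zbb,0,\Zbb/2,0,\Zbb/3,\dots$).
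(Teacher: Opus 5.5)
Your skeleton is the same as the paper's: reduce to $(H\Zbb)^4(H\Zbb)$ via $H\Zbb\to H\Qbb\to H(\Qbb/\Zbb)$, note the group is torsion, then show $\times p$ is injective prime by prime using $H^3(H\Zbb;\Fbb_p)$, with odd $p$ trivial. The routes part at $p=2$.

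\textbf{The paper's route at $p=2$.} The paper uses Kochman's theorem that $2$-primary integral operations have exponent $2$. Together with $H^1(H\Zbb;\Fbb_2)=0$ and $H^2(H\Zbb;\Fbb_2)\cong\Fbb_2$, this gives $H^3(H\Zbb;\Zbb)\cong\Fbb_2$. Then reduction onto $H^3(H\Zbb;\Fbb_2)\cong\Fbb_2$ is an isomorphism, so the connecting map into $H^4(H\Zbb;\Zbb)$ vanishes.

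\textbf{Your written-up route.} You import $H_3(H\Zbb;\Zbb)=0$ from Cartan and apply universal coefficients. This is correct. Incidentally $H_4(H\Zbb;\Zbb)\cong\Zbb/6$, not $\Zbb/3$, because of the $2$-primary pair $\operatorname{Sq}^4\mapsto\operatorname{Sq}^5$, but only torsionness matters. Be aware, though, that for $H\Zbb$ of finite type with torsion $H_4$, the vanishing of $H_3(H\Zbb;\Zbb)$ is equivalent to the lemma. So this route relocates the content into a citation rather than deriving it. Also, torsionness of $(H\Zbb)^4(H\Zbb)$ does not follow from $H\Zbb\wedge H\Qbb\simeq H\Qbb$ alone, since $\operatorname{Ext}(\Qbb/\Zbb,\Zbb)$ is not torsion. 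You need the finite type you invoke later, or Kochman's finiteness as the paper uses.

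\textbf{Your ``cleaner route'' has a gap as written.} Let $T$ be the $2$-primary part of $H^4(H\Zbb;\Zbb)$. You know $\widetilde\beta(\operatorname{Sq}^3)\in 2T$ and $2\,\widetilde\beta(\operatorname{Sq}^3)=0$, but this does not force $\widetilde\beta(\operatorname{Sq}^3)=0$. For instance, $T=\Zbb/4$ with $\widetilde\beta(\operatorname{Sq}^3)=2$ is consistent with everything you used. ``Iterating'' would require the element $y$ with $2y=\widetilde\beta(\operatorname{Sq}^3)$ to reduce to zero mod $2$, which you never check. Your sentence about $\operatorname{Sq}^3$ not being the reduction of an order-$2$ class in $(H\Zbb)^4(H\Zbb)$ also mixes up the degrees and the direction: what you need is that $\operatorname{Sq}^3$ \emph{is} the reduction of a class in degree $3$.

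\textbf{The fix.} Use the relation you already wrote down. Since $\operatorname{Sq}^3=\operatorname{Sq}^1\operatorname{Sq}^2=\rho\,\widetilde\beta\operatorname{Sq}^2$, the class $\operatorname{Sq}^3$ is the reduction of the integral class $\widetilde\beta\operatorname{Sq}^2\in H^3(H\Zbb;\Zbb)$. Hence $\widetilde\beta(\operatorname{Sq}^3)=\widetilde\beta\rho(\widetilde\beta\operatorname{Sq}^2)=0$ by exactness. Alternatively, $\operatorname{Sq}^1\operatorname{Sq}^4=\operatorname{Sq}^5\neq 0$ in $\mathcal{A}/\mathcal{A}\operatorname{Sq}^1$ shows $\operatorname{Sq}^4$ has no integral lift, so $T=2T$ and hence $T=0$.

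With either fix, your Bockstein argument becomes a more elementary alternative to the paper's. It needs only finite type and one Adem relation, and it avoids Kochman's exponent theorem, which the paper uses precisely to see that $\operatorname{Sq}^3$ lifts integrally.
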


For a ring $R$, $(HR)^*(HR)$ is referred to as the $R$-Steenrod algebras. The proof of Lemma~\ref{lem::technical_steenrod} involves the theory of integral Steenrod algebras, which are studied using integral cohomology operations. Here, we record the following fact.
\begin{lemma}[\cite{Kochman1982}]\label{lem::kochman}
For $n > 0$, $H\Zbb^n(H\Zbb)$ is finite. Furthermore for each prime $p$, the $p$-primary part of $H\Zbb^n(H\Zbb)$ is simple $p$-torsion (i.e., has order dividing $p$).
\end{lemma}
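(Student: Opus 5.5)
The plan is to separate the statement into a rational part, a finiteness part, and a $p$-local exponent part. The exponent part is the only one with real content.

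\emph{Rational part and finiteness.} First, $H\Zbb$ is a connective spectrum of finite type. Computing $H\Zbb^n(H\Zbb)$ as the stable limit $\lim_m H^{n+m}(K(\Zbb,m);\Zbb)$, which stabilizes in the Freudenthal range, shows that each group $H\Zbb^n(H\Zbb)$ is finitely generated. Second, rationalization gives
\[
H\Zbb^n(H\Zbb)\otimes\Qbb \cong H\Qbb^n(H\Qbb),
\]
which uses finite type. The latter group vanishes for $n>0$: this is the computation $H^k(H\Zbb;\Qbb)=0$ for $k>0$ carried out in the proof of Theorem~\ref{thm::qca_line_complex}. Hence for $n>0$ the group $H\Zbb^n(H\Zbb)$ is a finitely generated torsion group, i.e.\ finite. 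It therefore splits as the direct sum of its $p$-primary parts.

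\emph{Simple $p$-torsion via the Bockstein spectral sequence.} Fix a prime $p$ and run the mod $p$ cohomology Bockstein spectral sequence for the spectrum $H\Zbb$. Its $E_1$-term is $H^*(H\Zbb;\Fbb_p)\cong \mathcal{A}_p/\mathcal{A}_p\beta = \mathcal{A}_p\otimes_{E(Q_0)}\Fbb_p$, and $d_1$ is the Bockstein $\beta=Q_0$. By the standard convergence property, a summand $\Zbb/p^k$ with $k\ge 2$ in $H\Zbb^{n}(H\Zbb)_{(p)}$ produces nonzero classes in $E_k$ in degrees $n-1$ and $n$. Thus it suffices to show that $E_2=H(\mathcal{A}_p/\mathcal{A}_p\beta;\,Q_0)$ is $\Fbb_p$ concentrated in degree $0$, which is exactly the rational class.

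\emph{The main obstacle: computing the $Q_0$-Margolis homology of $\mathcal{A}_p/\mathcal{A}_p\beta$.} I would argue as follows.
\begin{itemize}
\item By the Milnor basis, $\mathcal{A}_p$ is free as a right module over the exterior algebra $E(Q_0,Q_1)$, and also over $E(Q_0)$.
\item Consequently $\mathcal{A}_p\otimes_{E(Q_0)}\Fbb_p$, viewed as a left $E(Q_0)$-module, decomposes into free $E(Q_0)$-modules plus a single trivial summand $\Fbb_p$ in degree $0$.
\item To see this concretely, dualize. The dual $\mathcal{A}_{p*}\,\square_{E(\tau_0)}\Fbb_p$ is $P(\xi_1,\xi_2,\dots)\otimes E(\tau_1,\tau_2,\dots)$, with $p=2$ handled by the analogous formula.
\item Left $Q_0$ acts on this dual as a derivation with $\tau_i\mapsto \xi_i$.
\item This derivation makes each pair $(\xi_i,\tau_i)$ into an acyclic Koszul complex. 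By the Künneth formula the total homology is therefore $\Fbb_p$ in degree $0$.
\end{itemize}
This step is where care with the left-versus-right actions is needed, so that the correct $Q_0$ is the one acting on the quotient. Once it is established, $E_2$ vanishes in positive degrees, every $p$-primary summand of $H\Zbb^n(H\Zbb)$ for $n>0$ has exponent $p$, and the lemma follows.
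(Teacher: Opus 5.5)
The paper does not prove this lemma; it quotes it from Kochman's work on integral cohomology operations. Your argument is a genuinely different, self-contained route, and it is sound in outline. You get finiteness from finite type together with $H^k(H\Zbb;\Qbb)=0$ for $k>0$. That vanishing is proved in the paper independently of this lemma, so there is no circularity. You then get the exponent statement from the mod $p$ Bockstein spectral sequence, once $H(\mathcal{A}/\mathcal{A}\beta;Q_0)$ is shown to be $\Fbb_p$ in degree $0$.

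The citation is shorter and brings Kochman's full structural description. Your route instead yields an explicit formula. Since $E_2=E_\infty$, the $p$-primary part of $H\Zbb^n(H\Zbb)$ is $\Fbb_p^{\,r}$, where $r$ is the rank of $\beta\colon H^{n-1}(H\Zbb;\Fbb_p)\to H^n(H\Zbb;\Fbb_p)$. From this, Lemma~\ref{lem::technical_steenrod} can be read off directly, because $Sq^1Sq^3=0$ and $H^3(H\Zbb;\Fbb_p)=0$ for odd $p$. It also shows that the true content of the statement is that the $p$-primary part is an $\Fbb_p$-vector space, which is what the paper actually uses via $\ker(\times p)$. The literal parenthetical ``order dividing $p$'' is false: for example $H\Zbb^7(H\Zbb)\cong(\Zbb/2)^2$, coming from the two pairs $Sq^6\mapsto Sq^7$ and $Sq^4Sq^2\mapsto Sq^5Sq^2$.

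Two steps need repair.

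First, the ``consequently'' in your second bullet does not follow. Right freeness of $\mathcal{A}$ over $E(Q_0)$ says nothing about the \emph{left} $E(Q_0)$-module structure of $\mathcal{A}\otimes_{E(Q_0)}\Fbb_p$. The dual Koszul computation is the real proof. Note that with Milnor's coproduct the dual is $P(\bar\xi_i)\otimes E(\bar\tau_i)$, spanned by conjugates, and $Q_0\bar\tau_i=-\bar\xi_i$.

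Second, $p=2$ is not handled ``by the analogous formula'' with a pairwise K\"unneth argument. Here $H_*(H\Zbb;\Fbb_2)=P(\bar\xi_1^2,\bar\xi_2,\bar\xi_3,\dots)$ with $Sq^1\bar\xi_{k+1}=\bar\xi_k^2$. The pairs $(\bar\xi_k^2,\bar\xi_{k+1})$ do not give tensor-independent sub-DG-algebras, because $\bar\xi_{k+1}^2\neq 0$ is itself the next cycle. For instance, $P(\bar\xi_1^2,\bar\xi_2)$ has homology $P(\bar\xi_2^2)$, not $\Fbb_2$.

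The fix is as follows. $Sq^1$ is a derivation that kills all squares, so it is linear over $S=P(\bar\xi_1^2,\bar\xi_2^2,\dots)$. In the $S$-basis of square-free monomials in $\bar\xi_2,\bar\xi_3,\dots$, the complex is the Koszul complex of the regular sequence $\bar\xi_1^2,\bar\xi_2^2,\dots$ in $S$. Its homology is $S/(\bar\xi_1^2,\bar\xi_2^2,\dots)=\Fbb_2$. Since $p=2$ is exactly where the paper uses the lemma, this step should be written out.
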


\begin{proof}[Proof of Lemma~\ref{lem::technical_steenrod}]
Consider the fiber sequence $H\Zbb \to H\Qbb \to H\frac{\Qbb}{\Zbb}$. In the proof of Theorem~\ref{thm::qca_line_complex}, we showed that $(H\Qbb)^k(H\Zbb) = 0$ for $k > 0$, independently of the present lemma. The long exact sequence then shows that
\[(H\frac{\Qbb}{\Zbb})^3(H\Zbb) \cong (H\Zbb)^4(H\Zbb)\]
are the integral cohomology operations in degree $4$. We will now show that $(H\Zbb)^4(H\Zbb) = 0$. For convenience, we write $X = H\Zbb$. The proof of why $(H\Zbb)^4(H\Zbb) = 0$ will follow from a series of sub-statements which we prove one by one.\\

\noindent (a) The Steenrod algebra $H^k(H\Fbb_p; \Fbb_p) = (H\Fbb_p)^k(H\Fbb_p)$ have explicit basis that have been computed. Here we will use the basis description of Serre-Cartan \cite{Cartan1954-1955, Serre1953} (see also the \texttt{Sage} package page for Steenrod algebras \cite{SageSteenrodAlgebra}). These basis show that:
\begin{itemize}[leftmargin=*]
    \item For any prime $p$, $H^1(H\Fbb_p; \Fbb_p) = \Fbb_p\{\beta\}$, generated by what is called the Bockstein map $\beta$, and $\beta^2 = 0$.
    \item For any prime $p > 2$, the next $k$ for which $H^k(H\Fbb_p; \Fbb_p) \neq 0$ is $k = 2(p-1)$. 
    \item $H^2(H\Fbb_2; \Fbb_2)$ is $\Fbb_2 \{\operatorname{Sq}^2\}$ and $H^3(H\Fbb_2; \Fbb_2) = \Fbb_2 \{\operatorname{Sq}^3, \operatorname{Sq}^2 \operatorname{Sq}^1\}$, where $\operatorname{Sq}^1$ is identified as the Bockstein map.
\end{itemize}

\noindent(b) $H^1(X; \Fbb_p) = 0$. Indeed, consider the fiber sequence $X \xrightarrow{\times p} X \to X/p = H\Fbb_p$. This induces a long exact sequence
\[... \to H^0(X; \Fbb_p) \xrightarrow{\times p} H^0(X; \Fbb_p) \to H^1(H\Fbb_p; \Fbb_p) \to H^1(X; \Fbb_p) \xrightarrow{\times p} H^1(X; \Fbb_p) \to ...\]
Clearly both $\times p$ maps are zero maps, as the cohomology groups have coefficients in $\Fbb_p$. This gives an exact sequence
\[0 \to H^0(X; \Fbb_p) \xrightarrow{\beta} H^1(H\Fbb_p; \Fbb_p) \to H^1(X; \Fbb_p) \to 0,\]
where $\beta$ is the Bockstein map. Since $H^0(X; \Fbb_p) = [H\Zbb; H\Fbb_p]= \operatorname{Hom}(\Zbb; \Fbb_p) = \Fbb_p$, the map $\beta$ is an isomorphism by (a). Thus, $H^1(X; \Fbb_p) = 0$.\\

\noindent(c) $H^2(X; \Fbb_2) = H^3(X; \Fbb_2) = \Fbb_2$. For $k \in \{2, 3\}$, running the long exact sequence in $\Fbb_2$-cohomology on the fiber sequence $X \xrightarrow{\times 2} X \to X/p$, we again have an exact sequence
\[0 \to H^{k-1}(X; \Fbb_2) \xrightarrow{\beta} H^k(HX/2 = H\Fbb_2; \Fbb_2) \to H^k(X; \Fbb_2) \to 0.\]
We now see that $H^k(X; \Fbb_2)$ is the cokernel of the Bockstein map. From (a), we then know that $H^k(X; \Fbb_2)$ is 1-dimensional for $k \in \{2, 3\}$.\\

Now we will prove $(H\Zbb)^4(H\Zbb) = H^4(X; \Zbb) =  0$. By Lemma~\ref{lem::kochman}, it suffices to show that each $p$-primary part of $H^4(X; \Zbb)$ is $0$ and the $p$-primary part of $H^k(X; \Zbb)$ is exactly $\ker(\times p: H^k(X; \Zbb) \to H^k(X; \Zbb))$ for $k > 0$. Consider the exact sequence $\Zbb \xrightarrow{\times p} \Zbb \to \Fbb_p$, which induces a long exact sequence in cohomology groups of $X$ as:
\[\begin{tikzcd}
	{H^1(X; \Fbb_p)=0} & {H^2(X; \Zbb)} & {H^2(X; \Zbb)} & {H^2(X; \Fbb_p)} & \\
	& {H^3(X; \Zbb)} & {H^3(X; \Zbb)} & {H^3(X; \Fbb_p)} \\
	& {H^4(X; \Zbb)} & {H^4(X; \Zbb)} & {H^4(X; \Fbb_p)} & {...}
	\arrow[from=1-1, to=1-2]
	\arrow["{\times p}", from=1-2, to=1-3]
	\arrow[from=1-3, to=1-4]
	\arrow[from=1-4, to=2-2]
	\arrow["{\times p}"', from=2-2, to=2-3]
	\arrow[from=2-3, to=2-4]
	\arrow[from=2-4, to=3-2]
	\arrow["{\times p}"', from=3-2, to=3-3]
	\arrow[from=3-3, to=3-4]
	\arrow[from=3-4, to=3-5]
\end{tikzcd}\]
where $H^1(X; \Fbb_p) = 0$ by (b). We know that $H^3(X; \Fbb_p) = 0$ is the cokernel of the Bockstein $\beta: H^2(X; \Fbb_p) \to H^3(H\Fbb_p; \Fbb_p)$ and hence (by (a)) is $0$ for $p > 2$, so the map $\times p: H^4(X; \Zbb) \to H^4(X; \Zbb)$ is injective. Thus, $H^4(X; \Zbb)$ has zero $p$-primary parts for $p > 2$. When $p = 2$, (a) implies the diagram becomes
\[\begin{tikzcd}
	0 & {H^2(X; \Zbb)} & {H^2(X; \Zbb)} & {\Fbb_2} & \\
	& {H^3(X; \Zbb)} & {H^3(X; \Zbb)} & {\Fbb_2} \\
	& {H^4(X; \Zbb)} & {H^4(X; \Zbb)} & {H^4(X; \Fbb_2)} & {...}
	\arrow[from=1-1, to=1-2]
	\arrow["{\times 2}", from=1-2, to=1-3]
	\arrow[from=1-3, to=1-4]
	\arrow[from=1-4, to=2-2]
	\arrow["{\times 2}"', from=2-2, to=2-3]
	\arrow[from=2-3, to=2-4]
	\arrow[from=2-4, to=3-2]
	\arrow["{\times 2}"', from=3-2, to=3-3]
	\arrow[from=3-3, to=3-4]
	\arrow[from=3-4, to=3-5]
\end{tikzcd}.\]
Since the map $\times p: H^2(X; \Zbb) \to H^2(X; \Zbb)$ is injective for any prime, Lemma~\ref{lem::kochman} implies $H^2(X; \Zbb) = 0$. The exact sequence now reduces to
\[\begin{tikzcd}
	0 & {\Fbb_2} & {\Fbb_2} & {\Fbb_2} & {\Fbb_2} & \\
	&& {H^4(X; \Zbb)} & {H^4(X; \Zbb)} & {H^4(X; \Fbb_2)} & {...}
	\arrow[from=1-1, to=1-2]
	\arrow["\cong", from=1-2, to=1-3]
	\arrow["{\times 2}", from=1-3, to=1-4]
	\arrow["\phi", from=1-4, to=1-5]
	\arrow["\delta", from=1-5, to=2-3]
	\arrow["{\times 2}"', from=2-3, to=2-4]
	\arrow[from=2-4, to=2-5]
	\arrow[from=2-5, to=2-6]
\end{tikzcd}.\]
Since $\ker(\phi) = \im(\times 2) = 0$, the map $\phi: \Fbb_2 \to \Fbb_2$ is injective and hence an isomorphism. Thus, $\ker(\delta) = \operatorname{im}(\phi) = \Fbb_2$, and $\delta$ is the zero map. This shows that the map $\times 2: H^4(X; \Zbb) \to H^4(X; \Zbb)$ is injective, so $H^4(X; \Zbb)$ has no non-trivial 2-primary parts.

Thus, $H^4(X; \Zbb)$ has no non-trivial $p$-primary parts for any prime $p$. Since $H^4(X; \Zbb)$ is finite by Lemma~\ref{lem::kochman}, this shows that $(H\Zbb)^4(H\Zbb) = H^4(X; \Zbb) = 0$.
\end{proof}

\subsection{Unitary Case}

We now prove that the  $*$-QCA spaces $\mathbf{Q}^*(X)$ (described in Section~\ref{sec::unitary_obstruction}) constructed by \cite{qca_grp_space_spectrum} are equivalent to a product of Eilenberg-MacLane spaces when $X = \Zbb^0, \Zbb^1, \Zbb^2$. This is the unitary analog of the results over $\mathbb{C}$ discussed above.

We first note the unitary QCA spaces are related to the following categories.
\begin{defn}
We consider three categories associated to unitary matrices
\begin{enumerate}[leftmargin=*]
    \item We let $\mathcal{U}_{\oplus}$ denote the groupoid whose objects are $\Cbb^n, n \geq 0$ and automorphisms are the unitary matrices $U(n)$. $\mathcal{U}_{\oplus}$ is symmetric monoidal under direct sum.
    \item We let $\mathcal{U}_{\otimes}$ denote the groupoid whose objects are $\Cbb^n, n > 0$ and automorphisms are the unitary matrices $U(n)$. $\mathcal{U}_{\otimes}$ is symmetric monoidal under tensor product.
    \item We let $\mathcal{PU}_{\otimes}$ denote the groupoid whose objects are $U(n), n > 0$, and automorphisms are the projective unitary matrices $PU(n)$. $\mathcal{PU}_{\otimes}$ is symmetric monoidal under tensor product.
\end{enumerate}
Here we do not associate any topology to the set of morphisms.
\end{defn}

Note that $\pi_0(\mathcal{PU}_{\otimes})$ is the group completion of the multiplicative monoid $\Zbb_{>0}$, which is $\Qbb_{>0}$. The delooping theorem of \cite{qca_grp_space_spectrum} implicitly shows that $K(\mathcal{PU}_{\otimes}) \cong \Omega K(\mathbf{C}^*(\Zbb^1))$. Taking $\pi_0$ on both sides gives an isomorphism $\Qbb_{> 0} \cong \pi_1(K(\mathbf{C}^*(\Zbb^1))) = \mathcal{Q}^*(\Zbb^1)/\mathcal{C}^*(\Zbb^1)$, which recovers the usual GNVW index \cite{Gross_2012}.

\begin{prop}\label{prop::unitary_pt_groups}
The homotopy groups of $K(U_{\otimes})$ fall in the following pattern.
\[
\begin{array}{c|cccc}
i & 0 & 1 & 2 & \geq 3  \\
\hline
K_i(\mathcal{PU}_{\otimes}) & \Qbb_{> 0} & 0 & \frac{\Qbb}{\Zbb} \oplus (\text{rational group}) & \text{rational group} 
\end{array}
\]
Furthermore, $K_2(\mathcal{PU}_{\otimes})$ can be non-canonically split into $U(1) \oplus (\text{rational group})$.
\end{prop}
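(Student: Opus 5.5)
The plan is to compute $K_*(\mathcal{PU}_{\otimes})$ by comparing it with $K(\mathcal{U}_{\otimes})$ through the central extension $U(1)\to U(n)\to PU(n)$, taking the colimit over tensor stabilization. Throughout, $U(1)$ is regarded as a discrete group.

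First, $\pi_0$. The monoid $\pi_0(B\mathcal{PU}_{\otimes})=\Zbb_{>0}$ group-completes to $\Qbb_{>0}$. By the group-completion theorem, the identity component is $K(\mathcal{PU}_{\otimes})_1\simeq \operatorname{BPU}_{\otimes}^+$, the plus construction taken at the commutator subgroup. The same holds for $\mathcal{U}_{\otimes}$: by Corollary~\ref{cor::zd_h_space_identification} in its unitary form with $X=*$, we get $K(\mathcal{U}_{\otimes})_1\simeq \operatorname{BU}_{\otimes}^+$. The unitary variant of Lemma~\ref{lem::BGL_rational}, noted in the proof of the unitary version of Theorem~\ref{thm::fin_gp_null}, shows that all homotopy groups of $\operatorname{BU}_{\otimes}^+$ are rational.

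Next, the fibration. The surjection $U_{\otimes}\to PU_{\otimes}$ has central kernel $U(1)$, so by \cite{BERRICK1983172} (as in the unitary version of Theorem~\ref{thm::converse_on_PGL}) there is a fibration
\[BU(1)\longrightarrow \operatorname{BU}_{\otimes}^+\longrightarrow \operatorname{BPU}_{\otimes}^+ .\]
Its long exact sequence gives the remaining groups. For $i\geq 3$, $\pi_i\operatorname{BPU}^+_\otimes\cong\pi_i\operatorname{BU}^+_\otimes$, which is rational; this covers $K_i$ for $i\geq 3$ (indexing $K_i=\pi_i$ of the identity component for $i\ge1$). In low degrees the sequence reads
\[0\to \pi_2 \operatorname{BU}^+_\otimes\to \pi_2\operatorname{BPU}^+_\otimes\to U(1)\to \pi_1\operatorname{BU}^+_\otimes\to \pi_1 \operatorname{BPU}^+_\otimes\to 0 .\]

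The main obstacle is identifying the middle map $U(1)\to \pi_1\operatorname{BU}^+_\otimes=U_{\otimes}^{ab}$. This is the map $\lambda\mapsto[\lambda I]$. Using the determinant-type invariant $\det^{1/n}$ on $U(n)$, which is compatible with tensor stabilization since $\det(A\otimes I_m)^{1/nm}=\det(A)^{1/n}$, I would show that $U_\otimes^{ab}\cong U(1)\otimes\Qbb$ (equivalently, the rational group $U(1)/\mu_\infty$). Under this identification the map is $\lambda\mapsto\lambda\otimes1$. Its kernel is the torsion subgroup $\mu_\infty\cong \Qbb/\Zbb$, and it is surjective. Surjectivity forces $\pi_1\operatorname{BPU}^+_\otimes=0$, consistent with $PU(n)$ being perfect. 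This step requires checking that the commutator subgroup of $U_\otimes$ is exactly the stabilized $SU$, which follows from each $SU(n)$ being perfect.

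Finally, we get $0\to V\to \pi_2\operatorname{BPU}^+_\otimes\to \Qbb/\Zbb\to 0$ with $V$ rational, hence divisible, so the sequence splits non-canonically as $\Qbb/\Zbb\oplus V$. For the last claim, $U(1)\cong \Qbb/\Zbb\oplus W$ with $W$ a rational vector space of continuum dimension. Absorbing $W$ into the rational summand, assuming the dimensions permit, gives $K_2\cong U(1)\oplus(\text{rational group})$.
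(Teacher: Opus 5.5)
Your computation of the table is correct, and in one respect sharper than the paper's. You identify $\pi_1\operatorname{BU}^+_\otimes=U_\otimes^{ab}\cong U(1)\otimes\Qbb$ via $A\mapsto \det(A)\otimes\frac1n$, and the fiber map $U(1)\to U_\otimes^{ab}$ as $\lambda\mapsto\lambda\otimes 1$. This gives $\ker\phi=\Qbb/\Zbb$ exactly, and you get $K_1(\mathcal{PU}_\otimes)=0$ directly from the long exact sequence. The paper instead obtains $K_1=0$ from Bass's $K_1$ and Goto's theorem for $PU(n)^\delta$, and only shows $\Qbb/\Zbb\subseteq\ker\phi$ by a rationalization argument. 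Your route therefore yields the cleaner statement $K_2(\mathcal{PU}_\otimes)\cong\Qbb/\Zbb\oplus K_2(\mathcal U_\otimes)$.

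The gap is in the final ``Furthermore'' clause. Your phrase ``assuming the dimensions permit'' is exactly the content of that claim. Write $U(1)\cong\Qbb/\Zbb\oplus W$ with $\dim_\Qbb W=2^{\aleph_0}$. To rewrite $\Qbb/\Zbb\oplus V$ as $U(1)\oplus V'$, you need $\dim_\Qbb V\geq 2^{\aleph_0}$. By your own computation, $V=K_2(\mathcal U_\otimes)$ is the only place the rational part can come from.

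Nothing in your argument bounds $V$ from below. The rationality of $\pi_*\operatorname{BU}^+_\otimes$ is proved by killing mod-$p$ homology via the Cohen--Lada--May formula, and this carries no size information. If $V$ were countable, $K_2(\mathcal{PU}_\otimes)$ would be countable and could not contain $U(1)$.

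The paper fills this with genuine external input:
\begin{enumerate}
\item $K_2(\mathcal U_\otimes)\cong K_2(\mathcal U_\oplus)\otimes\Qbb$, from May's comparison of the multiplicative and additive group completions. This is a statement beyond mere rationality.
\item $K_2(\mathcal U_\oplus)\cong H_2(BSU^\delta;\Zbb)$.
\item Homological stability $H_2(BSU^\delta)\cong H_2(BSU(n)^\delta)$ for large $n$.
\item The Sah--Wagoner theorem that $H_2(BG^\delta;\Zbb)$ of a nontrivial connected Lie group surjects onto a rational vector space of uncountable dimension.
\end{enumerate}
You need some input of this kind, about the second homology of $SU(n)$ or $PU(n)$ made discrete, to finish the proof.
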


\begin{remark}
    The splitting $K_2(\mathcal{PU}_{\otimes}) \cong U(1) \oplus (\text{rational group})$ being non-canonical is expected. The unitary QCA considered do not take into account the topology of the unitary matrices. Without topology, one does not have a canonical ordering on $\Rbb$ in $\Rbb/\Zbb \cong U(1)$, so the splitting is expected to be non-canonical.
\end{remark}

In the proof of Proposition~\ref{prop::unitary_pt_groups}, we adopt the following convention: for a group $G$ with a non-discrete topology, $G^{\delta}$ denotes its underlying discrete group.

\begin{proof}
    The case of $\pi_0$ had been discussed right before this proposition. For $\pi_1$, Bass's definition of $K_1$ (see Proposition 1 of \cite{c80f3555-900f-35b5-ab05-6d397e6a44e6}) tells us that $K_1(\mathcal{PU}_{\otimes}) = \operatorname{colim}_{n \in (\Zbb_{> 0}, |)} (PU(n)^{\delta})^{ab}$. The main theorem of \cite{10.2969/jmsj/00130270} shows that each $PU(n)^{\delta}$ is perfect, so the colimit becomes $0$. 
    
    For the higher homotopy groups, we will construct a fibration sequence to compute them. Let $U_{\otimes} = \operatorname{colim}_{(\Zbb_{>0}, |)} U(n)$ and $PU = \operatorname{colim}_{(\Zbb_{>0}, |)} PU(n)$, where the colimit is taken over the divisibility poset and an arrow $d \xrightarrow{\times k} n$ is sent to the Kronecker product $- \otimes I_k$.
    
    The short exact sequences $U(1) \to U(n) \to PU(n)$ assemble to give a short exact sequence $U(1) \to U_{\otimes} \to PU$, which induces a fibration
    \begin{equation}\label{eq::unitary_fibration}
    BU(1)^{\delta} \to BU_{\otimes}^{\delta} \to BPU^{\delta}.
    \end{equation}
    For a fibration $F \to E \to B$ in general, the main theorem of \cite{BERRICK1983172} characterizes when $F^+ \to E^+ \to B^+$ is a fibration: (a) $\pi_1(E) \to \pi_1(B)$ sends the maximal perfect normal subgroup $\mathcal{P}(\pi_1(E)) \subseteq \pi_1(E)$ surjectively onto the maximal perfect normal subgroup $\mathcal{P}(\pi_1(B)) \subseteq \pi_1(B)$, (b) $\mathcal{P}(\pi_1(B))$ acts trivially on $\pi_*(F^+)$.\\
    
    The sequence in (\ref{eq::unitary_fibration}) satisfies condition (b) as $U(1)$ is central in $U_{\otimes}$ and the plus construction of $BU(1)^{\delta}$ is itself. Condition (a) is also satisfied as the maximal perfect normal subgroup of $U_{\otimes}$ is $SU_{\otimes}$, which is sent surjectively onto $PU^{\delta}$, which is itself perfect because each $PU(n)^{\delta}$ is perfect. Note this is similar to Proposition 6.2 of \cite{qca_grp_space_spectrum}. Thus we have that
    \begin{equation}\label{eq::unitary_fibration_plus}
    BU(1)^{\delta} \to (BU_{\otimes}^{\delta})^+ \to (BPU^{\delta})^+
    \end{equation}
    is a fibration. Furthermore, Proposition 3 of \cite{c80f3555-900f-35b5-ab05-6d397e6a44e6} implies that we can identify $(BU^{\delta}_{\otimes})^+ = K(\mathcal{U_{\otimes}})_1$ and $(BPU^{\delta})^+ = K(\mathcal{PU}_{\otimes})_1$. Thus, we now have
    \begin{equation}\label{eq::unitary_k_theory_fib}
    BU(1)^{\delta} \to K(\mathcal{U}_{\otimes})_1 \to K(\mathcal{PU}_{\otimes})_1
    \end{equation}
    As $\pi_*(BU(1)^{\delta})$ is concentrated in degree $1$, the long exact sequence in homotopy groups show that $\pi_{i}(K(\mathcal{U}_{\otimes})_1) \cong \pi_i(K(\mathcal{PU}_{\otimes})_1)$ for $i \geq 3$. A similar argument as to that of Lemma~\ref{lem::BGL_rational} would show that the homotopy groups of $K(\mathcal{U_{\otimes}})_1$ are all rational. In fact, one can use Theorem V.5.3 of \cite{May1977EInfinityRingSpaces} directly here to show that $K_i(\mathcal{U}_{\otimes}) = K_i(\mathcal{U}_{\oplus}) \otimes \Qbb$ for $i > 0$. Regardless, we have that $\pi_i(K(\mathcal{U}_{\otimes})_1)$ is rational for $i \geq 3$.\\

    Finally for $i = 2$, we investigate the long exact sequence:
    \[0 \to \pi_2(K(\mathcal{U}_{\otimes})_1) \to \pi_2(K(\mathcal{PU}_{\otimes})_1) \to U(1) \xrightarrow{\phi} \pi_1(K(\mathcal{U}_{\otimes})_1) \to 0.\]
    Since $\pi_1(K(\mathcal{U}_{\otimes})_1)$ is rational, the universal property of rationalization shows $\phi$ factors through the rationalization $U(1) \cong \Rbb/\Zbb  \to U(1) \otimes \Qbb \cong (\Rbb/\Zbb)/(\Qbb/\Zbb)$, which has kernel being the roots of unity $\frac{\Qbb}{\Zbb}$. Thus, $\frac{\Qbb}{\Zbb} \subseteq \ker(\phi)$. Since $\frac{\Qbb}{\Zbb}$ is injective, we can decompose $\ker(\phi) = \frac{\Qbb}{\Zbb} \oplus \text{rational group}$. We also know the exact sequence splits off as
    \[0 \to \pi_2(K(\mathcal{U}_{\otimes})_1) \to \pi_2(K(\mathcal{PU}_{\otimes})_1) \to \ker(\phi) \to 0.\]
    Since $\pi_2(K(\mathcal{U}_{\otimes})_1)$ is rational and hence injective, we have that $\pi_2(K(\mathcal{PU}_{\otimes})_1) \cong \frac{\Qbb}{\Zbb} \oplus \text{rational group}$.\\

    Using the fact that $K_i(\mathcal{U}_{\otimes}) = K_i(\mathcal{U}_{\oplus}) \otimes \Qbb$ for $i > 0$, we can obtain the stronger second statement in the proposition. The split inclusion $\frac{\Qbb}{\Zbb} \subset \frac{\Rbb}{\Zbb}$ gives $\frac{\Rbb}{\Zbb} = \frac{\Qbb}{\Zbb} \oplus V$ where $V$ is a rational vector space. It suffices to show we can extract $V$ out of $\pi_2(K(\mathcal{U}_{\otimes})_1)$. Now we know that $\pi_2(K(\mathcal{U}_{\otimes})_1) = \pi_2(K(\mathcal{U}_{\oplus})_1) \otimes \Qbb$. Furthermore, Proposition 3 and Theorem 4 of \cite{c80f3555-900f-35b5-ab05-6d397e6a44e6} tells us that 
    \[\pi_2(K(\mathcal{U}_{\oplus})_1) = H_2(BSU^{\delta}; \Zbb),\]
    where $\operatorname{SU} = \operatorname{colim}_{(\Zbb, \leq)} \operatorname{SU}(n)$ is the infinite special unitary group. \cite{10.1007/BFb0080009} shows that $H_2(BSU^{\delta}; \Zbb) \cong H_2(SU(n); \Zbb)$ for a finite $n$ sufficiently large (see also Section 4 of \cite{Milnor1983}). \cite{Sah01011977} shows that for any non-trivial and connected Lie group $G$, $H_2(BG^{\delta}; \Zbb)$ surjects onto an uncountable rational vector space. Thus, $H_2(BSU^{\delta}; \Zbb) = H_2(BSU(n)^{\delta})$ surjects onto an uncountable rational vector space. This shows that $\pi_2(K(\mathcal{U}_{\otimes})_1)$ contains an uncountable rational vector space, so we can split  $V$ non-canonically off. This shows that $K_2(\mathcal{PU}_{\otimes}) \cong U(1) \oplus (\text{rational group})$.
\end{proof}

\begin{remark}
Let $R$ be a ring and $K^{\otimes}(R)$ be the group completion of $\bigsqcup_{n > 0} \operatorname{BGL}_n(R)$ under tensor product. Note that the usual K-theory space $K(R)$ is the group completion of $\bigsqcup_{n \geq 0} \operatorname{BGL}_n(R)$ under direct sum. The statement that $K_i(\mathcal{U}_{\otimes}) = K_i(\mathcal{U}_{\oplus}) \otimes \Qbb$ for $i > 0$ has an algebraic analog that $K_i^{\otimes}(R) = K_i(R) \otimes \Qbb$ for $i > 0$. This is Proposition 5 of \cite{c80f3555-900f-35b5-ab05-6d397e6a44e6} and follows from Theorem VII.5.3 of \cite{may2009preciselyeinftyringspaces}. Recently, the expository note \cite{dubey2026tensorproductktheoryrational} also gave an argument to explain how to prove the algebraic case, which can be adapted to the unitary case.
\end{remark}

Using Proposition~\ref{prop::unitary_pt_groups}, we can compute the structure of the unitary QCA spaces similarly as in the complex case, which is the statement for Theorem~\ref{thm::complex_qca_split}(3).

\begin{theorem}\label{thm::unitary_qca_split}
For $n = 0, 1, 2$, the space $K(\mathbf{C}^*(\Zbb^n))$ and hence $\mathbf{Q}^*(\Zbb^n)$ are product of Eilenberg-MacLane spaces.
\end{theorem}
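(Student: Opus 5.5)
We mirror the proof of Theorem~\ref{thm::qca_line_complex} and Corollary~\ref{cor::qca_plane_complex}. Each $K(\mathbf{C}^*(\Zbb^n))$ is the zeroth space of a connective spectrum $F_n$ (Segal's $K$-theory), and $\mathbf{Q}^*(\Zbb^n)=\Omega K(\mathbf{C}^*(\Zbb^n))_1$. It therefore suffices to show that the relevant spectrum has vanishing stable $k$-invariants, so that it splits as a wedge of suspended Eilenberg--MacLane spectra. Its zeroth space, and every loop space of it, is then a product of Eilenberg--MacLane spaces.

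The first step is to record the homotopy groups. We use the delooping $K(\mathcal{PU}_\otimes)\simeq \Omega K(\mathbf{C}^*(\Zbb^1))$ together with the $\Omega$-spectrum structure (Theorem 5.11 of \cite{qca_grp_space_spectrum}), which shifts degrees by one as $n$ increases. Proposition~\ref{prop::unitary_pt_groups} then gives the following.
\begin{itemize}
\item For $n=0$: $\pi_0=\Qbb_{>0}$ (dropped when passing to the identity component), $\pi_1=0$, $\pi_2\cong \Qbb/\Zbb\oplus(\text{rational})$, and rational groups above.
\item For $n=1$: $K(\mathbf{C}^*(\Zbb^1))$ is connected with $\pi_1=\Qbb_{>0}$, $\pi_2=0$, $\pi_3\cong \Qbb/\Zbb\oplus(\text{rational})$, and rational groups above.
\item For $n=2$: $\pi_1=0$, since $\mathcal{Q}^*(\Zbb^2)/\mathcal{C}^*(\Zbb^2)=0$ by \cite{freedman2020classification}. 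So $K(\mathbf{C}^*(\Zbb^2))_1$ is simply connected with $\pi_2=\Qbb_{>0}$, $\pi_3=0$, $\pi_4\cong \Qbb/\Zbb\oplus(\text{rational})$, and rational groups above.
\end{itemize}
For $n=2$ we also need $\pi_0$ of $K(\mathbf{C}^*(\Zbb^2))$. Since this space is group-like, it splits as $\pi_0\times(\text{identity component})$, and $\pi_0$ is a discrete factor, so we reduce to the identity component. Notably, no universal cover is needed here, unlike in the complex case, so the splitting is $\mathbb{E}_\infty$.

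Next we check the $k$-invariants, with coefficients handled exactly as before. We use the splitting $\Qbb_{>0}\cong\bigoplus_\Nbb\Zbb$, so that $H^k(H\Qbb_{>0};M)\cong\prod_\Nbb H^k(H\Zbb;M)$. Relative to the bottom group, the first possibly nontrivial $k$-invariant (for $n=1,2$) lies in $H^3(H\Qbb_{>0};\Qbb/\Zbb\oplus V)$ with $V$ rational. This vanishes by Lemma~\ref{lem::technical_steenrod} together with $H^k(H\Zbb;\Qbb)=0$ for $k>0$. All higher $k$-invariants are of the three forms $\Zbb\to$ rational, rational $\to$ rational, or $\Qbb/\Zbb\to$ rational. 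These vanish by $H^k(H\Zbb;\Qbb)=H^k(H\Qbb;\Qbb)=H^k(H\Qbb/\Zbb;\Qbb)=0$ for $k>0$, all established in the proof of Theorem~\ref{thm::qca_line_complex}.

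For $n=0$ the bottom nonzero group of the identity component is $\pi_2\cong\Qbb/\Zbb\oplus V$. Every $k$-invariant out of it lands in rational groups. The required vanishing $H^k(H\Qbb/\Zbb;\Qbb)=0$ holds for $k>0$, and $H^k(HV;\Qbb)=0$ holds as well, since $V$ is a $\Qbb$-vector space, so $HV$ is a wedge of copies of $H\Qbb$ and the cohomology is a product. A non-canonical choice of the $U(1)$ splitting does not matter.

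The main obstacle I anticipate is bookkeeping rather than new input. Specifically, I must identify the homotopy groups of $K(\mathbf{C}^*(\Zbb^n))$ correctly for $n=0,2$ via the delooping, and handle the components and the $n=2$ simple connectivity carefully. That the degree-$2$ group is $\Qbb/\Zbb$ plus a rational group, rather than something torsion-mixed, is already given by Proposition~\ref{prop::unitary_pt_groups}.
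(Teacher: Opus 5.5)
Your proposal is correct and follows the paper's proof. Like the paper, you adapt the vanishing-$k$-invariant argument of Theorem~\ref{thm::qca_line_complex} and Corollary~\ref{cor::qca_plane_complex}, using the homotopy groups from Proposition~\ref{prop::unitary_pt_groups} and the unitary delooping, and for $n=2$ you invoke the Freedman--Hastings vanishing of $\mathcal{Q}^*(\Zbb^2)/\mathcal{C}^*(\Zbb^2)$ so that no universal cover is needed. Splitting off $\pi_0$ as a discrete factor only makes explicit a connectedness point the paper leaves implicit.
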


\begin{remark}[Comparison with invertible phases]
Unitary QCA are closely related to the classification of invertible quantum phases of matter. In both cases, one obtains an $\Omega$-spectrum indexed by the lattice dimension. Recently, Kubota constructed an $\Omega$-spectrum
\begin{equation}
\mathbb{IP}
=
{\operatorname{IP}(\Zbb^d)}_{d\geq 0}
\end{equation}
of invertible states \cite{kubota2025stable}. In particular, Propositions 5.2 and 5.10 of \cite{kubota2025stable} show $\operatorname{IP}(\Zbb^0), \operatorname{IP}(\Zbb^1),$ and $\operatorname{IP}(\Zbb^2)$ are products of Eilenberg--MacLane spaces. This is parallel to the splitting result for unitary QCA in Theorem~\ref{thm::unitary_qca_split}.

This parallel is appealing in the present context. The relation between QCA and invertible phases is discussed in the introduction of \cite{qca_grp_space_spectrum}; morally, QCA could be regarded as dynamical analogs of invertible phases. We do not pursue this comparison further here, but the agreement provides additional evidence that the spectrum of unitary QCA is closely related to the spectrum of invertible phases.
\end{remark}

\begin{proof}[Proof of Theorem~\ref{thm::unitary_qca_split}]
  Due to the description of homotopy groups obtained in Proposition~\ref{prop::unitary_pt_groups}, the unitary delooping result in Theorem 5.10 of \cite{qca_grp_space_spectrum}, the proof of Theorem~\ref{thm::qca_line_complex} and Corollary~\ref{cor::qca_plane_complex} adapts similarly to show that $K(\mathbf{C}^*(\Zbb^0)), K(\mathbf{C}^*(\Zbb^1))$, and the universal cover $\widetilde{K(\mathbf{C}^*(\Zbb^2))_1}$ are all products of Eilenberg-MacLane spaces. Now $\pi_1(K(\mathbf{C}^*(\Zbb^2)))$ is the unitary QCA classification group of a plane, which had been shown to be $0$ (see Section 3 of \cite{freedman2020classification} or Appendix of \cite{haah2021clifford}), so $K(\mathbf{C}^*(\Zbb^2))$ is its own universal cover. This concludes the proof. 
\end{proof}

\subsection{Finite Field Case}\label{sec::finite}

Here we will prove Theorem~\ref{thm::complex_qca_split}(4). We take $\Fbb = \Fbb_{p^k}$ to denote a finite field of order $p^k$. Recall that Theorem D of \cite{qca_grp_space_spectrum} shows $K(\mathbf{C}(\Zbb^1; \Fbb_{p^k})$ is connected. Since Quillen \cite{quillen_plus} showed that $K_i(\Fbb_{p^k})$ is finite for $i > 0$, the first column of Table~\ref{table_q_z} gives:
\begin{prop}\label{prop::line_fin}
The homotopy groups of $K(\mathbf{C}(\Zbb^1; \Fbb_{p^k}))$ are:
\[
\begin{array}{c|ccccc}
i & 0 & 1 & 2 & 3 & \geq 4  \\
\hline
\pi_i(F) & 0 & \Qbb_{> 0} & 0 & \Zbb/(p^k-1)\Zbb & 0
\end{array}
\]
\end{prop}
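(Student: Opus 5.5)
The plan is to read off the homotopy groups of $K(\mathbf{C}(\Zbb^1; \Fbb_{p^k}))$ from the first column of Table~\ref{table_q_z} with $n=1$, and then simplify using Quillen's computation of $K_*(\Fbb_{p^k})$. By Theorem D of \cite{qca_grp_space_spectrum} this space is connected, so $\pi_0 = 0$. Since $\mathbf{Q}(\Zbb^1; \Fbb) = \Omega K(\mathbf{C}(\Zbb^1;\Fbb))_1$, we have $\pi_{i}K(\mathbf{C}(\Zbb^1;\Fbb)) = \pi_{i-1}\mathbf{Q}(\Zbb^1;\Fbb)$ for $i \geq 1$. By Lemma~\ref{lem::indexF}, these are the groups $A_{i-1}$ in the table.

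For $i=1$, the table gives $\pi_0\mathbf{Q}(\Zbb^1;\Fbb)=K_0(\operatorname{Az}(\Fbb)) = \operatorname{Br}(\Fbb)\oplus \Qbb_{>0}$. The first step is to note that $\operatorname{Br}(\Fbb_{p^k}) = 0$ by Wedderburn's little theorem, since finite division rings are commutative. This leaves $\Qbb_{>0}$.

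For $i=2$, the entry is $\frac{\Qbb}{\Zbb}\otimes \Fbb_{p^k}^\times$. Since $\Fbb_{p^k}^\times \cong \Zbb/(p^k-1)$ is finite and $\Qbb/\Zbb$ is divisible, this tensor product vanishes.

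For $i=3$, the entry is $\mu(\Fbb)\oplus (K_2(\Fbb)\otimes \Qbb)$. Here $\mu(\Fbb_{p^k}) = \Fbb_{p^k}^\times \cong \Zbb/(p^k-1)\Zbb$. Quillen's theorem \cite{quillen_plus} gives that $K_2(\Fbb_{p^k})$ is finite (in fact $0$), so $K_2\otimes\Qbb = 0$.

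For $i \geq 4$, the table lists rational vector spaces. To see they vanish, I would trace their origin back through the delooping of \cite{qca_grp_space_spectrum}. These groups are identified with $K_j^{\otimes}(\Fbb) = K_j(\Fbb)\otimes\Qbb$ for $j>0$, by the remark following Proposition~\ref{prop::unitary_pt_groups} (Proposition 5 of \cite{c80f3555-900f-35b5-ab05-6d397e6a44e6}), possibly modified through the fibration involving $B\Fbb^\times$ as in the unitary computation. Since Quillen shows $K_j(\Fbb_{p^k})$ is finite for $j>0$, each such group is finite tensored with $\Qbb$, hence zero.

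The main obstacle is the bookkeeping in this last step. One has to confirm that every rational term in Table~\ref{table_q_z} in degrees $\geq 3$ is of the form $K_j(\Fbb)\otimes\Qbb$ for some $j>0$, and not some other rational group arising from the fibration $B\Fbb^{\times}\to \operatorname{BGL}_\otimes(\Fbb)^+\to \operatorname{BPGL}_\otimes(\Fbb)^+$. I would check this via the long exact sequence of that fibration. The fiber $B\Fbb^\times$ has homotopy only in degree $1$, where it is finite, so it affects only the degree-$2$ and degree-$1$ terms of the point case. This shifts up to degree $3$ for the line, contributing exactly the $\Zbb/(p^k-1)$. All remaining groups agree with $K_j(\Fbb)\otimes\Qbb=0$.
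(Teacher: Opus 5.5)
Your proposal is correct and follows the paper's route. The paper likewise reads off the first column of Table~\ref{table_q_z} with $n=1$, uses Theorem D of \cite{qca_grp_space_spectrum} for connectedness, and uses Quillen's finiteness of $K_i(\Fbb_{p^k})$ for $i>0$ to kill the rational terms. Your extra checks fill in steps the paper leaves implicit: Wedderburn's theorem for $\operatorname{Br}(\Fbb_{p^k})=0$, the vanishing of $\Qbb/\Zbb\otimes\Fbb_{p^k}^\times$, and tracing the higher rational summands to $K_j(\Fbb)\otimes\Qbb$ through the fibration $B\Fbb^\times\to\operatorname{BGL}_\otimes(\Fbb)^+\to\operatorname{BPGL}_\otimes(\Fbb)^+$.
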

The stable $k$-invariant associated to $\pi_1$ and $\pi_3$ specifies an element 
\[\kappa \in H^3(H\Qbb_{> 0}; \Zbb/(p^k-1)\Zbb) \cong \bigoplus_{\Nbb} H^3(H\Zbb; \Zbb/(p^k-1)\Zbb).\]
Now we can check that $\kappa = 0$ when $p = 2$. This follows from the following more general lemma.
\begin{lem}
The group $H^3(H\Zbb; \Zbb/n) = 0$ if $n$ is odd and $\Fbb_2$ if $n$ is even.
\end{lem}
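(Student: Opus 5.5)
The plan is to compute $H^3(H\Zbb;\Zbb/n)$ from the integral Steenrod algebra $(H\Zbb)^*(H\Zbb)$ through the universal coefficient sequence for spectra. The inputs are the low-degree values of $H^k(H\Zbb;\Zbb)$, which I will extract from the proof of Lemma~\ref{lem::technical_steenrod} and from Lemma~\ref{lem::kochman}. Concretely, I use the fiber sequence of coefficients $\Zbb \xrightarrow{\times n} \Zbb \to \Zbb/n$. It gives a long exact sequence
\[H^3(X;\Zbb)\xrightarrow{\times n} H^3(X;\Zbb)\to H^3(X;\Zbb/n)\to H^4(X;\Zbb)\xrightarrow{\times n} H^4(X;\Zbb),\]
with $X=H\Zbb$. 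So $H^3(X;\Zbb/n)$ is an extension of $\ker(\times n \text{ on } H^4(X;\Zbb))$ by $H^3(X;\Zbb)/n$.

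Lemma~\ref{lem::technical_steenrod} already gives $H^4(X;\Zbb)=0$, so everything reduces to $H^3(X;\Zbb)/n$. The proof of Lemma~\ref{lem::technical_steenrod} also shows $H^2(X;\Zbb)=0$. I next determine $H^3(X;\Zbb)$ from the $p=2$ diagram in that proof. There, $0\to H^3(X;\Zbb)\xrightarrow{\times 2}H^3(X;\Zbb)$ is followed by the map $\phi\colon \Fbb_2=H^3(X;\Fbb_2)$ into $H^4(X;\Zbb)=0$, so the cokernel of $\times 2$ on $H^3(X;\Zbb)$ is $\Fbb_2$. The relevant segment reads $0=H^2(X;\Zbb)\to H^2(X;\Fbb_2)=\Fbb_2\to H^3(X;\Zbb)\xrightarrow{\times 2}H^3(X;\Zbb)\to \Fbb_2\to 0$. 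Hence the $2$-torsion of $H^3(X;\Zbb)$ is $\Fbb_2$. By Lemma~\ref{lem::kochman}, $H^3(X;\Zbb)$ is finite and its $2$-primary part is killed by $2$, so the $2$-primary part is exactly $\Fbb_2$. This is the class of $\beta\operatorname{Sq}^2$, the integral lift of $\operatorname{Sq}^3$.

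For odd $p$, the same sequence uses $H^2(X;\Fbb_p)=0$. This follows from (a) and (b) of Lemma~\ref{lem::technical_steenrod}: $H^2(H\Fbb_p;\Fbb_p)=0$ for $p>2$, so its quotient $H^2(X;\Fbb_p)$ vanishes. Therefore $\times p$ is injective on $H^3(X;\Zbb)$, and the $p$-primary part vanishes. I conclude $H^3(X;\Zbb)\cong \Fbb_2$.

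Substituting this in gives $H^3(X;\Zbb/n)\cong \Fbb_2/n\Fbb_2$, which is $0$ for $n$ odd and $\Fbb_2$ for $n$ even. The step I expect to need the most care is the identification $H^3(X;\Zbb)\cong\Fbb_2$. That step requires reading off exactness carefully at the degree-$3$ terms of the $p=2$ diagram in Lemma~\ref{lem::technical_steenrod}, and combining it with the finiteness and simple-torsion statement of Lemma~\ref{lem::kochman}. As a cross-check, I will also run the Bockstein sequence for $\Zbb\xrightarrow{\times 2}\Zbb\to\Fbb_2$ with $H^2(X;\Fbb_2)=H^3(X;\Fbb_2)=\Fbb_2$ from (c), which should give consistent ranks.
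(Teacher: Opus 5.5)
Your proposal is correct and follows the paper's argument. Both use the long exact sequence for $\Zbb\xrightarrow{\times n}\Zbb\to\Zbb/n$, with the inputs $H^3(H\Zbb;\Zbb)\cong\Fbb_2$ and $H^4(H\Zbb;\Zbb)=0$ taken from the proof of Lemma~\ref{lem::technical_steenrod}. The only differences are these. You explicitly justify $H^3(H\Zbb;\Zbb)\cong\Fbb_2$, via the injection of $H^2(X;\Fbb_2)$, Lemma~\ref{lem::kochman}, and $H^2(X;\Fbb_p)=0$ for odd $p$; the paper leaves this implicit in its reduced $p=2$ diagram. Also drop the stray ``$0\to$'' before $\times 2$ on $H^3$, since that map is not injective.
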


\begin{proof}
Write $X = H\Zbb$ and consider the long exact sequence in coefficients of $H^k(X; -)$ with respect to $\Zbb \xrightarrow{\times n} \Zbb \to \Zbb/n$. In the proof of Lemma~\ref{lem::technical_steenrod}, we showed that $H^3(X; \Zbb) = \Fbb_2$ and $H^4(X; \Zbb ) = 0$. This turns the sequence to:
\[\begin{tikzcd}
	{...} & {\mathbb{F}_2} & {\mathbb{F}_2} & {H^3(X; \mathbb{Z}/n)} & 0 & {...}
	\arrow[from=1-1, to=1-2]
	\arrow["{\times n}", from=1-2, to=1-3]
	\arrow[from=1-3, to=1-4]
	\arrow[from=1-4, to=1-5]
	\arrow[from=1-5, to=1-6]
\end{tikzcd}\]
If $n$ is even, then $\times n: \Fbb_2 \to \Fbb_2$ is the zero map, so $H^3(X; \Zbb/n) = \Fbb_2$. If $n$ is odd, $\times n: \Fbb_2 \to \Fbb_2$ is an isomorphism, so $H^3(X; \Zbb/n) = 0$.
\end{proof}

As a corollary, we see that $\kappa = 0$ when $p = 2$. Identical computations also show that the stable $k$-invariants for $K(\mathbf{C}(\Zbb^0; \Fbb_{2^k})), \mathbf{Q}(\Zbb^1; \Fbb_{2^k})$ vanish. Now we will prove the full theorem.

\begin{proof}[Proof of Theorem~\ref{thm::complex_qca_split}(4)]
The spaces $K(\mathbf{C}(\Zbb^n; \Fbb_{2^k}))$ for $n = 0,1$ are covered by the discussions above. By Proposition~\ref{prop::line_fin} and since $\mathbf{Q}(\Zbb^0; \Fbb_{p^k}) \cong \Omega \mathbf{Q}(\Zbb^1; \Fbb_{p^k})$, $\mathbf{Q}(\Zbb^0; \Fbb_{p^k})$ has only one homotopy group, so its stable $k$-invariants are all $0$. For $\Zbb^1$, $\mathbf{Q}(\Zbb^1; \Fbb_{p^k})$ has homotopy groups concentrated at $\pi_0$ and $\pi_2$ and hence split, This is because the unstable $k$-invariant between them is $0$ as $K(\pi_0, 0)$ is discrete.
\end{proof}

\appendix

\section{Low Degree Obstruction Classes}\label{appendix::obstruction}
Here, we give descriptions of several low-degree obstruction classes that already appear in the literature. In degree $1$, this obstruction agrees with Definition~\ref{def::universalObstruction}. In degree $2$, these obstructions take values in the same cohomology groups as the universal obstruction classes of Definition~\ref{def::universalObstruction}. Their construction depends on the choice of a hyperplane, and the dependence on this choice, as well as the relation between different choices, had remained open. By comparing these obstructions with the universal obstruction class, we resolve this ambiguity: the obstruction class vanishes for one such choice if and only if it vanishes for any other.

In degree $3$, the constructions described here take values in a different cohomology group, but are refined by the universal obstruction classes. Furthermore, the converse is true over finite groups.

From the descriptions, it becomes clear that definitions in this manner would quickly get out of hand for higher cohomological degrees. 
\stoptoc
\subsection{Degree-1}
Given a QCA representation $\varphi: G\rightarrow \CQ(\ZZ^d)$, the degree-1 obstruction class is simply the composite map 
\begin{equation}
    G\xrightarrow{\varphi} \CQ(\ZZ^d)\rightarrow \CQ(\ZZ^d)/ \CalC(\ZZ^d)
\end{equation}
viewed as a class in $H^1(BG; \CQ(\ZZ^d)/ \CalC(\ZZ^d))$. This is exactly the degree-1 universal obstruction class. The unitary case is also nearly identical.
\subsection{Degree-2}
Here we describe a degree-2 obstruction class that first lies in
\begin{equation}
H^2\bigl(BG;\CQ(\ZZ^{n-1})/\CalC^{sp}(\ZZ^{n-1})\bigr)
\end{equation}
associated to a QCA representation $\varphi:G\to \CQ(\ZZ^n)$ for $n \geq 1$. Note that for $n > 1$, $\mathcal{C}^{sp}(\Zbb^{n-1}) = \mathcal{C}(\Zbb^{n-1})$. The following construction is inspired by the $H^2$ obstruction class in \cite{kawagoe2025anomaly} constructed for $n = 2$ in the unitary case. Throughout, we assume that the degree-one obstruction vanishes, so that $\varphi$ takes values in $\CalC(\ZZ^n)$.

\begin{construction}\label{constr::hyperplane_class}
Let $H\subset \ZZ^n\otimes_\ZZ \RR$ be an oriented hyperplane, and write $H^+$ and $H^-$ for the lattice points lying in the two closed half-spaces, with $H^+\cup H^-=\ZZ^n$. One can define a truncation operation
\begin{equation}
t_+:\CalC(\ZZ^n)\longrightarrow \CalC(H^+)
\end{equation}
by retaining, in each layer of a circuit, only those local automorphisms supported entirely inside $H^+$. This operation is not a group homomorphism. It is also not unique because each $\al\in \CalC(\ZZ^n)$ has infinitely many different circuit expressions. However, the difference between any two truncations $t_+(\al)$ and $t'_+(\al)$, \begin{equation}
    t_+(\al)^{-1} t'_+(\al),
\end{equation} 
 acts as identity on all matrix algebras sufficiently far from the hyperplane $H$. Thus it is supported in a bounded neighborhood of $H$, and may be regarded as an element of $\CQ(\ZZ^{n-1})$, after identifying such a neighborhood with $\ZZ^{n-1}\times[-R,R]$. For $\alpha,\beta\in \CalC(\ZZ^n)$, the same can be said for
\begin{equation}
t_+(\alpha)t_+(\beta)t_+(\alpha\beta)^{-1}.
\end{equation}

Applying this to $\alpha=\varphi(g)$ and $\beta=\varphi(h)$ gives a function
\begin{equation}
\tau_{H^+}:G\times G\longrightarrow \CQ(\ZZ^{n-1}),
\qquad
\tau_{H^+}(g,h)=t_+(\varphi(g))t_+(\varphi(h))t_+(\varphi(gh))^{-1}.
\end{equation}
One may check, using associativity of multiplication, that $\tau_{H^+}$ defines a $2$-cocycle after passing to the quotient by (special) circuits:
\begin{equation}
\overline{\tau}_{H^+}\in Z^2\bigl(G;\CQ(\ZZ^{n-1})/\CalC^{sp}(\ZZ^{n-1})\bigr).
\end{equation} And for different choices of truncations, the cocycles obtained differ by a coboundary. Indeed, 
\begin{equation*}
\overline{t'_+(\varphi(g))t'_+(\varphi(h))t'_+(\varphi(gh))^{-1}}= \overline{t_+(\varphi(g))t_+(\varphi(h))t_+(\varphi(gh))^{-1}}\cdot  \overline{\mu(g)\mu(h)\mu(gh)^{-1}},
\end{equation*}
with $\mu(g)\coloneqq t_+(\varphi(g))^{-1}t'_+(\varphi(g))$. 
Its cohomology class, denoted by $\alpha_{\varphi}(H^+)$, is the degree-2 obstruction associated to the chosen oriented hyperplane $H$. This construction also adapts similarly to the unitary case.
\end{construction}

Reversing the orientation of $H$ exchanges the two half-spaces and changes the resulting class by a sign. Thus the construction depends on the choice of oriented hyperplane. It is less clear how to compare the classes obtained from two non-parallel hyperplanes. However, we will now show that \textit{they are actually all equivalent} to the universal obstruction class $u_2$ in Definition~\ref{def::universalObstruction}. More precisely,
\begin{prop}\label{prop::hyperplane}
Fix an oriented hyperplane $H$ with postive half-space $H^+$. When $G = \mathcal{C}(\Zbb^n)$ and the representation is the inclusion map $\iota: \mathcal{C}(\Zbb^n) \hookrightarrow \mathcal{Q}(\Zbb^n)$, the 2-cochain $\bar{\tau}_{H^+}$ is a 2-cocycle that is a model for the universal central extension of $\mathcal{C}(\Zbb^n)$. The analogous statement holds for the unitary case.
\end{prop}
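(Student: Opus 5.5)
Since $\CalC(\ZZ^n)$ is perfect only modulo special circuits, I first reduce to the setting of Dror's tower. By Proposition~\ref{prop::circuit_type}, for $n\geq 1$ we have $\CalC(\ZZ^n)=\CalC^{sp}(\ZZ^n)=[\CQ(\ZZ^n),\CQ(\ZZ^n)]$, and this group is perfect. Lemma~\ref{lem::indexF} then identifies $A_1=H_2(\CalC(\ZZ^n);\ZZ)$ with $\pi_1\mathbf Q(\ZZ^n)\cong \CQ(\ZZ^{n-1})/\CalC^{sp}(\ZZ^{n-1})$. For $n-1=0$ the special-circuit quotient is exactly what appears there.

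Because $C=\CalC(\ZZ^n)$ is perfect, the universal coefficient theorem gives $H^2(C;A_1)\cong\Hom(H_2(C),A_1)$. So the claim amounts to two things: $\bar\tau_{H^+}$ is a cocycle, and the homomorphism $H_2(C)\to A_1$ it induces is an isomorphism (the paper's identification, up to a sign fixed by the orientation). The plan has three steps.

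\textbf{Step 1: cocycle and independence.} I check the cocycle identity from associativity, using only that elements supported near $H$ commute modulo circuits with conjugation by $t_+(\varphi(g))$. Conjugating a near-$H$ QCA by a half-space circuit changes it by a circuit near $H$, so the $G$-action on the coefficients is trivial. Independence of the choice of truncation up to coboundary was already noted in Construction~\ref{constr::hyperplane_class}.

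\textbf{Step 2: build the extension.} Let $E$ be the group of pairs $(\beta,a)$ with $\beta\in\CalC(H^+)$ a half-truncation of some $\alpha\in C$, modulo QCA supported near $H$ that are special circuits. Concretely, $E$ is the subgroup of $\CQ(H^+\text{-strip-completed})/\CalC^{sp}(\text{near }H)$ generated by truncations. The map $E\to C$ sends $t_+(\alpha)$ to $\alpha$, and its kernel is the image of $\CQ(\ZZ^{n-1})/\CalC^{sp}(\ZZ^{n-1})$. This kernel is central by the commutation observation in Step 1. The extension class of $E$ is $\bar\tau_{H^+}$ by construction.

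\textbf{Step 3: universality.} This is the main obstacle. I would compare $E$ with the delooping of~\cite{qca_grp_space_spectrum}: that paper identifies $\Omega K(\mathbf C(\ZZ^n))_1$ with $K(\mathbf C(\ZZ^{n-1}))$ via exactly such half-space truncations (the Eilenberg swindle or boundary map). From that identification I would deduce that the boundary map $\pi_2(BC^+)=H_2(C)\to \pi_1\mathbf Q(\ZZ^{n-1})$ is realized by $\bar\tau_{H^+}$.

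Concretely, given a relation $\prod[a_i,b_i]=1$ in $C$ representing a class of $H_2(C)$ by Hopf's formula, its image is $\prod[t_+a_i,t_+b_i]$, a QCA supported near $H$. I then need to check that this is the class assigned by Corollary~C of~\cite{qca_grp_space_spectrum}. That gives the isomorphism $H_2(C)\cong A_1$, and the universal property of the universal central extension (Proposition~\ref{prop:superperfect} and the discussion before it) forces $E\cong\widetilde C$.

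If the direct comparison with the delooping map is hard, I would try first to verify injectivity and surjectivity by hand. For surjectivity, a boundary QCA $\gamma$ near $H$ arises as $\tau$ applied to the commutator of a half-space pump with a translation. For injectivity, I would use the Hopf formula together with a swindle on $H^+$.

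The unitary statement follows verbatim, using Lemma~\ref{lem::indexUnitary} and $\CalC^*$ in place of $\CalC^{sp}$.
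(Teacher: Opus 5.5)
\textbf{Gap: Step 3 restates the proposition instead of proving it.} Steps 1 and 2 only show that $\bar\tau_{H^+}$ is a cocycle, and hence defines \emph{some} central extension of $C=\CalC(\ZZ^n)$ by $A_1$. Step 2 is automatic once Step 1 holds.

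There is also a slip in Step 2. The ``concrete'' description of $E$ as a group of half-space QCA generated by truncations has no well-defined map $t_+(\alpha)\mapsto\alpha$, because $t_+(\alpha)$ forgets $\alpha$ on $H^-$: any $\alpha$ supported deep in $H^-$ has trivial truncation. You must keep the pairs $(\alpha,\beta)$.

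The whole content of the proposition is that the homomorphism $H_2(C;\ZZ)\to \CQ(\ZZ^{n-1})/\CalC^{sp}(\ZZ^{n-1})$ determined by $\bar\tau_{H^+}$ is an isomorphism, and Step 3 does not prove this. Saying that from the delooping you ``would deduce'' that the boundary map is realized by $\bar\tau_{H^+}$, or that you ``need to check'' that $\prod[t_+a_i,t_+b_i]$ is the class from Corollary~C, restates the claim. Corollary~C gives an abstract isomorphism $H_2(C)\cong\pi_1\mathbf Q(\ZZ^n)$, and nothing in your argument ties that isomorphism to truncations.

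The by-hand fallback does not close the gap either. Surjectivity via ``a half-space pump and a translation'' is only gestured at. Injectivity of $H_2(C)\to A_1$ cannot be checked word by word on Hopf relations without independent control of $H_2(C)$. Your instinct to use a swindle on $H^+$ is the right one, but as stated it is not an argument.

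\textbf{The missing mechanism (the paper's argument).} The paper reduces to Proposition~\ref{prop::u2_id1}, identifying $\ZZ^n$ coarsely with $H\times\ZZ$ and $H^+$ with $H\times\ZZ_{\geq 0}$. It then runs obstruction theory on the fibration $B\CQ(X)^+\to B\CalC(X\times\ZZ_{\geq 0})^+\to B\CalC(X\times\ZZ)^+$ extracted from the delooping square. The total space is contractible by the Eilenberg swindle on the half-space; this is where your swindle has to enter.

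Contractibility gives $\pi_1$ of the fibre $\cong\pi_2$ of the simply connected base. By Hurewicz, the primary obstruction of Theorem~\ref{thm::davis_kirk} for lifting the identity of the base is therefore the identity of $H_2$. Pulling back along the plus-construction map $B\CalC(X\times\ZZ)\to B\CalC(X\times\ZZ)^+$, which is a homology isomorphism, this becomes the identity of $H_2(\CalC(X\times\ZZ);\ZZ)$, i.e.\ the class of the universal central extension.

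The primary obstruction does not depend on the lift over the $1$-skeleton. So you may lift each $1$-cell $g$ through the loop of $t_+(g)$ in the contractible total space. The boundary of the $2$-simplex $(g,h)$ then lifts to exactly $t_+(g)t_+(h)t_+(gh)^{-1}$. This element is supported on a finite strip $X\times\{0,\dots,m\}$, so it lies in the fibre by Lemma~\ref{lem::colimit_finite}.

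Hence the obstruction cocycle is $\bar\tau$ itself, and its class is the universal one. This is the argument your Step 3 needs. With it, your Hopf-formula description of $H_2(C)\to A_1$ becomes a consequence rather than something left to check.
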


Here by a model, we just mean that the cohomology class $\alpha_{\varphi}(H^+)$ gives a central extension that has the universal property of being a central extension. Evidently, the 2-cochain $\bar{\tau}_{H^+}$ is natural with respect to group homomorphisms. Any QCA representation $G \to \mathcal{Q}(\Zbb^n)$ whose degree-1 obstruction vanish must factor through the circuit group $G \to \mathcal{C}(\Zbb^n) \hookrightarrow \mathcal{Q}(\Zbb^n)$. The universal obstruction class is also natural with respect to group homomorphisms. Thus, 
\begin{cor}
The cohomology class $\alpha_{\varphi}(H^+)$ is a model for the universal obstruction class $u_2(\varphi)$ for any $\varphi: G \to \mathcal{C}(\Zbb^n)$. The analogous statement holds for the unitary case.
\end{cor}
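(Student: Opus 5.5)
The plan is to reduce the corollary to the universal case $G=\mathcal{C}(\Zbb^n)$, which is handled by Proposition~\ref{prop::hyperplane}, and then transport the identification along $\varphi$ using naturality on both sides.

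\textbf{Step 1: identify the lift $\widetilde{\varphi}_1$.} Since $\varphi$ takes values in $\mathcal{C}(\Zbb^n)=C$, its degree-one obstruction vanishes. In Dror's tower $X_1=BC\to X_0=BQ$ is the covering map induced by the inclusion $C\hookrightarrow Q$. So the lift $\widetilde{\varphi}_1\colon BG\to X_1$ can be taken to be $B\varphi\colon BG\to BC$.

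I will check that this lift is the relevant one up to homotopy. Any two lifts through a covering that agree at the basepoint coincide. Changing the basepoint lift only conjugates by an element of $Q$, and conjugation acts compatibly on $\iota_1$ because $C$ is normal (Proposition~\ref{prop::circuit_type}). Hence
\[u_2(\varphi)=(B\varphi)^*\iota_1\in H^2\bigl(BG;A_1\bigr).\]
Here $\iota_1\in H^2(BC;H_2(C))$ classifies the universal central extension of $C$. By Lemma~\ref{lem::indexF} (resp.\ Lemma~\ref{lem::indexUnitary}), $A_1=H_2(C;\Zbb)$ is identified with $\mathcal{Q}(\Zbb^{n-1})/\mathcal{C}^{sp}(\Zbb^{n-1})$.

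\textbf{Step 2: naturality of the hyperplane cochain.} Fix truncations $t_+(\alpha)$ for every $\alpha\in C$, and use these same choices for the elements $\varphi(g)$. Then, on the nose,
\[\tau_{H^+}^{\varphi}(g,h)=\tau_{H^+}^{\iota}\bigl(\varphi(g),\varphi(h)\bigr),\]
so $\bar\tau^{\varphi}_{H^+}=\varphi^*\bar\tau^{\iota}_{H^+}$ as cocycles. Any other choice of truncations changes the cocycle only by the coboundary of $\overline{\mu}$, as computed in Construction~\ref{constr::hyperplane_class}. It follows that
\[\alpha_\varphi(H^+)=\varphi^*\alpha_\iota(H^+)\]
in $H^2\bigl(G;\mathcal{Q}(\Zbb^{n-1})/\mathcal{C}^{sp}(\Zbb^{n-1})\bigr)$.

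\textbf{Step 3: combine with Proposition~\ref{prop::hyperplane}.} The proposition says that $\alpha_\iota(H^+)$ classifies a universal central extension of $C$. By the universal property recalled before Proposition~\ref{prop:superperfect}, this means there is an isomorphism
\[\theta\colon H_2(C;\Zbb)\xrightarrow{\ \cong\ }\mathcal{Q}(\Zbb^{n-1})/\mathcal{C}^{sp}(\Zbb^{n-1})\]
with $\theta_*\iota_1=\alpha_\iota(H^+)$. Pulling back along $B\varphi$ and using Steps 1 and 2 gives
\[\theta_*\,u_2(\varphi)=\alpha_\varphi(H^+).\]
This is exactly the statement that $\alpha_\varphi(H^+)$ is a model for $u_2(\varphi)$. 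The unitary case is word-for-word the same, with $\mathcal{C}^*$, $\mathcal{Q}^*$ and Lemma~\ref{lem::indexUnitary} in place of their algebraic counterparts.

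\textbf{Main obstacle.} The delicate point is the bookkeeping of coefficient identifications. The isomorphism $\theta$ supplied by Proposition~\ref{prop::hyperplane} need not coincide with the identification of $A_1$ in Lemma~\ref{lem::indexF}, which is taken from Corollary~C of \cite{qca_grp_space_spectrum}. I will handle this by fixing $\theta$ once, using the universal case, and noting that $\theta$ is independent of $G$ and $\varphi$.

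Consequently, the conclusion is insensitive to this ambiguity. The vanishing of $\alpha_\varphi(H^+)$ is equivalent to the vanishing of $u_2(\varphi)$ for every oriented hyperplane $H$. This is what resolves the dependence on the choice of hyperplane.
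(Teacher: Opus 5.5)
Your proposal is correct and is essentially the paper's argument: the paper also deduces the corollary from the universal case $\iota\colon\mathcal{C}(\Zbb^n)\hookrightarrow\mathcal{Q}(\Zbb^n)$ of Proposition~\ref{prop::hyperplane}, using that both $\bar\tau_{H^+}$ and the universal obstruction class are natural under $\varphi\colon G\to\mathcal{C}(\Zbb^n)$, which is your Steps 2 and 3. The one soft spot is the basepoint remark in Step 1: the paper simply takes $\widetilde{\varphi}_1=B\varphi$, and if you want independence of the lift, the relevant fact is not normality of $C$ but that $Q$ acts trivially on $A_1\cong\pi_2(BQ^+)$ because $BQ^+$ is an H-space (in any case vanishing is unaffected, since conjugation acts on $A_1$ by an automorphism).
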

\begin{cor}\label{cor::hyperplane_algebraic}
    Let $\varphi$ be a QCA representation over $\mathbb F$. Vanishing of the cohomology class $\alpha_{\varphi}(H^+)$ is independent of the choice of hyperplane $H.$
\end{cor}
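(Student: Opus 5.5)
The plan is to deduce Corollary~\ref{cor::hyperplane_algebraic} from Proposition~\ref{prop::hyperplane} together with the naturality of both constructions. Vanishing of $\alpha_\varphi(H^+)$ is only meaningful once the degree-$1$ obstruction vanishes, so we may assume $\varphi$ factors as $G \xrightarrow{\varphi'} \mathcal{C}(\Zbb^n) \hookrightarrow \mathcal{Q}(\Zbb^n)$.

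\textbf{Step 1: reduce to the universal case.} The cochain $\bar\tau_{H^+}$ is built from truncations of the elements $\varphi(g)$. Hence, at the level of cocycles, it is the pullback along $\varphi'$ of the cochain built for the inclusion $\iota$. This gives
\[
\alpha_\varphi(H^+) = (B\varphi')^*\,\alpha_\iota(H^+) \in H^2\bigl(BG; \mathcal{Q}(\Zbb^{n-1})/\mathcal{C}^{sp}(\Zbb^{n-1})\bigr).
\]
It therefore suffices to compare the classes $\alpha_\iota(H^+)$ and $\alpha_\iota(H'^+)$ for two oriented hyperplanes $H$ and $H'$.

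\textbf{Step 2: identify both classes with the universal class.} By Proposition~\ref{prop::hyperplane}, each of $\alpha_\iota(H^+)$ and $\alpha_\iota(H'^+)$ classifies a universal central extension of the perfect group $C = \mathcal{C}(\Zbb^n)$. Under the identification from the universal coefficient theorem,
\[
H^2(BC; A) \cong \operatorname{Hom}(H_2(C;\Zbb), A),
\]
a class classifying a universal central extension corresponds to an isomorphism $H_2(C) \to A$. Here $A = \mathcal{Q}(\Zbb^{n-1})/\mathcal{C}^{sp}(\Zbb^{n-1})$, which is $A_1 \cong \pi_1\mathbf{Q}(\Zbb^n)$ by Lemma~\ref{lem::indexF}. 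Universal central extensions are unique up to unique isomorphism over $C$. Hence there is an automorphism $\theta = \theta_{H,H'}$ of $A$ with
\[
\alpha_\iota(H'^+) = \theta_*\,\alpha_\iota(H^+).
\]
Both classes also agree, up to such an automorphism, with the universal class $\iota_1$ from Section~\ref{sec::dror}.

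\textbf{Step 3: conclude.} Pulling back along $\varphi'$ commutes with the coefficient change $\theta_*$. Therefore
\[
\alpha_\varphi(H'^+) = \theta_*\,\alpha_\varphi(H^+).
\]
Since $\theta$ is an isomorphism, $\theta_*$ is an isomorphism on $H^2(BG;A)$. So one class vanishes if and only if the other does, and both vanish exactly when $u_2(\varphi)$ does. The same argument applies verbatim in the unitary case.

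\textbf{Expected obstacle.} The main subtlety is Step 2: one must check that ``model for the universal central extension'' in Proposition~\ref{prop::hyperplane} really means the coefficient map $H_2(C) \to A$ is an isomorphism. This map could a priori be the identification of Lemma~\ref{lem::indexF} twisted by a sign or another automorphism depending on $H$; reversing the orientation of $H$ already changes the sign. The argument is set up to tolerate any such automorphism $\theta$, so only its invertibility is needed. That invertibility follows from the uniqueness of universal central extensions of perfect groups.
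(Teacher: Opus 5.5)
Your proof is correct and follows the paper's route. The paper deduces this corollary from Proposition~\ref{prop::hyperplane} together with the naturality of $\bar\tau_{H^+}$ and of $u_2$, identifying each $\alpha_\varphi(H^+)$ with $u_2(\varphi)$ up to an isomorphism of coefficients, exactly as in your Steps 1--3; your isomorphism $\theta$ is precisely what absorbs the non-canonical identification of coefficient groups for different hyperplanes noted in Remark~\ref{rmk::computation}.
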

\begin{cor}\label{cor::hyperplane_unitary}
    Let $\varphi$ be a unitary QCA representation. Vanishing of the cohomology class $\alpha_{\varphi}(H^+)$ is independend of the choice of hyperplane $H.$
\end{cor}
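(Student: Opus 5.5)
The plan is to deduce Corollary~\ref{cor::hyperplane_unitary} formally from the unitary version of Proposition~\ref{prop::hyperplane} and the corollary following it, exactly as Corollary~\ref{cor::hyperplane_algebraic} is deduced in the algebraic case. First note the standing hypothesis of Construction~\ref{constr::hyperplane_class}: $\alpha_\varphi(H^+)$ is only defined when the degree-$1$ obstruction vanishes, i.e.\ when $\varphi$ factors as $G\xrightarrow{\varphi'}\CalC^*(\ZZ^n)\xrightarrow{\iota}\CQ^*(\ZZ^n)$. This condition does not involve $H$. In the unitary case $\CalC^*(\ZZ^{n-1})$ plays the role of $\CalC^{sp}(\ZZ^{n-1})$, since by Proposition~\ref{prop::circuit_type} it is the commutator subgroup. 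So every $\alpha_\varphi(H^+)$ lives in the single group $H^2\bigl(BG;\CQ^*(\ZZ^{n-1})/\CalC^*(\ZZ^{n-1})\bigr)$.

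Next I would make the word ``model'' precise. Fix $H$. The unitary version of Proposition~\ref{prop::hyperplane} says that the cocycle $\bar\tau_{H^+}$ for $\iota$ classifies a universal central extension of the perfect group $\CalC^*(\ZZ^n)$ by $A:=\CQ^*(\ZZ^{n-1})/\CalC^*(\ZZ^{n-1})$. Under the universal coefficient isomorphism $H^2(B\CalC^*(\ZZ^n);A)\cong\Hom(H_2(\CalC^*(\ZZ^n);\ZZ),A)$, the class $\alpha_\iota(H^+)$ therefore corresponds to an isomorphism $\theta_H\colon A_1=H_2(\CalC^*(\ZZ^n);\ZZ)\to A$. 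On the other hand, the universal class $\iota_1$ corresponds to the identity of $A_1$, and Lemma~\ref{lem::indexUnitary} identifies $A_1$ with $A$. Hence $\alpha_\iota(H^+)=(\theta_H)_*\iota_1$, where $\theta_H$ is a coefficient isomorphism that may depend on $H$.

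Then I would use naturality in $G$. Since $\tau_{H^+}(g,h)$ is built from $t_+(\varphi(g))$, $t_+(\varphi(h))$ and $t_+(\varphi(gh))$, the cocycle for $\varphi=\iota\circ\varphi'$ is literally the pullback along $\varphi'$ of the cocycle for $\iota$. Thus $\alpha_\varphi(H^+)=(B\varphi')^*\alpha_\iota(H^+)$. Likewise $u_2(\varphi)=(B\varphi')^*\iota_1$, where we use the lift $\widetilde\varphi_1=B\varphi'\colon BG\to X_1=B\CalC^*(\ZZ^n)$. Combining gives
\[
\alpha_\varphi(H^+)=(\theta_H)_*\,u_2(\varphi).
\]
Because $(\theta_H)_*$ is an isomorphism on $H^2(BG;-)$, $\alpha_\varphi(H^+)$ vanishes if and only if $u_2(\varphi)$ vanishes. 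The latter condition does not depend on $H$, which proves the claim. As a sanity check, reversing the orientation replaces $\theta_H$ by $-\theta_H$, which matches the sign change noted earlier.

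The main point needing care is the second step. One must check that the universal class $\iota_1$ and the hyperplane cocycle take values in the same group only up to an automorphism $\theta_H$, and that this discrepancy is harmless for vanishing. This is why the corollary asserts only independence of vanishing, not equality of classes. The substantive input, universality of $\bar\tau_{H^+}$, is supplied by the unitary version of Proposition~\ref{prop::hyperplane}, which I am assuming.
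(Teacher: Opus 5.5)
Your proposal is correct and follows the paper's route. The unitary version of Proposition~\ref{prop::hyperplane} identifies $\bar\tau_{H^+}$ for the inclusion $\iota$ as a model of the universal central extension, naturality in $G$ then makes $\alpha_\varphi(H^+)$ a model for $u_2(\varphi)$, and so its vanishing is independent of $H$. Your coefficient isomorphism $\theta_H$ makes explicit what the paper means by ``model'', and it is consistent with Remark~\ref{rmk::computation}: the coefficient groups for different hyperplanes are identified only abstractly, not canonically.
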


\begin{remark}[Computation]
A hyperplane $H\subset \ZZ^n$ gives a concrete way to compute $u_2(\varphi)$ for $\varphi: G \to \mathcal{C}(\Zbb^n)$. After choosing an arbitrary finite thickening of $H$, the group $\CQ(H)/\CalC^{sp}(H)$ may be used as a model for the coefficient group $A_1$. Proposition~\ref{prop::hyperplane} shows that, for a given QCA representation, one may choose any convenient hyperplane and compute the degree-$2$ universal obstruction class in the corresponding concrete model.
\end{remark}

\begin{remark}[Dependence on hyperplane]\label{rmk::computation}
 The resulting classes need not be canonically comparable for two hyperplanes $H$ and $H'$. Indeed, there is usually no canonical isomorphism between
$\CQ(H)/\CalC^{sp}(H)$ and $\CQ(H')/\CalC^{sp}(H')$, even though these groups are abstractly isomorphic. The exception is when $H$ and $H'$ are parallel; after passing to suitable finite thickenings, their QCA groups can be canonically identified. Then it is meaningful to compare the resulting obstruction classes. For example, merely reversing the orientation on $H$ results in the opposite class. Nevertheless, the vanishing of the obstruction is independent of the choice of hyperplane. 
\end{remark}

\begin{remark}
Toward the end of the preparation of this manuscript, we were informed by Kapustin and Xu that they have independently obtained a proof of Corollary~\ref{cor::hyperplane_unitary}. Their work is currently in preparation~\cite{kapustinXuInPreparation}.
\end{remark}

The proof of Proposition~\ref{prop::hyperplane} follows from a more general phenomenon. To state this, we generalize the hyperplane construction to the following context.

\begin{construction}
    Let $X$ a metric space as in Section~\ref{subsec::qca} and $\varphi: G \to \mathcal{Q}(X \times \Zbb)$ be a QCA representation such that $\varphi$ lands in $\mathcal{C}(X \times \Zbb)$. We can define a similar truncation operation
    \[t_+: \mathcal{C}(X \times \Zbb) \to \mathcal{C}(X \times \Zbb_{\geq 0})\]
    and a similar 2-cochain
    \[\tau_{\varphi}: G \times G \to \mathcal{Q}(X) \text{ with associated } \bar{\tau}_{\varphi}: G \times G \to \mathcal{Q}(X)/\mathcal{C}^{sp}(X). \]
\end{construction}

\begin{proposition}\label{prop::u2_id1}
Let $G = \mathcal{C}(\Zbb^n)$ and consider the inclusion map $\iota: \mathcal{C}(X \times \Zbb) \hookrightarrow \mathcal{Q}(X \times \Zbb)$. The 2-cochain $\bar{\tau}_{\iota}$ gives a 2-cocycle that is a model for the universal central extension of $\mathcal{C}(X \times \Zbb)$. The analogous statement holds for the unitary case.
\end{proposition}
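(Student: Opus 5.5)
The plan is to realize $\bar{\tau}_{\iota}$ as the cocycle of an explicit central extension of $C\coloneqq\mathcal{C}(X\times\Zbb)$, and then to check that this extension is superperfect. By the characterization of universal central extensions recalled before Proposition~\ref{prop:superperfect}, a central extension of a perfect group is universal exactly when the extension group is superperfect.

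\textbf{Step 1: the extension.} Let $E$ be the group of pairs $(\alpha,\gamma)$ with $\alpha\in C$ and $\gamma\in\mathcal{Q}(X\times\Zbb_{\geq 0})$ such that $\gamma\, t_+(\alpha)^{-1}$ is supported in a bounded neighbourhood of $X\times\{0\}$. Two pairs $(\alpha,\gamma)$ and $(\alpha,\gamma')$ are identified when $\gamma'\gamma^{-1}$ is a special circuit near the boundary. The projection $E\to C$ is surjective, since $(\alpha,t_+(\alpha))\in E$. Its kernel is the group of QCA supported near $X\times\{0\}$ modulo special circuits, which after a thickening is $\mathcal{Q}(X)/\mathcal{C}^{sp}(X)$. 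The section $\alpha\mapsto(\alpha,t_+(\alpha))$ has defect $t_+(\alpha)t_+(\beta)t_+(\alpha\beta)^{-1}$, so the extension class is $\bar{\tau}_{\iota}$. In particular $\bar{\tau}_\iota$ is a cocycle, and changing the truncation changes it by a coboundary, as already observed in Construction~\ref{constr::hyperplane_class}.

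\textbf{Step 2: centrality.} Conjugating a boundary element $\delta$ by $(\alpha,t_+(\alpha))$ gives $t_+(\alpha)\,\delta\, t_+(\alpha)^{-1}$. Since $\alpha$ is a circuit, only finitely many layers of $t_+(\alpha)$ interact with $\delta$. I would therefore replace $t_+(\alpha)$ near the boundary by a circuit supported in a thickened copy of $X$. The difference $t_+(\alpha)\,\delta\,t_+(\alpha)^{-1}\delta^{-1}$ is then a commutator in $\mathcal{Q}(X)$, hence lies in $\mathcal{C}^{sp}(X)$ by Proposition~\ref{prop::circuit_type}. So the kernel is central. The same argument works verbatim in the unitary case with $\mathcal{C}^*(X)$.

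\textbf{Step 3: superperfectness (the main obstacle).} First I would show $E$ is perfect. The group $C$ is perfect because $\mathcal{C}(X\times\Zbb)=\mathcal{C}^{sp}(X\times\Zbb)$ is the commutator subgroup by Proposition~\ref{prop::circuit_type}, so it suffices to write each boundary class as a product of commutators in $E$. For this I would use a pump or swindle argument: a boundary QCA $\delta$ is the truncation defect of a suitable pair of circuits on $X\times\Zbb$ (push $\delta$ to $-\infty$ layer by layer), and this exhibits $\delta$ as a commutator of lifts.

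Second, and harder, I would show $H_2(E)=0$. I would attempt this by identifying $E$ with the relevant group in the delooping argument of \cite{qca_grp_space_spectrum}. The half-space QCA group is flasque via an Eilenberg swindle along the $\Zbb_{\geq0}$ direction, so the pair $(\alpha,\gamma)$ should play the role of Wagoner's cone/suspension model. From this one gets a fibration whose boundary map realizes the equivalence $\pi_1\mathbf{Q}(X\times\Zbb)\cong\pi_0\mathbf{Q}(X)$ of Theorem~E.

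Equivalently, it suffices to check that the classifying homomorphism $\phi:H_2(C;\Zbb)\to\mathcal{Q}(X)/\mathcal{C}^{sp}(X)$ induced by $\bar\tau_\iota$ is an isomorphism, with $H_2(C)=A_1=\pi_1\mathbf{Q}(X\times\Zbb)$ by Lemma~\ref{lem::indexF}. Surjectivity follows from the perfectness of $E$ via the five-term sequence. Injectivity is where I expect the real work: I would compare $\phi$ with the delooping isomorphism on explicit commutator representatives. If that comparison fails, I would instead try to prove $H_2(E)=0$ directly using the acyclicity of the flasque half-space group.
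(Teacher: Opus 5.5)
Your Steps 1 and 2 give a reasonable explicit model. $E$ is a central extension of $C=\mathcal{C}(X\times\Zbb)$ by $\mathcal{Q}(X)/\mathcal{C}^{sp}(X)$, and its cocycle for the section $\alpha\mapsto(\alpha,t_+(\alpha))$ is $\bar\tau_\iota$. You also correctly reduce the proposition to showing that the classifying homomorphism $\phi\colon H_2(C;\Zbb)\to\mathcal{Q}(X)/\mathcal{C}^{sp}(X)$ is an isomorphism. But that is the entire content of the proposition, and Step 3 does not establish it.

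Knowing abstractly that $H_2(C)\cong\mathcal{Q}(X)/\mathcal{C}^{sp}(X)$ and that $\phi$ is surjective does not give injectivity. These groups are generally not Hopfian: when $X=\Zbb$ the target contains $\Qbb_{>0}\cong\bigoplus_{\Nbb}\Zbb$, which has surjective non-injective endomorphisms. For injectivity you offer two routes, ``compare on explicit commutator representatives'' and ``prove $H_2(E)=0$ from acyclicity of the half-space group'', but you only name them and do not carry either out. The perfectness of $E$ is likewise only gestured at. Writing an arbitrary, possibly non-circuit, boundary QCA as a truncation defect $t_+(g)t_+(h)t_+(gh)^{-1}$ of circuits is precisely surjectivity of the boundary map, and it needs its own argument.

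The missing idea is how to make the comparison with the delooping precise; the paper does this with primary obstruction theory.

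\begin{itemize}
\item The delooping theorem gives a fibration $B\mathcal{Q}(X)^+\to B\mathcal{C}(X\times\Zbb_{\geq 0})^+\to BC^+$ with contractible total space. By Lemma~\ref{lem::colimit_finite}, the fiber may be replaced by $\operatorname{hocolim}_n B\mathcal{Q}(X\times\{0,\dots,n\})^+$.
\item For the identity of the simply connected base, the primary obstruction to lifting is the fundamental class. This is the class in $\operatorname{Hom}(H_2(BC^+),\pi_1 F)$ given by the isomorphism $\partial\circ h^{-1}$, where $h$ is the Hurewicz map and $\partial\colon\pi_2(BC^+)\to\pi_1(B\mathcal{Q}(X)^+)$ is the connecting isomorphism.
\item Pulling this class back along the plus-construction map $BC\to BC^+$ gives the class of the universal central extension.
\item The primary obstruction does not depend on the lift over the $1$-skeleton. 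So you may lift each $1$-cell $g$ by a homotopy to the loop $\ell_{t_+(g)}$ in the half-space.
\item With that choice, the obstruction cochain on the $2$-simplex $(g,h)$ is literally $t_+(g)t_+(h)t_+(gh)^{-1}$. This lands in the fiber because it is supported on a finite strip.
\end{itemize}

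This identifies your $\phi$ with the composite $H_2(C)\cong H_2(BC^+)\cong\pi_2(BC^+)\xrightarrow{\partial}\pi_1(B\mathcal{Q}(X)^+)$. That composite is an isomorphism because the total space is contractible. Perfectness and superperfectness of $E$ then follow automatically, without separate swindle arguments.
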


Using the same naturality argument, one has that:
\begin{cor}
The associated cohomology class to $\bar{\tau}$, denoted $\alpha_{\varphi}(X) \in H^2(\mathcal{C}(X \times \Zbb); \mathcal{Q}(X)/\mathcal{C}^{sp}(X))$ is a model for the universal obstruction class $u_2(\varphi)$. The analogous statement holds for the unitary case.
\end{cor}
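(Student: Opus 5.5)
The statement to prove is Proposition~\ref{prop::u2_id1} (with $G$ really meaning $\mathcal{C}(X\times\Zbb)$). The plan is to build the central extension classified by $\bar\tau_\iota$ explicitly as a group of truncated circuits, and then check that it is the universal central extension. Universality will follow from two facts: the extension is perfect, and its kernel is $H_2(\mathcal{C}(X\times\Zbb);\Zbb)$. The second fact is known abstractly: Corollary~C of \cite{qca_grp_space_spectrum}, as used in Lemma~\ref{lem::indexF}, gives $H_2(\mathcal{C}(X\times\Zbb))\cong\pi_1\mathbf{Q}(X\times\Zbb)\cong\pi_0\mathbf{Q}(X)=\mathcal{Q}(X)/\mathcal{C}^{sp}(X)$. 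What remains is to show that the truncation construction realizes this abstract isomorphism.

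\textbf{Step 1: the extension group.} Let $E\subset \mathcal{C}(X\times\Zbb)\times\mathcal{Q}(X\times\Zbb_{\ge0})$ consist of pairs $(\alpha,\gamma)$ such that $\gamma$ agrees with $\alpha$ far inside the positive half, and agrees with the identity far inside the negative half. Equivalently, $\gamma t_+(\alpha)^{-1}$ is supported near the wall $X\times\{0\}$. Quotient by the normal subgroup of pairs $(1,\delta)$ with $\delta$ supported near the wall and $\delta\in\mathcal{C}^{sp}$ of the wall. This yields
\[1\to \mathcal{Q}(X)/\mathcal{C}^{sp}(X)\to \bar E\to \mathcal{C}(X\times\Zbb)\to 1.\]
The section $\alpha\mapsto (\alpha,t_+(\alpha))$ has cocycle exactly $\bar\tau_\iota$. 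This gives the cocycle property and independence of the truncation choice, as in Construction~\ref{constr::hyperplane_class}.

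\textbf{Step 2: centrality.} Conjugating a wall-supported $\delta$ by $\gamma$ gives $\gamma\delta\gamma^{-1}$. Since $\gamma$ restricted near the wall is a bounded-spread QCA on a thickening $X\times[-R,R]$, the class of $\gamma\delta\gamma^{-1}$ in $\mathcal{Q}(X)/\mathcal{C}^{sp}(X)$ equals that of $\delta$. The reason is that the quotient is abelian, since $\mathcal{C}^{sp}$ is the commutator subgroup by Proposition~\ref{prop::circuit_type}. To make this precise, write $\gamma$ near the wall as a product of a circuit and a wall-supported QCA.

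\textbf{Step 3: perfectness and universality.} To show $\bar E$ is perfect, note that $\mathcal{C}(X\times\Zbb)=\mathcal{C}^{sp}(X\times\Zbb)$ is perfect. It then suffices to show the kernel lies in $[\bar E,\bar E]$. For this, realize any wall QCA $\delta$ as a commutator-type product of truncated half-infinite circuits, via the standard ``pumping'' trick. Concretely, a wall QCA $\delta$ times $\delta^{-1}$ shifted one unit is a circuit in $X\times\Zbb$. Applying this trick to truncations of these circuits produces $\delta$ modulo commutators.

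Given perfectness, the universal property of $\widetilde{C}$ gives a map $\widetilde C\to\bar E$ over $C$, restricting to $\phi:H_2(C)\to\mathcal{Q}(X)/\mathcal{C}^{sp}(X)$. Perfectness of $\bar E$ makes $\phi$ surjective. For injectivity, I would identify $\phi$ with the delooping isomorphism of \cite{qca_grp_space_spectrum}, whose proof in that paper is essentially this Eilenberg swindle/truncation argument. Alternatively, I would check that $\phi$ composed with that isomorphism is the identity on generators. I expect this comparison to be the main obstacle, since it requires unwinding the delooping map of Theorem~E. I would first try to do it by naturality, testing on pumps. The unitary case runs identically, with $\mathcal{C}^*$ in place of $\mathcal{C}^{sp}$.
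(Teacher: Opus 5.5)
Your proposal has a genuine gap, and it is the step you flag yourself as the main obstacle: injectivity of $\phi\colon H_2(\CalC(X\times\Zbb);\Zbb)\to\CQ(X)/\CalC^{sp}(X)$. Equivalently, you have not shown that $\bar E$ is the \emph{universal} central extension rather than just some central extension.

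Steps~1--2 already give $[\bar\tau_\iota]=\phi_*\iota_1$. Here $\gamma$ should be taken in $\CQ(X\times\Zbb)$ rather than on the half-space, and you quotient by special circuits of a bounded strip. By naturality this gives $\alpha_\varphi(X)=\phi_*u_2(\varphi)$ for every $\varphi$. Perfectness in Step~3 only adds that $\phi$ is onto. So you get ``$u_2(\varphi)=0\Rightarrow\alpha_\varphi(X)=0$'' but not the converse. That does not show $\alpha_\varphi(X)$ is a model for $u_2(\varphi)$, and it would not give the hyperplane-independence corollaries.

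The abstract isomorphism from Corollary~C and Theorem~E cannot upgrade a surjection to an isomorphism. These groups are typically non-Hopfian: $\Qbb_{>0}\oplus\mathrm{Br}(\Fbb)$ and $\frac{\Qbb}{\Zbb}\otimes\Fbb^\times$ admit surjective endomorphisms with nonzero kernel. Checking ``on generators'' or ``on pumps'' presupposes an explicit formula for the delooping isomorphism, which is exactly what you lack.

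What is missing is an Eilenberg swindle that forces the comparison map to be an isomorphism. The paper gets it from the fibration $B\CQ(X)^+\to B\CalC(X\times\Zbb_{\ge0})^+\to B\CalC(X\times\Zbb)^+$, extracted from the delooping pullback square. Its total space is contractible, so the connecting map $\pi_2\to\pi_1$ is an isomorphism. Hence the primary obstruction to lifting the identity of the base is $\mathrm{id}_{H_2}$, which pulls back along $B\CalC(X\times\Zbb)\to B\CalC(X\times\Zbb)^+$ to $\iota_1$. The paper then computes the same obstruction using the $1$-skeleton lift given by homotopies from $\ell_g$ to the truncated loops $\ell_{t_+(g)}$, with the fiber identified with strip QCA via Lemma~\ref{lem::colimit_finite}. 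On the $2$-cell $(g,h)$ this yields exactly $t_+(g)t_+(h)t_+(gh)^{-1}$. So the isomorphism and its identification with $\bar\tau_\iota$ come out together, with no separate perfectness argument.

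If you want to stay group-theoretic, the same swindle should close your gap; I have only sketched this. Your unquotiented group $E$ sits in
\[1\to\textstyle\bigcup_R\CQ(X\times\Zbb_{\le R})\to E\to\bigcup_R\CQ(X\times\Zbb_{\ge -R})\to 1,\]
using that half-space QCA are circuits. Both outer groups are acyclic by the swindle, so $E$ is acyclic. The five-term exact sequence of $1\to(\text{wall group}\cong\CQ(X))\to E\to\CalC(X\times\Zbb)\to1$, with coinvariants trivialized by your Step~2, then shows that $H_2(\CalC(X\times\Zbb))\to\CQ(X)^{ab}$ is an isomorphism. That map is $\pm\phi$.
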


\begin{proof}[Proof of Proposition~\ref{prop::hyperplane}]    
This becomes a special case of Proposition~\ref{prop::u2_id1} as there is a coarse equivalence $\Zbb^n \cong H \times \Zbb$, and $H^+$ can be identified as $H \times \Zbb_{\geq 0}$.
\end{proof}

We will now discuss how to prove Proposition~\ref{prop::u2_id1} in the remainder of the subsection. In the proof of the delooping result (Theorem E) and Remark in \cite{qca_grp_space_spectrum}, we showed that, modulo a $\pi_0$-correction, the following diagram is a homotopy pullback:
\[\begin{tikzcd}
	{K(\mathbf{C}(X))} & {K(\mathbf{C}(X \times \mathbb{Z}_{\geq 0}))} \\
	{K(\mathbf{C}(X \times \mathbb{Z}_{\leq 0}))} & {K(\mathbf{C}(X \times \mathbb{Z}))}
	\arrow[from=1-1, to=1-2]
	\arrow[from=1-1, to=2-1]
	\arrow[from=1-2, to=2-2]
	\arrow[from=2-1, to=2-2]
\end{tikzcd}\]
where the maps are induced by the natural identification of a QCA in a subspace to be viewed as a QCA in the larger space. Furthermore, the spaces $K(\mathbf{C}(X \times \Zbb_{\geq 0}))$ and $K(\mathbf{C}(X \times \Zbb_{\leq 0})$ are contractible by an Eilenberg swindle argument (this is Lemma 4.5 of \cite{qca_grp_space_spectrum}). By restricting to the component at identity and taking the universal cover, the following is a fibration sequence
\[K(\mathbf{C}(X))_1 \to K(\mathbf{C}(X \times \Zbb_{\geq 0})) \to \widetilde{K(\mathbf{C}(X \times \Zbb))_1}.\]
We can translate the terms to plus-construction spaces as
\[B\mathcal{Q}(X)^+ \to B\mathcal{C}(X \times \Zbb_{\geq 0})^+ \to \widetilde{B\mathcal{Q}(X \times \Zbb)^+}.\]
Here we note that $\mathcal{Q}(X \times \Zbb_{\geq 0}) = \mathcal{C}(X \times \Zbb_{\geq 0})$. It is a general fact that for a group $G$ with maximal normal subgroup of $N$, the universal cover of $BG^+$ is $BN^+$ (see Section 1 of \cite{WAGONER1972349}). This fibration sequence then translates to
\begin{equation}\label{eq::path_fib}
B\mathcal{Q}(X)^+ \to B\mathcal{C}(X \times \Zbb_{\geq 0})^+ \to B\mathcal{C}(X \times \Zbb)^+
\end{equation}
The maps here are induced by the natural inclusion maps, and evidently the middle space is still contractible. 

There is a general obstruction theory for lifting a map to the base space along a fibration. Here we consider a special case of Section 7.10 of \cite{DavisKirk2001}.

\begin{construction}\label{primary}
Suppose $F \to E \xrightarrow{p} Y$ is a fibration such that $Y$ is simply connected and $F$ is a path-connected simple space. Let $f: W \to Y$ be a map. In this case, we always have a lift $g: W^{(1)} \to E$ is a lift of the 1-skeleton of $W$ along $p: E \to Y$. For any $2$-cell $e$ on $W$, the composition 
\[g|_{\partial e}: S^1 \to E \to Y \text{ is null-homotopic}\]
as it extends a 2-cell. This induces a lift $S^1 \to F$ and hence gives an element of $\pi_1(F)$, as $F$ is simple. This procedure now produces a 2-cochain
\[\gamma^{2}_g(p): C_2^{\operatorname{Cell}}(W) \to \pi_1(F),\]
where $C_2^{\operatorname{Cell}}(W)$ is the 2nd cellular chain complex of $W$.
\end{construction}

\begin{theorem}[\cite{DavisKirk2001}]\label{thm::davis_kirk}
The 2-cochain $\gamma^{2}_g(p)$ gives a well-defined class $[\gamma^{(2)}(p)] \in H^2(W; \pi_1(F))$, called the primary obstruction class, independent of the choice of lift to the $1$-skeleton. If $[\gamma^{(2)}(p)] = 0$, then $g$ can be redefined to give a lift of $W^{(2)} \to E$.
\end{theorem}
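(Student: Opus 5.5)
The plan is to reduce the lifting problem to the classical obstruction theory for sections of a fibration, as in Section 7.10 of \cite{DavisKirk2001}. First pull back $p$ along $f$ to get a fibration $F \to f^*E \to W$; lifts of $f$ (or of $f|_{W^{(k)}}$) along $p$ correspond to sections of $f^*E$ over $W$ (or over $W^{(k)}$). Since $Y$ is simply connected, the monodromy action of $\pi_1(W)$ on the fiber factors through $\pi_1(Y)=0$ up to homotopy. Since $F$ is simple, $\pi_1(F)$ is abelian and free homotopy classes $S^1\to F$ coincide with $\pi_1(F)$. Together these mean $\gamma^2_g(p)$ is a well-defined cellular cochain with \emph{untwisted} coefficients $\pi_1(F)$. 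A lift $g$ on $W^{(1)}$ always exists, since $F$ is path-connected and lifting over $0$- and $1$-cells only requires $\pi_0(F)=0$.

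\textbf{Step 1: $\gamma^2_g(p)$ is a cocycle.} Take a $3$-cell $e^3$ and trivialize $f^*E$ over the contractible closed cell, so $f^*E|_{e^3}\simeq D^3\times F$. The section $g$ over the $1$-skeleton of $\partial e^3$ becomes a map from a graph in $S^2$ to $F$. The value $\delta\gamma(e^3)=\sum_{e^2}[\partial e^3:e^2]\,\gamma(e^2)$ is the sum of the classes of this map around the $2$-cells of $S^2$. That sum is the obstruction to extending a map from the $1$-skeleton of the CW sphere $S^2$ over all of $S^2$. It vanishes because the boundaries of the $2$-cells of $S^2$ sum to zero in $H_1$ of the $1$-skeleton, and $\pi_1(F)$ is abelian, so the Hurewicz map from $\pi_1$ of the graph factors through $H_1$.

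\textbf{Step 2: independence of the lift.} Two lifts $g,g'$ on $W^{(1)}$ can be taken to agree on $W^{(0)}$ after a homotopy, using path-connectedness of $F$. They then differ on each $1$-cell by a loop in the fiber, giving a difference cochain $d(g,g')\in C^1(W;\pi_1F)$. The cellular formula for the boundary of a $2$-cell gives $\gamma_{g'}-\gamma_g=\delta d(g,g')$, so the class $[\gamma^{(2)}(p)]\in H^2(W;\pi_1F)$ is well defined. Conversely, every $1$-cochain $d$ is realized as $d(g,g')$ for some $g'$: keep $g$ on $W^{(0)}$ and splice the loop $d(e^1)$ into $g$ along each $1$-cell.

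\textbf{Step 3: extension.} If $[\gamma^{(2)}(p)]=0$, write $\gamma_g=\delta d$ and replace $g$ by $g'$ with $d(g,g')=-d$. Then $\gamma_{g'}=0$, so for each $2$-cell the loop $g'|_{\partial e}$, viewed in the trivialized fiber over the cell, is null-homotopic, and $g'$ extends over the cell. This gives the lift on $W^{(2)}$.

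The main obstacle I expect is the bookkeeping in Steps 1 and 2. Signs and orientations of attaching maps, and basepoint paths, all have to be handled consistently. This is exactly where simplicity of $F$ and simple connectivity of $Y$ are used: they let us ignore basepoints and make the local coefficient system trivial. I would handle it by working throughout in the pulled-back fibration over each closed cell, with a fixed trivialization, so that every comparison takes place in the single space $F$.
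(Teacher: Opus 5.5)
Your overall route is the standard obstruction-theory argument that the paper cites from Davis--Kirk without proof: pull back to a section problem, use $\pi_1Y=0$ and simplicity of $F$ to get an untwisted cochain, compare lifts by difference cochains, realize every $1$-cochain as a difference, and kill a coboundary. Steps~2 and~3 are fine in outline.

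The gap is in Step~1. For a general CW complex, the attaching map $\phi\colon S^2\to W^{(2)}$ of a $3$-cell is an arbitrary continuous map. The preimage $\phi^{-1}(W^{(1)})$ need not be a graph, and $S^2$ need not carry any cell structure whose $1$-skeleton lands in $W^{(1)}$ and whose $2$-cells map onto $2$-cells of $W$. So ``the $1$-skeleton of $\partial e^3$'' is not defined, and $g\circ\phi$ is only available on the closed set $\phi^{-1}(W^{(1)})$. Nothing in your argument then identifies $\sum_{e^2}[\partial e^3:e^2]\,\gamma(e^2)$ with a sum of loop classes around cells of $S^2$. As written, the step works only when attaching maps are combinatorial. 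That covers simplicial complexes and the bar-construction model of $B\mathcal{C}(X\times\ZZ)$ used in the paper's application, but not the generality of the statement.

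There are two standard repairs.

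(a) Argue homotopically. Send $\alpha\colon(D^2,S^1)\to(W^{(2)},W^{(1)})$ to the obstruction to extending $g\circ\alpha|_{S^1}$ to a lift of $f\circ\alpha$. This defines a homomorphism $\Gamma\colon\pi_2(W^{(2)},W^{(1)})\to\pi_1 F$, via the map of pairs into $(M_p,E)$, with $M_p$ the mapping cylinder, and $\pi_2(M_p,E)\cong\pi_1 F$.
\begin{itemize}
\item $\Gamma$ is $\pi_1(W^{(1)})$-invariant. Indeed $\pi_2(M_p,E)\to\pi_1E$ is onto because $\pi_1Y=0$, and $\pi_2(M_p,E)$ is abelian, so the crossed-module identity forces $\pi_1E$ to act trivially.
\item By relative Hurewicz, $\Gamma$ factors through $H_2(W^{(2)},W^{(1)})=C_2(W)$, and the factorization is $\gamma_g$.
\item $\partial_3[e^3]$ is the Hurewicz image of $[\phi]\in\pi_2(W^{(2)})$ pushed into $\pi_2(W^{(2)},W^{(1)})$. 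On such classes $\Gamma$ is $f_*$ followed by the fibration connecting map $\pi_2(Y)\to\pi_1F$. This vanishes on $[\phi]$ because $f\circ\phi$ extends over $D^3$.
\end{itemize}

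(b) Alternatively, keep your picture but first homotope $\phi$ inside $W^{(2)}$. Make it transverse to an interior point of each $2$-cell it meets, then compose with a pinch map homotopic to the identity rel $W^{(1)}$. Now $S^2$ splits into finitely many disjoint disks $D_j$, each mapped onto a closed $2$-cell by its characteristic map composed with a homeomorphism of sign $\epsilon_j$, plus a planar surface $\Sigma$ mapped into $W^{(1)}$. Cellular boundaries and the extendability of $f\circ\phi$ over $D^3$ depend only on the homotopy class of $\phi$. The loop around $\partial D_j$ represents $\epsilon_j$ times the value of $\gamma$ on the corresponding cell. Your $H_1$ argument then runs with $\Sigma$ in place of the graph, using $\sum_j[\partial D_j]=0$ in $H_1(\Sigma)$.

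The ``cellular formula'' in Step~2 needs the analogous, easier remark: first homotope the attaching loops of $2$-cells to edge paths.
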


This class is also natural. When $W = Y \to Y$ is the identity map. The 2-cochain can be given by collapsing the 1-skeleton of $Y$ and then sending every 2-cell to a class in $\pi_2(Y)$. The Hurewicz map $\pi_2(Y) \to H_2(Y; \Zbb)$ corresponds exactly to the inverse. This means that, under the Hurewicz isomorphism, the class
\[[\gamma^{(2)}(p)] \in H^2(Y; \pi_2(Y)) \cong H^2(Y, H_2(Y; \Zbb))\]
is the identity map $H_2(Y; \Zbb) \to H_2(Y; \Zbb)$.\\

Before proving Proposition~\ref{prop::u2_id1}, we need an additional technical lemma.
\begin{lem}\label{lem::colimit_finite}
The homotopy colimit $$\operatorname{hocolim}(K(\mathbf{C}(X \times \{0\})) \to  K(\mathbf{C}(X \times \{0, 1\})) \to K(\mathbf{C}(X \times \{0, 1, 2\})) \to  ...)$$
induced by the inclusion of spaces, is equivalent to $K(\mathbf{C}(X))$ via the identification of the first term.
\end{lem}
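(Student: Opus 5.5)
The plan is to reduce the statement to a compactness property of Segal's $K$-theory together with a coarse identification. The key point is that $X \times \{0,1,\dots,m\}$, with the $L^\infty$-metric, is coarsely equivalent to $X$. The inclusion $X \times \{0\} \hookrightarrow X \times \{0,\dots,m\}$ is a coarse equivalence, and the projection to $X$ is a coarse inverse. By the functoriality of $\mathbf{C}(-)$ under coarse equivalences established in \cite{qca_grp_space_spectrum}, each map
\[K(\mathbf{C}(X\times\{0,\dots,m\}))\to K(\mathbf{C}(X\times\{0,\dots,m+1\}))\]
should be a homotopy equivalence. The same holds for the composite from the first term. If so, the telescope is a sequential hocolim of equivalences, so the inclusion of the first term $K(\mathbf{C}(X\times\{0\}))\cong K(\mathbf{C}(X))$ into the hocolim is an equivalence, and the lemma follows.

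The first step is therefore to check that the inclusion functor $\mathbf{C}(X\times\{0,\dots,m\})\to\mathbf{C}(X\times\{0,\dots,m+1\})$ induces an equivalence on $K$-theory. I would try to build an explicit symmetric monoidal functor in the other direction, given by pushing spin systems forward along the projection $X\times\{0,\dots,m+1\}\to X\times\{0,\dots,m\}$ that collapses the last two layers. Concretely, $q'(x,m)=q(x,m)q(x,m+1)$, and the algebra $\mathcal{A}$ is regrouped accordingly, which is an isomorphism of algebras. A QCA of bounded spread stays of bounded spread under this regrouping, since the collapsing map is $1$-Lipschitz and moves points by at most $1$.

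The second step is to show that both composites are naturally isomorphic, or at least homotopic after $K$-theory, to the identity. The composite that collapses and then includes differs from the identity by a relabeling of sites at distance at most $1$. I would realize this as a natural isomorphism given by a bounded-spread algebra isomorphism, which I would rather call a ``shift circuit'' than claim is literally a circuit. A natural isomorphism of symmetric monoidal functors induces a homotopy on Segal $K$-theory. The other composite is literally the identity on objects, up to the regrouping isomorphism.

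I expect the main obstacle to be making this coarse invariance rigorous at the level of the category $\mathbf{C}(X;\Fbb)$ defined in \cite{qca_grp_space_spectrum}, whose morphisms may include non-identity site maps. If direct naturality is awkward, I would fall back on homotopy groups. Sequential hocolims commute with $\pi_*$, so it suffices that each map induces isomorphisms on $\pi_*$ of all components. For $\pi_0$ and the identity component $B\mathcal{Q}(-)^+$, this reduces via the plus construction, Theorem B of \cite{qca_grp_space_spectrum}, to showing that the group maps $\mathcal{Q}(X\times\{0,\dots,m\})\to\mathcal{Q}(X\times\{0,\dots,m+1\})$ are isomorphisms. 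That in turn follows from the regrouping isomorphism of the total QCA groups, since every $\mathcal{Q}(X\times\{0,\dots,m\})$ is identified with $\mathcal{Q}(X)$ compatibly. Homology isomorphisms on identity components, together with the $\pi_0$ statement and the $H$-space structure, then give the equivalence.
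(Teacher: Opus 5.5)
Your main argument is correct and is essentially the paper's proof. The paper uses the same layer-stacking isomorphism $D\cong D'$ onto a spin system concentrated on $X\times\{0\}$, packaged as an initial object of each comma category $D\downarrow F_n$, so that Quillen's Theorem A gives an equivalence on nerves; it then applies Thomason's (2.3) to the lax symmetric monoidal inclusion. As a result, you only need your collapse functor to be a homotopy inverse on nerves, and you never have to check that it or your natural isomorphism is monoidal.

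Your fallback, however, would not work as stated. The inclusion $\mathcal{Q}(X\times\{0,\dots,m\})\to\mathcal{Q}(X\times\{0,\dots,m+1\})$ of total QCA groups is not surjective: a QCA swapping the tensor factors at $(x,m)$ and $(x,m+1)$ never agrees in the colimit with one acting trivially on layer $m+1$. So this map is at best a homology isomorphism, not a group isomorphism, and $\pi_0$ of the $K$-theory space is not determined by these groups at all.
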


\begin{proof}
It suffices to prove each map $K(\mathbf{C}(X \times \{0\})) \to K(\mathbf{C}(X \times \{0, 1, ..., n\}))$ is an equivalence. Here we use the explicit definition of $\mathbf{C}(X')$ (Definition 3.1 of \cite{qca_grp_space_spectrum}). For any space $X'$ as in Section~\ref{subsec::qca}, the category $\mathbf{C}(X')$ has objects $A(X', q)$ for every quantum spin system $q: X' \to \Zbb_{>0}$ and morphisms as locality-preserving isomorphisms. $\mathbf{C}(X')$ is symmetric monoidal under pointwise tensor product. The natural functor
\[F_n: \mathbf{C}(X \times \{0\}) \to \mathbf{C}(X \times \{0, 1, ..., n\})\]
is then lax symmetric monoidal. By (2.3) of \cite{Thomason01011982}, any lax symmetric monoidal functor $\phi: \mathcal{S}_1 \to \mathcal{S}_2$ between small symmetric monoidal categories that induce an equivalence on $B\mathcal{S}_1 \to B\mathcal{S}_2$ would induce an equivalence $K(\mathcal{S}_1) \to K(\mathcal{S}_2)$. We wish to show this for $F_n$.

We now apply Quillen's Theorem A to $F_n$. For any $D = \mathcal{A}(X \times \{0, 1, ..., n\}, q) \in \mathbf{C}(X \times \{0, 1, ..., n\})$, it suffices to show $B(d \downarrow F_n)$ is contractible. The category $D \downarrow F_n$ has objects $\alpha: D \to F_n(C) \in \mathbf{C}(X \times \{0, 1, ..., n\})$ and morphisms from $\alpha: D \to F_n(C)$ to $\alpha': D \to F_n(C')$ is a map $F_n(f): F_n(C) \to F_n(C')$ such that $F_n(f) \circ \alpha = \alpha'$.

Now write $q'(x, k) = 1$ for $k > 0$ and $q'(x, 0) = \prod_{j = 0}^n q(x, j)$, and $D' = \mathcal{A}(X \times \{0, 1, ..., n\}, q')$. Clearly $D'$ is in the image of $F_n$. Let $\alpha: D \to D'$ a locality preserving isomorphism that stacks the tensor factors on $x \times \{0, 1, ..., n\}$ to $x \times \{0\}$. We claim $\alpha$ is an initial object in $d \downarrow F_n$. Indeed, for any $\alpha': D \to F_n(C')$, since $\{0, 1, ..., n\}$ is bounded, one can recreate the same map $\alpha'$ to in the flattened strip $X \times \{0\}$, which gives the desired morphism. Uniqueness follows since for any $F(f), F(g)$ such that $F(f) \circ \alpha = F(g) \circ \alpha$, $F(f) = F(g)$ as $\alpha$ is an isomorphism, and $f = g$ as the inclusion of the QCA group to a larger set is an injection. Thus, $B(d \downarrow F_n)$ is contractible, and the proof follows. 
\end{proof}

Now we will prove Proposition~\ref{prop::u2_id1}.
\begin{proof}[Proof of Proposition~\ref{prop::u2_id1}]
We check the algebraic case as the unitary case follows similarly. By Lemma~\ref{lem::colimit_finite}, we may without loss replace the fiber in the fibration sequence (\ref{eq::path_fib}) into
\[\operatorname{hocolim}_n B\mathcal{Q}(X \times \{0, ..., n\})^+ \to B\mathcal{C}(X \times \Zbb_{\geq 0})^+ \xrightarrow{f} B\mathcal{C}(X \times \Zbb)^+.\]
We first replace $f$ in the fibration sequence by a strict fibration. This is given by the pullback of $f: B\mathcal{C}(X \times \Zbb_{\geq 0})^+ \to B\mathcal{C}(X \times \Zbb)^+$ and the map from the path space $ev_0: PB\mathcal{C}(X \times \Zbb)^+ \to B\mathcal{C}(X \times \Zbb)^+$ that evaluates a path at $0$. Call the pullback $E'$, this gives a fibration
\[\operatorname{hocolim}_n B\mathcal{Q}(X \times \{0, ..., n\})^+ \to E' \to B\mathcal{C}(X \times \Zbb)^+.\]
Now consider Construction~\ref{primary} on this sequence and the plus-construction map $\psi: B\mathcal{C}(X \times \Zbb) \to B\mathcal{C}(X \times \Zbb)^+$, whose criterion is satisfied. By our discussions above, $[\gamma^2]$ is the identity map $H_2(B\mathcal{C}(X \times \Zbb)^+) \to H_2(B\mathcal{C}(X \times \Zbb)^+)$. The homology isomorphism given by the plus-construction map ensures $\psi^*(\gamma^2)$ is the identity map from $\iota: H_2(B\mathcal{C}(X \times \Zbb)) \to H_2(B\mathcal{C}(X \times \Zbb))$. Thus, we see that $\iota$ is the class representing both universal central extension and primary obstruction.

By Theorem~\ref{thm::davis_kirk}, the primary obstruction class is independent of the choice of lift on the 1-skeleton. Thus, it suffices for us to show that $\bar{\tau}_{\iota}$ is a 2-cocycle that occurs with a particular lift on the 1-skeleton. Indeed, for a map $S^1 \to B\mathcal{C}(X \times \Zbb)^+$, a lift of $S^1$ to $E'$ is precisely a homotopy from $S^1 \to B\mathcal{C}(X \times \Zbb)^+$ to a loop $S^1 \to B\mathcal{C}(X \times \Zbb_{\geq 0})^+ \to B\mathcal{C}(X \times \Zbb)^+$.

The map $\psi: B\mathcal{C}(X \times \Zbb) \to B\mathcal{C}(X \times \Zbb)^+$ is the inclusion of a subcomplex, and a 1-cell in $B\mathcal{C}(X \times \Zbb)$ is given as a loop $\ell_{g}$ representing $g \in B\mathcal{C}(X \times \Zbb)$. For each $g$, we choose a lift by homotopy from the loop representing $g \in B\mathcal{C}(X \times \Zbb)^+$ to a choice of $\ell_{t_+(g)} \to B\mathcal{C}(X \times \Zbb_{\geq 0})^+$. This is the lift on the 1-skeleton. A 2-cell in $B\mathcal{C}(X \times \Zbb)$ is given by a triangle whose edges are $g, h, gh$. Going along the boundary of the triangle gives exactly $t_+(g) t_+(h) t_+(gh)^{-1}$, where we need to flip the last edge. The element $t_+(g) t_+(h) t_+(gh)^{-1}$ is supported on $X \times \{0, ..., m\}$ for some $m$ sufficiently large and hence naturally restricts to the image of the fiber. Thus,
\[\gamma^2(g, h) = t_+(g) t_+(h) t_+(gh)^{-1}.\]
This shows that a choice of $\gamma_2$ could have been $\bar{\tau}_{\iota}$, so their cohomology classes agree. This concludes the proof.
\end{proof}

\subsection{Degree-3}
We recall the construction of~\cite{else2014classifying}, which associates an obstruction class in $H^3(BG;U(1))$ to a QCA representation $\varphi:G\to \CQ^*(\ZZ^1)$ that lands in $\mathcal{C}^*(\Zbb^1)$. We outline a proposal to generalize to the algebraic setting to give an obstruction class in $H^3(BG;\mathbb F^\times)$. We wonder whether the same idea can be extended to systems on $\ZZ^n$ for $n>1$ to produce an obstruction class. 
\begin{construction}
    Choose a point $x\in \ZZ$ and let $r^+$ be the ray $\ZZ \cap [x, \infty)$. We can define the same truncation operation: 
\begin{equation}
    t_+: \CalC(\ZZ) \rightarrow \CalC(r^+).
\end{equation}
Then we again have 
\begin{equation}
    t_+(\alpha)t_+(\beta)t_+(\alpha \beta)^{-1}
\end{equation}
acting as identity on matrix algebras far from $x$. This means we regard it as a QCA on some finite segment $[x, x+l]\cap \ZZ$. Then it must fall into $\PGL_m(\mathbb F)$ for some $m$.

Applying this to $\alpha=\varphi(g)$ and $\beta=\varphi(h)$ gives a function
\begin{equation}
\tau:G\times G\longrightarrow \PGL_\otimes(\mathbb F),
\qquad
\tau(g,h)=t_+(\varphi(g))t_+(\varphi(h))t_+(\varphi(gh))^{-1}.
\end{equation}

Observe that given $g, h, k\in G$, we have equalities 
\begin{align*}
   &\tau(g, h)\tau(gh, k)t_+(\varphi(ghk)) \\&= t_+(\varphi(g))t_+(\varphi(h))t_+(\varphi(k))\\
   &= t_+(\varphi(g)) \tau(h, k) t_+(\varphi(hk))\\
   &= t_+(\varphi(g)) \tau(h, k) t_+(\varphi(g))^{-1} t_+(\varphi(g)) t_+(\varphi(hk))\\
   &= t_+(\varphi(g)) \tau(h, k) t_+(\varphi(g))^{-1} \tau(g, hk) t_+(\varphi(ghk)). 
\end{align*}

This means 
\begin{equation}
    \tau(g, h)\tau(gh, k)=t_+(\varphi(g)) \tau(h, k) t_+(\varphi(g))^{-1} \tau(g, hk)\in \PGL_\otimes(\mathbb F). 
\end{equation}

Choose a lift into $\GL_\otimes(\mathbb F)$ for $\tau(g, h), \tau(gh, k)$, $\tau(h, k)$, and $\tau(g, hk)$, respectively denoted by $\widetilde{\tau(g, h)}, \widetilde{\tau(gh, k)}$, $\widetilde{\tau(h, k)}$, and $\widetilde{\tau(g, hk)}.$ It can be verified that $t_+(\varphi(g))(\widetilde{\tau(h, k)})$ is also a lift for $t_+(\varphi(g)) \tau(h, k) t_+(\varphi(g))^{-1},$ where $t_+(\varphi(g))$ acts on $\widetilde{\tau(h, k)}$ as it is an element in the algebra of observables supported in an interval, so the image is supported in a larger but still bounded interval. The equality above implies there is a function $\omega(g, h, k)\in \mathbb F^\times$

\begin{equation}
    \widetilde{\tau(g, h)}\widetilde{\tau(gh, k)}=\omega(g, h, k)t_+(\varphi(g))(\widetilde{\tau(h, k)})  \widetilde{\tau(g, hk)}. 
\end{equation}
\end{construction}

In the unitary case, the construction proceeds in the exact same manner, with projective unitary groups and unitary groups instead, and $U(1)$ playing the role of $\Fbb^{\times}$. Appendix B of~\cite{else2014classifying} verified $\omega: G^3\rightarrow \mathbb U(1)$ is a cocycle whose cohomology class is independent of various choices made above. We write $[\omega(\varphi)] \in H^3(BG; U(1))$ to denote the corrresponding cohomology class.

Although we did not check this in detail, we expect the proof for the unitary case adapts to show that $\omega: G^3\rightarrow \mathbb F^\times$ gives a well-defined obstruction class in $H^3(BG; \mathbb F^\times)$, provided its degree-2 obstruction given by $\tau(g, h)$ is $0$. In the unitary case, this was not an issue as $\mathcal{Q}^*(\Zbb^0)/\mathcal{C}^*(\Zbb^0) = 0$.\\

We now compare the universal obstruction classes in Definition~\ref{def::universalObstruction} with this class. 
\begin{prop}
Let $\varphi: G \to \mathcal{Q}^*(\Zbb^1)$ be an unitary QCA representation that land in circuits. Suppose the universal obstruction $u_3(\varphi) = 0$, then $[\omega(\varphi)] = 0$. 
\end{prop}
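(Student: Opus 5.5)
The plan is to prove the statement by \emph{universality}. Both $[\omega(-)]$ and $u_3(-)$ are natural in the group, so I reduce to the universal case $G=\CalC^*(\ZZ^1)$, where the universal coefficient theorem forces $[\omega]$ to factor through the universal class $\iota_2$.

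\textbf{Step 1: the first two stages of the Dror tower.} I work with $Q=\CQ^*(\ZZ^1)$ and $C=\CalC^*(\ZZ^1)$.
\begin{itemize}
\item By Lemma~\ref{lem::indexUnitary} and Table~\ref{table_q_z}, $A_1=\pi_1\mathbf Q^*(\ZZ^1)\cong \CQ^*(\ZZ^0)/\CalC^*(\ZZ^0)=0$, so $H_2(BC;\ZZ)=0$.
\item Hence the universal central extension $\widetilde C\to C$ is an isomorphism, and $X_2=B\widetilde C\simeq X_1=BC$.
\item Also $A_2=H_3(BC;\ZZ)\cong \pi_2\mathbf Q^*(\ZZ^1)$.
\end{itemize}
For $\varphi$ landing in circuits, $u_1(\varphi)=0$ and $u_2(\varphi)$ lies in a group with zero coefficients. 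So the lift $\widetilde{\varphi}_2\colon BG\to X_2$ can be taken to be $B\varphi\colon BG\to BC$ itself. With this choice, $u_3(\varphi)=(B\varphi)^*\iota_2\in H^3(BG;A_2)$.

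\textbf{Step 2: the Else--Nayak class on the circuit group.} I apply the construction to the tautological representation $\mathrm{id}\colon C\to C$. This gives a class $\omega_{\mathrm{univ}}\in H^3(BC;U(1))$. Its well-definedness is Appendix~B of~\cite{else2014classifying}, and no degree-2 hypothesis is needed because $\CQ^*(\ZZ^0)/\CalC^*(\ZZ^0)=0$.

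Next I check naturality: $[\omega(\varphi)]=(B\varphi)^*\omega_{\mathrm{univ}}$ for any $\varphi\colon G\to C$. This holds because the truncations $t_+(\varphi(g))$, the lifts $\widetilde{\tau(g,h)}$, and the scalars $\omega(g,h,k)$ for $\varphi$ can all be chosen to be the ones chosen for the elements $\varphi(g)\in C$. The resulting cocycle is then literally the pullback of the universal one along $\varphi^3\colon G^3\to C^3$. Since the class is independent of these choices, naturality follows.

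\textbf{Step 3: factoring through $\iota_2$.} Because $BC$ is homologically $2$-connected, the universal coefficient theorem gives
\[
H^3(BC;U(1))\cong \Hom(H_3(BC;\ZZ),U(1))=\Hom(A_2,U(1)).
\]
So there is a unique homomorphism $\theta\colon A_2\to U(1)$ with $\omega_{\mathrm{univ}}=\theta_*\iota_2$, since $\iota_2$ corresponds to $\mathrm{id}_{A_2}$. By naturality of change of coefficients,
\[
[\omega(\varphi)]=(B\varphi)^*\theta_*\iota_2=\theta_*\bigl((B\varphi)^*\iota_2\bigr)=\theta_*u_3(\varphi).
\]
Therefore $u_3(\varphi)=0$ implies $[\omega(\varphi)]=0$. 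I expect $\theta$ to be, up to sign, the projection onto the $U(1)$ summand of Proposition~\ref{prop::unitary_pt_groups}, but the argument does not need this identification.

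\textbf{Main obstacle.} The delicate point is Step 2, specifically that the Else--Nayak cocycle is genuinely defined and natural on the discrete group $C$ itself, rather than only for finite $G$. Appendix~B of~\cite{else2014classifying} is phrased for a given representation $G$. I would re-check that its choice-independence arguments use only the group structure and the circuit/truncation data, so they apply verbatim with $G=C$.

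A secondary check is that the definition of $u_3$ via the tower over $BQ$ is compatible with lifting through $BC$. Concretely, Definition~\ref{def::universalObstruction} should allow the lift $\widetilde{\varphi}_1$ to be the canonical factorization through $BC\simeq X_1$, up to the indeterminacy of lifts. Any indeterminacy here is harmless: the conclusion only uses the vanishing hypothesis for the lift through $BC$, and changing the lift through $BC\simeq X_1$ does not arise since $A_1=0$.
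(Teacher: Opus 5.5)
Your proposal is correct and follows the paper's proof. The paper also applies the Else--Nayak construction to the inclusion $\mathcal{C}^*(\Zbb^1)\hookrightarrow\mathcal{Q}^*(\Zbb^1)$ and uses $H_2(\mathcal{C}^*(\Zbb^1))=0$ with the universal coefficient theorem to write $[\omega(i)]=\psi_*\iota_2$ for a homomorphism $\psi\colon H_3(\mathcal{C}^*(\Zbb^1))\to U(1)$, then concludes $[\omega(\varphi)]=\varphi^*\psi_*\iota_2=\psi_*u_3(\varphi)$ by naturality. Your extra justifications — why $B\varphi$ serves as the lift to $X_2$, and why $[\omega]$ is natural by pulling back choices — fill in points the paper only asserts.
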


\begin{proof}
Note here $u_3(\varphi)$ exists as the $u_2$-obstruction vanish from $H_2(\mathcal{C}^*(\Zbb^1)) = 0$ (see Column 3 of Table \ref{table_q_z}). This means that $u_3(\varphi) = \varphi^*(\iota_2)$, where $\iota_2$ is the identity map in $\Hom( H_3(\mathcal{C}^*(\Zbb^1),  H_3(\mathcal{C}^*(\Zbb^1))\cong H^3(\mathcal{C}^*(\Zbb^1); H_3(\mathcal{C}^*(\Zbb^1))$. The class $[\omega(\bullet)]$ is also natural, i.e., for any QCA representations $\Phi: G' \to \mathcal{C}^*(\Zbb^1) \subseteq \mathcal{Q}^*(\Zbb^1)$ and map $f: G'' \to G'$, we have
\[[\omega(\Phi \circ f)] = f^*([\omega(\Phi)]).\]
Now consider the inclusion map $i: \mathcal{C}^*(\Zbb^1) \to \mathcal{Q}^*(\Zbb^1)$. The class $$[\omega(i)] \in H^3(\mathcal{C}^*(\Zbb^1); U(1)) \cong \operatorname{Hom}(H_3(\mathcal{C}^*(\Zbb^1)), U(1))$$ defines a map $\psi: A \coloneqq H_3(\mathcal{C}^*(\Zbb^1)) \to B \coloneqq U(1)$. The map $\varphi: G \to \mathcal{C}^*(\Zbb^1)$ gives a natural diagram
\[\begin{tikzcd}
	{H^3(\mathcal{C}^*(\Zbb^1); A)} & {H^3(\mathcal{C}^*(\Zbb^1); B)} \\
	{H^3(G; A)} & {H^3(G; B)}
	\arrow["{\psi_*}", from=1-1, to=1-2]
	\arrow["{\varphi^*}"', from=1-1, to=2-1]
	\arrow["{\varphi^*}", from=1-2, to=2-2]
	\arrow["{\psi_*}", from=2-1, to=2-2]
\end{tikzcd}\]
with $\psi_*(\iota_2) = [\omega(i)]$. Now if $u_3(\varphi) = 0$, then $0 = \psi_*(u_3(\varphi)) = \psi^* \circ \varphi^*(\iota_2) = \varphi^* \circ \psi_*(\iota_2) = \varphi^*([\omega[i]]) = [\omega(\varphi)]$. 
\end{proof}

We establish a partial converse for a finite group $G$. 

\begin{prop}
Let $G$ be a finite group and $\varphi: G \to \mathcal{Q}^*(\Zbb^1)$ be an unitary QCA representation that land in circuits. Suppose $[\omega(\varphi)] = 0$, then $u_3(\varphi) = 0$. 
\end{prop}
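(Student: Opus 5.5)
The plan is to reduce the claim to one statement about the coefficient map $\psi\colon A\coloneqq H_3(\mathcal{C}^*(\Zbb^1))\to U(1)$ from the proof of the previous proposition. That proof gives the naturality identity
\[[\omega(\varphi)]=\psi_*\bigl(u_3(\varphi)\bigr)\in H^3(G;U(1)).\]
So it suffices to show that, for $G$ finite, $\psi_*\colon H^3(G;A)\to H^3(G;U(1))$ is injective.

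First I decompose the coefficients. By Lemma~\ref{lem::indexUnitary} and Proposition~\ref{prop::unitary_pt_groups}, $A\cong\pi_2\mathbf Q^*(\Zbb^1)\cong \frac{\Qbb}{\Zbb}\oplus V$ with $V$ a rational vector space. Similarly $U(1)\cong\frac{\Qbb}{\Zbb}\oplus W$ with $W$ rational. Since $G$ is finite, $H^k(G;V)=H^k(G;W)=0$ for $k>0$. Hence the inclusions of torsion subgroups induce isomorphisms
\[H^3(G;\tfrac{\Qbb}{\Zbb})\xrightarrow{\cong} H^3(G;A),\qquad H^3(G;\mu_\infty)\xrightarrow{\cong}H^3(G;U(1)),\]
where $\mu_\infty\subset U(1)$ is the group of roots of unity. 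Any homomorphism $\psi$ sends torsion to torsion, so it restricts to $\psi_{\mathrm{tors}}\colon \frac{\Qbb}{\Zbb}\to\mu_\infty$. The problem is therefore reduced to showing that $\psi_{\mathrm{tors}}$ is an isomorphism, or at least injective. An injective endomorphism of $\Qbb/\Zbb$ is automatically surjective, since each finite cyclic subgroup maps injectively into the subgroup of the same order. Then $\psi_*$ is an isomorphism on $H^3(G;-)$, and $[\omega(\varphi)]=0$ forces $u_3(\varphi)=0$.

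The main step, and the expected obstacle, is identifying $\psi_{\mathrm{tors}}$. I would track where the $\frac{\Qbb}{\Zbb}$ in $A$ comes from. By the delooping used in Proposition~\ref{prop::u2_id1} (the fibration \eqref{eq::path_fib} with $X=*$), $\pi_3 B\mathcal{C}^*(\Zbb^1)^+\cong\pi_2 B\mathcal{Q}^*(*)^+=\pi_2 K(\mathcal{PU}_\otimes)_1$. In the proof of Proposition~\ref{prop::unitary_pt_groups}, the torsion $\frac{\Qbb}{\Zbb}$ there is the kernel of $\phi$, lifted from the connecting map of the fibration $BU(1)^\delta\to K(\mathcal U_\otimes)_1\to K(\mathcal{PU}_\otimes)_1$. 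So the torsion of $A$ is detected by the composite $\pi_3(B\mathcal{C}^*(\Zbb^1)^+)\to\pi_2(B\mathcal{PU}^+)\to\pi_1(BU(1)^\delta)=U(1)$, which is injective on torsion. I would then show that $\omega$, viewed as a map on $H_3$, is exactly this composite. Truncating with $t_+$ realizes the transgression from $\Zbb^1$ to a point, landing the cochain $\tau$ in $\PGL_\otimes$-type data. Choosing lifts $\widetilde\tau$ into $U_\otimes$ and reading off the scalar discrepancy $\omega$ is the standard cocycle-level model of the connecting map for the central extension $U(1)\to U_\otimes\to PU$. The argument mirrors the proof of Proposition~\ref{prop::u2_id1}: take the universal example $G=\mathcal{C}^*(\Zbb^1)$, pick the lift on the $2$-skeleton given by $\tau$, and use Theorem~\ref{thm::davis_kirk}-style obstruction cochains to match $\omega$ with the secondary obstruction for lifting along the composite fibration.

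As a fallback, if this chain-level identification is too delicate, I would check injectivity of $\psi_{\mathrm{tors}}$ on examples. For each $n$, I would exhibit a $\Zbb/n$ circuit representation on $\Zbb^1$ whose Else--Nayak class $[\omega]$ generates $H^3(\Zbb/n;U(1))\cong\Zbb/n$; the standard boundary actions of $1$d SPT-type entanglers would serve. Since $H^3(\Zbb/n;A)\cong\Zbb/n$ as well, this forces $\psi$ to be injective on the $n$-torsion for every $n$, hence on all of $\frac{\Qbb}{\Zbb}$.
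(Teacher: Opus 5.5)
Your argument is correct, but it takes a genuinely different route from the paper.

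\textbf{The paper's route.} The paper never looks at the coefficient map. It quotes the classification results of Bols--De Roeck--De Wilde--Carvalho and Seifnashri--Shirley: for finite $G$, a circuit action on a spin chain with vanishing Else--Nayak class is stably linearizable. It then applies Theorem~\ref{thm::anomalies}.

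\textbf{Your route.} You start from the identity $[\omega(\varphi)]=\psi_*(u_3(\varphi))$ proved in the preceding proposition. You reduce to injectivity of $\psi\colon A\to U(1)$ on the torsion subgroup $\Qbb/\Zbb\subset A$. That reduction is fine: rational summands die in $H^3(G;-)$, torsion maps to torsion, and $H^3(G;M)\cong\operatorname{Hom}(H_3(G),M)$ for divisible $M$.

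You offer two ways to get the injectivity. The first identifies $\psi$ with the delooping isomorphism followed by the connecting map of $BU(1)^\delta\to K(\mathcal U_\otimes)_1\to K(\mathcal{PU}_\otimes)_1$. This is only sketched: matching the Else--Nayak cocycle with that secondary transgression is a real computation you have not done. So it is the fallback that closes the proof.

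The fallback works. For $G=\Zbb/n$, both $H^3(G;A)$ and $H^3(G;U(1))$ are cyclic of order $n$. One example with generating class therefore makes $\psi|_{A[n]}$ bijective.

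However, the examples must be edge actions of two-dimensional $\Zbb/n$ group-cohomology SPTs. Concretely, these are an onsite shift dressed by two-site diagonal phase gates built from a generating $3$-cocycle, and Else and Nayak compute their class to be the bulk cocycle. Boundaries of one-dimensional SPT entanglers only produce degree-$2$ (projective) data, so they would not serve.

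\textbf{What each route buys.} The paper's proof is one line, but it rests on a deep ``anomaly-free implies onsiteable'' theorem and is tied to finite $G$. Yours needs only the existence of anomalous examples, or nothing external if the chain-level identification is completed. It also proves more. It shows that $\psi$ restricts to an isomorphism $\Qbb/\Zbb\to\mu_\infty$ on torsion. Hence $\psi_*\colon H^3(G;A)\to H^3(G;U(1))$ is an isomorphism for every rationally acyclic $G$, so $u_3(\varphi)$ and $[\omega(\varphi)]$ determine each other there.
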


\begin{proof}
    The main results of~\cite{bols2025classificationlocalitypreservingsymmetries, seifnashri2026disentangling} proved that $\varphi$ is stably linearizable in this case. Thus, $u_3(\varphi) = 0$ from Theorem~\ref{thm::anomalies}. 
\end{proof}
\resumetoc

\section{1D Arithmetic QCA Representations}\label{appendix::arithmetic}

We describe a family of 1D QCA representations over the rationals. They display obstruction classes not found for QCA representations over $\RR$, $\CC$, or the unitary case. Therefore, they have been missing from the literature.
\stoptoc
\subsection{Setup}
We place
\begin{equation}
    V_i \cong \mathbb Q^2
\end{equation}

to be the local vector space at the site \(i\in \mathbb Z\), with ordered basis
\begin{equation}
    |0\rangle,\ |1\rangle .
\end{equation}
We use the ordered tensor-product basis
\[
|00\rangle,\ |01\rangle,\ |10\rangle,\ |11\rangle
\]
on \(V_i\otimes V_{i+1}\).
The spin system correspondes to $q\equiv 2$ with algebra 
\begin{equation}
    \mathcal A(\ZZ, q)= \bigotimes_{i\in \ZZ} \End(V_i).
\end{equation}
Fix a parameter \(s\in \mathbb Q^\times\).  Eventually we will let
\(s\) a square-free number.  Define the two-site diagonal
gate
\begin{equation}
    D_s
=
\operatorname{diag}(1,s,1,1)
\in GL_4(\mathbb Q),
\end{equation}

and the onsite gate
\begin{equation}
    X
=
\begin{pmatrix}
0&1\\
1&0
\end{pmatrix}
\in GL_2(\mathbb Q).
\end{equation}
Technically, we are considering the elements they represent in the projective linear group. Nevertheless, we use the representatives for shorthand.

Let \(D_{s,i,i+1}\) denote \(D_s\) acting on \(\End(V_i)\otimes \End(V_{i+1})\) and let
\(X_i\) denote \(X\) acting on \(\End(V_i)\) both by conjugation.  We define
\[
A_s
=
\prod_{i\in \mathbb Z} D_{s,i,i+1}=\left(\prod_{i\text{ odd}} D_{s,i,i+1}\right)\left(\prod_{i\text{ even}} D_{s,i,i+1}\right),
\qquad
B
=
\prod_{i\in \mathbb Z} X_i .
\]
Since the gates \(D_{s,i,i+1}\) are all commuting, the product is well-defined and
\(A_s\) may be implemented as depth two circuit by separating even and odd edges.  The circuit \(B\) is an onsite
depth-one circuit.

The basic calculation is as follows.  Define
\[
L_s
=
\operatorname{diag}(1,s)
\in GL_2(\mathbb Q).
\]
Then
\[
D_s (X\otimes X)D_s^{-1}(X\otimes X)
=
\operatorname{diag}(1,s,s^{-1},1)
=
L_s^{-1}\otimes L_s .
\]
Equivalently, on the edge \((i,i+1)\),
\[
D_{s,i,i+1}(X_iX_{i+1})D_{s,i,i+1}^{-1}(X_iX_{i+1})
=
L_{s,i}^{-1}L_{s,i+1},
\]
where \(L_{s,i}\) denotes \(L_s\) acting on \(\End(V_i)\).

Now let \([m,n]\subset \mathbb Z\) be a finite interval and set
\[
A_{s,[m,n]}
=
\prod_{i=m}^{n-1} D_{s,i,i+1},
\qquad
B_{[m,n]}
=
\prod_{i=m}^{n} X_i .
\]
Using the preceding edge identity and telescoping, one obtains
\[
\begin{aligned}
A_{s,[m,n]}B_{[m,n]}A_{s,[m,n]}^{-1}B_{[m,n]}^{-1}
&=
\prod_{i=m}^{n-1} L_{s,i}^{-1}L_{s,i+1}  \\
&=
L_{s,m}^{-1}L_{s,n}.
\end{aligned}
\]
Thus the commutator is purely a boundary term.  In particular, $A_s$ and $B$ commute as the commutator vanishes telescopically. 

We now truncate to the right half-line.  Write
\[
A_{s,+}
=
\prod_{i\geq 0}D_{s,i,i+1},
\qquad
B_+
=
\prod_{i\geq 0}X_i .
\]
The discussion above implies
\[
A_{s,+}B_+A_{s,+}^{-1}B_+^{-1}=
L_{s,0}^{-1}.
\]
\subsection{QCA representations}
  Let
\[
G
=
\langle \alpha,\beta \mid \alpha\beta=\beta\alpha,\ \beta^2=1\rangle
\cong
\mathbb Z\times \mathbb Z/2.
\]  Thus a general
element of \(G\) is written as
\[
g=(a,\epsilon),
\qquad
a\in\mathbb Z,\quad \epsilon\in\{0,1\},
\]
with
\[
(a,\epsilon)(b,\delta)
=
(a+b,\epsilon+\delta \bmod 2).
\]

We represent \(\alpha\) by \(A_s\) and \(\beta\) by \(B\) to get QCA representations 
\begin{equation}
    \varphi_s: G\longrightarrow \CalC(\ZZ) \subset \CQ(\ZZ). 
\end{equation}

\subsection{Degree-2 obstruction}

Equipped with Proposition~\ref{prop::u2_id1}, we compute $u_2(\varphi_s)$ through truncation at $0$. 

Since $A_s$ and $B$ commute, we fix the ordering for their product to be $A_sB$ with truncation 
\begin{equation}
    t_+(B A_s)=t_+(A_s B)= A_{s, +} B_+. 
\end{equation}
Then
\begin{equation}
\tau((1,0),(0,1))
= A_{s,+}B_+(A_{s,+}B_+)^{-1}
= I,
\end{equation}
whereas
\begin{equation}
\tau((0,1),(1,0))
= B_+A_{s,+}(A_{s,+}B_+)^{-1}
= L_{s,0}.
\end{equation}
Observe that $[L_{s,0}]\in \PGL_2(\mathbb Q)$ does not lie in $\PSL_2(\mathbb Q)$ whenever $s\notin (\mathbb Q^\times)^2$, for instance when $s=2$. Indeed, $\det L_{s,0}=s$, while replacing $L_{s,0}$ by another representative of the same projective class multiplies the determinant by a square in $\mathbb Q^\times$. Thus no representative can have determinant $1$ unless $s$ is a square.

Moreover, this obstruction is not removed by stabilization. For any $k>0$, the class
\begin{equation}
[L_{s,0}\otimes I_k]\in \PGL_{2k}(\mathbb Q)
\end{equation}
has determinant $s^k$. If $[L_{s,0}\otimes I_k]$ were represented by an element of $\SL_{2k}(\mathbb Q)$, then $s^k$ would have to lie in $(\mathbb Q^\times)^{2k}$, which again forces $s$ to be a square in $\mathbb Q^\times$. Hence, for $s\notin (\mathbb Q^\times)^2$, the stabilized class remains nontrivial in
\begin{equation}
\PGL_\otimes(\mathbb Q)/\PSL_\otimes(\mathbb Q).
\end{equation}

It follows that the image
\begin{equation}
\bar{\tau}\in Z^2\bigl(G;\PGL_\otimes(\mathbb Q)/\PSL_\otimes(\mathbb Q)\bigr)
\end{equation}
is a nontrivial cocycle whenever $s\notin (\mathbb Q^\times)^2$. Thus this construction yields an infinite family of QCA representations with nontrivial obstruction class $u_2(\varphi_s)$. This also gives an infinite family of QCA representations that are not stably linearizable.

Of course, examples such as these can be defined for QCA representations over any number field. Therefore, we expect there to be a rich connection between QCA representations and number theory. 
\resumetoc

\printbibliography

@article{freedman2020classification,
  title={Classification of quantum cellular automata},
  author={Freedman, Michael and Hastings, Matthew B},
  journal={Communications in Mathematical Physics},
  volume={376},
  number={2},
  pages={1171--1222},
  year={2020},
  publisher={Springer}
}

@article{shirley2022three,
  title={Three-dimensional quantum cellular automata from chiral semion surface topological order and beyond},
  author={Shirley, Wilbur and Chen, Yu-An and Dua, Arpit and Ellison, Tyler D and Tantivasadakarn, Nathanan and Williamson, Dominic J},
  journal={PRX quantum},
  volume={3},
  number={3},
  pages={030326},
  year={2022},
  publisher={APS}
}

@article{haah2021clifford,
  title={Clifford quantum cellular automata: Trivial group in 2D and Witt group in 3D},
  author={Haah, Jeongwan},
  journal={Journal of Mathematical Physics},
  volume={62},
  number={9},
  year={2021},
  publisher={AIP Publishing}
}

@article{haah2025topological,
  title={Topological phases of unitary dynamics: Classification in Clifford category},
  author={Haah, Jeongwan},
  journal={Communications in Mathematical Physics},
  volume={406},
  number={4},
  pages={76},
  year={2025},
  publisher={Springer}
}

@article{czajka2025anomalies,
  title={Anomalies on the Lattice, Homotopy of Quantum Cellular Automata, and a Spectrum of Invertible States},
  author={Czajka, Alexander M and Geiko, Roman and Thorngren, Ryan},
  journal={arXiv preprint arXiv:2512.02105},
  year={2025}
}

@article{kawagoe2025anomaly,
  title={Anomaly diagnosis via symmetry restriction in two-dimensional lattice systems},
  author={Kawagoe, Kyle and Shirley, Wilbur},
  journal={arXiv preprint arXiv:2507.07430},
  year={2025}
}

@article{kapustin2025higher,
  title={Higher symmetries, anomalies, and crossed squares in lattice gauge theory},
  author={Kapustin, Anton and Spodyneiko, Lev},
  journal={arXiv preprint arXiv:2507.16966},
  year={2025}
}

@article{kapustinXu2025higher,
  title={Higher symmetries and anomalies in quantum lattice systems},
  author={Kapustin, Anton and Xu, Shixiong},
  journal={arXiv preprint arXiv:2505.04719},
  year={2025}
}

@article{haah2023nontrivial,
  title={Nontrivial quantum cellular automata in higher dimensions},
  author={Haah, Jeongwan and Fidkowski, Lukasz and Hastings, Matthew B},
  journal={Communications in Mathematical Physics},
  volume={398},
  number={1},
  pages={469--540},
  year={2023},
  publisher={Springer}
}

@article{fidkowski2024qca,
   title={A quantum cellular automaton for every symmetry protected topological phase},
   volume={112},
   ISSN={2469-9969},
   url={http://dx.doi.org/10.1103/kw68-mkkd},
   DOI={10.1103/kw68-mkkd},
   number={3},
   journal={Physical Review B},
   publisher={American Physical Society (APS)},
   author={Fidkowski, Lukasz and Haah, Jeongwan and Hastings, Matthew B.},
   year={2025},
   month=July }

@article{sun2025clifford,
  title={Clifford quantum cellular automata from topological quantum field theories and invertible subalgebras},
  author={Sun, Meng and Yang, Bowen and Wang, Zongyuan and Tantivasadakarn, Nathanan and Chen, Yu-An},
  journal={PRX Quantum},
  volume={7},
  number={1},
  pages={010362},
  year={2026},
  publisher={APS}
}

@article{Thomason01011982,
author = {Robert W. Thomason},
title = {First quadrant spectral sequences in algebraic k-theory via homotopy colimits},
journal = {Communications in Algebra},
volume = {10},
number = {15},
pages = {1589--1668},
year = {1982},
publisher = {Taylor \& Francis},
doi = {10.1080/00927878208822794},
URL = { 
        https://doi.org/10.1080/00927878208822794
},
eprint = { 
        https://doi.org/10.1080/00927878208822794
}
}

@article{else2014classifying,
  title={Classifying symmetry-protected topological phases through the anomalous action of the symmetry on the edge},
  author={Else, Dominic V and Nayak, Chetan},
  journal={Physical Review B},
  volume={90},
  number={23},
  pages={235137},
  year={2014},
  publisher={APS}
}

@article{quillen_plus,
 ISSN = {0003486X, 19398980},
 URL = {http://www.jstor.org/stable/1970825},
 author = {Daniel Quillen},
 journal = {Annals of Mathematics},
 number = {3},
 pages = {552--586},
 publisher = {[Annals of Mathematics, Trustees of Princeton University on Behalf of the Annals of Mathematics, Mathematics Department, Princeton University]},
 title = {On the Cohomology and K-Theory of the General Linear Groups Over a Finite Field},
 volume = {96},
 year = {1972}
}

@book{weibel2013k,
  title={The $K$-book: An Introduction to Algebraic $K$-theory},
  author={Weibel, C.A.},
  isbn={9780821891322},
  lccn={2012039660},
  series={Graduate Studies in Mathematics},
  url={https://books.google.com/books?id=Ja8xAAAAQBAJ},
  year={2013},
  publisher={American Mathematical Society}
}

@article{Segal1974,
  author    = {Graeme Segal},
  title     = {Categories and cohomology theories},
  journal   = {Topology},
  volume    = {13},
  year      = {1974},
  pages     = {293--312},
  issn      = {0040-9383},
  doi       = {10.1016/0040-9383(74)90022-6},
  mrnumber  = {0353298},
}

@article{951cf383-5a4d-3775-b8b2-0bafd2f162fe,
 ISSN = {00029947},
 URL = {http://www.jstor.org/stable/1998675},
 author = {Charles A. Weibel},
 journal = {Transactions of the American Mathematical Society},
 number = {2},
 pages = {621--635},
 publisher = {American Mathematical Society},
 title = {$KV$-Theory of Categories},
 volume = {267},
 year = {1981}
}

@article{c80f3555-900f-35b5-ab05-6d397e6a44e6,
 ISSN = {00029939, 10886826},
 URL = {http://www.jstor.org/stable/2043975},
 author = {Charles A. Weibel},
 journal = {Proceedings of the American Mathematical Society},
 number = {1},
 pages = {1--7},
 publisher = {American Mathematical Society},
 title = {$K$-Theory of Azumaya Algebras},
 volume = {81},
 year = {1981}
}

@article{BERRICK1983172,
title = {Characterisation of plus-constructive fibrations},
journal = {Advances in Mathematics},
volume = {48},
number = {2},
pages = {172-176},
year = {1983},
issn = {0001-8708},
doi = {https://doi.org/10.1016/0001-8708(83)90087-7},
author = {A. J. Berrick}
}

@book{May1977EInfinityRingSpaces,
  author    = {May, J. Peter},
  title     = {{E$_\infty$ Ring Spaces and E$_\infty$ Ring Spectra}},
  series    = {Lecture Notes in Mathematics},
  volume    = {577},
  publisher = {Springer-Verlag},
  address   = {Berlin, Heidelberg},
  year      = {1977},
  isbn      = {978-3-540-08136-4},
  doi       = {10.1007/BFb0097608},
  url       = {https://link.springer.com/book/10.1007/BFb0097608},
  note      = {Lecture Notes in Mathematics, volume 577}
}

@article{10.2969/jmsj/00130270,
author = {Morikuni Goto},
title = {{A Theorem on compact semi-simple groups}},
volume = {1},
journal = {Journal of the Mathematical Society of Japan},
number = {3},
publisher = {Mathematical Society of Japan},
pages = {270 -- 272},
year = {1949},
doi = {10.2969/jmsj/00130270},
URL = {https://doi.org/10.2969/jmsj/00130270}
}

@misc{Nachtergaele2004QuantumSpinSystems,
  author       = {Bruno Nachtergaele},
  title        = {Quantum Spin Systems},
  howpublished = {arXiv preprint arXiv:math-ph/0409006},
  year         = {2004},
  note         = {Encyclopedia of Mathematical Physics (Elsevier), arXiv:math-ph/0409006},
  doi          = {10.48550/arXiv.math-ph/0409006},
  url          = {https://arxiv.org/abs/math-ph/0409006}
}

@book{naaijkens2017quantum,
  title={Quantum spin systems on infinite lattices},
  author={Naaijkens, Pieter},
  year={2017},
  publisher={Springer}
}

@article{kubota2025stable,
  title={Stable homotopy theory of invertible gapped quantum spin systems I: Kitaev's Omega-spectrum},
  author={Kubota, Yosuke},
  journal={arXiv preprint arXiv:2503.12618},
  year={2025}
}

@unpublished{kapustinXuInPreparation,
author = {Kapustin, Anton and Xu, Shixiong},
title  = {},
note   = {Work in preparation}
}

@article{feng2026higher,
  title={Higher-form anomalies on lattices},
  author={Feng, Yitao and Kobayashi, Ryohei and Chen, Yu-An and Ryu, Shinsei},
  journal={Physical Review Letters},
  volume={136},
  number={4},
  pages={046504},
  year={2026},
  publisher={APS}
}

@article{freed2023anomaly,
  title={What is an anomaly?},
  author={Freed, Daniel S},
  journal={arXiv preprint arXiv:2307.08147},
  year={2023}
}

@article{farrelly2020review,
  title={A review of quantum cellular automata},
  author={Farrelly, Terry},
  journal={Quantum},
  volume={4},
  pages={368},
  year={2020},
  publisher={Verein zur F{\"o}rderung des Open Access Publizierens in den Quantenwissenschaften}
}

@misc{qca_grp_space_spectrum,
      title={Quantum Cellular Automata: The Group, the Space, and the Spectrum}, 
      author={Mattie Ji and Bowen Yang},
      year={2026},
      eprint={2602.16572},
      archivePrefix={arXiv},
      primaryClass={math.AT},
      url={https://arxiv.org/abs/2602.16572}, 
}

@article{WAGONER1972349,
title = {Delooping classifying spaces in algebraic K-theory},
journal = {Topology},
volume = {11},
number = {4},
pages = {349-370},
year = {1972},
issn = {0040-9383},
doi = {https://doi.org/10.1016/0040-9383(72)90031-6},
url = {https://www.sciencedirect.com/science/article/pii/0040938372900316},
author = {J.B. Wagoner}
}

@Inbook{Bousfield1972,
author="Bousfield, Aldridge K.
and Kan, Daniel M.",
title="R-localizations of nilpotent spaces",
bookTitle="Homotopy Limits, Completions and Localizations",
year="1972",
publisher="Springer Berlin Heidelberg",
address="Berlin, Heidelberg",
pages="126--162",
isbn="978-3-540-38117-4",
doi="10.1007/978-3-540-38117-4_5"
}

@article{Ivanov2022,
author={Ivanov, Sergei O.},
title={An Overview of Rationalization Theories of Non-simply Connected Spaces and Non-nilpotent Groups},
journal={Acta Mathematica Sinica, English Series},
year={2022},
month={Oct},
day={01},
volume={38},
number={10},
pages={1705-1721},
issn={1439-7617},
doi={10.1007/s10114-022-2063-9},
url={https://doi.org/10.1007/s10114-022-2063-9}
}

@article{may2009preciselyeinftyringspaces,
    author={J. P. May},
     title={What precisely are $E_{\infty}$ ring spaces and $E_{\infty}$ ring spectra?}, 
    journal = {Geometry \& Topology Monographs},
    year = {2009},
    volume={16},
    number={09}
}

@article{shirley2026anomaly,
  title={Anomaly-free symmetries with obstructions to gauging and onsiteability},
  author={Shirley, Wilbur and Zhang, Carolyn and Ji, Wenjie and Levin, Michael},
  journal={Physical Review Letters},
  volume={136},
  number={21},
  pages={216602},
  year={2026},
  publisher={APS}
}

@article{seifnashri2026disentangling,
  title={Disentangling anomaly-free symmetries of quantum spin chains},
  author={Seifnashri, Sahand and Shirley, Wilbur},
  journal={Physical Review Letters},
  volume={136},
  number={21},
  pages={216603},
  year={2026},
  publisher={APS}
}

@article{bols2026classification,
author={Bols, Alex
and De Roeck, Wojciech
and De Wilde, Michiel
and Carvalho, Bruno de O.},
title={Classification of Locality Preserving Symmetries on Spin Chains},
journal={Communications in Mathematical Physics},
year={2025},
month={Dec},
day={08},
volume={407},
number={1},
pages={10},
issn={1432-0916},
doi={10.1007/s00220-025-05503-2},
url={https://doi.org/10.1007/s00220-025-05503-2}
}

@book{CohenLadaMay1976,
  author    = {Frederick R. Cohen and Thomas J. Lada and J. Peter May},
  title     = {The Homology of Iterated Loop Spaces},
  series    = {Lecture Notes in Mathematics},
  volume    = {533},
  publisher = {Springer-Verlag},
  address   = {Berlin and New York},
  year      = {1976},
  doi       = {10.1007/BFb0080464}
}

@misc{bols2025classificationlocalitypreservingsymmetries,
      title={Classification of locality preserving symmetries on spin chains}, 
      author={Alex Bols and Wojciech De Roeck and Michiel De Wilde and Bruno de O. Carvalho},
      year={2025},
      eprint={2503.15088},
      archivePrefix={arXiv},
      primaryClass={quant-ph},
      url={https://arxiv.org/abs/2503.15088}, 
}

@book{may2011more,
  title={More Concise Algebraic Topology: Localization, Completion, and Model Categories},
  author={May, J.P. and Ponto, K.},
  isbn={9780226511795},
  series={Chicago Lectures in Mathematics},
  url={https://books.google.com/books?id=QCuvnoqsMEgC},
  year={2011},
  publisher={University of Chicago Press}
}

@book{DavisKirk2001,
  author    = {James F. Davis and Paul Kirk},
  title     = {Lecture Notes in Algebraic Topology},
  series    = {Graduate Studies in Mathematics},
  volume    = {35},
  publisher = {American Mathematical Society},
  address   = {Providence, RI},
  year      = {2001},
  isbn      = {978-1-4704-7368-6},
  url       = {https://bookstore.ams.org/gsm-35/}
}

@inproceedings{Kochman1982,
  author    = {Kochman, Stanley O.},
  title     = {Integral Cohomology Operations},
  booktitle = {Current Trends in Algebraic Topology, Part 1},
  series     = {CMS Conference Proceedings},
  volume      = {2},
  pages       = {437--478},
  year        = {1982},
  publisher   = {American Mathematical Society},
  address     = {Providence, RI},
  note         = {Proceedings of the conference held in London, Ontario, 1981}
}

@misc{SageSteenrodAlgebra,
  author       = {Palmieri, John H.},
  title        = {The Steenrod Algebra},
  howpublished = {\url{https://doc.sagemath.org/html/en/reference/algebras/sage/algebras/steenrod/steenrod_algebra.html}},
  note         = {SageMath Reference Manual, accessed June 2026},
  year         = {2026}
}

@article{Cartan1954-1955,
author = {Cartan, H.},
journal = {Séminaire Henri Cartan},
language = {fre},
number = {1},
pages = {1-8},
publisher = {Secrétariat mathématique},
title = {Détermination des algèbres $H_*(\pi , n; Z_2)$ et $H^*(\pi , n; Z_2)$ ; groupes stables modulo $p$},
url = {http://eudml.org/doc/112298},
volume = {7},
year = {1954-1955},
}

@Article{Serre1953,
author={Serre, Jean-Pierre},
title={Cohomologie modulo 2 des complexes d'Eilenberg-MacLane},
journal={Commentarii Mathematici Helvetici},
year={1953},
month={Dec},
day={01},
volume={27},
number={1},
pages={198-232},
issn={1420-8946},
doi={10.1007/BF02564562},
url={https://doi.org/10.1007/BF02564562}
}

@article{Gross_2012,
   title={Index Theory of One Dimensional Quantum Walks and Cellular Automata},
   volume={310},
   ISSN={1432-0916},
   DOI={10.1007/s00220-012-1423-1},
   number={2},
   journal={Communications in Mathematical Physics},
   publisher={Springer Science and Business Media LLC},
   author={Gross, D. and Nesme, V. and Vogts, H. and Werner, R. F.},
   year={2012},
   month=Jan, pages={419–454} }

@article{Milnor1983,
author={Milnor, J.},
title={On the homology of Lie groups made discrete},
journal={Commentarii Mathematici Helvetici},
year={1983},
month={Dec},
day={01},
volume={58},
number={1},
pages={72-85},
issn={1420-8946},
doi={10.1007/BF02564625},
url={https://doi.org/10.1007/BF02564625}
}

@InProceedings{10.1007/BFb0080009,
author="Alperin, Roger",
editor="Stein, Michael R.",
title="{Stability for $H_2(SU_n)$}",
booktitle="Algebraic K-Theory",
year="1976",
publisher="Springer Berlin Heidelberg",
address="Berlin, Heidelberg",
pages="283--289",
isbn="978-3-540-37964-5"
}

@article{Sah01011977,
author = {Chih-Han Sah and John B. Wagoner},
title = {Second homology of lie groups made discrete},
journal = {Communications in Algebra},
volume = {5},
number = {6},
pages = {611--642},
year = {1977},
publisher = {Taylor \& Francis},
doi = {10.1080/00927877708822184},
URL = {https://doi.org/10.1080/00927877708822184},
eprint = {https://doi.org/10.1080/00927877708822184}
}

@article{Tu_2026,
   title={Anomalies of Global Symmetries on the Lattice},
   volume={16},
   ISSN={2160-3308},
   url={http://dx.doi.org/10.1103/m188-w1ct},
   DOI={10.1103/m188-w1ct},
   number={1},
   journal={Physical Review X},
   publisher={American Physical Society (APS)},
   author={Tu, Yi-Ting and Long, David M. and Else, Dominic V.},
   year={2026},
   month=Feb }

@article{Dror1972,
  author    = {Dror, Emmanuel},
  title     = {Acyclic Spaces},
  journal   = {Topology},
  volume    = {11},
  year      = {1972},
  pages     = {339--348}
}

@article{brown,
 ISSN = {0003486X, 19398980},
 URL = {},
 author = {Edgar H. Brown},
 journal = {Annals of Mathematics},
 number = {3},
 pages = {467--484},
 publisher = {[Annals of Mathematics, Trustees of Princeton University on Behalf of the Annals of Mathematics, Mathematics Department, Princeton University]},
 title = {Cohomology Theories},
 volume = {75},
 year = {1962}
}

@article{Yang_2026,
   title={Categorifying Clifford QCA},
   volume={407},
   ISSN={1432-0916},
   url={http://dx.doi.org/10.1007/s00220-026-05596-3},
   DOI={10.1007/s00220-026-05596-3},
   number={4},
   journal={Communications in Mathematical Physics},
   publisher={Springer Science and Business Media LLC},
   author={Yang, Bowen},
   year={2026},
   month=Mar }

@book{Berrick1982,
  author    = {A. J. Berrick},
  title     = {An Approach to Algebraic K-Theory},
  series     = {Pitman Research Notes in Mathematics},
  volume      = {56},
  publisher   = {Pitman},
  address     = {London},
  year         = {1982}
}

@misc{dubey2026tensorproductktheoryrational,
      title={Tensor Product $K$-theory is Rational Algebraic $K$-theory}, 
      author={Amartya Shekhar Dubey and Mattie Ji},
      year={2026},
      eprint={2606.11412},
      archivePrefix={arXiv},
      primaryClass={math.AT},
      url={https://arxiv.org/abs/2606.11412}, 
}

@book{Neukirch1999,
author="Neukirch, J{\"u}rgen",
title="Algebraic Number Theory",
year="1999",
publisher="Springer Berlin Heidelberg",
address="Berlin, Heidelberg",
doi="10.1007/978-3-662-03983-0",
url="https://doi.org/10.1007/978-3-662-03983-0"
}
\end{document}